\def\ps@pprintTitle{%
 \let\@oddhead\@empty
 \let\@evenhead\@empty
 \def\@oddfoot{\centerline{\thepage}}%
 \let\@evenfoot\@oddfoot}
\newtheorem{theorem}{Theorem}[section]
\newtheorem{corollary}[theorem]{Corollary}
\newtheorem{lemma}[theorem]{Lemma}
\newtheorem{proposition}[theorem]{Proposition}
\newtheorem{remark}[theorem]{Remark}
\newcommand{\hooklongrightarrow}{\lhook\joinrel\longrightarrow}
\newcommand{\twoheadlongrightarrow}{\relbar\joinrel\twoheadrightarrow}
\newcommand{\ra}{\rightarrow}
\newcommand{\lra}{\longrightarrow}
\newcommand{\ul}{\underline}
\newcommand{\bA}{\mathbb A}
\newcommand{\Q}{\mathbb Q}
\newcommand{\bT}{\mathbb T}
\newcommand{\Z}{\mathbb Z}
\newcommand{\bU}{\mathbb U}
\newcommand{\bI}{\mathbb I}
\newcommand{\cL}{\mathcal L}
\newcommand{\co}{\mathcal O}
\newcommand{\cR}{\mathcal R}
\newcommand{\cC}{\mathcal C}
\newcommand{\cS}{\mathcal S}
\newcommand{\cD}{\mathcal D}
\newcommand{\cT}{\mathcal T}
\newcommand{\cM}{\mathcal M}
\newcommand{\cF}{\mathcal F}
\newcommand{\cE}{\mathcal E}
\newcommand{\fn}{\mathfrak n}
\newcommand{\fm}{\mathfrak{m}}
\newcommand{\ub}{\mathfrak b}
\newcommand{\fl}{\mathfrak l}
\newcommand{\fp}{\mathfrak p}
\newcommand{\ug}{\mathfrak g}
\newcommand{\fX}{\mathfrak X}
\newcommand{\ft}{\mathfrak t}
\newcommand{\fa}{\mathfrak a}
\newcommand{\fI}{\mathfrak I}
\newcommand{\fd}{\mathfrak d}
\newcommand{\sE}{\mathscr E}
\newcommand{\sW}{\mathscr W}
\DeclareMathOperator{\pol}{\mathrm pol}
\DeclareMathOperator{\tr}{\mathrm tr}
\DeclareMathOperator{\GL}{\mathrm GL}
\DeclareMathOperator{\gr}{\mathrm gr}
\DeclareMathOperator{\Fil}{\mathrm Fil}
\DeclareMathOperator{\Res}{\mathrm Res}
\DeclareMathOperator{\Gal}{\mathrm Gal}
\DeclareMathOperator{\Hom}{\mathrm Hom}
\DeclareMathOperator{\End}{\mathrm End}
\DeclareMathOperator{\rig}{\mathrm rig}
\DeclareMathOperator{\an}{\mathrm an}
\DeclareMathOperator{\Spec}{\mathrm Spec}
\DeclareMathOperator{\Ind}{\mathrm Ind}
\DeclareMathOperator{\unr}{\mathrm unr}
\DeclareMathOperator{\lp}{\mathrm lp}
\DeclareMathOperator{\Ker}{\mathrm Ker}
\DeclareMathOperator{\pr}{\mathrm pr}
\DeclareMathOperator{\Ext}{\mathrm Ext}
\DeclareMathOperator{\usl}{\mathfrak sl}
\DeclareMathOperator{\Spm}{\mathrm Spm}
\DeclareMathOperator{\Spf}{\mathrm Spf}
\DeclareMathOperator{\Ima}{\mathrm Im}
\DeclareMathOperator{\id}{\mathrm id}
\DeclareMathOperator{\dett}{\mathrm det}
\DeclareMathOperator{\alg}{\mathrm alg}
\DeclareMathOperator{\soc}{\mathrm soc}
\DeclareMathOperator{\Supp}{\mathrm Supp}
\DeclareMathOperator{\Ad}{\mathrm Ad}
\DeclareMathOperator{\Rep}{\mathrm Rep}
\DeclareMathOperator{\st}{\mathrm st}
\DeclareMathOperator{\St}{\mathrm St}
\DeclareMathOperator{\WD}{\mathrm WD}
\DeclareMathOperator{\HT}{\mathrm HT}
\DeclareMathOperator{\rk}{\mathrm rk}
\DeclareMathOperator{\wt}{\mathrm wt}
\DeclareMathOperator{\tri}{\mathrm tri}
\DeclareMathOperator{\sm}{\mathrm sm}
\DeclareMathOperator{\univ}{\mathrm univ}
\DeclareMathOperator{\loc}{\mathrm loc}
\DeclareMathOperator{\hH}{\mathrm H}
\DeclareMathOperator{\val}{\mathrm val}
\begin{document}
  \title{Simple $\cL$-invariants for $\GL_n$}
  \author{Yiwen Ding}
\ead{yiwen.ding@bicmr.pku.edu.cn}
\address{B.I.C.M.R., Peking University,
No.5 Yiheyuan Road Haidian District,
Beijing, P.R. China 100871}
\begin{abstract}Let $L$ be a finite extension of $\Q_p$, and $\rho_L$ be an $n$-dimensional  semi-stable non crystalline $p$-adic representation of $\Gal_L$ with full monodromy rank. Via a study of  Breuil's (simple) $\cL$-invariants, we attach to $\rho_L$ a locally $\Q_p$-analytic representation $\Pi(\rho_L)$ of $\GL_n(L)$, which carries the exact information of the Fontaine-Mazur simple $\cL$-invariants of $\rho_L$. When $\rho_L$ comes from an automorphic representation of $G(\bA_{F^+})$ (for a unitary group $G$ over a totally real filed $F^+$ which is compact at infinite places and $\GL_n$ at $p$-adic places), we prove under mild hypothesis that $\Pi(\rho_L)$ is a subrerpresentation of the associated Hecke-isotypic subspaces of the Banach spaces of $p$-adic automorphic forms on $G(\bA_{F^+})$. In other words, we prove the equality of Breuil's simple $\cL$-invariants and Fontaine-Mazur simple $\cL$-invariants.
\end{abstract}\maketitle
\tableofcontents
\section{Introduction and notation}\addtocontents{toc}{\protect\setcounter{tocdepth}{1}}
\noindent This paper seeks to explore new cases  in  $p$-adic Langlands program (see \cite{Br0} for a survey), where a  basic problem  (and a starting point) is to find the Galois information, that is lost when passing from an $n$-dimensional de Rham $p$-adic Galois representation $\rho_L: \Gal_L\ra \GL_n(E)$ to its associated Weil-Deligne representation $\WD(\rho_L)$, on the automorphic side, e.g. in the Banach representations or locally $\Q_p$-analytic representations of $\GL_n(L)$. Here $L$ is a finite extension of $\Q_p$ of degree $d_L$, and $E$ is the coefficient field of the representations that we consider in this paper, which is also finite over $\Q_p$ and sufficiently large containing all the embeddings of $L$ in $\overline{\Q_p}$.
In this paper, we consider the case where $\rho_L$ is non-critical  semi-stable non-crystalline with the monodromy operator $N$ on $D_{\st}(\rho_L)$ satisfying $N^{n-1}\neq 0$.

\subsection*{Fontaine-Mazur simple $\cL$-invariants}
\noindent When passing from $\rho_L$ (under the above condition) to its associated Weil-Deligne representation, the lost information (beside of the Hodge-Tate weights) can be concretely described via the so-called \emph{Fontaine-Mazur $\cL$-invariants}. We study \emph{some of} the Fontaine-Mazur $\cL$-invariants in this paper, which we call Fontaine-Mazur \emph{simple} $\cL$-invariants. Let  $D:=D_{\rig}(\rho_L)$, which is trianguline and admits a unique triangulation: $(\delta_1,\cdots, \delta_n)$  (where the $\delta_i$'s are continuous characters of $L^{\times}$ in $E^{\times}$). The simple $\cL$-invariants are those characterizing the consecutive extensions $D_i^{i+1}$ of $\cR_E(\delta_{i+1})$ by $\cR_E(\delta_i)$ for $i\in \Delta=\{1,\cdots, n-1\}$ (which we identify with the set of simple roots of $\GL_n$) inside $D$.
\\

\noindent In fact, since $N^{n-1}\neq 0$ on $D_{\st}(\rho_L)$, we have that $D_i^{i+1}$ is semi-stable non-crystalline, and $\delta_i\delta_{i+1}^{-1}=\varepsilon\prod_{\sigma:L\hookrightarrow E} \sigma^{k_{\sigma}}$ where $k_{\sigma}\in \Z_{\geq 0}$ for all $\sigma: L\hookrightarrow E$, and $\varepsilon$ denotes the cyclotomic character. We have a natural perfect pairing (induced from the Tate pairing, cf. \S~\ref{sec: lgln-LFM1}\big):
\begin{equation}\label{equ: lgln-perf0}
  \Ext^1_{(\varphi,\Gamma)}\big(\cR_E(\delta_{i+1}), \cR_E(\delta_i)\big) \times \Hom(L^{\times}, E) \xlongrightarrow{\langle \cdot, \cdot \rangle} E\ \big(\cong \hH^2_{(\varphi,\Gamma)}\big(\cR_E(\delta_i\delta_{i+1}^{-1})\big)\big),
\end{equation}
where $\Hom(L^{\times}, E) $ denotes the $(d_L+1)$-dimensional $E$-vector space of $E$-valued additive characters on $L^{\times}$ and is naturally isomorphic to $\Ext^ 1_{(\varphi,\Gamma)}(\cR_E(\delta_{i+1}), \cR_E(\delta_{i+1}))$. We let
\begin{equation*}
 \label{equ: lgln-LFM}\cL(\rho_L)_i:=\cL(D)_i \subset \Hom(L^{\times},E)
\end{equation*}
be the $E$-vector subspace orthogonal to $[D_i^{i+1}]$ via the perfect pairing (\ref{equ: lgln-perf0}), which is thus $d_L$-dimensional and is determined by $D_i^{i+1}$ (and vice versa). Put $\cL(\rho_L):=\cL(D):=\{\cL(D)_i\}_{i\in \Delta}$.

\subsection*{Breuil's simple $\cL$-invariants}
\noindent First introduce some notation. Let $\alpha\in E^{\times}$ be such that $\{\alpha, q_L \alpha, \cdots, q_L^{n-1} \alpha\}$ are the eigenvalues of $\varphi^{f_L}$ on $D_{\st}(\rho_L)$, where $f_L$ denotes the unramified degree of $L$ over $\Q_p$, and $q_L:=p^{f_L}$. Let $\ul{\lambda}$ be the dominant weight of $T(L)$ \big(with respect to the Borel subgroup $B$ of upper triangular matrices\big) associated to the Hodge-Tate weights $\HT(\rho_L)$ of $\rho_L$. Let $\St_n^{\infty}(\ul{\lambda})$ (resp. $\St_n^{\an}(\ul{\lambda})$) denote the locally algebraic (resp. locally $\Q_p$-analytic) Steinberg representation of weight $\ul{\lambda}$, and let $v_{\overline{P}}^{\infty}(\ul{\lambda})$ be the locally algebraic generalized Steinberg representation of $\GL_n(L)$ with respect to $\overline{P}$ (see \S \ref{sec: lgln-LB-not} for details) for a parabolic subgroup $\overline{P}\supset \overline{B}$ (where $\overline{B}$ is the Borel subgroup of lower triangular matrices). Let $\St_n^{\infty}(\alpha, \ul{\lambda}):=\St_n^{\infty}(\ul{\lambda})\otimes_E \unr(\alpha)\circ \dett$, $\Sigma(\alpha, \ul{\lambda}):=\St_n^{\an}(\ul{\lambda})\otimes_E \unr(\alpha)\circ \dett$,  and $v_{\overline{P}}^{\infty}(\alpha, \ul{\lambda}):=v_{\overline{P}}^{\infty}(\ul{\lambda})\otimes_E \unr(\alpha)\circ \dett$, where $\unr(\alpha)$ denotes the unramified character of $L^{\times}$ sending uniformizers to $\alpha$. For $i\in \Delta$, denote by $\overline{P}_i\supsetneq \overline{B}$ the minimal parabolic subgroup with respect to $i$ (i.e. $i$ is the unique simple root of the Levi subgroup of $\overline{P}_i$ containing the torus subgroup $T$). The following theorem plays a crucial role in finding the counter part of the Fontaine-Mazur simple $\cL$-invariants in the locally analytic representations of $\GL_n(L)$ (which we refer to as \emph{Breuil's simple $\cL$-invariants}).

\begin{theorem}[cf. Theorem \ref{cor: lgln-key}]For $i\in \Delta$, there exists a natural isomorphism of $E$-vector spaces
\begin{equation}\label{equ: lgln-LB}
  \Hom(L^{\times},E)\xlongrightarrow{\sim} \Ext^1_{\GL_n(L)}\big(v_{\overline{P}_i}^{\infty}(\alpha, \ul{\lambda}), \St_n^{\an}(\alpha, \ul{\lambda})\big) ,
\end{equation}
where the $\Ext^1$ is taken in the abelian category of admissible locally $\Q_p$-analytic representations of $\GL_n(L)$.
\end{theorem}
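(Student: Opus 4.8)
\noindent The plan is to construct a natural $E$-linear map
\begin{equation*}
\iota_i\colon \Hom(L^{\times},E)\longrightarrow \Ext^1_{\GL_n(L)}\big(v_{\overline{P}_i}^{\infty}(\alpha,\ul{\lambda}),\St_n^{\an}(\alpha,\ul{\lambda})\big),
\end{equation*}
to prove that $\iota_i$ is injective, and to bound the $E$-dimension of its target above by $d_L+1=\dim_E\Hom(L^{\times},E)$; these together force $\iota_i$ to be an isomorphism. To build $\iota_i$ I would realize both $\St_n^{\an}(\alpha,\ul{\lambda})$ and $v_{\overline{P}_i}^{\infty}(\alpha,\ul{\lambda})$ as subquotients of a single locally $\Q_p$-analytic principal series $\big(\Ind_{\overline{B}}^{\GL_n(L)}\delta\big)^{\an}$ for an appropriate locally algebraic character $\delta=(\delta_1,\dots,\delta_n)$ of $T(L)$, and then deform $\delta$ along the one-parameter direction multiplying $\delta_i\delta_{i+1}^{-1}$ by $1+\epsilon\psi$; this square-zero deformation, combined with the canonical (up to scalar) sub- and quotient-maps, yields by a Baer-sum construction an extension of $v_{\overline{P}_i}^{\infty}(\alpha,\ul{\lambda})$ by $\St_n^{\an}(\alpha,\ul{\lambda})$ that we define to be $\iota_i(\psi)$. $E$-linearity is automatic, and by construction the line $E\cdot\val_L$ of smooth additive characters is sent to the smooth part of $\Ext^1$, while the complementary $d_L$-dimensional space of locally $\Q_p$-analytic additive characters (spanned by the logarithmic characters attached to the embeddings $\sigma\colon L\hookrightarrow E$) is sent into the strictly locally analytic extensions.

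The heart of the matter is the dimension bound. I would d\'evisser $\St_n^{\an}(\alpha,\ul{\lambda})$: its socle is the locally algebraic Steinberg $\St_n^{\infty}(\alpha,\ul{\lambda})$, and it carries a finite filtration whose graded pieces are twists of locally analytic generalized Steinberg representations, equivalently the images under the Orlik--Strauch functors $\cF_{\overline{Q}}^{\GL_n}$ of the simple objects of category $\mathcal{O}^{\overline{\fp}}$ occurring in a BGG-type resolution, paired with (generalized) Steinberg representations of Levi quotients. Applying $\Ext^{\bullet}_{\GL_n(L)}\big(v_{\overline{P}_i}^{\infty}(\alpha,\ul{\lambda}),-\big)$ to this filtration and running the long exact sequences reduces the problem to two essentially independent inputs: (i) the smooth $\Ext$-groups between generalized Steinberg representations of $\GL_n(L)$ with fixed algebraic part --- since $\overline{P}_i$ covers $\overline{B}$, the acyclicity of the Tits building / Steinberg complex produces exactly one non-split smooth extension of $v_{\overline{P}_i}^{\infty}$ by $\St_n^{\infty}$ in degree $1$ (plus the valuation twist), together with the vanishing needed in the other degrees; and (ii) the $\Ext$-groups in the BGG category $\mathcal{O}$ for $\gl_n(L)$. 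For (ii), the Lie algebra $\Lie\GL_n(L)$ becomes, after extension of scalars to $E$, a product of $d_L$ copies of $\gl_n(E)$ indexed by the embeddings $\sigma\colon L\hookrightarrow E$, so the relevant category $\mathcal{O}$ decomposes correspondingly; in each factor the pertinent $\Ext^1$ between the simple modules indexed by $e$ and by the reflection $s_i$ is $1$-dimensional while the other relevant $\Ext$-groups vanish, and a K\"unneth argument assembles these into the $d_L$ locally analytic directions. Feeding (i) and (ii) into the resulting (Hochschild--Serre type) spectral sequence gives $\dim_E\Ext^1\le d_L+1$.

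The main obstacle is the array of auxiliary $\Ext$- and $\Hom$-vanishing statements that drive the d\'evissage and the two spectral sequences --- above all, pinning down $\Ext^1$ and $\Ext^2$ between smooth generalized Steinberg representations of $\GL_n(L)$ with a fixed central and algebraic twist (so that no unexpected smooth class survives), and then checking that these match the category-$\mathcal{O}$ contributions exactly, with the spectral sequence degenerating where it must and no spurious classes appearing. The Orlik--Strauch $\Hom$-formalism is available off the shelf, but the $\Ext^1$-statements at the level of generality needed here will likely have to be proved by hand, working directly with the two-step structure of $\St_n^{\an}(\alpha,\ul{\lambda})$ and the $\overline{P}_i$-action (and using Emerton's Jacquet-module adjunction to control homomorphisms out of the locally algebraic $v_{\overline{P}_i}^{\infty}$) rather than quoting a black-box comparison. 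I expect this bookkeeping, together with the proof that the classes $\iota_i(\psi)$ are non-split and linearly independent --- which reduces to showing that the deformed principal series above admits no unexpected $\GL_n(L)$-stable complement --- to be the most delicate part of the argument.
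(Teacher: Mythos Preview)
Your high-level strategy --- construct a natural map from $\Hom(L^{\times},E)$, show injectivity, then bound $\dim_E\Ext^1$ by $d_L+1$ --- is exactly the paper's, and your construction of the map via a square-zero deformation of the inducing character is essentially Lemma~\ref{descex} and Remark~\ref{rem: lgln-ext2}. Where you diverge from the paper is in the dimension bound, and there your plan has a real gap.

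You propose to d\'evisser $\St_n^{\an}(\alpha,\ul{\lambda})$ by its Jordan--H\"older (Orlik--Strauch) series and compute $\Ext^{\bullet}(v_{\overline{P}_i}^{\infty},-)$ against each irreducible piece. Two problems. First, you assert that $\soc_G\St_n^{\an}(\alpha,\ul{\lambda})\cong\St_n^{\infty}(\alpha,\ul{\lambda})$; the paper explicitly says this is \emph{not known} (see the Remark after Proposition~\ref{soc}). Second, and more seriously, a JH d\'evissage forces you to control not only $\Ext^1$ but the connecting maps into $\Ext^2$ against every irreducible constituent $\cF_{\overline{P}_J}^G(\overline{L}(-s\cdot\ul{\lambda}),\pi_J)$; there is no clean general formula expressing these $\Ext$-groups as a product of smooth $\Ext$ and category-$\mathcal{O}$ $\Ext$, and the ``Hochschild--Serre type spectral sequence'' you invoke is not an off-the-shelf tool here. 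The paper does eventually compute some of these (Lemma~\ref{lem: vanish}, Lemma~\ref{lem: stij}), but only \emph{after} the main theorem, and only in the easy cases $\lg(s)>1$ or $s=s_{j,\sigma}$.

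The paper bypasses all of this by using, instead of the JH filtration, the \emph{acyclic} Orlik--Schraen resolution (\ref{equ: lgln-lanSt}) of $\St_n^{\an}(\ul{\lambda})$ by the full parabolic inductions $\bI_{\overline{P}_K}^G(\ul{\lambda})$, together with the dual resolution (\ref{equ: lgln-lalgSt}) of $v_{\overline{P}_I}^{\infty}(\ul{\lambda})$ by the $i_{\overline{P}_K}^G(\ul{\lambda})$. This reduces everything to the single clean formula $\Ext^r_G\big(i_{\overline{P}_I}^G(\ul{\lambda}),\bI_{\overline{P}_J}^G(\ul{\lambda})\big)\cong \hH^r_{\an}(L_J(L),E)$ for $J\subseteq I$ (and $0$ otherwise), proved via Schraen's spectral sequence \cite[(4.39)]{Sch11}. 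The resulting $E_1$-pages are constant coefficient systems on simplices, and the dimension $d_L+1$ falls out of $\hH^1_{\an}(L_{\Delta\setminus\{i\}}(L)/Z,E)\cong\Hom(Z_{\Delta\setminus\{i\}}(L)/Z,E)$ together with $\hH^1_{\an}(G/Z,E)=\hH^2_{\an}(G/Z,E)=0$. This is the key organizational idea you are missing: resolve by \emph{inductions}, not by \emph{irreducibles}.
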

\noindent We refer to Remark \ref{rem: lgln-ext3} (ii) for an explicit description of the isomorphism. Let  $V$  be an $E$-vector subspace of $\Hom(L^{\times},E)$ of dimension $d$, $i\in \Delta$.  Let $\{\psi_1,\cdots, \psi_d\}$ be a basis of $V$, and we put
\begin{equation*}
\tilde{\Sigma}_i(\alpha, \ul{\lambda}, V):=\tilde{\Sigma}_i(\alpha, \ul{\lambda}, \psi_1)\oplus_{\St_n^{\an}(\alpha, \ul{\lambda})}\tilde{\Sigma}_i(\alpha, \ul{\lambda}, \psi_2)\oplus_{\St_n^{\an}(\alpha, \ul{\lambda})}\cdots \oplus_{\St_n^{\an}(\alpha, \ul{\lambda})}\tilde{\Sigma}_i(\alpha, \ul{\lambda}, \psi_d),
\end{equation*}
where $\tilde{\Sigma}_i(\alpha, \ul{\lambda}, \psi_j)$ denotes the image of $\psi_j$ via (\ref{equ: lgln-LB}). Thus  $\tilde{\Sigma}_i(\alpha, \ul{\lambda}, V)$ is isomorphic to  an extension of $v_{\overline{P}_i}^{\infty}(\alpha, \ul{\lambda})^{\oplus d_L}$ by $\St_n^{\an}(\alpha, \ul{\lambda})$, and is independent of the choice of the basis of $V$. We put
\begin{equation*}
  \tilde{\Sigma}(\alpha, \ul{\lambda}, \cL(\rho_L)):=\oplus^{i\in \Delta}_{\St_n^{\an}(\alpha, \ul{\lambda})} \tilde{\Sigma}_i(\alpha, \ul{\lambda}, \cL(\rho_L)_i).
\end{equation*}
This is a locally $\Q_p$-analytic representation of $\GL_n(L)$, and is isomorphic to an extension of $\oplus_{i\in \Delta} v_{\overline{P}_i}^{\infty}(\alpha, \ul{\lambda})^{d_L}$ by $\St_n^{\an}(\alpha,\ul{\lambda})$. By (\ref{equ: lgln-LB}), $\tilde{\Sigma}(\alpha, \ul{\lambda}, \cL(\rho_L)_{\Delta})$ carries the exact information of $\{\alpha, \ul{\lambda}, \cL(\rho_L)\}$, i.e. the information on the Weil-Deligne representation associated to $\rho_L$, the Hodge-Tate weights of $\rho_L$, and the simple $\cL$-invariants of $\rho_L$.

\subsection*{Local-global compatibility}\noindent
The precedent discussion is purely \emph{local}. Suppose now $\rho_L$ appears on the patched eigenvariety \cite{BHS1} (hence we also assume the so-called Taylor-Wiles hypothesis, see \S~\ref{sec: lgln-pbr} for details). In this case,  one can associate to $\rho_L$  an admissible unitary Banach representation $\widehat{\Pi}(\rho_L)$ of $\GL_n(L)$ as in \cite{CEGGPS} (where the method is \emph{global}), which is expected to be the right representation (up to multiplicities) corresponding to $\rho_L$. We have (e.g. by global triangulation theory, see Proposition \ref{lg-ind})
\begin{equation}\label{equ: lgln-llc}
  \St_n^{\infty}(\alpha, \ul{\lambda})\hooklongrightarrow \widehat{\Pi}(\rho_L).
\end{equation}
The following theorem is the main result of the paper (see Theorem \ref{thm: lgln-main}, Remark \ref{rem: lgln-lg} (i))
\begin{theorem}\label{thm: lgln-lg}Keep the assumption, the injection (\ref{equ: lgln-llc}) extends uniquely to a morphism
\begin{equation}\label{equ: lgln-exiL}
  \tilde{\Sigma}(\alpha, \ul{\lambda}, \cL(\rho_L))\lra \widehat{\Pi}(\rho_L).
\end{equation}
Moreover, for $i\in \Delta$, $\psi\in \Hom(L^{\times}, E)$, (\ref{equ: lgln-llc}) can extend to a morphism
\begin{equation*}
\tilde{\Sigma}_i(\alpha, \ul{\lambda}, \psi) \lra \widehat{\Pi}(\rho_L)
\end{equation*}
if and only if $\psi \in \cL(\rho_L)_i$.
\end{theorem}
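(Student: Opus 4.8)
\noindent The plan is to transport the statement to the infinitesimal geometry of the patched eigenvariety at the point $x$ attached to $(\rho_L,(\delta_1,\dots,\delta_n))$, and then to read the answer off the $(\varphi,\Gamma)$-cohomological pairing (\ref{equ: lgln-perf0}).

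\emph{Step 1 (reductions).} First I would upgrade the embedding (\ref{equ: lgln-llc}) to an embedding $\St_n^{\an}(\alpha,\ul{\lambda})\hooklongrightarrow\widehat{\Pi}(\rho_L)^{\an}$, using that $\rho_L$ is non-critical together with the structure of $\widehat{\Pi}(\rho_L)^{\an}$ discussed in \S\ref{sec: lgln-pbr}; by multiplicity one for the relevant Hecke eigensystem, $\Hom_{\GL_n(L)}(\St_n^{\an}(\alpha,\ul{\lambda}),\widehat{\Pi}(\rho_L)^{\an})$ is one-dimensional and this extension is unique. Since $\tilde{\Sigma}(\alpha,\ul{\lambda},\cL(\rho_L))$ is by construction an iterated amalgamated sum over $\St_n^{\an}(\alpha,\ul{\lambda})$ of the $\tilde{\Sigma}_i(\alpha,\ul{\lambda},\psi)$, the universal property of pushouts reduces the existence of (\ref{equ: lgln-exiL}) to the existence, for each $i\in\Delta$ and each $\psi$ in a basis of $\cL(\rho_L)_i$, of a morphism $\tilde{\Sigma}_i(\alpha,\ul{\lambda},\psi)\to\widehat{\Pi}(\rho_L)$ extending (\ref{equ: lgln-llc}); and the uniqueness of (\ref{equ: lgln-exiL}) is equivalent to $\Hom_{\GL_n(L)}(v_{\overline{P}_i}^{\infty}(\alpha,\ul{\lambda}),\widehat{\Pi}(\rho_L)^{\an})=0$, which holds because $v_{\overline{P}_i}^{\infty}(\alpha,\ul{\lambda})$ is locally algebraic and irreducible whereas $\widehat{\Pi}(\rho_L)^{\lalg}\cong\St_n^{\infty}(\alpha,\ul{\lambda})^{\oplus m}$ for some $m\geq 1$ by classical local-global compatibility --- here the hypothesis $N^{n-1}\neq 0$, which forces $\WD(\rho_L)$ to be a twist of a Steinberg, is crucial. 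Everything thus reduces to: for $i\in\Delta$ and $\psi\in\Hom(L^\times,E)$, a morphism $\tilde{\Sigma}_i(\alpha,\ul{\lambda},\psi)\to\widehat{\Pi}(\rho_L)$ extending (\ref{equ: lgln-llc}) exists if and only if $\psi\in\cL(\rho_L)_i$ (and for $\psi\neq0$ it is then automatically injective, whence the asserted subrepresentation).

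\emph{Step 2 (extensions versus tangent vectors).} Let $\partial_j(v)\in\Hom(L^\times,E)$ denote the first-order deformation of $\delta_j$ induced by a tangent vector $v$ of the patched eigenvariety at $x$. Using Emerton's locally analytic Jacquet functor $J_B$ and his adjunction formula, the realisation of the dual of $J_B(\widehat{\Pi}(\rho_L)^{\an})$ as a coherent sheaf supported at $x$ on the patched eigenvariety, and the principal-series resolutions of $\St_n^{\an}(\alpha,\ul{\lambda})$ and $v_{\overline{P}_i}^{\infty}(\alpha,\ul{\lambda})$ that enter the proof of (\ref{equ: lgln-LB}) (together with the explicit form of (\ref{equ: lgln-LB}) recalled in Remark \ref{rem: lgln-ext3}), I would show that a morphism $\tilde{\Sigma}_i(\alpha,\ul{\lambda},\psi)\to\widehat{\Pi}(\rho_L)$ extending (\ref{equ: lgln-llc}) exists if and only if there is a tangent vector $v$ at $x$ with $\partial_i(v)-\partial_{i+1}(v)=\psi$ and $\partial_j(v)-\partial_{j+1}(v)=0$ for $j\in\Delta\setminus\{i\}$. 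This is the $\GL_n$-incarnation of the Greenberg--Stevens mechanism: moving along such a $v$ deforms the embedded copy of $\St_n^{\infty}(\alpha,\ul{\lambda})$, the deformation of the associated character of $T(L)$ along the simple root $i$ being exactly $\psi$, and the ``companion constituent'' $v_{\overline{P}_i}^{\infty}(\alpha,\ul{\lambda})$ appears in $\widehat{\Pi}(\rho_L)^{\an}$ precisely when such a $v$ exists.

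\emph{Step 3 (which tangent directions occur).} By the description of the patched eigenvariety around $x$ recalled in \S\ref{sec: lgln-pbr} (following \cite{BHS1}; non-criticality is used), near $x$ it is smooth over the local trianguline variety, so its tangent space at $x$ surjects onto the tangent space of the latter at the point under $x$ and the two have the same image in $\bigoplus_{j=1}^{n}\Hom(L^\times,E)$. A tangent vector of the trianguline variety is a first-order deformation of $D$ together with a deformation of its (unique) triangulation, so its image in $\bigoplus_j\Hom(L^\times,E)$ is the set of tuples $(\psi_j)_j$ for which the triangulation can be deformed with $\delta_j$ moving in the direction $\psi_j$. Building the flag one step at a time, the obstruction at the $j$-th step lies in $\hH^2_{(\varphi,\Gamma)}$ of a successive extension of the $\cR_E(\delta_k\delta_l^{-1})$ with $k\le j<l$; since $\delta_k\delta_l^{-1}$ is the product of the $\delta_m\delta_{m+1}^{-1}$ ($k\le m<l$), each of which equals $\varepsilon\prod_{\sigma}\sigma^{k_\sigma}$ with $k_\sigma\in\Z_{\geq0}$, it is $\varepsilon^{l-k}$ times a locally algebraic character, so $\hH^2_{(\varphi,\Gamma)}(\cR_E(\delta_k\delta_l^{-1}))\neq0$ exactly when $l-k=1$; hence the $j$-th obstruction reduces to the single cup product $[D_j^{j+1}]\cup(\psi_j-\psi_{j+1})\in\hH^2_{(\varphi,\Gamma)}(\cR_E(\delta_j\delta_{j+1}^{-1}))\cong E$, which is the value of the pairing (\ref{equ: lgln-perf0}) and therefore vanishes if and only if $\psi_j-\psi_{j+1}\in\cL(\rho_L)_j$. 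Thus the image in question is exactly $\{(\psi_j)_j:\ \psi_j-\psi_{j+1}\in\cL(\rho_L)_j\ \text{for all }j\in\Delta\}$; intersecting with $\{\psi_j=\psi_{j+1}\ (j\in\Delta\setminus\{i\})\}$ and projecting by $v\mapsto\partial_i(v)-\partial_{i+1}(v)$ gives precisely $\cL(\rho_L)_i$ --- for the ``only if'' direction one reads off $\psi\in\cL(\rho_L)_i$, and for the ``if'' direction, given $\psi\in\cL(\rho_L)_i$ one takes $\psi_m=\psi$ for $m\le i$ and $\psi_m=0$ for $m>i$.

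\emph{Main obstacle.} The hard part is Step 2 --- making rigorous the passage from a locally analytic extension inside $\widehat{\Pi}(\rho_L)^{\an}$ to a tangent vector of the patched eigenvariety, i.e.\ constructing the ``derivation map'' on the relevant $\Ext^1$-groups compatibly with $J_B$ and with the explicit isomorphism (\ref{equ: lgln-LB}), and verifying that it extracts exactly the simple-root-$i$ component $\partial_i-\partial_{i+1}$ of the parameter deformation and nothing else. This is delicate because $v_{\overline{P}_i}^{\infty}(\alpha,\ul{\lambda})$ and $\St_n^{\an}(\alpha,\ul{\lambda})$ are neither principal series nor locally algebraic, so the argument must be run through their resolutions by locally analytic principal series with careful control of the several $\Ext^1$-spectral sequences --- essentially the bookkeeping already underlying the isomorphism (\ref{equ: lgln-LB}). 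A secondary point is the geometric input used at the start of Step 3, namely the local structure of the patched eigenvariety at the semi-stable non-crystalline point $x$; if it is not available in exactly the form needed, it has to be deduced from the trianguline deformation theory of $D$ under the full-monodromy and non-criticality hypotheses.
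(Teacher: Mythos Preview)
Your outline matches the paper's strategy for the \emph{existence} (``if'') direction: produce a tangent vector on the patched eigenvariety whose image under $\kappa_i\circ d\omega_x$ is $\psi$ (your Step~3 is exactly Proposition~\ref{prop: lgln-dtd}/Corollary~\ref{cor: lgln-patch}), then feed the resulting first-order deformation $\widetilde{\chi}=\chi(1+\Psi\epsilon)$ into Emerton's adjunction to obtain a map $I_{\overline{B}}^G(\widetilde{\chi}\delta_B^{-1})\to\Pi_\infty^{R_\infty-\an}[\fI_t]$, and extract from it the desired $\Sigma_i(\alpha,\ul{\lambda},\psi)\hookrightarrow\Pi_\infty^{R_\infty-\an}[\fm_y]$. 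Two technical points the paper handles that you should flag: one must check the map $\widetilde{\chi}\hookrightarrow J_B$ is \emph{balanced} (this uses Lemma~\ref{lem: lgln-nonc}), and one must show the image actually lands in the $\fm_y$-eigenspace rather than just the $\fI_t$-eigenspace (Corollary~\ref{cor: lgln-bij3} plus a small argument with the $R_\infty$-action). Also, the paper does \emph{not} assume multiplicity one: it works with $r=\dim_E x^*\cM_\infty$ copies throughout, using that $\cM_\infty$ is locally free near $x$ (Cohen--Macaulay plus smoothness).

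The genuine divergence is in the ``only if'' direction. You want the full biconditional in Step~2: an extension exists \emph{iff} there is a tangent vector on $X_\wp(\overline{\rho})$ with the prescribed $\partial_i-\partial_{i+1}$. The ``only if'' half of this is problematic: an extension $\tilde{\Sigma}_i(\alpha,\ul{\lambda},\psi)\to\widehat{\Pi}(\rho_L)=\Pi_\infty[\fm_y]$ lives entirely in the $\fm_y$-eigenspace, so any associated $E[\epsilon]$-point has \emph{constant} $R_\infty$-coordinate $y$; there is no obvious reason the pair $(y,\chi(1+\Psi\epsilon))$ should lie on $X_\wp(\overline{\rho})$ rather than just on $\{y\}\times\widehat{T}$. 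The paper sidesteps this completely with a one-line hyperplane argument: the set of $\psi$ admitting an extension is an $E$-subspace of $\Hom(L^\times,E)$ containing the codimension-one subspace $\cL(\rho_L)_i$ (by the ``if'' part); if it were strictly larger it would be everything, in particular it would contain a nonzero smooth $\psi_\infty$, but then $\Sigma_i(\alpha,\ul{\lambda},\psi_\infty)$ has a locally algebraic sub which is a nontrivial smooth extension of $v_{\overline{P}_i}^\infty$ by $\St_n^\infty$ tensored with $L(\ul{\lambda})$, and applying $J_B$ contradicts Lemma~\ref{lem: lgln-nonc}. This is both simpler and avoids the obstacle you correctly identify as the hard part.
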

\noindent
 This theorem implies that $\cL(\rho_L)$ can be explicitly read out from $\widehat{\Pi}(\rho_L)$. We remark that the map (\ref{equ: lgln-exiL}) is injective if $\soc_{\GL_n(L)} \Sigma(\alpha, \ul{\lambda})$ is exactly $\St_n^{\infty}(\alpha, \ul{\lambda})$, which is actually true for $\GL_2(L)$ and $\GL_3(\Q_p)$. The author does not know whether it is true in general. However, one can show $\tilde{\Sigma}(\alpha, \ul{\lambda}, \cL(\rho_L))$ actually comes by push-forward from an extension $\Sigma(\alpha, \ul{\lambda}, \cL(\rho_L))$ of $\oplus_{i\in \Delta}v_{\overline{P}_i}^{\infty}(\alpha, \ul{\lambda})^{\oplus d_L}$ by a certain $\Sigma(\alpha, \ul{\lambda})\subset \St_n^{\an}(\alpha, \ul{\lambda})$. And we have $\soc_{\GL_n(L)} \Sigma(\alpha, \ul{\lambda}, \cL(\rho_L)) \cong \St_n^{\infty}(\alpha, \ul{\lambda})$. We refer to (\ref{stru}) for the structure of $\Sigma(\alpha, \ul{\lambda}, \cL(\rho_L))$, and to Theorem \ref{thm: lgln-main} for a similar statement for $\Sigma(\alpha, \ul{\lambda}, \cL(\rho_L))$.
\\

\noindent A similar result of Theorem \ref{thm: lgln-lg} in modular curve case was proved by Breuil (\cite{Br10}) using modular symbols, and played a crucial rule in Emerton's proof of Mazur-Tate-Teitelbaum exceptional zero conjecture (cf. \cite{Em05}). One can expect the results (or the methods) of this article have applications in the study of $p$-adic $L$-functions.
\subsection*{Sketch of the proof}
\noindent We give a sketch of the proof of Theorem \ref{thm: lgln-lg}. It is sufficient to prove the second part of the theorem. Let $i\in \Delta$, and $\psi\in \cL(D)_i$, the uniqueness of the $\cL$-invariants (i.e. the ``only if" part) follows from the existence (i.e. the ``if" part) and the global triangulation theory. We sketch how to obtain a non-zero map $\tilde{\Sigma}_i(\alpha,\ul{\lambda}, \psi)\ra \widehat{\Pi}(\rho_L)$ if $0\neq \psi\in \cL(D)_i$.
\\

\noindent We first prove a result on triangulation deformations of $\rho_L$. Let $\fX_{\rho_L}^{\tri}$ denote the rigid space over $E$ parameterizing trianguline deformations of $\rho_L$ (note also that since $N^{n-1}\neq 0$, $\rho_L$ admits a unique triangulation). We have a natural tangent map $d\kappa: \fX_{\rho_L}^{\tri}(E[\epsilon]/\epsilon^2) \ra \Hom(T(L), E)$. For $i\in \Delta$, denote by $\kappa_i: \Hom(T(L), E)\ra \Hom(L^{\times}, E)$, $(\psi_j)_{j=1,\cdots, n}\mapsto \psi_i-\psi_{i+1}$. The following proposition is of interest in it own right (cf. \S~\ref{sec: triDef}).
\begin{proposition}\label{thm: lgln-tridef}
  $\fX_{\rho_L}^{\tri}$ is formally smooth of dimension $1+\frac{n(n+1)}{2} d_L$, and the composition $\kappa_i\circ d \kappa$ factors through a surjective map $\fX_{\rho_L}^{\tri}(E[\epsilon]/\epsilon^2)\twoheadrightarrow \cL(\rho_L)_i$.
\end{proposition}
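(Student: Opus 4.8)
The plan is to rephrase both assertions in terms of the $(\varphi,\Gamma)$-cohomology of $D:=D_{\rig}(\rho_L)$ and to reduce them to one cohomological vanishing and one cup‑product computation. Write $0=D_0\subset D_1\subset\cdots\subset D_n=D$ for the (unique) triangulation, $D_j/D_{j-1}\cong\cR_E(\delta_j)$, so that $D_i^{i+1}:=D_{i+1}/D_{i-1}$ is the rank‑two semi‑stable non‑crystalline sub‑quotient and its extension class $[D_i^{i+1}]\in\Ext^1_{(\varphi,\Gamma)}(\cR_E(\delta_{i+1}),\cR_E(\delta_i))$ is non‑zero for every $i\in\Delta$. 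Since $N^{n-1}\neq0$, the monodromy operator is regular nilpotent on $D_{\st}(\rho_L)$ and the $\varphi^{f_L}$‑eigenvalues $\alpha,q_L\alpha,\dots,q_L^{n-1}\alpha$ are pairwise distinct, whence $\End_{\Gal_L}(\rho_L)=E$. Let $\fp\subset\End(D)$ be the $(\varphi,\Gamma)$‑submodule of endomorphisms preserving the triangulation and $\fn\subset\fp$ the submodule of those killing $\gr D$, so $\fp/\fn\cong\bigoplus_{j=1}^n\cR_E$ and $\fn$ carries a finite filtration with graded pieces $\cR_E(\delta_i\delta_j^{-1})$, $i<j$, its distance‑one layer being $\bigoplus_{i\in\Delta}\cR_E(\delta_i\delta_{i+1}^{-1})$. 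Using the full faithfulness of $\rho\mapsto D_{\rig}(\rho)$ on Artinian coefficients together with the vanishing $\hH^0_{(\varphi,\Gamma)}(\End(D)/\fp)=0$ (so that triangulations of trianguline deformations remain unique), I would first identify the trianguline deformation functor of $\rho_L$ with the deformation functor of the flag $(D,D_\bullet)$, whose tangent space is $\hH^1_{(\varphi,\Gamma)}(\fp)$ and whose obstructions lie in $\hH^2_{(\varphi,\Gamma)}(\fp)$.

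For part (i), the key point is $\hH^2_{(\varphi,\Gamma)}(\fp)=0$. Because $\delta_i\delta_j^{-1}=\varepsilon^{j-i}\prod_\sigma\sigma^{k_\sigma}$ with $k_\sigma\geq0$ for $i<j$, one has $\hH^2_{(\varphi,\Gamma)}(\cR_E(\delta_i\delta_j^{-1}))\cong E$ if $j=i+1$ and $=0$ otherwise, and $\hH^2_{(\varphi,\Gamma)}(\cR_E)=0$; running these through the two filtrations gives $\hH^2_{(\varphi,\Gamma)}(\fn)\cong\bigoplus_{i\in\Delta}\hH^2_{(\varphi,\Gamma)}(\cR_E(\delta_i\delta_{i+1}^{-1}))$ and exhibits $\hH^2_{(\varphi,\Gamma)}(\fp)$ as the cokernel of the connecting map $\partial\colon\hH^1_{(\varphi,\Gamma)}(\fp/\fn)=\bigoplus_{j=1}^n\Hom(L^\times,E)\to\hH^2_{(\varphi,\Gamma)}(\fn)$ of $0\to\fn\to\fp\to\fp/\fn\to0$. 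I would then identify, by inspecting this extension of $(\varphi,\Gamma)$‑modules, the $i$‑th component of $\partial$ with (a sign times) the Tate cup product $(\psi_j)_j\mapsto\langle[D_i^{i+1}],\psi_i-\psi_{i+1}\rangle$ of the pairing (\ref{equ: lgln-perf0}); since $[D_i^{i+1}]\neq0$ and (\ref{equ: lgln-perf0}) is perfect, each component is onto $E$, distinct $i$'s hit distinct summands, so $\partial$ is surjective and $\hH^2_{(\varphi,\Gamma)}(\fp)=0$. Formal smoothness follows, and the Euler–Poincaré formula gives $\dim_E\hH^1_{(\varphi,\Gamma)}(\fp)=\dim_E\hH^0_{(\varphi,\Gamma)}(\fp)-\chi(\fp)=1+\tfrac{n(n+1)}{2}d_L$, using $\hH^0_{(\varphi,\Gamma)}(\fp)=E$ and $\chi(\fp)=\sum_{1\le i\le j\le n}\chi(\cR_E(\delta_i\delta_j^{-1}))=-\tfrac{n(n+1)}{2}d_L$.

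For part (ii), note that under these identifications $d\kappa$ is the natural map $\hH^1_{(\varphi,\Gamma)}(\fp)\to\hH^1_{(\varphi,\Gamma)}(\fp/\fn)=\bigoplus_j\Hom(L^\times,E)$ recording the deformations $\psi_j$ of the graded pieces, so $\kappa_i\circ d\kappa$ sends a class to $\psi_i-\psi_{i+1}$. For the "factors through'' part: given a trianguline deformation $(D_\epsilon,D_{\epsilon,\bullet})$, the rank‑two sub‑quotient $D_{\epsilon,i+1}/D_{\epsilon,i-1}$ is a trianguline deformation of $D_i^{i+1}$; the obstruction to the existence of such a deformation with parameter $(\delta_i(1+\epsilon\psi_i),\delta_{i+1}(1+\epsilon\psi_{i+1}))$—equivalently, to lifting $[D_i^{i+1}]$ along the $E[\epsilon]/\epsilon^2$‑thickening of $\cR_E(\delta_i\delta_{i+1}^{-1})$ twisted by that parameter—lies in $\hH^2_{(\varphi,\Gamma)}(\cR_E(\delta_i\delta_{i+1}^{-1}))\cong E$ and equals $\langle[D_i^{i+1}],\psi_i-\psi_{i+1}\rangle$, which must therefore vanish, forcing $\psi_i-\psi_{i+1}\in\cL(\rho_L)_i$. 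For surjectivity, I would fix $0\neq\psi\in\cL(\rho_L)_i$, set $\psi_1=\cdots=\psi_i=0$ and $\psi_{i+1}=\cdots=\psi_n=-\psi$ (so $\psi_j-\psi_{j+1}\in\cL(\rho_L)_j$ for every $j$: it is $0$ for $j\neq i$ and $\psi$ for $j=i$), and build a trianguline deformation by induction on $j$: given a trianguline deformation of $D_j$ with graded pieces $\cR_E(\delta_l(1+\epsilon\psi_l))$, the obstruction to extending it to one of $D_{j+1}$ with new parameter $\delta_{j+1}(1+\epsilon\psi_{j+1})$ lies in a group reducing to $\hH^2_{(\varphi,\Gamma)}(\cR_E(\delta_j\delta_{j+1}^{-1}))\cong E$ and equals $\langle[D_j^{j+1}],\psi_j-\psi_{j+1}\rangle=0$—a number depending only on $[D_j^{j+1}]$ and on $(\psi_j,\psi_{j+1})$, not on the other data already constructed. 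The resulting point of $\fX_{\rho_L}^{\tri}(E[\epsilon]/\epsilon^2)$ maps to $\psi_i-\psi_{i+1}=\psi$, giving surjectivity onto $\cL(\rho_L)_i$.

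The main obstacle will be the bookkeeping in the two cup‑product identifications: recognising the connecting map of $0\to\fn\to\fp\to\fp/\fn\to0$, and the successive obstruction classes in the inductive construction, as the Tate pairing (\ref{equ: lgln-perf0}) evaluated against $[D_i^{i+1}]$, which requires tracking extension classes carefully through the filtration of $\fp$ and through the $E[\epsilon]/\epsilon^2$‑thickenings; this is exactly where the hypothesis $N^{n-1}\neq0$ (making every $[D_i^{i+1}]$ non‑zero, i.e.\ every distance‑one layer a non‑split extension) enters decisively. The supporting ingredients—pro‑representability via Schlessinger's criteria, the comparison between deformations of $\rho_L$ and of $D_{\rig}(\rho_L)$, uniqueness of triangulations in families, and the identification of tangent and obstruction spaces with $\hH^1_{(\varphi,\Gamma)}(\fp)$ and $\hH^2_{(\varphi,\Gamma)}(\fp)$—are by now standard and can be quoted.
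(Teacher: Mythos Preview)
Your proposal is correct. For part~(ii) it is essentially the paper's argument: the ``factors through'' assertion is the Colmez--Greenberg--Stevens formula (Theorem~\ref{thm: CGS}), and surjectivity is the inductive construction of Proposition~\ref{prop: lgln-trd}, including the same reduction of the step-$j$ obstruction to $\hH^2_{(\varphi,\Gamma)}(\cR_E(\delta_j\delta_{j+1}^{-1}))$ via the vanishing of $\hH^2$ on the lower layers.

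For part~(i) you and the paper both identify the obstruction space with $\hH^2_{(\varphi,\Gamma)}(\fp)$ for $\fp=\End_\cF(D)$, but prove its vanishing via different filtrations. The paper filters $\fp$ by ``columns'': the successive quotients are $\Fil^iD\otimes\cR_E(\delta_i^{-1})$ for $i=1,\dots,n$, and each has $\hH^2=0$ by Tate duality $\hH^2\cong\hH^0(\text{dual})^\vee$ together with a short direct argument that $\hH^0$ of the dual vanishes (this is where non-splitness of the rank-two subquotients enters). No connecting map needs to be identified. Your route through $0\to\fn\to\fp\to\fp/\fn\to0$ also works, but it requires the cup-product identification $\partial\colon(\psi_j)\mapsto(\pm\langle[D_i^{i+1}],\psi_i-\psi_{i+1}\rangle)_i$ that you rightly flag as the main obstacle; in particular one must verify that the $(i,j)$-component of the extension class of $\fp/\fn_{\geq 2}$ over $\fp/\fn$ vanishes whenever $j\notin\{i,i+1\}$, which is true but needs a small computation. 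The payoff of your approach is conceptual: it makes transparent that $\hH^2(\fp)=0$ is \emph{equivalent}, via the perfectness of the pairing~(\ref{equ: lgln-perf0}), to every $[D_i^{i+1}]$ being non-zero, so parts~(i) and~(ii) become visibly the same cup-product statement. The paper's column filtration is more economical but obscures this link.
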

\noindent We unwind a little  on the construction of $\widehat{\Pi}(\rho_L)$. As in \cite{CEGGPS}, we have a continuous Banach representation $\Pi_{\infty}$ of $\GL_n(L)$ which is  equipped with a continuous action of certain patched Galois deformation ring $R_{\infty}$ commuting with the $\GL_n(L)$-action. By \cite{BHS1}, one can then construct the so-called patched eigenvariety $X_p(\overline{\rho})$ (where $\overline{\rho}$ is a certain global residue Galois representation) from the Jacquet-Emerton module $J_B(\Pi_{\infty}^{R_{\infty}-\an})$ (where ``$(\cdot)^{R_{\infty}-\an}$" denotes the locally $R_{\infty}$-analytic vectors, cf. \cite[\S~3.1]{BHS1}). We have a natural embedding (by construction) $X_p(\overline{\rho}) \hookrightarrow \widehat{T} \times (\Spf R_{\infty})^{\rig}$, and hence a natural morphism $\omega: X_p(\overline{\rho}) \ra \widehat{T}$, where $\widehat{T}$ denotes the rigid space over $E$ parametrizing continuous characters of $T(L)$.
Assume that we can associate to $\rho_L$ a classical point $x=(\chi, \fm_x)\in X_p(\overline{\rho})$ with $\fm_x$ the corresponding maximal ideal of $R_{\infty}[1/p]$ (and $\chi$ a locally algebraic character of $T(\Q_p)$ attached to $\rho_L$, see \S~\ref{sec:lg} for details), and we put $\widehat{\Pi}(\rho_L):=\Pi_{\infty}[\fm_x]$, the $E$-vector subspace of vectors annihilated by $\fm_x$. Consider the tangent map of $\omega$ at $x$:
\begin{equation*}
  d \omega_x: \cT_{X_p(\overline{\rho}), x} \lra \cT_{\widehat{T}, \omega(x)}\cong \Hom(T(L),E).
\end{equation*}
By a study of the geometry of $X_p(\overline{\rho})$ at $x$ (similarly as in \cite[\S~4]{BHS2}, but much easier), we can deduce from  Proposition \ref{thm: lgln-tridef}:
\begin{corollary}\label{cor: lgln-keyL} The composition $\kappa_i \circ d\omega_x$ factors through a projection of $E$-vector spaces $\cT_{X_p(\overline{\rho}), x} \twoheadrightarrow \cL(\rho_L)_i$.
\end{corollary}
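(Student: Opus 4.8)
The plan is to derive Corollary \ref{cor: lgln-keyL} from Proposition \ref{thm: lgln-tridef} by a light analysis of the local geometry of $X_p(\overline{\rho})$ at $x$, in the spirit of \cite[\S~4]{BHS2}; the crucial simplification is that $N^{n-1}\neq 0$ forces $\rho_L$ to have a unique triangulation, so there are no companion points and no combinatorics of refinements to keep track of. First I would produce, on a neighbourhood of $x$ in $X_p(\overline{\rho})$, a natural morphism $\iota$ to $\fX_{\rho_L}^{\tri}$ compatible with $\omega$ and with the canonical map $\kappa\colon\fX_{\rho_L}^{\tri}\to\widehat{T}$ whose tangent map is $d\kappa$. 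Indeed, via $X_p(\overline{\rho})\hookrightarrow\widehat{T}\times(\Spf R_{\infty})^{\rig}$, a point of $X_p(\overline{\rho})$ close to $x$ carries a deformation of $\rho_L$ (the restriction to the decomposition group at $p$ of the tautological deformation on $(\Spf R_{\infty})^{\rig}$) together with a continuous character of $T(L)$, and by the global triangulation theory underlying the construction of \cite{BHS1} this deformation is trianguline with triangulation parameter the $\widehat{T}$-component, hence defines a point of $\fX_{\rho_L}^{\tri}$ mapping under $\kappa$ to that component. Thus $\omega$ coincides, up to a fixed character of $T(L)$, with $\kappa\circ\iota$ near $x$, so on tangent spaces $d\omega_x=d\kappa\circ d\iota_x$ with $d\iota_x\colon\cT_{X_p(\overline{\rho}),x}\to\cT_{\fX_{\rho_L}^{\tri},\tilde x}$, where $\tilde x$ is the point of $\fX_{\rho_L}^{\tri}$ attached to $x$. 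Composing with $\kappa_i$ and invoking Proposition \ref{thm: lgln-tridef} (which gives $\Ima(\kappa_i\circ d\kappa)=\cL(\rho_L)_i$), we obtain $\kappa_i\circ d\omega_x=(\kappa_i\circ d\kappa)\circ d\iota_x$, so $\Ima(\kappa_i\circ d\omega_x)\subseteq\cL(\rho_L)_i$; it then remains only to prove that $d\iota_x$ is surjective.

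For the surjectivity of $d\iota_x$ I would unwind the construction of $X_p(\overline{\rho})$ out of $\Pi_{\infty}$ and $R_{\infty}$. In the patched setting of \cite{CEGGPS} (under the Taylor--Wiles hypothesis), $R_{\infty}$ is formally smooth over the framed local deformation ring $R_{p}^{\square}$ of $\rho_L$ at $p$, the relevant framed deformation rings at the Taylor--Wiles and auxiliary places being regular. Moreover the condition defining $X_p(\overline{\rho})$ inside $\widehat{T}\times(\Spf R_{\infty})^{\rig}$ near $x$ --- namely that the local-at-$p$ representation be trianguline with parameter the $\widehat{T}$-coordinate --- is precisely the condition cutting out the framed trianguline deformation space of $\rho_L$. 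Combined with the formal smoothness of $R_{\infty}$ over $R_{p}^{\square}$, this exhibits $\widehat{\co}_{X_p(\overline{\rho}),x}$ as a quotient of a power series ring $P$ over $\widehat{\co}_{\fX_{\rho_L}^{\tri},\tilde x}$, the quotient map being compatible with the ring map induced by $\iota$, and the extra formal variables of $P$ (the framings at $p$ and the auxiliary and Taylor--Wiles variables) not moving the $\widehat{T}$-coordinate. By Proposition \ref{thm: lgln-tridef}, $\widehat{\co}_{\fX_{\rho_L}^{\tri},\tilde x}$, hence also $P$, is a domain; and $\dim_x X_p(\overline{\rho})=\dim P$ by the equidimensionality and dimension count for the patched eigenvariety, which rest on the Cohen--Macaulayness of the patched module attached to $\Pi_{\infty}$ over $R_{\infty}[[\GL_n(\co_L)]]$ (\cite{CEGGPS}, \cite{BHS1}). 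Hence the defining ideal of $P\twoheadrightarrow\widehat{\co}_{X_p(\overline{\rho}),x}$ is $0$, so $\widehat{\co}_{X_p(\overline{\rho}),x}\cong P$ is formally smooth over $\widehat{\co}_{\fX_{\rho_L}^{\tri},\tilde x}$; in particular $d\iota_x$ is surjective. With the first paragraph this gives $\Ima(\kappa_i\circ d\omega_x)=\cL(\rho_L)_i$, which is the assertion of the corollary.

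The hard part is the input used in the second paragraph: identifying $X_p(\overline{\rho})$, locally at $x$, with (a union of irreducible components of) the fibre product of the framed trianguline deformation space of $\rho_L$ with $(\Spf R_{\infty})^{\rig}$ over $(\Spf R_{p}^{\square})^{\rig}$, and matching its dimension at $x$. This requires the construction of $X_p(\overline{\rho})$ from the Jacquet--Emerton module $J_B(\Pi_{\infty}^{R_{\infty}-\an})$, the Cohen--Macaulayness of the patched module over $R_{\infty}[[\GL_n(\co_L)]]$, and the fact that along all of $X_p(\overline{\rho})$ the local-at-$p$ Galois representation is trianguline with the tautological parameter --- so that the image of $\iota$ really lies in the trianguline locus and not merely in $\widehat{T}\times(\Spf R_{p}^{\square})^{\rig}$. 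Granting these, the remaining work --- tracking the framing, auxiliary and Taylor--Wiles variables and checking they lie in $\Ker d\omega_x$ --- is routine, precisely because the uniqueness of the triangulation of $\rho_L$ removes all the subtleties of \cite[\S~4]{BHS2}.
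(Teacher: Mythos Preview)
Your proposal is correct and follows essentially the same strategy as the paper: factor the tangent map through the trianguline deformation space, then use a dimension count (relying on the equidimensionality of the patched eigenvariety and the formal smoothness in Proposition~\ref{thm: lgln-tridef}) to force surjectivity, and finally invoke Proposition~\ref{thm: lgln-tridef} to land in $\cL(\rho_L)_i$.

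The only organisational difference is the intermediary object. The paper does not work directly with the formal deformation space $\fX_{\rho_L}^{\tri}$; instead it routes through the \emph{trianguline variety} $X_{\tri}^{\square}(\overline{r})$ of \cite{BHS1}. The key structural input is their theorem that $X_{\wp}(\overline{\rho})$ embeds as a union of irreducible components of $\fX_{\overline{\rho}^{\wp}}^{\square}\times\bU^{g}\times\iota_L^{-1}(X_{\tri}^{\square}(\overline{r}))$; this product decomposition immediately reduces the problem to the analogous statement for an irreducible component $X$ of $X_{\tri}^{\square}(\overline{r})$ through $x_{\wp}=(\rho_L,\delta)$. One then proves (this is the paper's Proposition~\ref{prop: lgln-tv}) that $X$ is smooth at $x_{\wp}$ and that the tangent map $T_{X,x_{\wp}}\to F_{D,\cF}(E[\epsilon]/\epsilon^2)$ is \emph{onto}: the image lies in $F_{D,\cF}(E[\epsilon]/\epsilon^2)$ by global triangulation (using the accumulation property of very classical points), and equality follows since $\dim_E T_{X,x_{\wp}}\le n^2+\tfrac{n(n+1)}{2}d_L=\dim X$. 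Your argument packages the same inequality as ``$\dim_x X_p(\overline{\rho})=\dim P$ forces the surjection $P\twoheadrightarrow\widehat{\co}_{X_p(\overline{\rho}),x}$ to be an isomorphism''; the paper's packaging has the advantage that the product structure from \cite{BHS1} is already available off the shelf, so one need not reassemble the framed and Taylor--Wiles variables by hand.
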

\noindent By the corollary, there exists $\Psi\in \Ima d\omega_x$ with $\kappa_i(\Psi)=\psi$. Together with the construction of $X_p(\overline{\rho})$, one obtains an injection of $T(L)$-representations
\begin{equation}\label{equ: lgln-LinJB}
  \chi \otimes_E 1_{\Psi} \hooklongrightarrow J_B(\Pi_{\infty}^{R_{\infty}-\an})\{\fm_x\}
\end{equation}
where $\{\fm_x\}$ denotes the generalized eigenspace, i.e. the subspace of the vectors annihilated by a certain power of $\fm_x$, and where $1_{\Psi}$ denotes the extension of two trivial characters attached to $\Psi$. The non-critical assumption on $\rho_L$ then implies that (\ref{equ: lgln-LinJB}) is \emph{balanced} in the sense of \cite[Def. 0.8]{Em2}. We can then apply Emerton's adjunction formula (cf. \cite[Thm. 0.13]{Em2}, and we adopt the notation of \emph{loc. cit.}) to get a non-zero morphism
\begin{equation}\label{equ: lgln-Adj0}
  I_{\overline{B}}^{\GL_n}\big(\chi\delta_B^{-1}  \otimes_E 1_{\Psi}\big) \lra \Pi_{\infty}^{R_{\infty}-\an}\{\fm_x\}.
\end{equation}
By some locally analytic representation theory (and unwinding the bijection in (\ref{equ: lgln-LB})), one can actually show that (\ref{equ: lgln-Adj0}) induces a  non-zero morphism
\begin{equation*}
  \tilde{\Sigma}_i(\alpha, \ul{\lambda}, \psi) \lra  \Pi_{\infty}^{R_{\infty}-\an}[\fm_x].
\end{equation*}

\noindent We finally remark that the global context that we are working in is not important for the above arguments. Actually a key point above in finding the Fontaine-Mazur $\cL$-invariants in the space of $p$-adic automorphic representations is Corollary \ref{cor: lgln-keyL}, which one can expect to hold in more general setting (e.g. with $X_p(\overline{\rho})$ replaced by the classical eigenvarieties, see Remark \ref{rem: eig}, \ref{rem: lgln-lg} (ii)). We refer to the body of the text for more detailed and more precise statements.

\subsection*{Acknowledgement}
\noindent I would like to thank Christophe Breuil, Florian Herzig, Ruochuan Liu and Benjamin Schraen for answering my questions  or helpful remarks. This work was supported by EPSRC grant EP/L025485/1 and by Grant No. 8102600240 from B.I.C.M.R.. I also thank the referee for the careful reading and helpful comments.

\subsection{Notation}
\label{sec: lgln-LB-not}
\noindent Let $L$ (resp. $E$) be a finite extension of $\Q_p$ with $\co_L$ (resp. $\co_E$) its ring of integers and $\varpi_L$ (resp. $\varpi_E$) a uniformizer. Suppose $E$ is sufficiently large containing all the embeddings of $L$ in $\overline{\Q_p}$. Put
\begin{equation*}
  \Sigma_L:=\{\sigma: L\hookrightarrow \overline{\Q_p}\} =\{\sigma: L\hookrightarrow E\}.
\end{equation*}
Let $\val_L(\cdot)$ (resp. $\val_p$) be the $p$-adic valuation on $\overline{\Q_p}$ normalized by sending uniformizers of $\co_L$ (resp. of $\Z_p$) to $1$. Let $d_L:=[L:\Q_p]=|\Sigma_L|$, $e_L:=\val_L(\varpi_L)$ and $f_L:=d_L/e_L$. We have $q_L:=p^{f_L}=|\co_L/\varpi_L|$.
\\

\noindent
For a Lie algebra $\ug$ over $L$, $\sigma\in \Sigma_L$, let $\ug_{\sigma}:=\ug\otimes_{L,\sigma} E$ (which is  a Lie algebra over $E$). For $J\subseteq \Sigma_L$, let $\ug_J:=\prod_{\sigma\in J} \ug_{\sigma}$. In particular, we have $\ug_{\Sigma_L}\cong \ug\otimes_{\Q_p} E$.
\\

\noindent
Let $\Delta$ be the set of simple roots of $\GL_n$ (with respect to the Borel subgroup $B$ of upper triangular matrices), and we identify the set $\Delta$ with $\{1,\cdots, n-1\}$ such that $i\in \{1,\cdots, n-1\}$ corresponds to the simple root $(x_1,\cdots, x_n)\in \ft \mapsto x_i-x_{i+1}$, where $\ft$ denotes the Lie algebra of the torus $T$ of diagonal matrices.
To $I\subset \Delta$, we can associate a parabolic subgroup $P_I$ of $\GL_n$ containing $B$ such that $I$ is equal to the set of simple roots of $L_I$, the unique Levi subgroup of $P_I$ containing $T$ \big(thus $P_{\Delta}=\GL_n$, $P_{\emptyset}=B$\big).
Let $N_I$ be the nilpotent radical of $P_I$,  $Z_I$ the center of $L_I$ and $D_I$ the derived subgroup of $L_I$. Let $\overline{P}_I$ be the parabolic subgroup opposite to $P_I$, and $\overline{N}_I$ be the nilpotent radical of $\overline{P}_I$. Note that $L_I$ is also the unique Levi subgroup of $\overline{P}_I$ containing $T$. Let $N:=N_{\emptyset}$, $\overline{N}:=\overline{N}_{\emptyset}$. Let $\ug$, $\fp_I$, $\fn_I$, $\fl_I$, $\overline{\fp}_I$, $\overline{\fn}_I$, $\ft$ be the Lie algebra over $L$ of $\GL_n$, $P_I$, $N_I$, $L_I$, $\overline{P}_I$, $\overline{N}_I$, $T$ respectively, and let $\ub:=\fp_{\emptyset}$, $\overline{\ub}:=\overline{\fp}_{\emptyset}$,  $\fn:=\fn_{\emptyset}$ and $\overline{\fn}:=\overline{\fn}_{\emptyset}$.
\\

\noindent
Let $\ul{\lambda}:=(\lambda_{1,\sigma}, \cdots, \lambda_{n,\sigma})_{\sigma\in \Sigma_L}$ be a weight of $\ft_{\Sigma_L}$ . For $I\subseteq \Delta=\{1,\cdots, n-1\}$, we call that $\ul{\lambda}$ is $I$-dominant with respect to $B(L)$ (resp. with respect to $\overline{B}(L)$) if $\lambda_{i,\sigma}\geq \lambda_{i+1,\sigma}$ \big(resp. $\lambda_{i,\sigma} \leq \lambda_{i+1,\sigma}$\big) for all $i\in I$ and $\sigma\in \Sigma_L$. We denote by $X_{I}^+$ (resp. $X_{I}^-$) the set of $I$-dominant integral weights  of $\ft_{S}$ with respect to $B(L)$ (resp. with respect to $\overline{B}(L)$). Note that $\ul{\lambda}_S\in X_{I}^+$ if and only if $-\ul{\lambda}\in X_{I}^-$. For $\ul{\lambda}\in X_I^+$, there exists a unique irreducible algebraic representation, denoted by $L(\ul{\lambda})_I$, of $\Res_{\Q_p}^L L_I$ with highest weight $\ul{\lambda}$, i.e. $L(\ul{\lambda})_I^{N_I(L)}$ is one dimensional on which $\ft_{\Sigma_L}$ acts via $\ul{\lambda}$. Denote $\chi_{\ul{\lambda}}:=L(\ul{\lambda})_{\emptyset}$. If $\ul{\lambda}\in X_{\Delta}^+$, let $L(\ul{\lambda}):=L(\ul{\lambda})_{\Delta}$.  Let $I\subseteq J\subseteq \Delta$, and suppose $\ul{\lambda}\in X_J^+$, then we have $L(\ul{\lambda})_J^{N_I(L) \cap L_J(L)} \cong L(\ul{\lambda})_I$.
\\

\noindent Let $\ul{\lambda}$ be an integral weight, denote by $M(\ul{\lambda}):=\text{U}(\ug_{\Sigma_L})\otimes_{\text{U}(\ub_{\Sigma_L})} \ul{\lambda}$ (resp. $\overline{M}(\ul{\lambda}):=\text{U}(\ug_{\Sigma_L})\otimes_{\text{U}(\overline{\ub}_{\Sigma_L})} \ul{\lambda}$), and let $L(\ul{\lambda})$ (resp. $\overline{L}(\ul{\lambda})$) be the unique simple quotient of $M(\ul{\lambda})$ (resp. of $\overline{M}(\ul{\lambda})$). Actually, when $\ul{\lambda}\in X_{\Delta}^+$, $L(\ul{\lambda})$ is finite dimensional and  isomorphic to the algebraic representation $L(\ul{\lambda})$ introduced above (hence there is no conflict of notation). If moreover $\ul{\lambda}\in X_{\Delta}^+$ (i.e. $-\ul{\lambda}\in X_{\Delta}^-$) we have
\begin{equation*}
  \overline{L}(-\ul{\lambda})\cong L(\ul{\lambda})^{\vee}.
\end{equation*}


\noindent Denote by $\sW$ ($\cong S_n$) the Weyl group of $\GL_n$, $s_i$ the simple reflection corresponding to $i\in \Delta$. Let  $\sW_{\Sigma_L}\cong S_n^{d_L}$ be the Weyl group of $\Res^L_{\Q_p} \GL_n$. For $\sigma\in L$, let $\sW_{\sigma}$ be the $\sigma$-th factor of $\sW_{\Sigma_L}$, which is isomorphic to $\sW$, and let $s_{i,\sigma}\in \sW_{\sigma}$ be the simple reflection corresponding to $i$. For $S\subseteq \Sigma_L$, denote by $\sW_S:=\prod_{\sigma\in S} \sW_{\sigma}$.
\\

\noindent
Let $A$ be an affinoid $E$-algebra. A locally $\Q_p$-analytic character $\delta: L^{\times} \ra A^{\times}$ induces a $\Q_p$-linear map
\begin{equation*}
    L \lra A, \ x\mapsto \frac{d}{dt}\delta(\exp(tx))|_{t=0},
\end{equation*}
and hence induces an $E$-linear map
\begin{equation*}L\otimes_{\Q_p} E\cong \prod_{\sigma\in \Sigma_L} E \lra A.
\end{equation*}
Thus there exist $\wt(\delta)_{\sigma}\in A$ for all $\sigma\in \Sigma_L$, called the $\sigma$-weight of $\delta$, such that the above map is given by $(a_{\sigma})_{\sigma\in \Sigma_L}\mapsto \sum_{\sigma\in \Sigma_L} a_\sigma \wt(\delta)_{\sigma}$. And we call  $\wt(\delta):=(\wt(\delta)_{\sigma})_{\sigma\in \Sigma_L}$ the \emph{weight} of $\delta$. For a weight $\ul{\lambda}$ of $\ft_{\Sigma_L}$ over $E$, we see $\chi_{\ul{\lambda}}$ is just the algebraic character of $T(L)$ of weight $\ul{\lambda}$.
\\

\noindent For a topological commutative group $M$, we denote by $\Hom(M,E)$ the $E$-vector space of  continuous additive $E$-valued characters on $M$. If $M$ is a locally $L$-analytic group, denote by $\Hom_{\sigma}(M,E)$ the $E$-vector space of locally $\sigma$-analytic characters on $M$ (i.e. the continuous characters, which are moreover locally $\sigma$-analytic as $E$-valued functions on $M$).
\\

\noindent We normalize the local class field theory by sending a uniformizer  to a (lift of the) geometric Frobenius. In this way, we obtain a bijection
\begin{equation*}
  \Hom(L^{\times}, E) \cong \Hom(\Gal_L,E).
\end{equation*}
Denote by $\varepsilon$ the cyclotomic character of $\Gal_L$ over $E$.
\\

\noindent
If $A$ is a commutative ring, $M$ an $A$ module and $I$ an ideal of $A$, we denote by $M[I]\subseteq M$ the $A$-submodule of elements killed by $I$ and by $M\{I\}:=\cup_{n\geq 1}M[I^n]$.\\

\noindent For simplicity, we let $G=\GL_n(L)$, and $Z$ be the center of $\GL_n(L)$. When $I$ is a singleton $\{i\}$, we also use $i$ to denote $I$ for subscripts (e.g. $P_i=P_{\{i\}}$).
\addtocontents{toc}{\protect\setcounter{tocdepth}{2}}
\section{Breuil's simple $\cL$-invariants}\label{sec: lgln-LB}
\noindent In this section, we study the extensions of some locally analytic generalized Steinberg representations. We show that such extensions can be parameterized by  Breuil's simple $\cL$-invariants.
\subsection{Preliminaries}\noindent We recall some useful notation and statements on locally analytic representations.

\subsubsection{Locally analytic representations}
 \noindent Let $H$ be a strictly paracompact locally $\Q_p$-analytic group (where strictly paracompact means that any open covering of $H$ can be refined into a covering by pairwise disjoint open subsets), and denote by $\cD(H,E)$ the locally convex $E$-algebra of $E$-valued distributions on $H$ (cf. \cite[Prop. 2.3]{ST02}). Let $\cM(H)$ denote the abelian category of (abstract) $\cD(H,E)$-modules. Note that one can embed $H$ into $\cD(H,E)$ by sending $h\in H$ to $\delta_h:=[f\mapsto f(h)]$ with $f\in \cC^{\an}(H,E)$, where $\cC^{\an}(H,E)$ denotes the $E$-vector spaces of $E$-values locally analytic functions on $H$.
\\

\noindent Let $V$ be a locally $(\Q_p$-)analytic representation of $H$ over $E$ (cf. \cite[\S~3]{ST02}), the continuous dual $V^{\vee}$ is naturally equipped with a $\cD(H,E)$-module structure given by
\begin{equation}\label{equ: lgln-duala}
  (\mu \cdot w)(v)=\mu([g\mapsto w(g^{-1}v)])
\end{equation}
for $\mu\in \cD(H,E)$, $w\in V^{\vee}$ and $v\in V$. Note that the function $[g\mapsto w(g^{-1}v)]$ on $H$ lies in $\cC^{\Q_p-\an}(H,E)$, so the action in (\ref{equ: lgln-duala}) is well defined.
\\

\noindent Let $V$, $W$ be locally $\Q_p$-analytic representations of $H$ over $E$. As in \cite[D\'ef. 3.1]{Sch11}, we put
\begin{equation}\label{equ: lgln-ext}
  \Ext^r_{H}(V,W):=\Ext^r_{\cM(H)}(W^{\vee}, V^{\vee}),
\end{equation}
where the latter denotes the $r$-th extension group in the abelian category $\cM(H)$. Let $Z'$ be a locally $\Q_p$-analytic closed subgroup of the center $Z$ of $H$, and $\chi$ be a locally $\Q_p$-analytic character of $Z'$, we denote by $\cM(H)_{Z',\chi}$ the full subcategory of $\cM(H)$ of $\cD(H,E)$-modules on which $Z'$ acts by $\chi^{-1}$ \big(where the $Z'$-action is induced by $Z'\hookrightarrow H\hookrightarrow \cD(H,E)$\big). If $Z'$ acts on both $V$ and $W$ via the character $\chi'$, we put
\begin{equation*}
  \Ext^r_{H,Z'=\chi}(V,W):=\Ext^r_{\cM(H)_{Z',\chi}}(W^{\vee},V^{\vee})
\end{equation*}
where the latter denotes the $r$-th extension group in $\cM(H)_{Z',\chi}$. In the case where $Z'=Z$, we denote $\cM(H)_{\chi}:=\cM(H)_{Z,\chi}$, and $\Ext^r_{H,\chi}(V,W):=\Ext^r_{H,Z=\chi}(V,W)$. In the case where $H/Z'$ is also a strictly paracompact locally $\Q_p$-analytic group, and $H\cong H/Z'\times Z'$ (as $\Q_p$-analytic manifolds), we have a natural equivalence of categories between  $\cM(H)_{Z',1}$ and $\cM(H/Z')$. Hence in this case we have
\begin{equation*}
  \Ext^r_{H,Z'=1}(V,W)\cong \Ext^r_{H/Z'}(V,W).
\end{equation*}
\begin{proposition}[$\text{\cite[Cor. 3.3]{Sch11}}$]\label{prop: gpcoho}
Let $V$ be a finite dimensional locally $\Q_p$-analytic representation of $H$ over $E$, then we have
\begin{equation*}
  \Ext^r_H(1,V)\cong \hH^r_{\an}(H,V)
\end{equation*}
where $\hH^r_{\an}(H,V)$ denotes the $r$-th locally analytic group cohomology defined by Casselman and Wigner (\cite{CaWi}).
\end{proposition}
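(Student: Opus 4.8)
The plan is to unwind the definition (\ref{equ: lgln-ext}), so that the assertion becomes $\Ext^r_{\cM(H)}(V^\vee,E)\cong\hH^r_{\an}(H,V)$, where $E=1^\vee$ denotes $E$ with the trivial $\cD(H,E)$-module structure coming from the augmentation $\cD(H,E)\to E$, $\delta_h\mapsto 1$; both sides are covariant functors of $V$. First I would dispose of the base case $r=0$: any $\cD(H,E)$-linear map $V^\vee\to E$ kills the augmentation ideal $\fa$ (generated by the $\delta_h-1$), hence factors through $(V^\vee)_H=V^\vee/\fa V^\vee$, so for finite-dimensional $V$ one gets $\Hom_{\cM(H)}(V^\vee,E)=((V^\vee)_H)^\vee=V^H=\hH^0_{\an}(H,V)$. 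Next I would record that both sides are cohomological $\delta$-functors in $V$: since $V\mapsto V^\vee$ is exact and contravariant on finite-dimensional (more generally reflexive) representations, a short exact sequence of such representations dualizes to one in $\cM(H)$, and the long exact $\Ext^\bullet_{\cM(H)}(-,E)$ sequence makes $\{\Ext^r_H(1,-)\}_r$ a $\delta$-functor; on the Casselman--Wigner side, an $E$-linear section of a surjection $V\twoheadrightarrow V''$ turns the cochain spaces $\cC^{\an}(H^\bullet,-)$ into a short exact sequence of complexes, giving the $\delta$-functor $\{\hH^r_{\an}(H,-)\}_r$.

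To compare the two $\delta$-functors I would enlarge the source to a suitable category $\mathcal A$ of locally $\Q_p$-analytic representations of $H$ (for instance those on spaces of compact type, so that duals, completed tensor products and the Casselman--Wigner complex are all well behaved); both $\delta$-functors extend to $\mathcal A$, and it suffices to prove that both are effaceable there, since a universal cohomological $\delta$-functor is determined by its degree-zero part. Given $V\in\mathcal A$ I would efface using the co-induced representation $\cC^{\an}(H,V)$ (with $H$ acting by right translation in the argument) together with the $H$-equivariant embedding $V\hookrightarrow\cC^{\an}(H,V)$, $v\mapsto(h\mapsto hv)$. On the Casselman--Wigner side, Shapiro's lemma gives $\hH^r_{\an}(H,\cC^{\an}(H,V))\cong\hH^r_{\an}(\{1\},V)=0$ for $r>0$. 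On the $\Ext$ side, the contragredient action (\ref{equ: lgln-duala}) identifies $\cC^{\an}(H,V)^\vee$ with the completed tensor product $\cD(H,E)\otimes_E V^\vee$, on which $\cD(H,E)$ acts through an antipode-twist of its left regular module structure while $V^\vee$ stays inert; in particular, \emph{when $V$ is finite-dimensional} this is the genuinely free module $\cD(H,E)^{\oplus\dim V}$, so $\Ext^r_H(1,\cC^{\an}(H,V))=\Ext^r_{\cM(H)}(\cD(H,E)^{\oplus\dim V},E)=0$ for $r>0$ with no analytic subtlety. Combining these, $\Ext^r_H(1,-)$ and $\hH^r_{\an}(H,-)$ are universal cohomological $\delta$-functors on $\mathcal A$ with the same degree-zero part, hence canonically isomorphic, which restricted to finite-dimensional $V$ gives the theorem. (One can also present this as a bare dimension-shift induction on $r$, putting $Q:=\cC^{\an}(H,V)/V$, the case $r=1$ being $\mathrm{coker}(\cC^{\an}(H,V)^H\to Q^H)$ on both sides.)

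The one genuinely delicate step is the acyclicity of $\cC^{\an}(H,V)^\vee$ \emph{for arbitrary $V\in\mathcal A$} --- which is forced on us because the effacement of a finite-dimensional $V$ is itself infinite-dimensional, so one cannot stay within finite-dimensional representations (already for $H=\Z_p$ the trivial representation has no finite-dimensional effacement, since a finite-dimensional $W$ on which the pro-generator acts invertibly has $W^H=0$). For infinite-dimensional $V^\vee$ the module $\cD(H,E)\otimes_E V^\vee$ is a genuine \emph{completed} tensor product, and a completed direct sum of copies of $\cD(H,E)$ need not be projective in the abstract module category $\cM(H)$; so one must instead check directly that it is $\Hom_{\cM(H)}(-,E)$-acyclic, e.g. by realizing the bar-type complex $\cdots\to\cD(H^2,E)\to\cD(H,E)\to E\to0$ as the continuous dual of an explicitly null-homotopic complex of spaces of compact type (which keeps it exact and $\Hom_{\cM(H)}(-,E)$-acyclic), after applying the change of coordinates $(g,h)\mapsto(g,g^{-1}h)$ on $H\times H$ that carries the diagonal action to the action on the first factor alone. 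This is precisely where the hypothesis that $H$ is strictly paracompact and the good topology of $\cC^{\an}(H^\bullet,E)$ enter. An alternative route isolating the same crux: the exact duality $M\mapsto\Hom_E(M,E)$ on $\cM(H)$ twisted by the antipode sends this bar resolution of $E$ to an injective resolution and satisfies $\Hom_{\cM(H)}(M,E)\cong\Hom_{\cM(H)}(E,M^\vee)$, whence $\Ext^r_{\cM(H)}(V^\vee,E)\cong\Ext^r_{\cM(H)}(E,V)$ for finite-dimensional $V$, and the bar resolution then computes the latter as $H^r(\cC^{\an}(H^\bullet,V))=\hH^r_{\an}(H,V)$.
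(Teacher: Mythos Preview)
The paper itself gives no proof of this proposition; it is quoted verbatim from \cite[Cor.~3.3]{Sch11}. So there is nothing in the paper to compare against, and your proposal should be read as an attempted proof of Schraen's result.

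Your outline is the right one, and you correctly isolate the only non-formal point: effacing a finite-dimensional $V$ via $V\hookrightarrow\cC^{\an}(H,V)$ forces you to leave the finite-dimensional world, so the induction (or the universal-$\delta$-functor argument) needs acyclicity of $(\cC^{\an}(H,W))^\vee\cong\cD(H,E)\widehat\otimes_E W^\vee$ in $\cM(H)$ for \emph{infinite}-dimensional $W$. Your dimension-shift works cleanly for $r=1$ (there the dual is the genuinely free $\cD(H,E)^{\dim V}$), but for $r\ge 2$ one must apply the inductive hypothesis to the infinite-dimensional cokernel $Q$, and this is exactly where your argument is not closed.

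Neither of your two suggested fixes actually resolves it. Saying the bar complex is the continuous dual of an explicitly null-homotopic complex gives only an $E$-linear contracting homotopy; that makes the complex exact but says nothing about the vanishing of $\Ext^{>0}_{\cM(H)}(\cD(H,E)\widehat\otimes W^\vee,-)$, since the homotopy is not $\cD(H,E)$-linear. In the alternative route, the reduction $\Ext^r_{\cM(H)}(V^\vee,E)\cong\Ext^r_{\cM(H)}(E,V)$ for finite-dimensional $V$ is fine, but you then still need the terms $\cD(H^{n+1},E)=\cD(H,E)\widehat\otimes\cD(H^n,E)$ of the bar resolution to be projective in the \emph{abstract} category $\cM(H)$, or their algebraic $E$-duals to be injective; neither is a formal consequence of what you have written, and the algebraic dual of a completed tensor product is not the object you want. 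What is missing is precisely the structural input Schraen uses (the Fr\'echet--Stein description of $\cD(H_0,E)$ for a compact open $H_0\subset H$ and the resulting finite-length projective resolution of the trivial module, cf.\ also \cite{Koh2011}); without importing something of that strength, the step you yourself flag remains a genuine gap.
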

\noindent By \cite[Lem. 2.1.1]{Br16} and the proof, we have
\begin{lemma}\label{lem: ext0}
Let $V$ and $W$ be admissible locally $\Q_p$-analytic representations of $H$ over $E$, and let $M\in \Ext^1_{\cM(H)}(W^{\vee}, V^{\vee})$. Then $M$ is a coadmissible $\cD(H_0, E)$-module for a certain uniform pro-$p$ open compact subgroup $H_0$ of $H$. Hence $M^{\vee}$ (equipped with the strong topology) is an admissible locally $\Q_p$-analytic representation of $H$.
\end{lemma}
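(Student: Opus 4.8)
The plan is to reduce to a uniform pro-$p$ open compact subgroup of $H$, recognise $M$ as a coadmissible module there, and then invoke the Schneider--Teitelbaum anti-equivalence. First I would fix a uniform pro-$p$ open compact subgroup $H_0\subseteq H$ (one always exists). Restricting scalars along $\cD(H_0,E)\hookrightarrow \cD(H,E)$, the exact sequence $0\to V^{\vee}\to M\to W^{\vee}\to 0$ becomes an extension of $\cD(H_0,E)$-modules, and since $V$ and $W$ (hence $V|_{H_0}$ and $W|_{H_0}$) are admissible locally $\Q_p$-analytic, the modules $V^{\vee}$ and $W^{\vee}$ are coadmissible over $\cD(H_0,E)$. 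Recall that $\cD(H_0,E)$ is a (two-sided) Fréchet--Stein algebra: $\cD(H_0,E)\cong\varprojlim_n D_n$ with the $D_n$ Noetherian Banach $E$-algebras, transition maps $D_{n+1}\to D_n$ flat with dense image; a $\cD(H_0,E)$-module $N$ is coadmissible when $N_n:=D_n\otimes_{\cD(H_0,E)}N$ is finitely generated over $D_n$ for every $n$ and $N\to\varprojlim_n N_n$ is an isomorphism.

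The core step is to show that $M$ is itself coadmissible over $\cD(H_0,E)$. Put $M_n:=D_n\otimes_{\cD(H_0,E)}M$. Since $W^{\vee}$ is coadmissible one has $\Tor_1^{\cD(H_0,E)}(D_n,W^{\vee})=0$ (a basic property of coadmissible modules over a Fréchet--Stein algebra), so tensoring with $D_n$ preserves exactness:
\begin{equation*}
0\lra (V^{\vee})_n\lra M_n\lra (W^{\vee})_n\lra 0.
\end{equation*}
Hence $M_n$ is finitely generated over the Noetherian ring $D_n$, being an extension of two finitely generated modules. Passing to inverse limits over $n$ and using that $\varprojlim_n (V^{\vee})_n\cong V^{\vee}$, $\varprojlim_n (W^{\vee})_n\cong W^{\vee}$ and that the $\varprojlim^1$ of the system $(V^{\vee})_n$ vanishes, the exact sequence of inverse limits gives $0\to V^{\vee}\to\varprojlim_n M_n\to W^{\vee}\to 0$, which fits into a commutative diagram with $0\to V^{\vee}\to M\to W^{\vee}\to 0$ along the canonical map $M\to\varprojlim_n M_n$. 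The five lemma forces $M\xlongrightarrow{\sim}\varprojlim_n M_n$, so $M$ is coadmissible over $\cD(H_0,E)$.

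Finally, being coadmissible over $\cD(H_0,E)$, the module $M$ is a coadmissible $\cD(H,E)$-module, coadmissibility of a $\cD(H,E)$-module being by definition detected after restriction to $\cD(H_0,E)$ and independent of the choice of $H_0$. By the Schneider--Teitelbaum anti-equivalence between coadmissible $\cD(H,E)$-modules and admissible locally $\Q_p$-analytic representations of $H$, the module $M$ acquires its canonical Fréchet topology and $M^{\vee}$, with the strong dual topology, is an admissible locally $\Q_p$-analytic representation of $H$ whose associated $\cD(H,E)$-module is $M$. This proves both assertions.

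The main obstacle is the middle step, i.e. the stability of coadmissible modules under extension. This is the only place where genuine input is needed: the vanishing of $\Tor_{>0}^{\cD(H_0,E)}(D_n,-)$ on coadmissible modules and the vanishing of $\varprojlim^1$ for the coherent systems attached to coadmissible modules — both furnished by the theory of Fréchet--Stein algebras, and this is in essence the content of \cite[Lem. 2.1.1]{Br16} and its proof. Everything else is dévissage, together with the standard fact that admissibility and local analyticity of a representation can be checked on an open compact subgroup.
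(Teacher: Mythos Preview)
Your proposal is correct and is precisely the argument the paper defers to: the paper's proof consists entirely of the citation ``By \cite[Lem.~2.1.1]{Br16} and the proof'', and what you have written is an accurate unpacking of that lemma and its proof (reduction to a uniform pro-$p$ subgroup, closure of coadmissible modules under extensions via the Fr\'echet--Stein structure, then the Schneider--Teitelbaum anti-equivalence). There is nothing to add.
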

\noindent Keep the notation and assumption of Lemma \ref{lem: ext0}, by equivalence of categories (\cite[Thm. 6.3]{ST03}), we see that  $\Ext^1_H(V,W)$ consists of admissible locally analytic representations which are extensions of $V$ by $W$. If moreover, $Z'$ acts on both $V$ and $W$ via $\chi$, then $\Ext^1_{H,Z'=\chi}(V,W)$ consists of extensions of $V$ by $W$ on which $Z'$ acts via  $\chi$. \\

\noindent
Let $\cD^{\infty}(H,E)$ denote the strong continuous dual of the $E$-vector space (equipped with the finest locally convex topology) of locally constant $E$-valued functions on $H$, which is in fact the quotient algebra of $\cD(H,E)$ by the closed ideal generated by the Lie algebra of $H$. The map $h\mapsto \delta_h$ induces in fact an injection $H\hookrightarrow \cD^{\infty}(H,E)$. Let $\cM(H)^{\infty}$ denote the category of abstract $\cD^{\infty}(H,E)$-modules, which is thus a full subcategory of $\cM(H)$. For a locally analytic representation $V$ of $H$, it is easy to see $V^{\vee}\in \cM(H)^{\infty}$ if and only if the $H$-action on $V$ is smooth.
For a locally $\Q_p$-analytic closed subgroup $Z'\subseteq Z$, let $\chi$ be a smooth character of $Z'$,  we denote by $\cM(H)^{\infty}_{Z',\chi}$ the category of $\cD^{\infty}(H,E)$-modules on which $Z'$ acts via $\chi^{-1}$, and $\cM(H)^{\infty}_{\chi}:=\cM(H)^{\infty}_{Z,\chi}$. Let $\Rep^{\infty}(H)$ \big(resp. $\Rep^{\infty}_{Z'=\chi}(H)$\big) denote the category of smooth representations of $H$ over $E$ (resp. smooth representations of $H$ over $E$ on which $Z'$ acts via the character $\chi$). We have the following functor
\begin{equation}\label{lgln: fcsmooth}
 \Rep^{\infty}(H) \ra \cM(H)^{\infty}, V\mapsto V^{\vee}
\end{equation}
where $V^{\vee}$ denotes the abstract dual of $V$, which coincides with the continuous dual of $V$ (as $\cD^{\infty}(H,E)$-modules) if $V$ is equipped with the finest locally convex topology, and where the $\cD^{\infty}(H,E)$-action on $V^{\vee}$ is given by the same way as in (\ref{equ: lgln-duala}). This functor is fully faithful. By \cite[Cor. 0.2]{Sch11bis}, we have
\begin{proposition}\label{prop: smExtd}
  Let $V$, $W \in \Rep^{\infty}(H)$ be admissible representations. Then the functor in (\ref{lgln: fcsmooth}) induces natural bijections
  \begin{equation*}
    \Ext^r_{\Rep^{\infty}(H)}(V,W) \cong \Ext^r_{\cM^{\infty}(H)}(W^{\vee}, V^{\vee}), \ \forall r\in \Z_{\geq 0}.
  \end{equation*}
\end{proposition}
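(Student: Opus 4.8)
The plan is to convert a suitable injective resolution on the smooth side into a projective resolution on the module side and then read off the comparison from full faithfulness of the duality functor $F:=(-)^{\vee}$ of (\ref{lgln: fcsmooth}). The soft inputs are: $F$ is contravariant and exact — on underlying $E$-vector spaces it is linear duality, which is exact, and the $\cD^{\infty}(H,E)$-module structure (the analogue of (\ref{equ: lgln-duala})) depends functorially on the input; and $F$ is fully faithful, which is exactly the degree $r=0$ statement, already recorded after (\ref{lgln: fcsmooth}). So the content lies in degrees $r\geq 1$, where I would run the usual argument with resolutions, the point being to exhibit enough injectives of $\Rep^{\infty}(H)$ whose images under $F$ are projective in $\cM(H)^{\infty}$.

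Concretely, I would try to show that an admissible $W\in\Rep^{\infty}(H)$ admits an injective resolution $W\hookrightarrow J^{\bullet}$ in $\Rep^{\infty}(H)$ with each $F(J^{j})=(J^{j})^{\vee}$ projective in $\cM(H)^{\infty}$. The clean case is $H$ compact: there $F$ restricts to a Pontryagin-type anti-equivalence between $\Rep^{\infty}(H)$ and the category of pseudocompact $\cD^{\infty}(H,E)$-modules, under which admissible representations correspond to the pseudocompact modules $M$ with $M/\fm_{n}M$ finite-dimensional for all $n$ ($\fm_{n}$ the kernel of $\cD^{\infty}(H,E)\twoheadrightarrow E[H/H_{n}]$ for a cofinal system of open normal $H_{n}\trianglelefteq H$). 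One resolves such $M$ in pseudocompact modules by (completed) free modules; the prototype is the injective cogenerator $\cC^{\infty}(H,E)=\Ind_{\{1\}}^{H}(E)$ of $\Rep^{\infty}(H)$, whose dual is $\cD^{\infty}(H,E)$ itself, free of rank one, hence projective in $\cM(H)^{\infty}$. For general $H$ one reduces to this case by the induction formalism: fix a compact open $H_{0}\leq H$ (restriction along which is exact and preserves admissibility), use that smooth induction $\Ind_{H_{0}}^{H}$ and compact induction are exact and adjoint to restriction, and check that $F$ intertwines these with the analogous functors on $\cD^{\infty}(H_{0},E)$- and $\cD^{\infty}(H,E)$-modules, which preserve projectivity.

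Granting this, applying $F$ to $W\hookrightarrow J^{\bullet}$ yields, by exactness and contravariance, a projective resolution $F(J^{\bullet})\twoheadrightarrow W^{\vee}$ in $\cM(H)^{\infty}$, so that
\begin{equation*}
\Ext^{r}_{\cM^{\infty}(H)}(W^{\vee},V^{\vee})=\hH^{r}\big(\Hom_{\cM(H)^{\infty}}(F(J^{\bullet}),V^{\vee})\big)
\quad\text{and}\quad
\Ext^{r}_{\Rep^{\infty}(H)}(V,W)=\hH^{r}\big(\Hom_{H}(V,J^{\bullet})\big),
\end{equation*}
and full faithfulness of $F$ identifies the two cochain complexes, naturally in $V$ and $W$; passing to cohomology gives the bijections, independence of the resolution being the usual comparison argument. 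The main obstacle is the projectivity half of the claim: duality of smooth representations lands among pseudocompact modules, where projectivity is transparent, but pseudocompact-projectivity does \emph{not} imply projectivity in the category $\cM(H)^{\infty}$ of all abstract $\cD^{\infty}(H,E)$-modules (an infinite-rank completed free module need not be an abstract direct summand of a free one), so the resolution must be chosen carefully — this is exactly where admissibility of $W$ is indispensable. In the special case $V=1$ everything collapses to the smooth analogue of Proposition \ref{prop: gpcoho}, both sides becoming locally constant (Casselman--Wigner type) cohomology of $H$; the complete argument is \cite[Cor. 0.2]{Sch11bis}.
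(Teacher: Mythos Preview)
The paper does not actually prove this proposition: it simply records it as a consequence of \cite[Cor.~0.2]{Sch11bis} and moves on. Your proposal ends at exactly the same citation, so in that sense you and the paper agree; what you have added is a sketch of the mechanism behind Schraen's argument (duality exchanging suitable injective resolutions in $\Rep^{\infty}(H)$ for projective resolutions in $\cM(H)^{\infty}$), together with an honest flag on the one genuinely delicate point, namely that pseudocompact-projective need not mean abstract-projective and that admissibility of $W$ is what lets one arrange resolutions avoiding this pitfall. That diagnosis is correct and is precisely the issue the erratum \cite{Sch11bis} addresses, so your write-up is a faithful expansion of what the paper leaves implicit.
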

\begin{remark}
Let $Z'$ be a locally $\Q_p$-analytic closed subgroup of $Z$ such that $H\cong H/Z'\times Z'$ as $\Q_p$-analytic manifolds, and let $\chi$ be a smooth character of $Z'$ over $E$ such that $\chi$ can lift to a smooth character of $H$. We have an equivalence of categories $\Rep_{Z'=\chi}^{\infty}(H)\xrightarrow{\sim}\Rep^{\infty}(H/Z')$, $V\mapsto V\otimes_E \chi$, which induces $\cM^{\infty}(H)_{Z',\chi}\cong \cM^{\infty}(H/Z')$. Let $V$, $W\in \Rep^{\infty}_{Z'=\chi}(H)$ be admissible, by the above discussion and Proposition \ref{prop: smExtd}, we  deduce
  \begin{equation}\label{equ: extsm}
    \Ext^r_{\Rep^{\infty}_{Z'=\chi}(H)}(V,W)\cong \Ext^r_{\cM^{\infty}(H)_{Z',\chi}}(W^{\vee},V^{\vee}), \ \forall r\in \Z_{\geq 0}.
  \end{equation}
\end{remark}

\subsubsection{Locally analytic generalized Steinberg representations}
\noindent Let $\ul{\lambda}\in X_{\Delta}^+$, and $I\subseteq \Delta$. We put (recall $G=\GL_n(L)$)
\begin{eqnarray*}
    \bI_{\overline{P}_I}^G(\ul{\lambda})&:=& \big(\Ind_{\overline{P}_I(L)}^G L(\ul{\lambda})_I\big)^{\Q_p-\an},\\
    i_{\overline{P}_I}^G(\ul{\lambda})&:=&\big(\Ind_{\overline{P}_I(L)}^G 1\big)^{\infty} \otimes_E L(\ul{\lambda}). \\
  \end{eqnarray*}
  Note that by \cite{OS}, $i_{\overline{P}_I}^G(\ul{\lambda})$ is actually the locally algebraic subrepresentation of $\bI_{\overline{P}_I}^G(\ul{\lambda})$. If $J\supseteq I$, we have natural injections $\bI_{\overline{P}_J}^G(\ul{\lambda}) \hookrightarrow \bI_{\overline{P}_I}^G(\ul{\lambda})$ and  $i_{\overline{P}_J}^G(\ul{\lambda}) \hookrightarrow i_{\overline{P}_I}^G(\ul{\lambda})$. Put
  \begin{eqnarray*}
    v_{\overline{P}_I}^{\an}(\ul{\lambda})&:=&\bI_{\overline{P}_I}^G(\ul{\lambda})/\big(\sum_{J\supsetneq I} \bI_{\overline{P}_J}^G(\ul{\lambda})\big),\\
    v_{\overline{P}_I}^{\infty}(\ul{\lambda})&:=&i_{\overline{P}_I}^G(\ul{\lambda})/\big(\sum_{J\supsetneq I} i_{\overline{P}_J}^G(\ul{\lambda})\big).
  \end{eqnarray*}
We put $\St_n^{\an}(\ul{\lambda}):=v_{\overline{B}}^{\an}(\ul{\lambda})$,  $\St_n^{\infty}(\ul{\lambda}):=v_{\overline{B}}^{\infty}(\ul{\lambda})$, $\bI_{\overline{P}_I}^G:=\bI_{\overline{P}_I}^G(\ul{0})$, $i_{\overline{P}_I}^G:=i_{\overline{P}_I}^G(\ul{0})$, $v_{\overline{P}_I}^{\an}:=v_{\overline{P}_I}^{\an}(\ul{0})$, and $v_{\overline{P}_I}^{\infty}:=v_{\overline{P}_I}^{\infty}(\ul{0})$. For $\alpha\in E^{\times}$, put $\bI_{\overline{P}_I}^G(\alpha, \ul{\lambda}):=\bI_{\overline{P}_I}^G(\ul{\lambda})\otimes_E \unr(\alpha)\circ \dett$ where $\unr(\alpha): L^{\times} \ra E^{\times}$ is an unramified character sending uniformizers to $\alpha$. We define representations $i_{\overline{P}_I}^G(\alpha, \ul{\lambda})$, $v_{\overline{P}_I}^{\an}(\alpha, \ul{\lambda})$, $v_{\overline{P}_I}^{\infty}(\alpha, \ul{\lambda})$, $\St_n^{\an}(\alpha, \ul{\lambda})$ in a similar way.
The following theorem can be easily deduced from the classical result for smooth representations, by tensoring with $L(\ul{\lambda})$.
\begin{theorem}
 Let $\ul{\lambda}\in X_{\Delta}^+$, $I\subset \Delta$, we have a natural exact sequence of locally algebraic representations of $G$:
\begin{equation}\label{equ: lgln-lalgSt}
  0 \ra i_G^G(\ul{\lambda}) \ra \bigoplus_{\substack{I\subset K\subset \Delta \\|\Delta\setminus K|=1}}i_{\overline{P}_K}^G(\ul{\lambda}) \ra \bigoplus_{\substack{I\subset K\subset \Delta \\|\Delta\setminus K|=2}}i_{\overline{P}_K}^G(\ul{\lambda}) \ra
   \cdots \ra \bigoplus_{\substack{I\subset K\subset \Delta \\|K\setminus I|=1}}i_{\overline{P}_K}^G(\ul{\lambda}) \ra i_{\overline{P}_I}^G(\ul{\lambda}) \ra v_{\overline{P}_I}^{\infty}(\ul{\lambda}) \ra 0.
\end{equation}
\end{theorem}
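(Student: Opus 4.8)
The plan is to deduce the analytic exact sequence directly from the known smooth one by tensoring with the irreducible algebraic representation $L(\ul{\lambda})$, using the fact that $i_{\overline{P}_K}^G(\ul{\lambda}) = (\Ind_{\overline{P}_K(L)}^G 1)^{\infty}\otimes_E L(\ul{\lambda})$ and $v_{\overline{P}_I}^{\infty}(\ul{\lambda})$ are defined precisely this way. So first I would recall the classical combinatorial exact sequence for smooth generalized Steinberg representations: for $I\subseteq \Delta$ there is an exact sequence
\begin{equation*}
0 \ra i_G^G \ra \bigoplus_{\substack{I\subset K\subset \Delta \\ |\Delta\setminus K|=1}} i_{\overline{P}_K}^G \ra \cdots \ra \bigoplus_{\substack{I\subset K\subset \Delta \\ |K\setminus I|=1}} i_{\overline{P}_K}^G \ra i_{\overline{P}_I}^G \ra v_{\overline{P}_I}^{\infty} \ra 0
\end{equation*}
of smooth representations (this is the standard resolution, going back to the realization of generalized Steinberg representations via the Tits building / Solomon--Tits; see e.g. the references used elsewhere in the paper). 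The indexing runs over the poset of parabolics between $\overline{P}_I$ and $G$, with the $r$-th term the sum over $K$ with $|K\setminus I| = (\text{corank in the appropriate sense})$, and the differentials the alternating sums of the natural inclusions $i_{\overline{P}_K}^G \hookrightarrow i_{\overline{P}_{K'}}^G$ for $K\supseteq K'$.

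Next I would observe that $L(\ul{\lambda})$ is a finite-dimensional $E$-vector space, so the functor $(-)\otimes_E L(\ul{\lambda})$ is exact on the category of $E$-vector spaces, hence exactness of the smooth sequence is preserved termwise. The only point requiring a word is that this functor, applied to smooth $G$-representations with the diagonal $G$-action, sends $i_{\overline{P}_K}^G = (\Ind_{\overline{P}_K(L)}^G 1)^{\infty}\otimes_E L(\ul{\lambda})$ to exactly the representation denoted $i_{\overline{P}_K}^G(\ul{\lambda})$, and $v_{\overline{P}_I}^{\infty}$ to $v_{\overline{P}_I}^{\infty}(\ul{\lambda})$ --- this is immediate from the definitions, since $v_{\overline{P}_I}^{\infty}(\ul{\lambda}) = i_{\overline{P}_I}^G(\ul{\lambda})/(\sum_{J\supsetneq I} i_{\overline{P}_J}^G(\ul{\lambda}))$ and tensoring with $L(\ul{\lambda})$ commutes with the finite sums and quotients defining these objects (again because $L(\ul{\lambda})$ is flat, being a finite-dimensional vector space). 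I would also note that the $G$-equivariance of the differentials is preserved since we tensor with a fixed $G$-representation and use the diagonal action throughout.

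Finally I would check that the resulting maps are $G$-equivariant and that the sequence so obtained is the one asserted in the statement, with the correct indexing of terms by subsets $K$ with $I\subset K\subset \Delta$ and the correct differentials; this is purely bookkeeping. I do not expect any genuine obstacle here: the content is entirely in the classical smooth statement, and the analytic (really, locally algebraic) version is a formal consequence of exactness of $(-)\otimes_E L(\ul{\lambda})$ together with the definitions of $i_{\overline{P}_I}^G(\ul{\lambda})$ and $v_{\overline{P}_I}^{\infty}(\ul{\lambda})$. The one place to be slightly careful --- and the closest thing to a ``main point'' --- is making sure the smooth exact sequence is correctly stated with its indexing and signs, in particular that the alternating-sum differentials actually form a complex with the claimed homology (exactness); but this is exactly the classical result being invoked, so in the write-up I would simply cite it and then perform the tensoring. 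Hence the proof reduces to: (1) state the smooth exact sequence; (2) apply the exact functor $(-)\otimes_E L(\ul{\lambda})$; (3) identify the terms via the definitions. $\hfill\qed$
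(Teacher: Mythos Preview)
Your proposal is correct and follows exactly the approach the paper takes: the paper simply states that the theorem ``can be easily deduced from the classical result for smooth representations, by tensoring with $L(\ul{\lambda})$,'' which is precisely your plan of invoking the smooth exact sequence and applying the exact functor $(-)\otimes_E L(\ul{\lambda})$.
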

\noindent We also have an analogue in the locally analytic setting:
\begin{theorem}[$\text{\cite[Thm. 4.2]{OSch}}$] Let $\ul{\lambda}\in X_{\Delta}^+$, $I\subset \Delta$, we have a natural exact sequence of locally $\Q_p$-analytic representations of $G$:
\begin{equation}\label{equ: lgln-lanSt}
  0 \ra \bI_G^G(\ul{\lambda}) \ra \bigoplus_{\substack{I\subset K\subset \Delta \\|\Delta\setminus K|=1}}\bI_{\overline{P}_K}^G(\ul{\lambda}) \ra \bigoplus_{\substack{I\subset K\subset \Delta \\|\Delta\setminus K|=2}}\bI_{\overline{P}_K}^G(\ul{\lambda}) \ra
   \cdots \ra \bigoplus_{\substack{I\subset K\subset \Delta \\|K\setminus I|=1}}\bI_{\overline{P}_K}^G(\ul{\lambda}) \ra \bI_{\overline{P}_I}^G(\ul{\lambda}) \ra v_{\overline{P}_I}^{\an}(\ul{\lambda}) \ra 0.
\end{equation}
\end{theorem}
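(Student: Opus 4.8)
The statement is \cite[Thm.~4.2]{OSch}, and its locally algebraic analogue (\ref{equ: lgln-lalgSt}) is already in hand, so I sketch how I would recover (\ref{equ: lgln-lanSt}). The plan is to realize (\ref{equ: lgln-lanSt}) as the termwise image, under a single exact contravariant functor, of an exact complex of generalized Verma modules in the BGG category $\mathcal{O}$, so that the exactness of (\ref{equ: lgln-lanSt}) becomes a purely Lie-algebraic assertion. The functor is the Orlik--Strauch functor $\mathcal{F}_{B}^{G}(-,\mathbf{1})$ (cf. \cite{OS}), which is exact and contravariant on the relevant (finitely generated, integral) subcategory of $\mathcal{O}$ and takes values in admissible locally $\Q_p$-analytic representations of $G$.

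First I would, for each $K$ with $I\subseteq K\subseteq\Delta$, identify $\bI_{\overline{P}_K}^{G}(\ul{\lambda})$ with the value $\mathcal{F}_{B}^{G}(\overline{M}_K,\mathbf{1})$ of that functor on the generalized Verma module $\overline{M}_K$ attached to the pair $(\ug_{\Sigma_L},\fp_{K,\Sigma_L})$ and (the contragredient of) $L(\ul{\lambda})_K$ — the passage to the opposite parabolic $\overline{P}_K$ and the dualization being precisely the content of the standard dictionary between locally analytic parabolic inductions and $\mathcal{F}$; here $\overline{M}_{\Delta}$ is the finite-dimensional module with $\mathcal{F}_{B}^{G}(\overline{M}_{\Delta},\mathbf{1})=\bI_G^G(\ul{\lambda})=L(\ul{\lambda})$, and one checks the values of $\mathcal{F}$ for the varying parabolics are all compatible because $\overline{M}_K$ is locally $\fp_K$-finite. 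Under this dictionary the canonical injections $\bI_{\overline{P}_{K'}}^{G}(\ul{\lambda})\hookrightarrow\bI_{\overline{P}_K}^{G}(\ul{\lambda})$ for $K\subseteq K'$ correspond, by contravariance, to the canonical surjections $\overline{M}_K\twoheadrightarrow\overline{M}_{K'}$; and since $\sum_{J\supsetneq I}\bI_{\overline{P}_J}^{G}(\ul{\lambda})=\sum_{|K\setminus I|=1}\bI_{\overline{P}_K}^{G}(\ul{\lambda})$, contravariant exactness of $\mathcal{F}$ identifies $v_{\overline{P}_I}^{\an}(\ul{\lambda})$ with $\mathcal{F}_{B}^{G}(C_I,\mathbf{1})$, where $C_I:=\ker\big(\overline{M}_I\to\bigoplus_{|K\setminus I|=1}\overline{M}_K\big)$.

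It then remains to assemble the surjections $\overline{M}_K\twoheadrightarrow\overline{M}_{K'}$ (for $I\subseteq K\subseteq K'\subseteq\Delta$), with the alternating signs indexed by the Boolean lattice of subsets $K$ with $I\subseteq K\subseteq\Delta$, into the complex
\begin{equation*}
0\to C_I\to\overline{M}_I\to\bigoplus_{\substack{I\subseteq K\subseteq\Delta\\|K\setminus I|=1}}\overline{M}_K\to\cdots\to\bigoplus_{\substack{I\subseteq K\subseteq\Delta\\|\Delta\setminus K|=1}}\overline{M}_K\to\overline{M}_{\Delta}\to 0,
\end{equation*}
and to prove that this complex is exact; applying $\mathcal{F}_{B}^{G}(-,\mathbf{1})$ termwise then returns (\ref{equ: lgln-lanSt}) verbatim. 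That it is a complex is immediate from the canonicity of the surjections (the two composites $\overline{M}_K\to\overline{M}_{K''}$ through the two intermediate $\overline{M}_{K'}$ agree and cancel); surjectivity at $\overline{M}_{\Delta}$ and exactness at $\overline{M}_I$ are tautological; the substance is the middle exactness, which is a parabolic Koszul-type resolution in $\mathcal{O}$ (in the spirit of, but not literally, Lepowsky's generalized BGG resolution) and rests on the ``transversality'' of the kernels $\ker(\overline{M}_I\twoheadrightarrow\overline{M}_K)$ inside $\overline{M}_I$ together with the vanishing of the higher \v{C}ech cohomology of the system $\{\overline{M}_K\}_{K\supseteq I}$.

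The main obstacle is precisely this last exactness statement — one would either quote it from the literature on category $\mathcal{O}$, or, in the spirit of the present paper, reduce it (via Lie-algebra homology along $\overline{\fn}_I$, which converts the generalized Verma modules into locally algebraic data) to the already-known smooth sequence (\ref{equ: lgln-lalgSt}) — together with the bookkeeping needed to be sure that $\mathcal{F}_{B}^{G}(-,\mathbf{1})$ carries the Verma complex to (\ref{equ: lgln-lanSt}) on the nose: correct highest weights and duals, correct direction of every arrow, and commutation of $\mathcal{F}$ with the finite direct sums indexed by $K$. Everything else is formal: the exactness and contravariance of $\mathcal{F}$, the admissibility of all the terms, and the anti-equivalence between admissible locally $\Q_p$-analytic representations and coadmissible $\cD(G,E)$-modules (\cite[Thm.~6.3]{ST03}), which one uses to move exactness freely between the representation side and the module side.
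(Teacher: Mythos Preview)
The paper does not supply its own proof of this theorem; it is stated purely as a citation of \cite[Thm.~4.2]{OSch}. Your sketch is essentially the argument of that reference: realize each $\bI_{\overline{P}_K}^G(\ul{\lambda})$ as the image under the exact contravariant Orlik--Strauch functor of the corresponding generalized Verma module, reduce (\ref{equ: lgln-lanSt}) to the exactness of the Boolean-lattice complex of these Verma modules in category $\mathcal{O}$, and observe that the latter is the substantive step (which Orlik--Schraen prove directly, rather than by reduction to the smooth case).
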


\subsubsection{Locally $S$-analytic representations}\noindent Let $V$ be a locally $\Q_p$-analytic representation of $G$ over $E$. Recall that $V$ is equipped with a natural $\Q_p$-linear action of $\ug$ given by
\begin{equation*}
  X(v)=\frac{d}{dt}\exp(tX)v|_{t=0},
\end{equation*}
for $X \in \ug$ and $v\in V$.
This action  extends naturally to an $E$-linear action of $\ug_{\Sigma_L}$ on $V$. For $S\subseteq \Sigma_L$, a vector $v\in V$ is called \emph{locally $S$-analytic}, if the $\ug_{\Sigma_L}$-action on $v$ factors through $\ug_{\Sigma_S}$; $v$ is called $\text{U}(\ug_S)$\emph{-finite}, if the orbit $\text{U}(\ug_S) v$ is finite dimensional over $E$. It is clear that if $v$ is locally $S$-analytic then $v$ is $\text{U}(\ug_{\Sigma_L\setminus S})$-finite. The representation $V$ is called \emph{locally $S$-analytic} (resp.$\text{U}(\ug_S)$\emph{-finite}) if any vector $v\in V$ is locally $S$-analytic (resp. $\text{U}(\ug_S)$-finite).
\\

\noindent Let $V$ be an $E$-vector space of compact type, and let $\cC^{\Q_p-\an}(G,V)$ denote the $E$-vector space of locally $\Q_p$-analytic $V$-valued functions on $G$. This space is equipped with a natural locally convex topology (cf. \cite[\S~2]{ST02}) and with a  natural right regular locally $\Q_p$-analytic action of $G$. The $G$-action on $\cC^{\Q_p-\an}(G,V)$ induces a $\Q_p$-linear action of $\ug$ (hence an $E$-linear action of $\ug_{\Sigma_L}$):
\begin{equation}\label{equ: lgln-right}
  (Xf)(g)=\frac{d}{dt} f(g \exp(tX))|_{t=0},
\end{equation}
for $X\in \ug$, $f\in \cC^{\Q_p-\an}(G,V)$, and $g\in G$. We have another $\Q_p$-linear action of $\ug$ (inducing an $E$-linear action of $\ug_{\Sigma_L}$) on $\cC^{\Q_p-\an}(G,V)$ given by:
\begin{equation}\label{equ: lgln-left}
  (X\cdot f)(g)=\frac{d}{dt} f(\exp(-tX)g )|_{t=0}
\end{equation}
for $X\in \ug$, $f\in \cC^{\Q_p-\an}(G,V)$, and $g\in G$. For $S\subseteq \Sigma_L$, denote by $\cC^{S-\an}(G,V)$ the subspace of locally $S$-analytic $V$-valued functions, i.e. the functions annihilated by all $X_{\Sigma_L\setminus S}\in \ug_{\Sigma_L\setminus S} \hookrightarrow \ug_{\Sigma_L}$ via the action in (\ref{equ: lgln-right}) or equivalently via the action in (\ref{equ: lgln-left}) \big(since $(X\cdot f)(g)=(\Ad_g(X) f)(g)$\big).
\\

\noindent Let $I\subseteq \Delta$, recall that the Orlik-Strauch functor (cf. \cite{OS}, see also \cite[\S~2]{Br13I}) associates, to an object $M$ in the BGG category $\co^{\overline{\fp}_{I,\Sigma_L}}_{\alg}$ together with a finite length smooth admissible representation $\pi$ of $L_I(L)$ a locally $\Q_p$-analytic representation $\cF_{\overline{P}_I}^G(M,\pi)$ of $G$.
\begin{lemma}\label{lem: lgln-Sla}Let $I\subseteq \Delta$, $S\subseteq \Sigma_L$, $W_S$ a finite dimensional algebraic representation of $\overline{\fp}_{I,S}$, and $\pi$ a finite length smooth representation of $L_I(L)$. Then we have a natural isomorphism of locally $\Q_p$-analytic representations of $G$ (note the natural projection $\ug_{\Sigma_L}\twoheadrightarrow \ug_{S}$ induces $\co^{\overline{\fp}_{I,S}}_{\alg}\subseteq \co^{\overline{\fp}_{I,\Sigma_L}}_{\alg}$)
\begin{equation*}
  \cF_{\overline{P}_I}^G\big(\text{U}(\ug_S)\otimes_{\text{U}(\overline{\fp}_{I,S})} W_S, \pi\big)\cong \big(\Ind_{\overline{P}_I(L)}^G W_S^{\vee} \otimes_E \pi\big)^{S-\an}.
\end{equation*}
\end{lemma}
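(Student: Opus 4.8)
The plan is to compute $\cF_{\overline{P}_I}^G(M,\pi)$ for $M:=\text{U}(\ug_S)\otimes_{\text{U}(\overline{\fp}_{I,S})}W_S$ from the defining properties of the Orlik--Strauch functor: that $\cF_{\overline{P}_I}^G(-,\pi)$ is contravariant and exact on $\co^{\overline{\fp}_{I,\Sigma_L}}_{\alg}$, that it sends a generalized Verma module $\text{U}(\ug_{\Sigma_L})\otimes_{\text{U}(\overline{\fp}_{I,\Sigma_L})}W$ to $(\Ind_{\overline{P}_I(L)}^G W^\vee\otimes_E\pi)^{\Q_p-\an}$, and that it sends the morphism of generalized Verma modules induced by a $\overline{\fp}_{I,\Sigma_L}$-equivariant map into $\text{U}(\ug_{\Sigma_L})\otimes_{\text{U}(\overline{\fp}_{I,\Sigma_L})}W$ to the corresponding differential operator between the induced representations (cf. \cite{OS}, \cite[\S~2]{Br13I}). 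So the first task is to present $M$, viewed via $\ug_{\Sigma_L}\twoheadrightarrow\ug_S$ as an object of $\co^{\overline{\fp}_{I,\Sigma_L}}_{\alg}$, as the cokernel of a map of generalized Verma modules. Extending $W_S$ to a representation of $\overline{\fp}_{I,\Sigma_L}=\overline{\fp}_{I,S}\times\overline{\fp}_{I,\Sigma_L\setminus S}$ by letting $\overline{\fp}_{I,\Sigma_L\setminus S}$ act by zero, the natural surjection $\text{U}(\ug_{\Sigma_L})\otimes_{\text{U}(\overline{\fp}_{I,\Sigma_L})}W_S\twoheadrightarrow M$ has kernel the $\ug_{\Sigma_L}$-submodule generated by $\fn_{I,\Sigma_L\setminus S}\otimes W_S$ sitting in degree one of $\text{U}(\ug_{\Sigma_L})\otimes_{\text{U}(\overline{\fp}_{I,\Sigma_L})}W_S\cong\text{U}(\fn_{I,\Sigma_L})\otimes_E W_S$; a direct computation using $[\ug_S,\ug_{\Sigma_L\setminus S}]=0$ and the triviality of the $\overline{\fp}_{I,\Sigma_L\setminus S}$-action on $W_S$ shows that this degree-one subspace is stable under $\overline{\fp}_{I,\Sigma_L}$ and isomorphic, as such, to $\fn_{I,\Sigma_L\setminus S}\otimes_E W_S$ with $\fn_{I,\Sigma_L\setminus S}\cong\ug_{\Sigma_L\setminus S}/\overline{\fp}_{I,\Sigma_L\setminus S}$ carrying the (truncated) adjoint $\overline{\fp}_{I,\Sigma_L\setminus S}$-action and a trivial $\overline{\fp}_{I,S}$-action. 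Hence one obtains an exact sequence $\text{U}(\ug_{\Sigma_L})\otimes_{\text{U}(\overline{\fp}_{I,\Sigma_L})}(\fn_{I,\Sigma_L\setminus S}\otimes W_S)\xrightarrow{d}\text{U}(\ug_{\Sigma_L})\otimes_{\text{U}(\overline{\fp}_{I,\Sigma_L})}W_S\to M\to 0$ in $\co^{\overline{\fp}_{I,\Sigma_L}}_{\alg}$.

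Applying the exact contravariant functor $\cF_{\overline{P}_I}^G(-,\pi)$ to this presentation gives
\begin{equation*}
\cF_{\overline{P}_I}^G(M,\pi)=\ker\Big((\Ind_{\overline{P}_I(L)}^G W_S^\vee\otimes_E\pi)^{\Q_p-\an}\xrightarrow{\ \partial\ }(\Ind_{\overline{P}_I(L)}^G (\fn_{I,\Sigma_L\setminus S})^\vee\otimes_E W_S^\vee\otimes_E\pi)^{\Q_p-\an}\Big),
\end{equation*}
and from the explicit description of $\cF_{\overline{P}_I}^G$ on morphisms, $\partial=\cF_{\overline{P}_I}^G(d,\pi)$ sends $f$ to $g\mapsto\big(Y\mapsto (Y\cdot f)(g)\big)$, where $Y\cdot f$ denotes the action (\ref{equ: lgln-left}) of $Y\in\fn_{I,\Sigma_L\setminus S}\subseteq\ug_{\Sigma_L}$. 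Thus $\cF_{\overline{P}_I}^G(M,\pi)$ is the space of $f$ killed by $\fn_{I,\Sigma_L\setminus S}$ under (\ref{equ: lgln-left}). It remains to upgrade this to being killed by all of $\ug_{\Sigma_L\setminus S}$: since $W_S^\vee\otimes_E\pi$ is a locally $S$-algebraic (hence $\overline{\fp}_{I,\Sigma_L\setminus S}$-trivial) representation of $\overline{P}_I(L)$, the equivariance $f(\overline{p}g)=\overline{p}f(g)$ forces $X\cdot f=0$ for every $X\in\overline{\fp}_{I,\Sigma_L\setminus S}$ already; combining this with the vector-space decomposition $\ug_{\Sigma_L\setminus S}=\overline{\fp}_{I,\Sigma_L\setminus S}\oplus\fn_{I,\Sigma_L\setminus S}$ and the $\Ad(G)$-equivariant identification of the actions (\ref{equ: lgln-left}) and (\ref{equ: lgln-right}) on $\ug_{\Sigma_L\setminus S}$, we get that $\cF_{\overline{P}_I}^G(M,\pi)$ consists exactly of the $f$ annihilated by $\ug_{\Sigma_L\setminus S}$, i.e. the locally $S$-analytic vectors of $(\Ind_{\overline{P}_I(L)}^G W_S^\vee\otimes_E\pi)^{\Q_p-\an}$, which by definition is $(\Ind_{\overline{P}_I(L)}^G W_S^\vee\otimes_E\pi)^{S-\an}$. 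The resulting identification is $G$-equivariant and natural by construction.

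I expect the main technical point to be the identification in the second paragraph of $\partial=\cF_{\overline{P}_I}^G(d,\pi)$ with the honest ``derivative along $\fn_{I,\Sigma_L\setminus S}$'': this requires pinning down the explicit form of the Orlik--Strauch functor on morphisms of generalized Verma modules and carefully tracking the conventions (left versus right infinitesimal action, the $\overline{P}_I$-side action versus the $G$-action). A secondary, more routine, nuisance is the bookkeeping showing that $\fn_{I,\Sigma_L\setminus S}\otimes W_S$ is genuinely a $\overline{\fp}_{I,\Sigma_L}$-subrepresentation of the degree-$\le 1$ part and hence that the two-term presentation of $M$ is exact; one could alternatively sidestep this by realising $M$ directly as a generalized Verma module for the (non-$L$-split) parabolic subalgebra $\overline{\fp}_{I,S}\times\ug_{\Sigma_L\setminus S}$ of $\ug_{\Sigma_L}$, at the cost of first extending the Orlik--Strauch formalism to $\Res^L_{\Q_p}\GL_n$.
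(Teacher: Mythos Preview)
Your proposal is correct and follows essentially the same approach as the paper: both identify $\cF_{\overline{P}_I}^G(M,\pi)$ as the subspace of $(\Ind_{\overline{P}_I(L)}^G W_S^\vee\otimes_E\pi)^{\Q_p-\an}$ annihilated by $\fn_{I,\Sigma_L\setminus S}$ under the left action (\ref{equ: lgln-left}), and then both upgrade this to annihilation by all of $\ug_{\Sigma_L\setminus S}$ using the parabolic equivariance $f(\overline{p}g)=\overline{p}f(g)$ and the local $S$-analyticity of $W_S^\vee\otimes_E\pi$ as a $\overline{P}_I(L)$-module. The only cosmetic difference is packaging: the paper writes down the full kernel $\Ker\phi=\fd_{I,\Sigma_L\setminus S}\otimes_E(\text{U}(\ug_S)\otimes_{\text{U}(\overline{\fp}_{I,S})}W_S)$ and checks annihilation directly from the defining formula (\ref{equ: dotwX}), whereas you present $M$ as a cokernel via the degree-one generators $\fn_{I,\Sigma_L\setminus S}\otimes W_S$ and invoke exactness of $\cF_{\overline{P}_I}^G(-,\pi)$ to obtain the kernel description; your $\partial$ is then exactly the map encoded by (\ref{equ: dotwX}).
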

\begin{proof}
For $S'\subseteq \Sigma_L$, denote by $\fd_{I,S'}$ the kernel of the natural projection $\text{U}(\ug_{S'}) \otimes_{\text{U}(\overline{\fp}_{I,S'})} 1 \twoheadrightarrow 1$ (where the two ``$1$" denote the trivial representation of $\text{U}(\overline{\fp}_{I,S'})$ and $\text{U}(\ug_{S'})$ respectively). We have an exact sequence in $\co^{\overline{\fp}_{I,\Sigma_L}}_{\alg}$ (using $\text{U}(\ug_{\Sigma_L}) \cong \text{U}(\ug_S)\otimes_E \text{U}(\ug_{\Sigma_L\setminus S})$):
\begin{equation*}
 0 \ra \fd_{I,\Sigma_L\setminus S} \otimes_E \big(\text{U}(\ug_S)\otimes_{\text{U}(\overline{\fp}_{I,S})} W_S\big) \ra \text{U}(\ug_{\Sigma_L}) \otimes_{\text{U}(\overline{\fp}_{I,\Sigma_L})} W_S \xrightarrow{\phi} \text{U}(\ug_S)\otimes_{\text{U}(\overline{\fp}_{I,S})} W_S \ra 0.
\end{equation*}
Let $f\in  \big(\Ind_{\overline{P}_I(L)}^G W_S^{\vee} \otimes_E \pi\big)^{\Q_p-\an}$. It suffices to show that $f$ is locally $S$-analytic if and only if $f$ is annihilated by $\Ker \phi$, where the action of $\text{U}(\ug_{\Sigma_L}) \otimes_{\text{U}(\overline{\fp}_{I,\Sigma_L})} W_S$ on $f$ is induced by (see \cite{OS} \cite[\S~2]{Br13I} for details)
\begin{equation}\label{equ: dotwX}
 ( (X\otimes w)\cdot f)(-)=w((X\cdot f)(-))\in \cC^{\Q_p-\an}(G, \pi),
\end{equation}
for $X\otimes w\in \text{U}(\ug_{\Sigma_L}) \otimes_{E} W_S$, $g\in G$.
\\

\noindent
Let $n_1, \cdots, n_k$ be a basis of $\fn_{I,\Sigma_L\setminus S}$ over $E$.
Using the isomorphism $\text{U}(\ug_{\Sigma_L\setminus S}) \cong \text{U}(\fn_{I, \Sigma_L\setminus S})\otimes_E\text{U}(\overline{\fp}_{I,\Sigma_L\setminus S})$, we see $\fd_{I,\Sigma_L\setminus S}$ is the $E$-vector space spanned  by $n_1^{e_1}\cdots n_k^{e_k} \otimes 1$ with $e_i\in \Z_{\geq 0}$ for all $i$, and $\sum_{i=1}^k e_i>0$.  Thus, if $f$ is locally $S$-analytic, then $X\cdot f=0$ for all $X\in \fd_{I,\Sigma_L\setminus S} \otimes_E \text{U}(\ug_{S})$, and one can easily check by (\ref{equ: dotwX}) that  $f$ is annihilated by $\Ker \phi$.  Conversely, if $f$ is annihilated by $\Ker \phi$, we can deduce, using the above description of $\fd_{I,\Sigma_L \setminus S}$ and (\ref{equ: dotwX}), that $n_i\cdot f$=0 for $i=1,\cdots, k$.  However, using $f(\overline{p}g)=\overline{p}f(g)$ for any $\overline{p}\in \overline{P}_I(L)$, $g\in G$, and the fact that the action of $\overline{P}_I(L)$ on $W_S^{\vee}\otimes_E \pi$ is locally $S$-analytic, we deduce $Y\cdot f=0$ for all $Y\in \overline{\fp}_{I,\Sigma_L\setminus S}$. Hence $X\cdot f=0$ for all $X\in \ug_{\Sigma_L\setminus S}\cong \fn_{I,\Sigma_L \setminus S}\oplus \overline{\fp}_{I,\Sigma_L\setminus S}$, i.e. $f$ is locally $S$-analytic.  This concludes the proof.
\end{proof}
\begin{remark}\label{rem: actS}
  By Lemma \ref{lem: lgln-Sla} and the proof, (\ref{equ: dotwX}) induces an action  of $\text{U}(\ug_S)\otimes_{\text{U}(\overline{\fp}_{I,S})} W_S$ on $\big(\Ind_{\overline{P}_I(L)}^G W_S^{\vee} \otimes_E \pi\big)^{S-\an}$ satisfying
  \begin{equation*}
   ( (X_S \otimes w)\cdot f)(-)=w(X_S \cdot f(-))\in \cC^{S-\an}(G, \pi),
  \end{equation*}
  for $X_S\otimes w\in \text{U}(\ug_S)\otimes_{\text{U}(\overline{\fp}_{I,S})} W_S$ and $f\in \big(\Ind_{\overline{P}_I(L)}^G W_S^{\vee} \otimes_E \pi\big)^{S-\an}$.
\end{remark}
\noindent Let $M_S\in \co^{\overline{\fp}_{I,S}}_{\alg}$, $W_S$ be a finite dimensional representation of $\overline{\fp}_{I,S}$ which generates $M_S$, and let $\phi_S$ denote the natural projection $\text{U}(\ug_S)\otimes_{\text{U}(\overline{\fp}_{I,S})} W_S \twoheadrightarrow M_S$. Let $\pi$ be a finite length smooth representation of $L_I(L)$ over $E$.
\begin{lemma}\label{lem: SlaOS}
We have an isomorphism of locally $\Q_p$-analytic representations of $G$ (cf. Remark \ref{rem: actS} for the action ``$\cdot$"):
\begin{equation}\label{equ: SanOSa}
  \cF_{\overline{P}_I}^G(M_S, \pi)\\ \cong \big\{f\in  (\Ind_{\overline{P}_I(L)}^G W_S^{\vee}\otimes_E\pi)^{S-\an}\ |\ \
  Y\cdot f=0, \  \forall\ Y\in \Ker\phi_S\big\}.
\end{equation}
\end{lemma}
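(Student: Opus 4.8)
The plan is to reduce the statement to the combination of Lemma \ref{lem: lgln-Sla} and the definition of the Orlik--Strauch functor via the presentation $\text{U}(\ug_S)\otimes_{\text{U}(\overline{\fp}_{I,S})} W_S \xrightarrow{\phi_S} M_S$. First I would recall that, by definition, $\cF_{\overline{P}_I}^G(M_S,\pi)$ is constructed so that it is contravariantly exact in the first variable: applying $\cF_{\overline{P}_I}^G(-,\pi)$ to the short exact sequence
\begin{equation*}
  0 \ra \Ker\phi_S \ra \text{U}(\ug_S)\otimes_{\text{U}(\overline{\fp}_{I,S})} W_S \xrightarrow{\phi_S} M_S \ra 0
\end{equation*}
in $\co^{\overline{\fp}_{I,S}}_{\alg}$ yields a left-exact sequence of locally $\Q_p$-analytic representations of $G$
\begin{equation*}
  0 \ra \cF_{\overline{P}_I}^G(M_S,\pi) \ra \cF_{\overline{P}_I}^G\big(\text{U}(\ug_S)\otimes_{\text{U}(\overline{\fp}_{I,S})} W_S,\pi\big) \ra \cF_{\overline{P}_I}^G(\Ker\phi_S,\pi).
\end{equation*}
Here one should take a little care: $\Ker\phi_S$ need not be finitely generated by a single finite-dimensional $\overline{\fp}_{I,S}$-subrepresentation, so strictly I would either invoke the exactness of the Orlik--Strauch functor in full generality (writing $\Ker\phi_S$ as a union/limit of objects of the BGG category and passing to the limit), or, more concretely, realize $\cF_{\overline{P}_I}^G(M_S,\pi)$ directly as the subspace of $\cF_{\overline{P}_I}^G\big(\text{U}(\ug_S)\otimes_{\text{U}(\overline{\fp}_{I,S})} W_S,\pi\big)$ annihilated by $\Ker\phi_S$ — this last description is exactly how the functor is defined on a chosen presentation, see \cite{OS}, \cite[\S~2]{Br13I}.

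Next I would substitute the identification furnished by Lemma \ref{lem: lgln-Sla}, namely
\begin{equation*}
  \cF_{\overline{P}_I}^G\big(\text{U}(\ug_S)\otimes_{\text{U}(\overline{\fp}_{I,S})} W_S,\pi\big)\cong \big(\Ind_{\overline{P}_I(L)}^G W_S^{\vee}\otimes_E\pi\big)^{S-\an},
\end{equation*}
and track through this isomorphism the action of $\text{U}(\ug_S)\otimes_{\text{U}(\overline{\fp}_{I,S})} W_S$. By Remark \ref{rem: actS}, that action is given on the right-hand side by $((X_S\otimes w)\cdot f)(-)=w(X_S\cdot f(-))$, where $X_S\cdot f$ is the infinitesimal left action from (\ref{equ: lgln-left}). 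Therefore the subspace of $\cF_{\overline{P}_I}^G\big(\text{U}(\ug_S)\otimes_{\text{U}(\overline{\fp}_{I,S})} W_S,\pi\big)$ killed by $\Ker\phi_S$ corresponds precisely to
\begin{equation*}
  \big\{f\in(\Ind_{\overline{P}_I(L)}^G W_S^{\vee}\otimes_E\pi)^{S-\an}\ |\ Y\cdot f=0,\ \forall\,Y\in\Ker\phi_S\big\},
\end{equation*}
which is the asserted right-hand side of (\ref{equ: SanOSa}). It remains to check that this identification is $G$-equivariant, which is immediate since the isomorphism of Lemma \ref{lem: lgln-Sla} is one of $G$-representations and the subspace cut out by $\Ker\phi_S$ is $G$-stable (the left $\ug_S$-action commutes with the right regular $G$-action, and $\Ker\phi_S$ is $\ug_S$-stable).

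The only genuinely delicate point I anticipate is the first one: making sure that ``$\cF_{\overline{P}_I}^G(M_S,\pi)$ = (sections annihilated by $\Ker\phi_S$)'' is legitimate, i.e. that it does not depend on the chosen generating subspace $W_S$ and that it agrees with the intrinsic definition of the Orlik--Strauch functor. This is built into the construction in \cite{OS} (the functor is defined on a presentation and shown to be well defined), so I would simply cite that; everything else is bookkeeping with Lemma \ref{lem: lgln-Sla} and Remark \ref{rem: actS}. Note also that $M_S$ lies in $\co^{\overline{\fp}_{I,S}}_{\alg}\subseteq\co^{\overline{\fp}_{I,\Sigma_L}}_{\alg}$ via the projection $\ug_{\Sigma_L}\twoheadrightarrow\ug_S$, so the left-hand side of (\ref{equ: SanOSa}) indeed makes sense as written.
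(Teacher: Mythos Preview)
Your proposal is essentially correct and follows the same route as the paper, but you slightly underestimate the ``delicate point'' you yourself flag. The Orlik--Strauch construction in \cite{OS} (and \cite[\S~2]{Br13I}) is defined via presentations over $\text{U}(\ug_{\Sigma_L})$, not over $\text{U}(\ug_S)$; so simply citing \cite{OS} does not directly give you ``$\cF_{\overline{P}_I}^G(M_S,\pi)$ = subspace of $\cF_{\overline{P}_I}^G(\text{U}(\ug_S)\otimes_{\text{U}(\overline{\fp}_{I,S})}W_S,\pi)$ annihilated by $\Ker\phi_S$''. Likewise, if you go via exactness of the OS functor, you still need to identify the kernel of the induced map $\cF_{\overline{P}_I}^G(\text{U}(\ug_S)\otimes W_S,\pi)\to\cF_{\overline{P}_I}^G(\Ker\phi_S,\pi)$ with the subspace annihilated by $\Ker\phi_S$ under the action of Remark~\ref{rem: actS}, and that identification is not a tautology at the $\Sigma_L$-level.

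The paper closes this gap in the most direct way: it writes down the $\Sigma_L$-presentation $\phi:\text{U}(\ug_{\Sigma_L})\otimes_{\text{U}(\overline{\fp}_{I,\Sigma_L})}W_S\twoheadrightarrow M_S$, uses the tensor decomposition to split $\Ker\phi=\Ker(\phi)_1+\Ker(\phi)_2$ with $\Ker(\phi)_1=(\text{U}(\ug_S)\otimes W_S)\otimes_E\fd_{I,\Sigma_L\setminus S}$ and $\Ker(\phi)_2=\Ker\phi_S\otimes_E(\text{U}(\ug_{\Sigma_L\setminus S})\otimes 1)$, and then observes that annihilation by $\Ker(\phi)_1$ is exactly the $S$-analytic condition (Lemma~\ref{lem: lgln-Sla}) while, on the $S$-analytic subspace, annihilation by $\Ker(\phi)_2$ reduces to annihilation by $\Ker\phi_S\cong\Ker(\phi)_2/(\Ker(\phi)_1\cap\Ker(\phi)_2)$ (Remark~\ref{rem: actS}). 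This is precisely the bookkeeping you would have to do to justify your step; once it is made explicit, your argument and the paper's coincide. (A side remark: your worry that $\Ker\phi_S$ might not be finitely generated is unfounded, since objects of $\co^{\overline{\fp}_{I,S}}_{\alg}$ are.)
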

\begin{proof}Consider the projection $\text{U}(\ug_{\Sigma_L})\otimes_{\text{U}(\overline{\fp}_{I,\Sigma_L})} W_S \xrightarrow{\phi} M_S$. Using the isomorphism
\begin{equation*}
\text{U}(\ug_{\Sigma_L})\otimes_{\text{U}(\overline{\fp}_{I,\Sigma_L})} W_S \cong \big(\text{U}(\ug_{S})\otimes_{\text{U}(\overline{\fp}_{I,S})} W_S\big)\otimes_E \big(\text{U}(\ug_{\Sigma_L\setminus S}) \otimes_{\text{U}(\overline{\fp}_{I,\Sigma_L\setminus S})}1\big),
\end{equation*}one easily sees
\begin{equation*}
  \Ker(\phi)=\big(\text{U}(\ug_S)\otimes_{\text{U}(\overline{\fp}_{I,S})} W_S\big)\otimes_E \fd_{I,\Sigma_L\setminus S} +\Ker(\phi_S) \otimes_E \big(\text{U}(\ug_{\Sigma_L\setminus S})\otimes_{\text{U}(\overline{\fp}_{I,\Sigma_L\setminus S})} 1\big) =:\Ker(\phi)_1+\Ker(\phi)_2.
\end{equation*}
By definition, we have
\begin{equation}\label{equ: SanOSb}
  \cF_{\overline{P}_I}^G(M_S, \pi)\cong  \big\{f\in (\Ind_{\overline{P}_I(L)}^G W_S^{\vee}\otimes_E\pi)^{\Q_p-\an} \ |\  Y\cdot f=0, \  \forall\  Y\in \Ker(\phi)\}.
\end{equation}
By Lemma \ref{lem: lgln-Sla} and the fact  $\Ker(\phi)_2/(\Ker(\phi)_1\cap \Ker(\phi)_2)\cong \Ker(\phi_S)$, we easily deduce the right hand side of (\ref{equ: SanOSa}) and (\ref{equ: SanOSb}) coincide. The lemma follows.
\end{proof}
\begin{lemma}\label{lem: SfiniOS}
Let $M_S\in \co^{\overline{\fp}_{I,S}}_{\alg}\subseteq \co^{\overline{\fp}_{I,\Sigma_L}}_{\alg}$, $\ul{\lambda}^S$ be a dominant weight of $\ft_{\Sigma_L\setminus S}$, and $\pi$ be a finite length smooth representation of $L_I(L)$. We have an isomorphism of locally $\Q_p$-analytic representations of $G$ (note $M_S \otimes_E \overline{L}(-\ul{\lambda}^S)\in \co^{\overline{\fp}_{I,\Sigma_L}}_{\alg}$):
\begin{equation*}
  \cF_{\overline{P}_I}^G\big(M_S\otimes_E \overline{L}(-\ul{\lambda}^S), \pi\big) \cong \cF_{\overline{P}_I}^G(M_S, \pi) \otimes_E L(\ul{\lambda}^S).
\end{equation*}
\end{lemma}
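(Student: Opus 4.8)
The plan is to recognise this as the projection (tensor) formula for the Orlik--Strauch functor. Extending $\ul{\lambda}^S$ by zero to a dominant weight of $\ft_{\Sigma_L}$, the isomorphism $\overline{L}(-\ul{\lambda})\cong L(\ul{\lambda})^{\vee}$ recalled in \S\ref{sec: lgln-LB-not} gives $\overline{L}(-\ul{\lambda}^S)\cong L(\ul{\lambda}^S)^{\vee}$ as $\ug_{\Sigma_L}$-modules, with $\ug_S$ acting trivially on both and $L(\ul{\lambda}^S)$ a finite-dimensional algebraic representation of $G$. Hence the claimed isomorphism amounts to
\begin{equation*}
\cF_{\overline{P}_I}^G\big(M_S\otimes_E L(\ul{\lambda}^S)^{\vee},\pi\big)\ \cong\ \cF_{\overline{P}_I}^G(M_S,\pi)\otimes_E L(\ul{\lambda}^S),
\end{equation*}
with $L(\ul{\lambda}^S)^{\vee}$ carrying the diagonal $\ug_{\Sigma_L}$-action on the left-hand tensor product; this is the projection formula for $\cF_{\overline{P}_I}^G(-,\pi)$ along the algebraic $G$-representation $L(\ul{\lambda}^S)$ (see \cite{OS}, \cite[\S~2]{Br13I}), which one may either quote, or re-prove within the present framework as follows.

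Fix a finite-dimensional $\overline{\fp}_{I,S}$-subrepresentation $W_S$ generating $M_S$, with projection $\phi_S\colon\text{U}(\ug_S)\otimes_{\text{U}(\overline{\fp}_{I,S})}W_S\twoheadrightarrow M_S$. Since $\ug_{\Sigma_L\setminus S}$ acts by $0$ on $M_S$ and $\ug_S$ acts by $0$ on $\overline{L}(-\ul{\lambda}^S)$, the subspace $W_S\otimes_E\overline{L}(-\ul{\lambda}^S)$ is a finite-dimensional $\overline{\fp}_{I,\Sigma_L}$-subrepresentation generating $M_S\otimes_E\overline{L}(-\ul{\lambda}^S)$ over $\text{U}(\ug_{\Sigma_L})$; writing $\phi$ for the associated projection and using
\begin{equation*}
\text{U}(\ug_{\Sigma_L})\otimes_{\text{U}(\overline{\fp}_{I,\Sigma_L})}\big(W_S\otimes_E\overline{L}(-\ul{\lambda}^S)\big)\ \cong\ \big(\text{U}(\ug_S)\otimes_{\text{U}(\overline{\fp}_{I,S})}W_S\big)\otimes_E\big(\text{U}(\ug_{\Sigma_L\setminus S})\otimes_{\text{U}(\overline{\fp}_{I,\Sigma_L\setminus S})}\overline{L}(-\ul{\lambda}^S)\big),
\end{equation*}
one checks $\phi=\phi_S\otimes(\text{counit})$, so that $\Ker\phi=\Ker\phi_S\otimes_E(\cdots)+(\cdots)\otimes_E\mathfrak{e}$, where $\mathfrak{e}:=\Ker\big(\text{U}(\ug_{\Sigma_L\setminus S})\otimes_{\text{U}(\overline{\fp}_{I,\Sigma_L\setminus S})}\overline{L}(-\ul{\lambda}^S)\twoheadrightarrow\overline{L}(-\ul{\lambda}^S)\big)$. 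Now Lemma~\ref{lem: SlaOS}, applied once with the given $S$ and once with ``$S$'' $=\Sigma_L$ (using $(W_S\otimes_E\overline{L}(-\ul{\lambda}^S))^{\vee}\cong W_S^{\vee}\otimes_E L(\ul{\lambda}^S)$), identifies $\cF_{\overline{P}_I}^G(M_S\otimes_E\overline{L}(-\ul{\lambda}^S),\pi)$ with the subspace of $\big(\Ind_{\overline{P}_I(L)}^G W_S^{\vee}\otimes_E L(\ul{\lambda}^S)\otimes_E\pi\big)^{\Q_p-\an}$ annihilated by $\Ker\phi$, and $\cF_{\overline{P}_I}^G(M_S,\pi)$ with the subspace of $\big(\Ind_{\overline{P}_I(L)}^G W_S^{\vee}\otimes_E\pi\big)^{S-\an}$ annihilated by $\Ker\phi_S$. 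The tensor identity for locally $\Q_p$-analytic parabolic induction furnishes a $G$-equivariant isomorphism
\begin{equation*}
\Theta\colon\big(\Ind_{\overline{P}_I(L)}^G W_S^{\vee}\otimes_E\pi\big)^{\Q_p-\an}\otimes_E L(\ul{\lambda}^S)\ \xlongrightarrow{\sim}\ \big(\Ind_{\overline{P}_I(L)}^G W_S^{\vee}\otimes_E L(\ul{\lambda}^S)\otimes_E\pi\big)^{\Q_p-\an},\qquad f\otimes v\mapsto\big[g\mapsto (gv)\otimes f(g)\big],
\end{equation*}
well-defined because $L(\ul{\lambda}^S)$ is algebraic, hence locally $\Q_p$-analytic, over $G$; so it remains to prove that $\Theta$ carries $\cF_{\overline{P}_I}^G(M_S,\pi)\otimes_E L(\ul{\lambda}^S)$ onto the $\Ker\phi$-annihilated subspace.

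This is the step I expect to be the main obstacle, and it is done by splitting along the two summands of $\Ker\phi$. For $X\in\ug_S$ the isomorphism $\Theta$ intertwines the left $\ug$-actions, $X\cdot\Theta(f\otimes v)=\Theta\big((X\cdot f)\otimes v\big)$, because the would-be Leibniz correction $-\,[g\mapsto (X\cdot(gv))\otimes f(g)]$ vanishes ($\ug_S$ acts trivially on $L(\ul{\lambda}^S)$); hence the conditions coming from $\Ker\phi_S\otimes_E(\cdots)$ pull back through $\Theta$ to the conditions defining $\cF_{\overline{P}_I}^G(M_S,\pi)$, imposed componentwise in $L(\ul{\lambda}^S)$. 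For the summand $(\cdots)\otimes_E\mathfrak{e}$, recall from the proof of Lemma~\ref{lem: lgln-Sla} that the ``nilpotent'' elements $\fd_{I,\Sigma_L\setminus S}$ sit inside $\mathfrak{e}$ and that their vanishing is exactly local $S$-analyticity; modulo these, $\mathfrak{e}$ encodes the relations presenting $\overline{L}(-\ul{\lambda}^S)$ as the irreducible quotient of its generalized Verma module, and one checks that, once $f$ is $S$-analytic, the correction terms $-\,[g\mapsto (X\cdot(gv))\otimes f(g)]$ for $X\in\ug_{\Sigma_L\setminus S}$ make the remaining $\mathfrak{e}$-conditions on $\Theta(f\otimes v)$ impose nothing beyond $v\in L(\ul{\lambda}^S)$ --- that is, the $\Theta$-preimage of the $\mathfrak{e}$-annihilated subspace is precisely $\big(\Ind_{\overline{P}_I(L)}^G W_S^{\vee}\otimes_E\pi\big)^{S-\an}\otimes_E L(\ul{\lambda}^S)$. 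Combining the two, $\Theta$ identifies $\cF_{\overline{P}_I}^G(M_S,\pi)\otimes_E L(\ul{\lambda}^S)$ with the $\Ker\phi$-annihilated subspace, i.e.\ with $\cF_{\overline{P}_I}^G(M_S\otimes_E\overline{L}(-\ul{\lambda}^S),\pi)$, and the lemma follows, all maps being $G$-equivariant. The genuinely delicate point is the last one: pinning down, through $\Theta$, that the $\mathfrak{e}$-conditions cut out exactly the $S$-analytic vectors tensored with $L(\ul{\lambda}^S)$.
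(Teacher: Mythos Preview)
Your overall strategy matches the paper's: both set up $\Ker\phi$ as a sum of a ``$\Ker\phi_S$-part'' and a ``$\mathfrak{e}$-part'' ($=\Ker\phi^S$ in the paper's notation), use the tensor-identity isomorphism $\Theta$ (the paper's $\iota$), and argue the two pieces pull back to the $\cF_{\overline{P}_I}^G(M_S,\pi)$-condition and local $S$-analyticity respectively. Your handling of the $\Ker\phi_S$-part via the triviality of the $\ug_S$-action on $L(\ul{\lambda}^S)$ is fine. The paper does the two pieces in the reverse order and notes the argument is an adaptation of \cite[Prop.~4.9(b)]{OS}; that result is not literally the present statement, so ``quoting'' it outright is not enough.

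The gap is in your treatment of $\mathfrak{e}$, which you yourself flag as the delicate point. Your description --- $\fd_{I,\Sigma_L\setminus S}$ ``sitting inside'' $\mathfrak{e}$, and ``modulo these'' the relations cutting out $\overline{L}(-\ul{\lambda}^S)$ from its generalized Verma module --- is not correct: $\fd_{I,\Sigma_L\setminus S}$ lives in a different module, and there is no residual ``relations'' part to deal with. What the paper uses, and what you are missing, is the tensor identity for generalized Verma modules,
\[
\big(\text{U}(\ug_{\Sigma_L\setminus S})\otimes_{\text{U}(\overline{\fp}_{I,\Sigma_L\setminus S})}1\big)\otimes_E \overline{L}(-\ul{\lambda}^S)\ \xlongrightarrow{\sim}\ \text{U}(\ug_{\Sigma_L\setminus S})\otimes_{\text{U}(\overline{\fp}_{I,\Sigma_L\setminus S})}\overline{L}(-\ul{\lambda}^S),\qquad (1\otimes 1)\otimes w\mapsto 1\otimes w
\]
(diagonal action on the left), which identifies $\mathfrak{e}$ \emph{entirely} with $\fd_{I,\Sigma_L\setminus S}\otimes_E\overline{L}(-\ul{\lambda}^S)$. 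With this in hand the paper computes directly: for $n^S\in\fn_{I,\Sigma_L\setminus S}$ and $Y'=(n^S\otimes 1)\otimes v\otimes(1\otimes w)$ in these twisted coordinates, the Leibniz corrections cancel via the duality $\overline{L}(-\ul{\lambda}^S)\cong L(\ul{\lambda}^S)^\vee$, yielding $(Y'\cdot\iota(v'\otimes f))(g)=v(gv')\,w((n^S\cdot f)(g))$. Hence annihilation by the whole $\mathfrak{e}$-part is exactly $n^S\cdot f=0$ for all $n^S\in\fn_{I,\Sigma_L\setminus S}$, i.e.\ local $S$-analyticity. Your sentence ``impose nothing beyond $v\in L(\ul{\lambda}^S)$'' is vacuous since $v$ is already chosen there; the real mechanism is that the twist absorbs what you imagine as ``remaining relations'' into the nilpotent condition.
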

\begin{proof}The lemma follows by similar arguments as in the proof of \cite[Prop. 4.9 (b)]{OS}. First recall  we have an isomorphism of $\text{U}(\ug_{\Sigma_L\setminus S})$-modules
\begin{equation*}\big(\text{U}(\ug_{\Sigma_L\setminus S}) \otimes_{\text{U}(\overline{\fp}_{I,\Sigma_L\setminus S})} 1\big)\otimes_E \overline{L}(-\ul{\lambda}^S) \xlongrightarrow{\sim}\text{U}(\ug_{\Sigma_L\setminus S}) \otimes_{\text{U}(\overline{\fp}_{I,\Sigma_L\setminus S})} \overline{L}(-\ul{\lambda}^S),
\end{equation*}
sending $(1\otimes 1)\otimes w$ to $1\otimes w$, where $\big(\text{U}(\ug_{\Sigma_L\setminus S}) \otimes_{\text{U}(\overline{\fp}_{I,\Sigma_L\setminus S})} 1\big)\otimes_E \overline{L}(-\ul{\lambda}^S)$ is equipped with the diagonal action, and where $\text{U}(\overline{\fp}_{I,\Sigma_L\setminus S})$ acts on $\overline{L}(-\ul{\lambda}^S)$ via $\text{U}(\overline{\fp}_{I,\Sigma_L\setminus S}) \hookrightarrow \text{U}(\ug_{\Sigma_L\setminus S})$. Using this isomorphism, we deduce an exact sequence in $\co^{\overline{\fp}_{I,\Sigma_L \setminus S}}_{\alg}$:
\begin{multline*}
  0 \ra \fd_{I,\Sigma_L \setminus S}\otimes_E \overline{L}(-\ul{\lambda}^S) \ra \big( \big(\text{U}(\ug_{\Sigma_L\setminus S}) \otimes_{\text{U}(\overline{\fp}_{I,\Sigma_L\setminus S})} 1\big)\otimes_E \overline{L}(-\ul{\lambda}^S)  \cong \big)\\ \text{U}(\ug_{\Sigma_L\setminus S}) \otimes_{\text{U}(\overline{\fp}_{I,\Sigma_L\setminus S})}\overline{L}(-\ul{\lambda}^S)
  \xrightarrow{\phi^S} \overline{L}(-\ul{\lambda}^S) \ra 0,
\end{multline*}
where $\fd_{I,\Sigma_L \setminus S}\otimes_E \overline{L}(-\ul{\lambda}^S)$ is equipped with the diagonal $\text{U}(\ug_{\Sigma_L\setminus S})$-action.
\\

\noindent Let $W_S$ be a finite dimensional algebraic representation of  $\overline{\fp}_{I,S}$ which generates $M_S$. Denote by $\phi_S$  the projection $\widetilde{M}_S:=\text{U}(\ug_S) \otimes_{\text{U}(\overline{\fp}_{I,S})} W_S \twoheadrightarrow M_S$, and denote by $\phi$ the induced projection
\begin{multline*}
  \text{U}(\ug_{\Sigma_L})\otimes_{\text{U}(\overline{\fp}_{I,\Sigma_L})} (\overline{L}(-\ul{\lambda}^S)\otimes_E W_S)\\ \cong \big(\text{U}(\ug_S) \otimes_{\text{U}(\overline{\fp}_{I, S})} W_S\big)\otimes_E \big(\text{U}(\ug_{\Sigma_L\setminus S}) \otimes_{\text{U}(\overline{\fp}_{I,\Sigma_L\setminus S})} \overline{L}(-\ul{\lambda}^S)\big) \xlongrightarrow{\phi_S \otimes \phi^S} M_S \otimes_E \overline{L}(-\ul{\lambda}^S).
\end{multline*}
We have thus
\begin{equation}\label{equ: kerphiSS}
  \Ker(\phi)=\\ \Ker(\phi^S) \otimes_E \widetilde{M}_S + ( \text{U}(\ug_{\Sigma_L\setminus S}) \otimes_{\text{U}(\overline{\fp}_{I,\Sigma_L\setminus S})} \overline{L}(-\ul{\lambda}^S))\otimes_E \Ker(\phi_S)=:\Ker(\phi)_1+\Ker(\phi)_2.
\end{equation}
By definition, we have
\begin{equation*}
   \cF_{\overline{P}_I}^G\big(M_S\otimes_E \overline{L}(-\ul{\lambda}^S), \pi\big) \\ \cong  \big\{f\in (\Ind_{\overline{P}_I(L)}^G L(\ul{\lambda}^S)\otimes_E W_S^{\vee}\otimes_E\pi)^{\Q_p-\an} \ |\  Y \cdot f=0, \  \forall \ Y\in \Ker(\phi)\}.
\end{equation*}
Consider the following bijection
\begin{equation*}
 \iota: L(\ul{\lambda}^S)\otimes_E \cC^{\Q_p-\an}\big(G, W_S^{\vee} \otimes_E \pi\big) \xlongrightarrow{\sim}  \cC^{\Q_p-\an}\big(G, L(\ul{\lambda}^S)\otimes_E W_S^{\vee} \otimes_E \pi\big)
\end{equation*}
sending $v\otimes f$ to the map $[g\mapsto g(v)\otimes f(g)]$. One can check this map induces an isomorphism of locally $\Q_p$-analytic representations of $G$
\begin{equation*}
  L(\ul{\lambda}^S)\otimes_E (\Ind_{\overline{P}_I(L)}^G W_S^{\vee}\otimes_E \pi)^{\Q_p-\an} \xlongrightarrow{\sim} (\Ind_{\overline{P}_I(L)}^G L(\ul{\lambda}^S) \otimes_E W_S^{\vee} \otimes_E \pi)^{\Q_p-\an}.
\end{equation*}
For $Y=X\otimes v \otimes  w \in \ug_{\Sigma_L} \otimes_E \overline{L}(-\ul{\lambda}^S)\otimes_E W_S\subset \text{U}(\ug_{\Sigma_L}) \otimes_E \overline{L}(-\ul{\lambda}^S) \otimes_E W_S$, and $v'\otimes f\in L(\ul{\lambda}^S)\otimes_E (\Ind_{\overline{P}_I(L)}^G W_S^{\vee}\otimes_E \pi)^{\Q_p-\an}$, one can check
\begin{equation*}
\big(Y \cdot (\iota(v'\otimes f))\big)(g)= (v\otimes w)\big( ((-X)(gv'))\otimes f(g)+(gv')\otimes (X\cdot f)(g)\big).
\end{equation*}
In particular, for $n^S \in \fn_{I,\Sigma_L\setminus S}$, and
\begin{multline*}Y':=(n^S\otimes 1) \otimes v \otimes 1 \otimes w\in (\text{U}(\ug_{\Sigma_L\setminus S}) \otimes_{\text{U}(\overline{\fp}_{I,\Sigma_L\setminus S})} 1) \otimes_E \overline{L}(-\ul{\lambda}^S) \otimes_E \text{U}(\ug_S)\otimes_{\text{U}(\overline{\fp}_{I,S})} W_S\\ \cong \big(\text{U}(\ug_{\Sigma_L\setminus S}) \otimes_{\text{U}(\overline{\fp}_{I,\Sigma_L\setminus S})} \overline{L}(-\ul{\lambda}^S)\big)\otimes_E\big(\text{U}(\ug_S) \otimes_{\text{U}(\overline{\fp}_{I, S})} W_S\big),
\end{multline*}
one can check (note that $Y'$ is sent to $(n^S \otimes v- 1\otimes n^S(v))\otimes 1 \otimes w$ via the above isomorphism, and use that $(n^Su)(u')=-u(n^Su')$ for $u\in \overline{L}(-\ul{\lambda}^S) $, $u'\in L(\ul{\lambda}^S)$):
\begin{multline*}\label{equ: action}
  \big(Y' \cdot (\iota(v'\otimes f))\big)(g)=(v\otimes w) ((-n^S)(gv') \otimes f(g))+(v\otimes w)((gv')\otimes (n^S\cdot f)(g))-(n^S(v) \otimes w)((gv') \otimes f(g))\\
  =(v\otimes w)((gv')\otimes (n^S\cdot f)(g))=v(gv') w((n^S\cdot f)(g)).
\end{multline*}
Let $v'$ and $f$ be as above. We can deduce that the followings are equivalent:
\begin{enumerate}[(1)]\item $\iota(v'\otimes f)$ is annihilated by $\Ker(\phi^S) \otimes_E \widetilde{M}_S $,
\item $v(gv')w((X\cdot f)(g))=0$ for all $X\in \fn_{I,\Sigma_L\setminus S}$, $g\in G$, $v\in \overline{L}(-\ul{\lambda}^S)$ and $w\in W_S$,
\item $X\cdot f=0$ for all $X\in \fn_{I,\Sigma_L\setminus S}$.
\end{enumerate}
Since $Y\cdot f=0$ for all $Y\in \overline{\fp}_{I,\Sigma_L\setminus S}$ (because $W_S^{\vee}\otimes_E \pi$ is locally $S$-analytic for the action of  $\overline{P}_I$, see the proof of Lemma \ref{lem: lgln-Sla}), we deduce (3) is equivalent to that $f$ is locally $S$-analytic, i.e. $f\in (\Ind_{\overline{P}_I(L)}^G W_S^{\vee}\otimes_E \pi)^{S-\an}$.
\\

\noindent
By the equivalence between (1) and (3), and using the fact $\Ker(\phi)_2/(\Ker(\phi)_1\cap \Ker(\phi)_2)\cong \overline{L}(-\ul{\lambda}^S)\otimes_E  \Ker(\phi_S)$ together with (\ref{equ: kerphiSS}) and Lemma \ref{lem: SlaOS}, one sees $\iota(v'\otimes f)$ is  annihilated by $\Ker(\phi)=\Ker(\phi)_1+\Ker(\phi)_2$ if and only if $f$ is locally $S$-analytic and lies in $\cF_{\overline{P}_I}^G(M_S, \pi)$.
This concludes the proof.
\end{proof}

\subsection{Extensions of locally analytic representations, I}\noindent We study the extension groups of certain locally analytic generalized Steinberg representations. In particular, we show that for any $i\in \Delta$, and $\ul{\lambda}\in X_{\Delta}^+$, the $\Ext^1$ of $v_{\overline{P}_i}^{\infty}(\ul{\lambda})$ (recall our convention: $\overline{P}_i=\overline{P}_{\{i\}}$) by $\St_n^{\an}(\ul{\lambda})$ can be parametrized by Breuil's simple $\cL$-invariants (cf. Theorem \ref{cor: lgln-key}). This section follows the line of \cite{Orl} but in the setting of locally analytic representations. A key ingredient is Schraen's spectral sequence \cite[(4.39)]{Sch11}.

\begin{proposition}
We have \begin{equation}\label{equ: lgln-ext0}
      \Ext^i_{G}\big(i_{\overline{P}_I}^G(\ul{\lambda}), \bI_{\overline{P}_J}^G(\ul{\lambda})\big)\cong \begin{cases}
        \hH^i_{\an}\big(L_J(L), E\big) & \text{if } J\subseteq I \\
        0 & \text{otherwise}
      \end{cases},
\end{equation}
and (where $\chi_{\ul{\lambda}}$ is viewed as a character of $Z$ by restriction)
\begin{equation}\label{equ: lgln-ext2}
      \Ext^i_{G,\chi_{\ul{\lambda}}}\big(i_{\overline{P}_I}^G(\ul{\lambda}), \bI_{\overline{P}_J}^G(\ul{\lambda})\big)\cong \begin{cases}
        \hH^i_{\an}\big(L_J(L)/Z, E\big) & \text{if } J\subseteq I \\
        0 & \text{otherwise}
      \end{cases}.
\end{equation}
\end{proposition}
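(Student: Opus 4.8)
The plan is to reduce both isomorphisms to a single computation of locally $\Q_p$-analytic $\Ext$-groups between representations lying in the image of the Orlik--Strauch functor, following the strategy of \cite{Orl} but with smooth Frobenius reciprocity replaced by Schraen's spectral sequence \cite[(4.39)]{Sch11}. I would first rewrite the two sides in Orlik--Strauch form: by Lemma \ref{lem: lgln-Sla} with $S=\Sigma_L$ one has $\bI_{\overline{P}_J}^G(\ul{\lambda})\cong \cF_{\overline{P}_J}^G\big(\overline{M}_J(-\ul{\lambda}),\mathbf{1}\big)$, where $\overline{M}_J(-\ul{\lambda}):=\text{U}(\ug_{\Sigma_L})\otimes_{\text{U}(\overline{\fp}_{J,\Sigma_L})}L(\ul{\lambda})_J^{\vee}$ is the generalized Verma module in $\co^{\overline{\fp}_{J,\Sigma_L}}_{\alg}$ attached to $\overline{P}_J$ (with $L(\ul{\lambda})_J^{\vee}$ regarded as a $\overline{\fp}_{J,\Sigma_L}$-module through $\overline{\fp}_{J,\Sigma_L}\twoheadrightarrow \fl_{J,\Sigma_L}$), and $\mathbf{1}$ is the trivial smooth representation of $L_J(L)$; and since $i_{\overline{P}_I}^G(\ul{\lambda})$ is the locally algebraic subrepresentation of $\bI_{\overline{P}_I}^G(\ul{\lambda})$, it corresponds under Orlik--Strauch to the finite dimensional simple quotient $\overline{L}(-\ul{\lambda})$ of $\overline{M}_I(-\ul{\lambda})$ (finite dimensional since $\ul{\lambda}\in X_{\Delta}^+$), so $i_{\overline{P}_I}^G(\ul{\lambda})\cong \cF_{\overline{P}_I}^G\big(\overline{L}(-\ul{\lambda}),\mathbf{1}\big)$ with $\overline{L}(-\ul{\lambda})\in \co^{\overline{\fp}_{I,\Sigma_L}}_{\alg}$. (Equivalently, after pulling $\overline{L}(-\ul{\lambda})^{\vee}\cong L(\ul{\lambda})$ out of the source, one is computing $\Ext^{\bullet}_G\big((\Ind_{\overline{P}_I(L)}^G 1)^{\infty},L(\ul{\lambda})\otimes_E\bI_{\overline{P}_J}^G(\ul{\lambda})\big)$, whose source is smooth and induced from $\overline{P}_I(L)$.)

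Next I would feed these into Schraen's spectral sequence, which, as in \cite{Orl}, expresses $\Ext^{\bullet}_G\big(\cF_{\overline{P}_I}^G(M,\pi),\cF_{\overline{P}_J}^G(M',\pi')\big)$ through a sum, indexed by the double cosets $\overline{P}_J(L)\backslash G/\overline{P}_I(L)$ (equivalently by reduced elements of $\sW$), of contributions built from: (i) cohomology of nilpotent radicals, i.e. $\Ext$-groups in parabolic category $\co$ relating $M$ and a $w$-twist of $M'$; (ii) smooth $\Ext$-groups over the relevant Levi intersection, between suitable Jacquet modules; and (iii) locally analytic group cohomology $\hH^{\bullet}_{\an}$ of that Levi intersection. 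The technical core — the analogue here of Orlik's geometric-lemma bookkeeping — is to show that the cosets $w\neq 1$ contribute zero (by central character/linkage incompatibilities between $\overline{L}(-\ul{\lambda})$ and the $w$-twists of $\overline{M}_J(-\ul{\lambda})$, together with vanishing of the relevant smooth $\Ext$), and that the identity coset contributes nothing unless $\overline{P}_J\subseteq\overline{P}_I$, i.e. $J\subseteq I$; when $J\subseteq I$ its inputs (i) and (ii) each degenerate to a one-dimensional space in degree $0$ — on the Lie-algebra side because $\overline{M}_J(-\ul{\lambda})\twoheadrightarrow\overline{L}(-\ul{\lambda})$ realizes $\Hom_{\co}\cong E$ with vanishing higher $\Ext$ (standard in parabolic category $\co$, source and target having the same highest weight), and on the smooth side because $\Hom_{L_J(L)}\big((\Ind_{\overline{P}_I(L)\cap L_J(L)}^{L_J(L)}1)^{\infty},\mathbf{1}\big)\cong E$ with vanishing higher $\Ext$ — so that the $E_2$-page is concentrated in the row $q=0$ and equals $\hH^{\bullet}_{\an}(L_J(L),E)$ there. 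This gives (\ref{equ: lgln-ext0}). For (\ref{equ: lgln-ext2}) I would run the identical argument inside the category $\cM(G)_{\chi_{\ul{\lambda}}}$ of modules with fixed central character $\chi_{\ul{\lambda}}$ (note $i_{\overline{P}_I}^G(\ul{\lambda})$ and $\bI_{\overline{P}_J}^G(\ul{\lambda})$ both have central character $\chi_{\ul{\lambda}}$); the only change is that the group cohomology in step (iii) is taken on $L_J(L)/Z$, so $\hH^{\bullet}_{\an}(L_J(L),E)$ is replaced by $\hH^{\bullet}_{\an}(L_J(L)/Z,E)$.

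I expect the main obstacle to be precisely this geometric-lemma analysis: pinning down which double cosets and which cohomological degrees survive, which on the smooth side requires a careful description of the Jacquet modules of the (non-semisimple) generalized Steinberg inductions along $\overline{N}_J$, and on the Lie-algebra side control of $\Ext^{>0}$ in parabolic category $\co$ from a generalized Verma module to the simple module of the same highest weight. As a guiding consistency check — and the $i=0$ case — one verifies directly that any $G$-morphism $i_{\overline{P}_I}^G(\ul{\lambda})\to\bI_{\overline{P}_J}^G(\ul{\lambda})$ factors through the locally algebraic vectors $i_{\overline{P}_J}^G(\ul{\lambda})$ of the target, and hence, using that the smooth generalized principal series from the trivial character are multiplicity free, is a scalar multiple of the natural inclusion $i_{\overline{P}_I}^G(\ul{\lambda})\hookrightarrow i_{\overline{P}_J}^G(\ul{\lambda})$ when $J\subseteq I$ and is $0$ otherwise, matching $\hH^0_{\an}(L_J(L),E)=E$.
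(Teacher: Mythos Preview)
Your overall plan---reduce to the Levi via Schraen, then run an Orlik-style double-coset argument---is exactly what the paper does, but you have misdescribed the structure of \cite[(4.39)]{Sch11} and conflated several distinct steps into one imagined spectral sequence. Schraen's sequence is not a Bruhat-indexed object; it is a Frobenius-reciprocity-type sequence
\[
E_2^{r,s}=\Ext^r_{L_J(L)}\big(\hH_s(\overline{N}_J,\,i_{\overline{P}_I}^G(\ul{\lambda})),\,L(\ul{\lambda})_J\big)\ \Longrightarrow\ \Ext^{r+s}_G\big(i_{\overline{P}_I}^G(\ul{\lambda}),\,\bI_{\overline{P}_J}^G(\ul{\lambda})\big),
\]
which descends the problem from $G$ to $L_J(L)$ by taking $\overline{N}_J$-homology of the \emph{source}. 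The paper then handles the Lie-algebra input not via your claimed vanishing of $\Ext^{>0}_{\co}(\overline{M}_J(-\ul{\lambda}),\overline{L}(-\ul{\lambda}))$ (this is not standard for generalized Verma modules and you would need to justify it) but directly via Kostant's theorem \cite[Thm.~4.10]{Sch11}: one has $\hH_s(\overline{N}_J,i_{\overline{P}_I}^G(\ul{\lambda}))\cong J_{\overline{P}_J}(i_{\overline{P}_I}^G)\otimes_E\hH_s(\overline{\fn}_{J,\Sigma_L},L(\ul{\lambda}))$, and for $s>0$ none of the Kostant pieces $L(w\cdot\ul{\lambda})_J$ match $L(\ul{\lambda})_J$, so \cite[Prop.~4.7]{Sch11} kills those rows. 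This collapses everything to $\Ext^r_{L_J(L)}\big(J_{\overline{P}_J}(i_{\overline{P}_I}^G)\otimes_E L(\ul{\lambda})_J,\,L(\ul{\lambda})_J\big)$, which after a further reduction to $\ul{\lambda}=\ul{0}$ becomes $\Ext^r_{L_J(L)}\big(J_{\overline{P}_J}(i_{\overline{P}_I}^G),\,1\big)$.

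Only \emph{now} does the geometric lemma enter: the paper filters the smooth Jacquet module $J_{\overline{P}_J}(i_{\overline{P}_I}^G)$ by double cosets exactly as in \cite[Prop.~15]{Orl}, and then uses a \emph{second} spectral sequence \cite[(4.27)]{Sch11} to pass from smooth $\Ext$ over $L_J(L)$ (where Orlik's vanishing (\ref{equ: lgln-vsm}) applies to the $w\neq 1$ pieces and to the case $J\nsubseteq I$) to locally analytic $\Ext$. The surviving $w=1$ piece for $J\subseteq I$ gives $\Ext^r_{L_J(L)}(1,1)\cong\hH^r_{\an}(L_J(L),E)$ by Proposition~\ref{prop: gpcoho}. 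So the architecture is sequential---Schraen's reciprocity, then Kostant, then weight reduction, then Orlik's double-coset argument on the smooth Jacquet module, then the smooth-to-analytic comparison---rather than a single spectral sequence packaging your ingredients (i)--(iii) simultaneously. Your ingredients are the right ones, but you should reorganize them into this two-stage shape and replace the category-$\co$ $\Ext$-vanishing claim with the Kostant computation.
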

\begin{proof}We only prove (\ref{equ: lgln-ext2}) (for the case with fixed central character), and (\ref{equ: lgln-ext0}) follows by the same argument.
\\

\noindent (a) By \cite[Cor. 4.9]{Sch11}, we have a spectral sequence (note that the separated assumption in \emph{loc. cit. }is satisfied since $i_{\overline{P}_I}^G(\ul{\lambda})$ is locally algebraic):
\begin{equation*}
E_2^{r,s}=\Ext^r_{L_J(L),Z=\chi_{\ul{\lambda}}}\big(\hH_s(\overline{N}_{J},i_{\overline{P}_I}^G(\ul{\lambda})), L(\ul{\lambda})_J\big) \Rightarrow \Ext^{r+s}_{G,\chi_{\ul{\lambda}}}\big(i_{\overline{P}_I}^G(\ul{\lambda}), \bI_{\overline{P}_J}^G(\ul{\lambda})\big).
\end{equation*}
By \cite[(4.41)]{Sch11}, $\hH_s(\overline{N}_J,i_{\overline{P}_I}^G(\ul{\lambda}))=\hH_s(\overline{\fn}_{J,\Sigma_L}, L(\ul{\lambda}))\otimes_E J_{\overline{P}_J}(i_{\overline{P}_I}^G)$ \big(where $J_{\overline{P}_J}(i_{\overline{P}_I}^G)$ denotes the Jacquet module of $i_{\overline{P}_I}^G$ with respect to the parabolic subgroup $\overline{P}_J$\big). By \cite[Thm. 4.10]{Sch11}, one has an isomorphism of algebraic representations of $\fl_{J,\Sigma_L}$ (thus of $\Res^L_{\Q_p} L_J$):
\begin{equation}
\hH_s(\overline{\fn}_{J,\Sigma_L},L(\ul{\lambda}))=\bigoplus_{\substack{w\in (S_{n})^{d_L}, \lg(w)=s,\\ w\cdot \ul{\lambda}\in X_J^{+}}} L(w\cdot \ul{\lambda})_J.
\end{equation}
Thus by \cite[Prop. 4.7 (1)]{Sch11}, for all $s>1$, $r\geq 0$, we have
\begin{equation*}
\Ext^r_{L_J(L),Z=\chi_{\ul{\lambda}}}\big(\hH_s(\overline{N}_J,i_{\overline{P}_I}^G(\ul{\lambda})), L(\ul{\lambda})_J\big)=0.
\end{equation*}
Consequently, for all $r\in \Z_{\geq 0}$, we have
\begin{equation*}
\Ext^r_{G,\chi_{\ul{\lambda}}}\big(i_{\overline{P}_I}^G(\ul{\lambda}), \bI_{\overline{P}_J}^G(\ul{\lambda})\big) \cong \Ext^r_{L_J(L),Z=\chi_{\ul{\lambda}}}\big(J_{\overline{P}_J}(i_{\overline{P}_I}^G) \otimes_E L(\ul{\lambda})_J, L(\ul{\lambda})_J\big).
\end{equation*}
\\
\noindent (b) We reduce to the case $\ul{\lambda}=\ul{0}$: By \cite[(4.23)]{Sch11} (with an easy variation), we can deduce
\begin{equation*}
\Ext^r_{L_J(L),Z=\chi_{\ul{\lambda}}}\big(J_{\overline{P}_J}(i_{\overline{P}_I}^G) \otimes_E L(\ul{\lambda})_J, L(\ul{\lambda})_J\big) \cong \Ext^r_{L_J(L),Z=1}\big(J_{\overline{P}_J}(i_{\overline{P}_I}^G), L(\ul{\lambda})_J\otimes_E L(\ul{\lambda})_J^{\vee}\big).
\end{equation*}
The algebraic representation $L(\ul{\lambda})_J\otimes_E L(\ul{\lambda})_J^{\vee}$ \big(of $L_J(L)$\big) is semi-simple, in which the trivial representation has multiplicity one. Thus by \cite[Prop. 4.7 (1)]{Sch11}, we deduce
\begin{equation*}
\Ext^r_{L_J(L),Z=1}\big(J_{\overline{P}_J}(i_{\overline{P}_I}^G), L(\ul{\lambda})_J\otimes_E L(\ul{\lambda})_J^{\vee}\big) \cong \Ext^r_{L_J(L),Z=1}\big(J_{\overline{P}_J}(i_{\overline{P}_I}^G),1\big).
\end{equation*}

\noindent
(c) As in the proof of \cite[Prop. 15]{Orl}, we define a filtration on $i_{\overline{P}_I}^G$ by $\overline{P}_J(L)$-invariant subspaces:
\begin{equation*}
\cF^k i_{\overline{P}_I}^G:=\Big\{f\in i_{\overline{P}_I}^G, \  \Supp(f)\subseteq \cup_{\substack{w\in \sW_I\backslash \sW/\sW_J\\ \lg(w)\leq k}} \overline{P}_I(L)\backslash\big( \overline{P}_I(L)w\overline{P}_J(L)\big)\Big\},
\end{equation*}for $k\in \Z_{\geq 0}$, where $\sW_S$ denotes the Weyl group of the Levi subgroup $L_S$ for $S\subseteq \Delta$, and  $\lg(w)$ is the length of its Kostant-representative (which is the one of minimal length with its double coset). In the following, we identify the double cosets $\sW_I\backslash \sW /\sW_J$ with its Kostant-representatives.
As in the proof of \cite[Prop. 15]{Orl}, we have
\begin{equation*}
J_{\overline{P}_J}(\gr^k i_{\overline{P}_I}^G) \cong \bigoplus_{\substack{w\in \sW_I\backslash \sW/\sW_J, \\ \lg(w)=k}} \big(\Ind_{L_J(L)\cap (w^{-1} \overline{P}_I(L) w)}^{L_J(L)} \gamma_w\big)^{\infty},
\end{equation*}
where $\gr^k i_{\overline{P}_I}^G:=\cF^k i_{\overline{P}_I}^G/\cF^{k-1} i_{\overline{P}_I}^G$ and $\gamma_w$ is the modulus character of $\overline{P}_J(L)\cap w^{-1} \overline{P}_I(L) w$ acting on $\overline{N}_J(L)/\big(\overline{N}_J(L)\cap w^{-1} \overline{P}_I(L) w\big)$. We have (see the proof of \cite[Prop. 15]{Orl}),
\begin{equation}\label{equ: lgln-vsm}
\Ext^r_{\Rep^{\infty}_{Z=1}(L_J(L))}\big((\Ind_{L_J(L)\cap (w^{-1} \overline{P}_I w)}^{L_J(L)} \gamma_w)^{\infty}, 1\big)=0
\end{equation}
for all $r\in \Z_{\geq 0}$, if $J\nsubseteq I$ or $w\neq 1$.
\\

\noindent (d) We have a spectral sequence (e.g. see \cite[(4.27)]{Sch11}, note that the same arguments of \emph{loc. cit.} also show the existence of a similar spectral sequence in the case without fixing the central character, which can be used to prove (\ref{equ: lgln-ext0})):
\begin{equation*}
\Ext^r_{\cM(L_J(L))^{\infty}_{Z,1}}\big(V^{\vee},\hH^s\big((\overline{\fl_{J}})_{\Sigma_L},E\big)\otimes_E W^{\vee}\big) \Rightarrow \Ext^{r+s}_{L_J(L),Z=1}(W,V).
\end{equation*}    for smooth representations $W$, $V$ of $L_J(L)$ with trivial character on $Z$, where $\overline{\fl_J}$ denotes the Lie algebra of $L_J(L)/Z$. If $V$ and $W$ are moreover admissible, by (\ref{equ: extsm}), we have \begin{equation*}\Ext^r_{\cM(L_J(L))^{\infty}_{Z,1}}\big(V^{\vee},\hH^s\big((\overline{\fl_{J}})_{\Sigma_L},E\big)\otimes_E W^{\vee}\big)\cong \Ext^r_{\Rep^{\infty}_{Z=1}(L_J(L))}\big(W\otimes_E \hH^s\big((\overline{\fl_{J}})_{\Sigma_L},E\big)^{\vee}, V \big).
\end{equation*}
These combined with (\ref{equ: lgln-vsm}) imply
\begin{equation*}
\Ext^r_{L_J(L),Z=1}\big((\Ind_{L_J(L)\cap (w^{-1} \overline{P}_I w)}^{L_J(L)} \gamma_w)^{\infty}, 1\big)=0
\end{equation*}
for all $r\in \Z_{\geq 0}$, if $J\nsubseteq I$ or $w\neq 1$. From which, we deduce by d\'evissage that  if $J\nsubseteq I$, then $$\Ext^r_{L_J(L),Z=1}\big(J_{\overline{P}_J}(i_{\overline{P}_I}^G),1\big)=0,$$ and that  if $J\subseteq I$, then (see Proposition \ref{prop: gpcoho} for the second isomorphism)
\begin{equation*}
\Ext^r_{L_J(L),Z=1}\big(J_{\overline{P}_J}(i_{\overline{P}_I}^G),1\big) \cong \Ext^r_{L_J(L),Z=1}(1,1)\cong \hH^r_{\an}\big(L_J(L)/Z,E\big).
\end{equation*}
The proposition follows.
\end{proof}

\begin{remark}\label{rem: lgln-ext}
(i) Let $Z_J(L)$ be the center of $L_J(L)$, $\fd_J$ the Lie algebra of $D_J(L)$ (the derived subgroup of $L_J(L)$).
Then by \cite[Cor. 3.11, (3.28)]{Sch11}, we have
\begin{eqnarray}\label{equ: lgln-cohan}\hH^i_{\an}(L_J(L),E)&\cong& \bigoplus_{0\leq r\leq i}\Big(\big(\wedge^r \Hom(Z_J(L),E)\big) \otimes_E  \hH^{i-r}\big((\fd_J)_{\Sigma_L}, E\big)\Big),\\
\hH^i_{\an}(L_J(L)/Z,E)&\cong& \bigoplus_{0\leq r\leq i}\Big(\big(\wedge^r \Hom\big(Z_J(L)/Z,E\big)\big) \otimes_E  \hH^{i-r}\big((\fd_J)_{\Sigma_L}, E\big)\Big).
\end{eqnarray}
Since $\fd_J$ is semi-simple, $\hH^r\big((\fd_J)_{\Sigma_L}, E\big)=0$ for $r=1,2$. In particular, we deduce natural isomorphisms $\hH^i_{\an}\big(L_J(L),E\big)\cong \wedge^i \Hom\big(Z_J(L),E\big)$ and $\hH^i_{\an}(L_J(L)/Z,E)\cong \wedge^i\Hom(Z_J(L)/Z,E)$ for $i=1, 2$.
\\

\noindent
(ii) Let $J\subseteq I$ and $j=1,2$, the natural injection $L_J(L)\hookrightarrow L_I(L)$ induces a natural morphism $\hH^j_{\an}(L_I(L),E)\ra \hH^j_{\an}(L_J(L),E)$, and hence (by (i)) induces a morphism $\wedge^j\Hom(Z_I(L),E) \ra \wedge^j\Hom(Z_J(L),E)$. This morphism can also be induced by the natural composition
\begin{equation}\label{equ: dettI}Z_J\hooklongrightarrow L_J \hooklongrightarrow L_I \xlongrightarrow{\dett} Z_I.\end{equation}
In particular, we will view $\Hom(Z_I(L),E)$ as subspace of $\Hom(Z_J(L),E)$ with no further mention. The natural map $\Hom(Z_J(L)/Z_I(L),E) \hookrightarrow \Hom(Z_J(L),E)$ induces  a bijection
\begin{equation*}
\Hom(Z_J(L)/Z_I(L),E) \xlongrightarrow{\sim}  \Hom(Z_J(L),E)/\Hom(Z_I(L),E).
\end{equation*}
\end{remark}
\noindent Let $J\subseteq I$, we describe explicitly the isomorphism in (\ref{equ: lgln-ext0}) for $\Ext^1$: Let
\begin{equation*}\psi\in \hH^1_{\an}\big(L_J(L),E\big)\cong \Hom\big(Z_J(L),E\big),
\end{equation*}
which induces an extension $1_{\psi}$ of the trivial characters of $L_J(L)$:
  \begin{equation*}
    1_{\psi}(a)=\begin{pmatrix}
      1 & \psi(a) \\ 0 & 1
    \end{pmatrix}, \\ \forall ~a\in L_J(L).
  \end{equation*}
The parabolic induction $\big(\Ind_{\overline{P}_J(L)}^G L(\ul{\lambda})_J\otimes_E 1_{\psi}\big)^{\Q_p-\an}$  lies thus in an exact sequence
  \begin{equation*}
    0 \ra \bI_{\overline{P}_J}^G(\ul{\lambda}) \ra \big(\Ind_{\overline{P}_J(L)}^G L(\ul{\lambda})_J\otimes_E 1_{\psi}\big)^{\Q_p-\an} \xrightarrow{\pr} \bI_{\overline{P}_J}^G(\ul{\lambda}) \ra 0.
  \end{equation*}
  Since $J\subseteq I$, we have natural injections
  \begin{equation*}i_{\overline{P}_I}^G(\ul{\lambda}) \hooklongrightarrow \bI_{\overline{P}_I}^G(\ul{\lambda}) \hooklongrightarrow \bI_{\overline{P}_J}^G(\ul{\lambda}).
  \end{equation*}
  Let $\sE_{I}^J(\ul{\lambda},\psi):=\pr^{-1} \big(i_{\overline{P}_I}^G(\ul{\lambda})\big)$, which is thus an extension of $i_{\overline{P}_I}^G(\ul{\lambda})$ by $\bI_{\overline{P}_J}^G(\ul{\lambda})$.
  \begin{lemma}\label{descex}
    The representation $[\sE_{I}^J(\ul{\lambda},\psi)]\in \Ext^1_{G}\big(i_{\overline{P}_I}^G(\ul{\lambda}), \bI_{\overline{P}_J}^G(\ul{\lambda})\big)$ is mapped (up to non-zero scalars) to $\psi$ via the isomorphism in (\ref{equ: lgln-ext0}):
  \begin{equation}\label{equ: lgln-isoex}\Ext^1_{G}\big(i_{\overline{P}_I}^G(\ul{\lambda}), \bI_{\overline{P}_J}^G(\ul{\lambda})\big)\cong \hH^1_{\an}\big(L_J(L),E\big).\end{equation}
  \end{lemma}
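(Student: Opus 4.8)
The plan is to feed the class $[\sE_{I}^J(\ul\lambda,\psi)]$ through the chain of isomorphisms built in the proof of the Proposition and check that it is returned, up to the normalization ambiguity inherent in that chain (which is what ``up to non-zero scalars'' refers to), to $\psi$. First I would record the formal input: by construction $\sE_{I}^J(\ul\lambda,\psi)=\pr^{-1}(i_{\overline{P}_I}^G(\ul\lambda))$, so $[\sE_{I}^J(\ul\lambda,\psi)]=\iota^*[\cI_\psi]$, where $\iota$ is the composite inclusion $i_{\overline{P}_I}^G(\ul\lambda)\hooklongrightarrow\bI_{\overline{P}_I}^G(\ul\lambda)\hooklongrightarrow\bI_{\overline{P}_J}^G(\ul\lambda)$ and $\cI_\psi:=(\Ind_{\overline{P}_J(L)}^G L(\ul\lambda)_J\otimes_E 1_\psi)^{\Q_p-\an}\in\Ext^1_G(\bI_{\overline{P}_J}^G(\ul\lambda),\bI_{\overline{P}_J}^G(\ul\lambda))$. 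Next I would recall that (\ref{equ: lgln-ext0}) in degree $1$ is the composite of: the edge isomorphism $\Ext^1_G(i_{\overline{P}_I}^G(\ul\lambda),\bI_{\overline{P}_J}^G(\ul\lambda))\xrightarrow{\sim}\Ext^1_{L_J(L)}\big(\hH_0(\overline{N}_J,i_{\overline{P}_I}^G(\ul\lambda)),L(\ul\lambda)_J\big)$ of Schraen's spectral sequence (an isomorphism since $E_2^{r,s}=0$ for $s\geq 1$); the identification $\hH_0(\overline{N}_J,i_{\overline{P}_I}^G(\ul\lambda))\cong J_{\overline{P}_J}(i_{\overline{P}_I}^G)\otimes_E L(\ul\lambda)_J$; the passage, via the Bruhat filtration of steps (c)--(d), to the unique surviving ($w=1$) constituent $L(\ul\lambda)_J$; and the final comparison $\Ext^1_{L_J(L)}(L(\ul\lambda)_J,L(\ul\lambda)_J)\cong\Ext^1_{L_J(L)}(1,1)\cong\hH^1_{\an}(L_J(L),E)\cong\Hom(Z_J(L),E)$. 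The point is that, because $E_2^{0,1}=0$, the edge isomorphism is computed concretely by applying $\hH_0(\overline{N}_J,-)$ to an extension and post-composing with the adjunction counit $c\colon\hH_0(\overline{N}_J,\bI_{\overline{P}_J}^G(\ul\lambda))\twoheadrightarrow L(\ul\lambda)_J$.

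The heart of the argument is then a morphism of short exact sequences realizing this computation. I would observe that $c$ is induced by $\bI_{\overline{P}_J}^G(\ul\lambda)\to L(\ul\lambda)_J$, $f\mapsto f(1)$, which is $\overline{P}_J(L)$-equivariant and factors through $\overline{N}_J$-coinvariants because $L(\ul\lambda)_J$ is inflated from $L_J(L)$ (so $(Xf)(1)=0$ for $X\in\overline{\fn}_{J,\Sigma_L}$); the same rule $f\mapsto f(1)$ gives a $\overline{P}_J(L)$-equivariant surjection $\cI_\psi\twoheadrightarrow L(\ul\lambda)_J\otimes_E 1_\psi$, compatible with $\pr$, whose restriction to $i_{\overline{P}_I}^G(\ul\lambda)\subseteq\bI_{\overline{P}_J}^G(\ul\lambda)$ is still onto $L(\ul\lambda)_J$ (here one uses $J\subseteq I$, i.e. $L_J\subseteq L_I$). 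Hence evaluation at $1$ exhibits
\[
\big(0\ra\bI_{\overline{P}_J}^G(\ul\lambda)\ra\sE_{I}^J(\ul\lambda,\psi)\ra i_{\overline{P}_I}^G(\ul\lambda)\ra0\big)\ \twoheadrightarrow\ \big(0\ra L(\ul\lambda)_J\ra L(\ul\lambda)_J\otimes_E 1_\psi\ra L(\ul\lambda)_J\ra0\big)
\]
as a morphism of short exact sequences of $\overline{P}_J(L)$-representations (surjectivity of the middle arrow follows from that of the outer ones). Applying $\hH_0(\overline{N}_J,-)$ — which only involves the restriction to $\overline{P}_J(L)$ — and using $E_2^{0,1}=0$ to annihilate the exactness defect, this computes the edge isomorphism on $[\sE_{I}^J(\ul\lambda,\psi)]$ and identifies its image, after restricting to the surviving constituent, with the class of $0\ra L(\ul\lambda)_J\ra L(\ul\lambda)_J\otimes_E 1_\psi\ra L(\ul\lambda)_J\ra0$ in $\Ext^1_{L_J(L)}(L(\ul\lambda)_J,L(\ul\lambda)_J)$; under the final comparison this is $\psi$. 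Since every step is $E$-linear in $\psi$ and the comparison isomorphisms are fixed but determined only up to nonzero scalars, the conclusion is that (\ref{equ: lgln-isoex}) sends $[\sE_{I}^J(\ul\lambda,\psi)]$ to $c_0\psi$ for a nonzero $c_0$ independent of $\psi$.

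The main obstacle will be the bookkeeping just alluded to: verifying that Schraen's edge map \cite[(4.39)]{Sch11} genuinely coincides with ``$\hH_0(\overline{N}_J,-)$ followed by the counit $c$'', that $c$ is the map induced by $f\mapsto f(1)$, and that the $\hH_0$-exactness defect along the displayed morphism of extensions is indeed killed by post-composition with $c$ — this last being exactly the vanishing $E_2^{0,1}(i_{\overline{P}_I}^G(\ul\lambda))=0$ established (via steps (b)--(d)) in the proof of the Proposition. Everything else is formal. Finally, to see $c_0\neq0$ without tracking normalizations, it suffices to note that for $\psi\neq0$ the $\overline{P}_J(L)$-restriction of $\sE_{I}^J(\ul\lambda,\psi)$ surjects onto the non-split extension $L(\ul\lambda)_J\otimes_E 1_\psi$, so $\sE_{I}^J(\ul\lambda,\psi)$ is itself non-split; hence $\psi\mapsto[\sE_{I}^J(\ul\lambda,\psi)]$ is injective, and an isomorphism by the dimension count in the Proposition.
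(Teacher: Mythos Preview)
Your proposal is correct and follows essentially the same approach as the paper: both identify the composite (\ref{equ: lgln-isoex}) with the chain in \cite[(4.32)]{Sch11} realized concretely via the $\overline{P}_J(L)$-equivariant evaluation map $f\mapsto f(1)$, and then check that the displayed morphism of short exact sequences carries $[\sE_I^J(\ul\lambda,\psi)]$ to the class of $L(\ul\lambda)_J\otimes_E 1_\psi$. The only cosmetic difference is directional: the paper computes the \emph{inverse} of the last two isomorphisms in the chain (showing it is induced by the composite $i_{\overline{P}_I}^G(\ul\lambda)\to\bI_{\overline{P}_J}^G(\ul\lambda)\twoheadrightarrow L(\ul\lambda)_J$) and observes that $\sE_I^J(\ul\lambda,\psi)$ is by construction the image of $1_\psi\otimes L(\ul\lambda)_J$ under this inverse, whereas you push $[\sE_I^J(\ul\lambda,\psi)]$ forward through the edge map and the counit; both arguments hinge on the same evaluation-at-$1$ morphism of extensions and the vanishing $E_2^{r,s}=0$ for $s\ge 1$.
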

\begin{proof}By the discussion in \cite[\S ~4.4]{Sch11} (in particular, see \cite[(4.32)]{Sch11}), we see that (\ref{equ: lgln-isoex}) factors though the following composition
  \begin{multline}\label{equ: lgln-lex}
    \Ext^1_{G}\big(i_{\overline{P}_I}^G(\ul{\lambda}),\bI_{\overline{P}_J}^G(\ul{\lambda})\big) \xlongrightarrow{\sim} \Ext^1_{\overline{P}_J(L)}\big(i_{\overline{P}_I}^G(\ul{\lambda})|_{\overline{P}_J(L)}, L(\ul{\lambda})_J\big)\\ \xlongrightarrow{\sim} \Ext^1_{L_J(L)}\big(J_{\overline{P}_J}(i_{\overline{P}_I}^G) \otimes_E L(\ul{\lambda})_J, L(\ul{\lambda})_J\big) \xlongrightarrow{\sim} \Ext^1_{L_J(L)}\big(L(\ul{\lambda})_J,L(\ul{\lambda})_J\big).
  \end{multline}
  where the first map sends $V$ to the push-forward of $V|_{\overline{P}_J(L)}$ via the natural evaluation map $\bI_{\overline{P}_J}^G(\ul{\lambda}) \twoheadrightarrow L(\ul{\lambda})_J$, $f\mapsto f(1)$, the second map is inverse of the map induced by the natural projection $i_{\overline{P}_I}^G(\ul{\lambda})|_{\overline{P}_J(L)} \twoheadrightarrow J_{\overline{P}_J}(i_{\overline{P}_I}^G) \otimes_E L(\ul{\lambda})_J$ (note the latter one is no other than the $\overline{N}_J(L)$-covariant quotient of the first one), and the last map is induced by the injection $L(\ul{\lambda})_J\hookrightarrow J_{\overline{P}_J}(i_{\overline{P}_I}^G) \otimes_E L(\ul{\lambda})_J$. It is sufficient to show that (\ref{equ: lgln-lex}) sends $\sE_{I}^J(\ul{\lambda},\psi)$ to $1_{\psi}\otimes_E L(\ul{\lambda})_J$. The composition map \big(which is $\overline{P}_J(L)$-invariant\big)
  \begin{equation}\label{equ: lgln-aaIl}i_{\overline{P}_I}^G(\ul{\lambda}) \lra \bI_{\overline{P}_J}^G(\ul{\lambda}) \twoheadlongrightarrow L(\ul{\lambda})_J\end{equation} factors though
  \begin{equation}\label{equ: lgln-jgf}J_{\overline{P}_J}(i_{\overline{P}_I}^G)\otimes_E L(\ul{\lambda})_J\twoheadlongrightarrow L(\ul{\lambda})_J.
   \end{equation}Moreover, it is straightforward to see the composition $L(\ul{\lambda})_J\hookrightarrow J_{\overline{P}_J}(i_{\overline{P}_I}^G)\otimes_E L(\ul{\lambda})_J \xrightarrow{(\ref{equ: lgln-jgf})} L(\ul{\lambda})_J$ is the identity map (up to non-zero scalars). Thus the inverse of the last isomorphism in (\ref{equ: lgln-lex}) can be induced by the projection (\ref{equ: lgln-jgf}), and hence the inverse of the composition of the last two maps in (\ref{equ: lgln-lex}) is induced by (\ref{equ: lgln-aaIl}), i.e. it factors as
    \begin{equation*}
    \Ext^1_{L_J(L)}\big(L(\ul{\lambda})_J,L(\ul{\lambda})_J\big) \lra \Ext^1_{\overline{P}_J(L)}\big(\bI_{\overline{P}_J}^G(\ul{\lambda})|_{\overline{P}_J(L)}, L(\ul{\lambda})_J\big) \lra \Ext^1_{\overline{P}_J(L)}\big(i_{\overline{P}_I}^G(\ul{\lambda})|_{\overline{P}_J(L)}, L(\ul{\lambda})_J\big).
  \end{equation*}
However, by construction, $\sE_{I}^J(\ul{\lambda},\psi)$ is just the image of $1_\psi\otimes_E L(\ul{\lambda})_J$ via the above composition and the inverse of the first isomorphism in (\ref{equ: lgln-lex}). We see that (\ref{equ: lgln-lex}) maps $\psi$ to $1_\psi\otimes_E L(\ul{\lambda})_J$, and the lemma follows.
\end{proof}
\begin{remark}\label{rem: lalg01}
  (i) We have a similar description of the isomorphism in (\ref{equ: lgln-ext2})  for $i=1$. And we have that $[\sE_{I}^J(\ul{\lambda},\psi)]\in \Ext^1_{G,\chi_{\ul{\lambda}}}\big(i_{\overline{P}_I}^G(\ul{\lambda}), \bI_{\overline{P}_J}^G(\ul{\lambda})\big)$ if and only if the character $\psi$ factors through $L_J(L)/Z$.
  \\

\noindent
(ii) By \cite[Prop. 15]{Orl}, we have
\begin{equation}\label{equ: lgln-extSm}\Ext^i_{\Rep^{\infty}(G)}(i_{\overline{P}_I}^G, i_{\overline{P}_J}^G)=
\begin{cases}
  \wedge^i \Hom_{\infty}\big(L_J(L),E\big) & \text{if $J\subseteq I$} \\
  0 & \text{otherwise}
\end{cases},
\end{equation}
where $\Hom_{\infty}(L_J(L),E)$ denotes the $E$-vector space of smooth $E$-valued characters on $L_J(L)$. One can describe similarly as in Lemma \ref{descex} the isomorphism for $\Ext^1$ in (\ref{equ: lgln-extSm}). On the other hand, the injection $i_{\overline{P}_J}^G \hookrightarrow \bI_{\overline{P}_J}^G$ induces an injection (using $\Hom_G(i_{\overline{P}_I}^G, \bI_{\overline{P}_J}^G/i_{\overline{P}_J}^G)=0$)
\begin{equation*}
  \Ext^1_{G}\big(i_{\overline{P}_I}^G, i_{\overline{P}_J}^G\big) \hooklongrightarrow \Ext^1_{G}\big(i_{\overline{P}_I}^G, \bI_{\overline{P}_J}^G\big).
\end{equation*}
Using the explicit description for $\Ext^1$ in Lemma \ref{descex}, we see that the following diagram is commutative (where the horizontal maps are natural injections)
\begin{equation*}
  \begin{CD}
    \Ext^1_{\Rep^{\infty}(G)}\big(i_{\overline{P}_I}^G, i_{\overline{P}_J}^G\big) @>>> \Ext^1_{G}\big(i_{\overline{P}_I}^G, \bI_{\overline{P}_J}^G\big) \\
    @V \sim VV @V \sim VV \\
    \Hom_{\infty}\big(L_J(L),E\big) @>>> \Hom\big(L_J(L),E\big)
  \end{CD}.
\end{equation*}
We have similar results for $i_{\overline{P}_I}^G(\ul{\lambda})$, $i_{\overline{P}_J}^G(\ul{\lambda})$, $\bI_{\overline{P}_J}^G(\ul{\lambda})$. And we have that $\sE_{I}^J(\ul{\lambda},\psi)$ comes by push-forward from a locally algebraic  extension of $i_{\overline{P}_I}^G(\ul{\lambda})$ by $i_{\overline{P}_J}^G(\ul{\lambda})$ if and only if the character $\psi$ is smooth.
\end{remark}

  \begin{corollary}\label{cor: lgln-ext2}
    We have
\begin{equation}\label{equ: lgln-ext4}
      \Ext^r_{G}\big(v_{\overline{P}_I}^\infty(\ul{\lambda}), \bI_{\overline{P}_J}^G(\ul{\lambda})\big)\cong
      \begin{cases}
        \hH_{\an}^{r-|\Delta\setminus I|}\big(L_J(L),E\big) & \text{if } I\cup J=\Delta \\
        0 & \text{otherwise}
      \end{cases},
    \end{equation}
    and
    \begin{equation}\label{equ: lgln-ext3}
      \Ext^r_{G,\chi_{\ul{\lambda}}}\big(v_{\overline{P}_I}^\infty(\ul{\lambda}), \bI_{\overline{P}_J}^G(\ul{\lambda})\big)\cong
      \begin{cases}
        \hH_{\an}^{r-|\Delta\setminus I|}\big(L_J(L)/Z,E\big) & \text{if } I\cup J=\Delta \\
        0 & \text{otherwise}
      \end{cases}.
    \end{equation}
  \end{corollary}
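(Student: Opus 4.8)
The plan is to use the exact sequence (\ref{equ: lgln-lalgSt}) as a resolution of $v_{\overline{P}_I}^\infty(\ul{\lambda})$ by the locally algebraic representations $i_{\overline{P}_K}^G(\ul{\lambda})$ with $I\subseteq K\subseteq\Delta$, and to compute $\Ext^r_G\big(v_{\overline{P}_I}^\infty(\ul{\lambda}),\bI_{\overline{P}_J}^G(\ul{\lambda})\big)$ from the associated hyper-$\Ext$ spectral sequence, feeding in the previous proposition; this follows the line of \cite[Prop. 15]{Orl}. Concretely, write $C^\bullet$ for the complex obtained from (\ref{equ: lgln-lalgSt}) by deleting the term $v_{\overline{P}_I}^\infty(\ul{\lambda})$, placed in cohomological degrees $-|\Delta\setminus I|,\dots,0$, so that $C^{-p}=\bigoplus_{K\supseteq I,\,|K\setminus I|=p}i_{\overline{P}_K}^G(\ul{\lambda})$ (in particular $C^{-|\Delta\setminus I|}=i_G^G(\ul{\lambda})$ and $C^0=i_{\overline{P}_I}^G(\ul{\lambda})$); exactness of (\ref{equ: lgln-lalgSt}) is precisely the assertion that the augmentation $C^\bullet\to v_{\overline{P}_I}^\infty(\ul{\lambda})[0]$ is a quasi-isomorphism. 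Deriving $\Hom_G\big(-,\bI_{\overline{P}_J}^G(\ul{\lambda})\big)$ then produces a (first-quadrant) spectral sequence
\begin{equation*}
E_1^{p,q}=\bigoplus_{K\supseteq I,\,|K\setminus I|=p}\Ext^q_G\big(i_{\overline{P}_K}^G(\ul{\lambda}),\bI_{\overline{P}_J}^G(\ul{\lambda})\big)\ \Rightarrow\ \Ext^{p+q}_G\big(v_{\overline{P}_I}^\infty(\ul{\lambda}),\bI_{\overline{P}_J}^G(\ul{\lambda})\big),\qquad 0\le p\le|\Delta\setminus I|.
\end{equation*}
By (\ref{equ: lgln-ext0}) the $K$-summand vanishes unless $J\subseteq K$, so only the indices $I\cup J\subseteq K\subseteq\Delta$ survive, and for each of those the summand is canonically $\hH^q_{\an}(L_J(L),E)$, independently of $K$.

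It then remains to identify the differential $d_1$. The one substantial input is a functoriality statement for (\ref{equ: lgln-ext0}): for $J\subseteq K'\subseteq K$, the natural inclusion $i_{\overline{P}_K}^G(\ul{\lambda})\hookrightarrow i_{\overline{P}_{K'}}^G(\ul{\lambda})$ induces the identity on $\hH^q_{\an}(L_J(L),E)$. This is obtained by unwinding the construction of (\ref{equ: lgln-ext0}) as in the proof of the previous proposition: both sides are computed through $\Ext^q_{L_J(L)}\big(J_{\overline{P}_J}(i_{\overline{P}_\bullet}^G)\otimes_E L(\ul{\lambda})_J,\,L(\ul{\lambda})_J\big)$, the inclusion induces the inclusion of Jacquet modules $J_{\overline{P}_J}(i_{\overline{P}_K}^G)\hookrightarrow J_{\overline{P}_J}(i_{\overline{P}_{K'}}^G)$, and the only graded piece of either Jacquet module contributing to $\Ext^q$ — namely the multiplicity-one trivial representation of $L_J(L)$ attached to the Kostant representative $w=1$ — is carried isomorphically by this inclusion. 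Granting this, $d_1$ equals (up to signs) $\hH^q_{\an}(L_J(L),E)$ tensored over $E$ with the simplicial ``add-a-vertex'' differential on $\bigoplus_{S\subseteq\Delta\setminus(I\cup J)}E\cdot e_S$ — the reindexing being $K\mapsto K\setminus(I\cup J)$, using $K\setminus I=(J\setminus I)\sqcup\big(K\setminus(I\cup J)\big)$ — so that $E_1^{\bullet,q}$ is, up to a shift by $|J\setminus I|$, the complex $\hH^q_{\an}(L_J(L),E)\otimes_E\big(\wedge^\bullet E^{\Delta\setminus(I\cup J)},\ e\wedge(-)\big)$ with $e=\sum_{s\in\Delta\setminus(I\cup J)}e_s$.

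Finally I would read off the cohomology. If $I\cup J\neq\Delta$ then $e\neq0$, so wedging by this nonzero vector makes $\wedge^\bullet E^{\Delta\setminus(I\cup J)}$ acyclic; hence $E_2=0$ and $\Ext^r_G\big(v_{\overline{P}_I}^\infty(\ul{\lambda}),\bI_{\overline{P}_J}^G(\ul{\lambda})\big)=0$ for all $r$. If $I\cup J=\Delta$ then $I\cup J\subseteq K$ forces $K=\Delta$, so $E_1^{\bullet,q}$ is concentrated in the single column $p=|\Delta\setminus I|$, where it is $\hH^q_{\an}(L_J(L),E)$; the spectral sequence degenerates and $\Ext^r_G\big(v_{\overline{P}_I}^\infty(\ul{\lambda}),\bI_{\overline{P}_J}^G(\ul{\lambda})\big)\cong\hH^{r-|\Delta\setminus I|}_{\an}(L_J(L),E)$. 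This gives (\ref{equ: lgln-ext4}); (\ref{equ: lgln-ext3}) follows by the identical argument run inside the category of representations on which $Z$ acts by (the restriction of) $\chi_{\ul{\lambda}}$, feeding in (\ref{equ: lgln-ext2}) in place of (\ref{equ: lgln-ext0}) and replacing $\hH^q_{\an}(L_J(L),E)$ by $\hH^q_{\an}(L_J(L)/Z,E)$ throughout. The main obstacle is the functoriality of (\ref{equ: lgln-ext0}) with respect to the inclusions $i_{\overline{P}_K}^G(\ul{\lambda})\hookrightarrow i_{\overline{P}_{K'}}^G(\ul{\lambda})$: it is what pins down $d_1$, and in particular guarantees $d_1\neq0$ so that the Koszul complex really intervenes; everything else is the formalism of the spectral sequence.
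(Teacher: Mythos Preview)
Your proposal is correct and follows essentially the same route as the paper: both resolve $v_{\overline{P}_I}^\infty(\ul{\lambda})$ by the exact sequence (\ref{equ: lgln-lalgSt}), run the hyper-$\Ext$ spectral sequence, and feed in (\ref{equ: lgln-ext0})/(\ref{equ: lgln-ext2}) to see that only the terms with $I\cup J\subseteq K\subseteq\Delta$ contribute, each yielding the same group $\hH^q_{\an}(L_J(L),E)$. The paper disposes of the case $I\cup J\neq\Delta$ by the one-line remark that the $E_1$-row is ``a constant coefficient system on the standard simplex corresponding to $I\cup J$'', which is exactly your Koszul-complex computation; your version is more explicit about the functoriality of (\ref{equ: lgln-ext0}) under the inclusions $i_{\overline{P}_K}^G(\ul{\lambda})\hookrightarrow i_{\overline{P}_{K'}}^G(\ul{\lambda})$, which the paper leaves implicit.
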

\begin{proof}We only prove (\ref{equ: lgln-ext3}), and the proof for (\ref{equ: lgln-ext4}) is similar.
Using the exact sequence (\ref{equ: lgln-lalgSt}),
the cohomology groups in the corollary can  be identified with the corresponding hypercohomology groups of the complex
    \begin{equation*}
      \Big[C_{n-1-|I|} \ra C_{n-2-|I|} \ra \cdots  \ra C_0\Big],
    \end{equation*}
    with $C_{j}=\oplus _{\substack{I\subseteq K,\\ |K/I|=j}}   i_{\overline{P}_{K}}^G(\ul{\lambda})$. By the same arguments as  in the discussion after \cite[Prop. 5.4]{Sch11} (using \cite[(25)]{Koh2011}), we obtain a spectral sequence
    \begin{equation*}
      E_1^{s,r}=\Ext_{G,\chi_{\ul{\lambda}}}^r\big(\oplus_{\substack{I\subseteq K,\\ |K/I|=s}} i_{\overline{P}_{K}}^G(\ul{\lambda}),\bI_{\overline{P}_J}^G(\ul{\lambda})\big) \Rightarrow \Ext^{r+s}_{G,\chi_{\ul{\lambda}}}\big(v_{\overline{P}_I}^\infty(\ul{\lambda}), \bI_{\overline{P}_J}^G(\ul{\lambda})\big).
    \end{equation*}
    By (\ref{equ: lgln-ext2}), for any $r\in \Z_{\geq 0}$, the $r$-th row of the $E_1$-page of the spectral sequence is given by
  \begin{equation*}\footnotesize
    \begin{matrix}& E_1^{|I\cup J|-|I|,r}& \ra & E_1^{|I\cup J|-|I|+1,r} & \ra & \cdots & \ra & E_1^{ n-1-|I|,r} \\
         &\parallel &\empty&\parallel &\empty &\empty &\empty &\parallel  \\
     &\Ext_{G,\chi_{\ul{\lambda}}}^r\big(i_{\overline{P}_{I\cup J}}^G(\ul{\lambda}),\bI_{\overline{P}_J}^G(\ul{\lambda})\big)& \ra & \oplus_{\substack{I\cup J\subseteq K,\\ |K\setminus (I\cup J)|=1}}\Ext_{G,\chi_{\ul{\lambda}}}^r\big(i_{\overline{P}_K}^G(\ul{\lambda}),\bI_{\overline{P}_J}^G(\ul{\lambda})\big) &\ra &\cdots &\ra & \Ext_{G,\chi_{\ul{\lambda}}}^r\big(i_{\overline{P}_{\Delta}}^G(\ul{\lambda}),\bI_{\overline{P}_J}^G(\ul{\lambda})\big)\\
     &\parallel &\empty&\parallel &\empty &\empty &\empty &\parallel \\
     &\hH^r_{\an}\big(L_J(L)/Z,E\big)&\ra &\oplus_{\substack{I\cup J\subseteq K,\\ |K\setminus (I\cup J)|=1}} \hH^r_{\an}\big(L_J(L)/Z,E\big)& \ra &\cdots & \ra &\hH^r_{\an}\big(L_J(L)/Z,E\big).
    \end{matrix}
  \end{equation*}
If $I\cup J\neq \Delta$,  the above sequence is exact (since it is a constant coefficient system on the standard simplex corresponding to $I\cup J$), and hence $\Ext^{r}_{G,\chi_{\ul{\lambda}}}\big(v_{\overline{P}_I}^\infty(\ul{\lambda}), \bI_{\overline{P}_J}^G(\ul{\lambda})\big)=0$ for all $r\in \Z_{\geq 0}$; if $I\cup J =\Delta$, only the elements on the $|\Delta\setminus I|$-th column of the $E_1$-page can be non-zero, and hence
    \begin{equation*}
      \hH^r_{\an}\big(L_J(L)/Z,E\big)\cong \Ext^r_{G,\chi_{\ul{\lambda}}}\big(i_{\overline{P}_{\Delta}}^G(\ul{\lambda}),\bI_{\overline{P}_J}^G(\ul{\lambda})\big)\cong \Ext^{r+|\Delta\setminus I|}_{G,\chi_{\ul{\lambda}}}\big(v_{\overline{P}_I}^{\infty}(\ul{\lambda}),\bI_{\overline{P}_J}^G(\ul{\lambda})\big).
    \end{equation*}
    The corollary follows.
  \end{proof}
\begin{corollary}\label{prop: lgln-ext2}
Let $i\in \Delta$, we have isomorphisms of $E$-vector spaces
\begin{equation*}
\Hom\big(Z_{\Delta\setminus \{i\}}(L)/Z,E\big)  \xlongrightarrow{\sim} \Ext^1_{G,\chi_{\ul{\lambda}}}\big(v_{\overline{P}_i}^{\infty}(\ul{\lambda}), \St_n^{\an}(\ul{\lambda})\big)
\xlongrightarrow{\sim} \Ext^1_{G}\big(v_{\overline{P}_i}^{\infty}(\ul{\lambda}), \St_n^{\an}(\ul{\lambda})\big).
\end{equation*}
In particular, $\dim_E  \Ext^1_{G}\big(v_{\overline{P}_i}^{\infty}(\ul{\lambda}), \St_n^{\an}(\ul{\lambda})\big)=d_L+1$.
\end{corollary}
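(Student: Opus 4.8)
The plan is to deduce the statement from Corollary~\ref{cor: lgln-ext2} by a d\'evissage along the locally analytic resolution (\ref{equ: lgln-lanSt}) of the analytic Steinberg representation. Taking $I=\emptyset$ in (\ref{equ: lgln-lanSt}) and recalling $\St_n^{\an}(\ul{\lambda})=v_{\overline{B}}^{\an}(\ul{\lambda})$, one obtains an exact resolution $0\to D^{-(n-1)}\to\cdots\to D^{-1}\to D^0\to\St_n^{\an}(\ul{\lambda})\to 0$ with $D^{-j}=\bigoplus_{|\Delta\setminus K|=n-1-j}\bI_{\overline{P}_K}^G(\ul{\lambda})$, so $\St_n^{\an}(\ul{\lambda})$ is quasi-isomorphic to the complex $D^\bullet$ in degrees $-(n-1),\dots,0$. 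Applying $\Ext_G^\bullet(v_{\overline{P}_i}^{\infty}(\ul{\lambda}),-)$ gives a spectral sequence $E_1^{p,q}=\Ext_G^q(v_{\overline{P}_i}^{\infty}(\ul{\lambda}),D^p)\Rightarrow\Ext_G^{p+q}(v_{\overline{P}_i}^{\infty}(\ul{\lambda}),\St_n^{\an}(\ul{\lambda}))$, and likewise one with fixed central character $\chi_{\ul{\lambda}}$. By Corollary~\ref{cor: lgln-ext2}, $\Ext_G^q(v_{\overline{P}_i}^{\infty}(\ul{\lambda}),\bI_{\overline{P}_K}^G(\ul{\lambda}))$ vanishes unless $\{i\}\cup K=\Delta$, i.e. unless $K=\Delta$ (sitting in $D^{-(n-1)}$) or $K=\Delta\setminus\{i\}$ (sitting in $D^{-(n-2)}$). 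Hence only the two adjacent columns $p=-(n-1)$ and $p=-(n-2)$ are nonzero, every $d_r$ with $r\ge 2$ vanishes, and the abutment acquires a two-step filtration, equivalently a long exact sequence relating $\Ext_G^\bullet(v_{\overline{P}_i}^{\infty}(\ul{\lambda}),\St_n^{\an}(\ul{\lambda}))$ to $\Ext_G^\bullet(v_{\overline{P}_i}^{\infty}(\ul{\lambda}),\bI_G^G(\ul{\lambda}))$ and $\Ext_G^\bullet(v_{\overline{P}_i}^{\infty}(\ul{\lambda}),\bI_{\overline{P}_{\Delta\setminus\{i\}}}^G(\ul{\lambda}))$.

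Next I would make the two columns explicit near total degree $1$. Since $|\Delta\setminus\{i\}|=n-2$, Corollary~\ref{cor: lgln-ext2} and Remark~\ref{rem: lgln-ext}(i) give, for $q=n-2,n-1,n$, that $\Ext_G^q(v_{\overline{P}_i}^{\infty}(\ul{\lambda}),\bI_G^G(\ul{\lambda}))\cong \hH_{\an}^{q-(n-2)}(\GL_n(L),E)$ equals $E$, $\Hom(L^{\times},E)$, $\wedge^2\Hom(L^{\times},E)$ (using $Z_\Delta=Z$), and $\Ext_G^q(v_{\overline{P}_i}^{\infty}(\ul{\lambda}),\bI_{\overline{P}_{\Delta\setminus\{i\}}}^G(\ul{\lambda}))\cong \hH_{\an}^{q-(n-2)}(L_{\Delta\setminus\{i\}}(L),E)$ equals $E$, $\Hom(Z_{\Delta\setminus\{i\}}(L),E)$, $\wedge^2\Hom(Z_{\Delta\setminus\{i\}}(L),E)$, where $L_{\Delta\setminus\{i\}}\cong\GL_i\times\GL_{n-i}$ and $Z_{\Delta\setminus\{i\}}\cong\bG_m^2$. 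The differential $d_1$ on the $q$-th row is the restriction map $\hH_{\an}^{q-(n-2)}(\GL_n(L),E)\to\hH_{\an}^{q-(n-2)}(L_{\Delta\setminus\{i\}}(L),E)$, which by Remark~\ref{rem: lgln-ext}(ii) is, on $\hH^1$, the injection $\Hom(L^{\times},E)=\Hom(Z(L),E)\hookrightarrow\Hom(Z_{\Delta\setminus\{i\}}(L),E)$ induced by $Z_{\Delta\setminus\{i\}}\xrightarrow{\dett}Z$, hence on $\hH^2$ the $\wedge^2$ of it, so again injective. Reading off $E_\infty$ in total degree $1$: the $p=-(n-1)$ piece (at $q=n$) is $\ker(d_1)=0$, and the $p=-(n-2)$ piece (at $q=n-1$) is $\mathrm{coker}(d_1)\cong\Hom(Z_{\Delta\setminus\{i\}}(L),E)/\Hom(Z(L),E)\cong\Hom(Z_{\Delta\setminus\{i\}}(L)/Z,E)$, the last identification by Remark~\ref{rem: lgln-ext}(ii). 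Thus $\Ext_G^1(v_{\overline{P}_i}^{\infty}(\ul{\lambda}),\St_n^{\an}(\ul{\lambda}))\cong\Hom(Z_{\Delta\setminus\{i\}}(L)/Z,E)$, and since $Z_{\Delta\setminus\{i\}}(L)/Z\cong L^{\times}$ its dimension is $d_L+1$.

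For the fixed--central--character isomorphism I would rerun the same computation with $\GL_n(L)$, $L_{\Delta\setminus\{i\}}(L)$ replaced by $\GL_n(L)/Z$, $L_{\Delta\setminus\{i\}}(L)/Z$, using the corresponding part of Corollary~\ref{cor: lgln-ext2}. Here Remark~\ref{rem: lgln-ext}(i) gives $\hH_{\an}^1(\GL_n(L)/Z,E)\cong\Hom(Z_\Delta(L)/Z,E)=0$ and $\hH_{\an}^2(\GL_n(L)/Z,E)=0$, so the $\bI_G^G(\ul{\lambda})$--column contributes nothing in total degrees $\ge 1$, and one obtains directly $\Ext_{G,\chi_{\ul{\lambda}}}^1(v_{\overline{P}_i}^{\infty}(\ul{\lambda}),\St_n^{\an}(\ul{\lambda}))\cong\hH_{\an}^1(L_{\Delta\setminus\{i\}}(L)/Z,E)\cong\Hom(Z_{\Delta\setminus\{i\}}(L)/Z,E)$, the first asserted isomorphism. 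Finally, the forgetful map $\Ext_{G,\chi_{\ul{\lambda}}}^1(v_{\overline{P}_i}^{\infty}(\ul{\lambda}),\St_n^{\an}(\ul{\lambda}))\to\Ext_G^1(v_{\overline{P}_i}^{\infty}(\ul{\lambda}),\St_n^{\an}(\ul{\lambda}))$ is compatible with the resolution and with the appeals to Corollary~\ref{cor: lgln-ext2}, so under the above identifications it is the natural map $\Hom(Z_{\Delta\setminus\{i\}}(L)/Z,E)\hookrightarrow\Hom(Z_{\Delta\setminus\{i\}}(L),E)\twoheadrightarrow\Hom(Z_{\Delta\setminus\{i\}}(L),E)/\Hom(Z(L),E)$, which is an isomorphism (the two subspaces are complementary as $n\ne 0$ in $E$); this gives the second asserted isomorphism.

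The step I expect to require the most care is the identification of $d_1$: one needs the isomorphisms of Corollary~\ref{cor: lgln-ext2} to be functorial in the second argument with respect to the natural inclusions $\bI_{\overline{P}_\Delta}^G(\ul{\lambda})\hookrightarrow\bI_{\overline{P}_{\Delta\setminus\{i\}}}^G(\ul{\lambda})$ built into (\ref{equ: lgln-lanSt}), so that $d_1$ is indeed the restriction map on locally analytic cohomology described in Remark~\ref{rem: lgln-ext}(ii) and, crucially, injective. This is a functoriality property of Schraen's spectral sequences and can be verified by tracking the maps through the proof of Corollary~\ref{cor: lgln-ext2}; alternatively one can bypass it by constructing $\Ext^1$ directly from the explicit extensions of Lemma~\ref{descex} (pushed forward into $\St_n^{\an}(\ul{\lambda})$, using Remark~\ref{rem: lalg01}). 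A secondary, purely bookkeeping point is to track the shift by $|\Delta\setminus\{i\}|=n-2$ consistently, noting that the small cases (in particular $n=2$, where $\Delta\setminus\{i\}=\emptyset$, $L_\emptyset=T$ and $Z_\emptyset(L)/Z\cong L^{\times}$) fall out of the very same computation.
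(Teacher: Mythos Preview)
Your proposal is correct and uses the same spectral-sequence approach as the paper, deriving the result from Corollary~\ref{cor: lgln-ext2} applied to the resolution (\ref{equ: lgln-lanSt}). The paper, however, takes a small shortcut that bypasses precisely the point you flag as delicate: it first observes that $\Hom_G\big(v_{\overline{P}_i}^{\infty}(\ul{\lambda}),\St_n^{\an}(\ul{\lambda})\big)=0$, which immediately forces any extension to have central character $\chi_{\ul{\lambda}}$ and yields the second isomorphism $\Ext^1_{G,\chi_{\ul{\lambda}}}\xrightarrow{\sim}\Ext^1_G$. It then runs the spectral sequence \emph{only} in the fixed-central-character setting, where the $\bI_G^G(\ul{\lambda})$-column vanishes outright in the relevant degrees because $\hH^i_{\an}(\GL_n(L)/Z,E)\cong\hH^i(\mathfrak{sl}_{n,\Sigma_L},E)=0$ for $i=1,2$; this gives the first isomorphism directly, with no need to identify or prove injectivity of $d_1$. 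Your route through the unrestricted $\Ext^1_G$ works as well, but the paper's ordering spares you the functoriality verification you anticipated.
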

\begin{proof}
The second isomorphism follows from the fact that $\Hom_G\big(v_{\overline{P}_i}^{\infty}(\ul{\lambda}), \St_n^{\an}(\ul{\lambda})\big)=0$. Consider the complex (cf. (\ref{equ: lgln-lanSt}))
\begin{equation*}
\Big[C_{-(n-1)} \ra C_{-(n-2)} \ra \cdots \ra C_0\Big]
\end{equation*}
with $C_{-j}=\oplus_{ |K|=j} \bI_{\overline{P}_K}^G(\ul{\lambda})$. Similarly as in the proof of Corollary \ref{cor: lgln-ext2}, we deduce a spectral sequence
\begin{equation}\label{equ: lgln-sps}
E_1^{-s,r}=\oplus_{ |K|=s} \Ext^r_{G,\chi_{\ul{\lambda}}}\big(v_{\overline{P}_i}^{\infty}(\ul{\lambda}), \bI_{\overline{P}_K}^G(\ul{\lambda})\big) \Rightarrow \Ext^{r-s}_{G,\chi_{\ul{\lambda}}}\big(v_{\overline{P}_i}^{\infty}(\ul{\lambda}), \St_n^{\an}(\ul{\lambda})\big).
\end{equation}
By (\ref{equ: lgln-ext3}), only the objects in the $(1-n)$-th and $(2-n)$-th columns of the $E_1$-page can be non-zero, where the $(n-1)$-th row is given by
\begin{equation*}
\begin{matrix}
 & \hH^1_{\an}(L_{\Delta}(L)/Z,E) &\lra &\hH^1_{\an}(L_{\Delta \setminus \{i\}}(L)/Z,E) \\
 &\parallel &\empty &\parallel \\
 &E_1^{1-n, n-1} &\xlongrightarrow{d_1^{1-n,n-1}} &E_1^{2-n, n-1}
\end{matrix}
\end{equation*}
and the $n$-th row is given by
\begin{equation*}
\begin{matrix}
  &\hH^2_{\an}(L_{\Delta}(L)/Z,E) &\lra &\hH^2_{\an}(L_{\Delta \setminus \{i\}}(L)/Z,E)\\
  &\parallel &\empty &\parallel \\
 & E_1^{1-n,n} &\xlongrightarrow{d_1^{1-n,n}} & E_1^{2-n,n}
\end{matrix}.
\end{equation*}
By Remark \ref{rem: lgln-ext} (i),  $\hH^i_{\an}(L_{\Delta}(L)/Z,E)\cong \hH^i(\usl_{n,\Sigma_L},E)=0$ for $i=1,2$. The first isomorphism follows.
\end{proof}

\begin{corollary} \label{prop: lgln-ext}Let $i\in \Delta$, we have a natural commutative diagram  of isomorphisms:
\begin{equation}\label{equ: lgln-excd}
  \begin{CD}
   {\Hom(T(L)/Z,E)}/{\Hom(Z_i(L)/Z,E)}@> \sim >> {\Hom(T(L),E)}/{\Hom(Z_i(L),E)} \\
    @V \sim VV @V \sim VV\\
 \Ext^1_{G,\chi_{\ul{\lambda}}}\big(i_{\overline{P}_i}^G(\ul{\lambda}), \St_n^{\an}(\ul{\lambda})\big)@>\sim >>\Ext^1_{G}\big(i_{\overline{P}_i}^G(\ul{\lambda}), \St_n^{\an}(\ul{\lambda})\big)
  \end{CD}.
\end{equation}
In particular, $\dim_E \Ext^1_G(i_{\overline{P}_i}^G\big(\ul{\lambda}), \St_n^{\an}(\ul{\lambda})\big)=d_L+1.$
  \end{corollary}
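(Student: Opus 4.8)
The plan is to run the argument of Corollary~\ref{cor: lgln-ext2} and Corollary~\ref{prop: lgln-ext2} verbatim, only with $v_{\overline{P}_i}^{\infty}(\ul{\lambda})$ replaced by $i_{\overline{P}_i}^G(\ul{\lambda})$, i.e. with $I=\{i\}$. Applying $\Ext^\bullet_{G,\chi_{\ul{\lambda}}}\big(i_{\overline{P}_i}^G(\ul{\lambda}),-\big)$ (resp.\ $\Ext^\bullet_{G}\big(i_{\overline{P}_i}^G(\ul{\lambda}),-\big)$) to the resolution $[C_{-(n-1)}\ra\cdots\ra C_0]$ with $C_{-j}=\bigoplus_{|K|=j}\bI_{\overline{P}_K}^G(\ul{\lambda})$ coming from~\eqref{equ: lgln-lanSt}, one obtains a spectral sequence
\[
E_1^{-s,r}=\bigoplus_{|K|=s}\Ext^r_{G,\chi_{\ul{\lambda}}}\big(i_{\overline{P}_i}^G(\ul{\lambda}),\bI_{\overline{P}_K}^G(\ul{\lambda})\big)\ \Rightarrow\ \Ext^{r-s}_{G,\chi_{\ul{\lambda}}}\big(i_{\overline{P}_i}^G(\ul{\lambda}),\St_n^{\an}(\ul{\lambda})\big),
\]
together with its analogue without a fixed central character. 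By~\eqref{equ: lgln-ext2} (resp.~\eqref{equ: lgln-ext0}) the term $\Ext^r_{G,\chi_{\ul{\lambda}}}\big(i_{\overline{P}_i}^G(\ul{\lambda}),\bI_{\overline{P}_K}^G(\ul{\lambda})\big)$ vanishes unless $K\subseteq\{i\}$, so exactly two columns of the $E_1$-page are nonzero: the column $K=\emptyset$, where $E_1^{0,r}=\hH^r_{\an}(T(L)/Z,E)$, and the column $K=\{i\}$, where $E_1^{-1,r}=\hH^r_{\an}(L_i(L)/Z,E)$. Since only two adjacent columns survive there is no room for differentials $d_{\geq 2}$, so the spectral sequence degenerates at $E_2$.

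The next step is to compute the single relevant differential $d_1^{-1,r}\colon \hH^r_{\an}(L_i(L)/Z,E)\to \hH^r_{\an}(T(L)/Z,E)$. It is induced by postcomposition with the natural inclusion $\bI_{\overline{P}_i}^G(\ul{\lambda})\hookrightarrow\bI_{\overline{B}}^G(\ul{\lambda})$, so, unwinding the description of~\eqref{equ: lgln-ext2} through Jacquet modules and the cohomology of Levi subgroups (cf.\ Remark~\ref{rem: lgln-ext}~(ii)), it is the restriction map in cohomology along $T(L)/Z\hookrightarrow L_i(L)/Z$. By Remark~\ref{rem: lgln-ext}~(i) --- using that $\fd_i$ is semisimple, so $\hH^1\big((\fd_i)_{\Sigma_L},E\big)=\hH^2\big((\fd_i)_{\Sigma_L},E\big)=0$ --- this map is an isomorphism for $r=0$, equals the inclusion $\Hom(Z_i(L)/Z,E)\hookrightarrow\Hom(T(L)/Z,E)$ for $r=1$, and equals its exterior square for $r=2$; in all cases it is injective. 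Hence $E_2^{-1,r}=0$ for $r\leq 2$, and therefore
\[
\Ext^1_{G,\chi_{\ul{\lambda}}}\big(i_{\overline{P}_i}^G(\ul{\lambda}),\St_n^{\an}(\ul{\lambda})\big)\ \cong\ E_\infty^{0,1}\ \cong\ \Hom(T(L)/Z,E)/\Hom(Z_i(L)/Z,E),
\]
while the same computation without central character gives $\Ext^1_{G}\big(i_{\overline{P}_i}^G(\ul{\lambda}),\St_n^{\an}(\ul{\lambda})\big)\cong\Hom(T(L),E)/\Hom(Z_i(L),E)$ (and, incidentally, $\Hom_G\big(i_{\overline{P}_i}^G(\ul{\lambda}),\St_n^{\an}(\ul{\lambda})\big)=0$, from $E_2^{0,0}=E_2^{-1,1}=0$). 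These two isomorphisms will be the vertical arrows of~\eqref{equ: lgln-excd}.

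It remains to check the commutativity of~\eqref{equ: lgln-excd} and that its top arrow is an isomorphism; this will then force all four arrows to be isomorphisms. The two spectral sequences above arise from one and the same resolution, so the natural transformation $\Ext^\bullet_{G,\chi_{\ul{\lambda}}}\to\Ext^\bullet_{G}$ which forgets the central character condition is a morphism of spectral sequences; on the surviving $E_2^{0,1}$-terms it is precisely the map induced by $\Hom(T(L)/Z,E)\hookrightarrow\Hom(T(L),E)$, which is the top arrow of~\eqref{equ: lgln-excd}, so the square commutes. That this arrow is an isomorphism is then elementary: identifying $\Hom(T(L),E)$ with $\Hom(L^{\times},E)^{n}$ via the diagonal torus, the embedding of Remark~\ref{rem: lgln-ext}~(ii) realizes $\Hom(Z_i(L),E)$ as $\{(\chi_j)_j:\chi_i=\chi_{i+1}\}$, one has $\Hom(T(L)/Z,E)=\{(\chi_j)_j:\chi_1+\cdots+\chi_n=0\}$, and $\Hom(Z_i(L)/Z,E)$ is the intersection of the two; since $E$ has characteristic $0$ one checks $\Hom(T(L)/Z,E)+\Hom(Z_i(L),E)=\Hom(T(L),E)$, which yields the isomorphism. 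A dimension count then gives $\dim_E\Ext^1_G\big(i_{\overline{P}_i}^G(\ul{\lambda}),\St_n^{\an}(\ul{\lambda})\big)=n(d_L+1)-(n-1)(d_L+1)=d_L+1$. The only point that needs a little attention is this last piece of linear algebra --- pinning down how $\Hom(Z_i(L),E)$ and $\Hom(Z_i(L)/Z,E)$ sit inside $\Hom(T(L),E)$ through the block-determinant map of Remark~\ref{rem: lgln-ext}~(ii); everything else is a direct transcription of arguments already present in the paper, and I expect no real obstacle.
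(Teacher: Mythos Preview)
Your proposal is correct and follows essentially the same route as the paper: the same two-column spectral sequence coming from the resolution~\eqref{equ: lgln-lanSt}, the identification of $d_1^{-1,r}$ with the restriction map via Remark~\ref{rem: lgln-ext}, and its injectivity for $r\le 2$ through $\wedge^r$. The only cosmetic difference is that the paper argues commutativity of the square via the explicit description of the extensions (Remark~\ref{rem: lgln-ext2}), whereas you invoke the morphism of spectral sequences induced by forgetting the central character; both are valid, and your approach is arguably cleaner here.
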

  \begin{proof}
  The bottom isomorphism follows from the fact that $\Hom_G\big(i_{\overline{P}_i}^{G}(\ul{\lambda}), \St_n^{\an}(\ul{\lambda})\big)=0$. The top isomorphism follows by Remark \ref{rem: lgln-ext} (ii).
\\

\noindent
Using the long exact sequence (\ref{equ: lgln-lanSt}) as in the proof of Corollary \ref{prop: lgln-ext2} (see (\ref{equ: lgln-sps})), we can deduce a spectral sequence
\begin{equation}\label{equ: lgln-sp3}
      E_1^{-s,r}=\oplus_{|K|=s} \Ext^r_{G}\big(i_{\overline{P}_i}^G(\ul{\lambda}), \bI_{\overline{P}_K}^G(\ul{\lambda})\big) \Rightarrow \Ext^{r-s}_{G}\big(i_{\overline{P}_i}^G(\ul{\lambda}), \St_n^{\an}(\ul{\lambda})\big),
\end{equation}
\begin{equation*}
   \text{\Big(resp. } E_1^{-s,r}=\oplus_{|K|=s} \Ext^r_{G,\chi_{\ul{\lambda}}}\big(i_{\overline{P}_i}^G(\ul{\lambda}), \bI_{\overline{P}_K}^G(\ul{\lambda})\big) \Rightarrow \Ext^{r-s}_{G,\chi_{\ul{\lambda}}}\big(i_{\overline{P}_i}^G(\ul{\lambda}), \St_n^{\an}(\ul{\lambda})\big)\text{\Big)}.
\end{equation*}
We prove the vertical isomorphisms of (\ref{equ: lgln-excd}). We only prove the right isomorphism, since the left one follows by the same argument. By (\ref{equ: lgln-ext0}), only the $0$-th and $(-1)$-th columns of the $E_1$-page of (\ref{equ: lgln-sp3}) can have non-zero terms. We see the $1$-th row is given by
  \begin{equation*}
d_1^{-1,1}: \Ext^1_{G}\big(i_{\overline{P}_i}^G(\ul{\lambda}), \bI_{\overline{P}_i}^G(\ul{\lambda})\big) \longrightarrow \Ext^1_{G}\big(i_{\overline{P}_i}^G(\ul{\lambda}), \bI_{\overline{P}_{\emptyset}}^G(\ul{\lambda})\big).
  \end{equation*}
By (\ref{equ: lgln-ext0}) and (\ref{equ: lgln-cohan}), we have
  \begin{equation*}E_1^{0,1}/\Ima(d_1^{-1,1})\cong \Hom(T(L),E)/\Hom(Z_i(L),E).\end{equation*} The $2$-th row is given by
\begin{equation*}
d_1^{-1,2}: \Ext^2_{G}\big(i_{\overline{P}_i}^G(\ul{\lambda}), \bI_{\overline{P}_i}^G(\ul{\lambda})\big) \longrightarrow \Ext^2_{G}\big(i_{\overline{P}_i}^G(\ul{\lambda}), \bI_{\overline{P}_{\emptyset}}^G(\ul{\lambda})\big).
\end{equation*}
By (\ref{equ: lgln-ext0})  and (\ref{equ: lgln-cohan}), we have
\begin{eqnarray*}
  \Ext^2_{G}\big(i_{\overline{P}_i}^G(\ul{\lambda}), \bI_{\overline{P}_i}^G(\ul{\lambda})\big)&\cong& \wedge^2 \Hom(Z_i(L),E),\\
  \Ext^2_{G}\big(i_{\overline{P}_i}^G(\ul{\lambda}), \bI_{\overline{P}_{\emptyset}}^G(\ul{\lambda})\big) &\cong & \wedge^2 \Hom(T(L),E).
\end{eqnarray*}
In particular, $d_1^{-1,2}$ is injective (see also Remark \ref{rem: lgln-ext} (ii)). Together with (\ref{equ: lgln-sp3}),  we obtain the right isomorphism of (\ref{equ: lgln-excd}). Finally, using an explicit description of the vertical isomorphisms as in Remark \ref{rem: lgln-ext2} below, one sees easily that the diagram commutes.
\end{proof}

\begin{remark}\label{rem: lgln-ext2}
We describe explicitly the right isomorphism in (\ref{equ: lgln-excd}) (the description for the left one is similar). From the proof, this isomorphism is induced via the natural morphism
\begin{equation*}
   \Hom(T(L),E) \cong \Ext^1_{G}\big(i_{\overline{P}_{i}}^G(\ul{\lambda}),  \bI_{\overline{B}}^G(\ul{\lambda})\big)\lra \Ext^1_{G}\big(i_{\overline{P}_{i}}^G(\ul{\lambda}), \St_n^{\an}(\ul{\lambda})\big).
 \end{equation*}
Let $\psi\in \Hom(T(L),E)$, and $V:=\sE_{\{i\}}^{\emptyset}(\ul{\lambda}, \psi)$  (see Lemme. \ref{descex}), then the push-forward $W$ of $V$  via the the quotient map $\bI_{\overline{B}}^G(\ul{\lambda})\twoheadrightarrow \St_n^{\an}(\ul{\lambda})$ gives the extension of $i_{\overline{P}_{i}}^G(\ul{\lambda})$ by $\St_n^{\an}(\ul{\lambda})$ associated to $\psi$ via the right isomorphism in (\ref{equ: lgln-excd}). This extension is split if and only if $\psi$ comes from a character on $Z_{i}(L)$ via (\ref{equ: dettI}) (applied with $J=\emptyset$, $I=\{i\}$). Note also that by Remark \ref{rem: lalg01} (ii), $W$ comes by push-forward from a locally algebraic extension of $i_{\overline{P}_{i}}^G(\ul{\lambda})$ by $\St_n^{\infty}(\ul{\lambda})$ if and only if the character $\psi$ is smooth modulo $\Hom(Z_{i}(L),E)$.
\end{remark}

\begin{theorem}\label{cor: lgln-key}
Let $i \in \Delta$, the natural map
    \begin{equation*}\Ext^1_G(v_{\overline{P}_i}^{\infty}(\ul{\lambda}),\St_n^{\an}(\ul{\lambda})) \lra \Ext^1_G(i_{\overline{P}_i}^G(\ul{\lambda}),\St_n^{\an}(\ul{\lambda}))
     \end{equation*}is an isomorphism. In particular, we have an isomorphism
    \begin{equation}\label{equ: lgln-extc}
      \Hom(T(L),E)/\Hom(Z_i(L),E) \xlongrightarrow{\sim} \Ext^1_G(v_{\overline{P}_i}^{\infty}(\ul{\lambda}),\St_n^{\an}(\ul{\lambda})).
    \end{equation}
\end{theorem}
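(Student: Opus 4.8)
The plan is to deduce the theorem from the presentation of $v_{\overline{P}_i}^{\infty}(\ul{\lambda})$ as a quotient of $i_{\overline{P}_i}^G(\ul{\lambda})$, combined with the dimension counts of Corollaries \ref{prop: lgln-ext2} and \ref{prop: lgln-ext}. Write $K_i:=\sum_{J\supsetneq \{i\}} i_{\overline{P}_J}^G(\ul{\lambda})\subseteq i_{\overline{P}_i}^G(\ul{\lambda})$, so that by definition of $v_{\overline{P}_i}^{\infty}(\ul{\lambda})$ there is a short exact sequence of admissible locally $\Q_p$-analytic representations of $G$
\begin{equation*}
  0 \lra K_i \lra i_{\overline{P}_i}^G(\ul{\lambda}) \xlongrightarrow{q} v_{\overline{P}_i}^{\infty}(\ul{\lambda}) \lra 0,
\end{equation*}
and the natural map of the statement is the pullback $q^*$ along $q$. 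Applying $\Ext^{\bullet}_G\big(-,\St_n^{\an}(\ul{\lambda})\big)$ to this short exact sequence yields a long exact sequence, part of which reads
\begin{equation*}
  \Hom_G\big(K_i,\St_n^{\an}(\ul{\lambda})\big) \lra \Ext^1_G\big(v_{\overline{P}_i}^{\infty}(\ul{\lambda}),\St_n^{\an}(\ul{\lambda})\big) \xlongrightarrow{q^*} \Ext^1_G\big(i_{\overline{P}_i}^G(\ul{\lambda}),\St_n^{\an}(\ul{\lambda})\big).
\end{equation*}

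The crucial step is to prove the vanishing $\Hom_G\big(K_i,\St_n^{\an}(\ul{\lambda})\big)=0$. Being a sum of locally algebraic subrepresentations $i_{\overline{P}_J}^G(\ul{\lambda})$, the representation $K_i$ is locally algebraic; hence the image $W$ of any $f\in\Hom_G\big(K_i,\St_n^{\an}(\ul{\lambda})\big)$ is a locally algebraic subrepresentation of $\St_n^{\an}(\ul{\lambda})$, and its irreducible constituents lie among those of $i_{\overline{P}_i}^G(\ul{\lambda})$, i.e. among the $v_{\overline{P}_J}^{\infty}(\ul{\lambda})$ with $\{i\}\subseteq J\subseteq\Delta$. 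On the other hand, by the Orlik--Strauch theory \cite{OS} the unique locally algebraic irreducible constituent of $\St_n^{\an}(\ul{\lambda})=v_{\overline{B}}^{\an}(\ul{\lambda})$ is $\St_n^{\infty}(\ul{\lambda})=v_{\overline{P}_{\emptyset}}^{\infty}(\ul{\lambda})$. Since $W$ is admissible of finite length with all constituents locally algebraic, if $W\neq 0$ then $\St_n^{\infty}(\ul{\lambda})$ would have to be a constituent of $W$, which is impossible since $\emptyset\not\supseteq\{i\}$. Therefore $f=0$, as claimed.

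Consequently $q^*$ is injective. By Corollary \ref{prop: lgln-ext2} and Corollary \ref{prop: lgln-ext}, its source and target are $E$-vector spaces both of dimension $d_L+1$, so $q^*$ is an isomorphism; this is the first assertion. The isomorphism (\ref{equ: lgln-extc}) is then obtained by composing $(q^*)^{-1}$ with the isomorphism $\Hom(T(L),E)/\Hom(Z_i(L),E)\xrightarrow{\sim}\Ext^1_G\big(i_{\overline{P}_i}^G(\ul{\lambda}),\St_n^{\an}(\ul{\lambda})\big)$ of Corollary \ref{prop: lgln-ext}. The only substantial point is the vanishing in the middle step, which rests on the fact that $\St_n^{\infty}(\ul{\lambda})$ is the sole locally algebraic constituent of $\St_n^{\an}(\ul{\lambda})$; the dimension count is genuinely needed, since surjectivity of $q^*$ is not directly visible (indeed $\Ext^1_G\big(K_i,\St_n^{\an}(\ul{\lambda})\big)$ need not vanish, so the surjectivity of $q^*$ would not follow from the long exact sequence alone).
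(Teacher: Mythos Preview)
Your proof is correct and follows essentially the same approach as the paper: both use the short exact sequence $0\to K_i\to i_{\overline{P}_i}^G(\ul{\lambda})\to v_{\overline{P}_i}^{\infty}(\ul{\lambda})\to 0$ (the paper denotes your $K_i$ by $w_{\overline{P}_i}^{\infty}(\ul{\lambda})$), establish the vanishing $\Hom_G\big(K_i,\St_n^{\an}(\ul{\lambda})\big)=0$ to get injectivity of $q^*$, and conclude by the dimension count from Corollaries \ref{prop: lgln-ext2} and \ref{prop: lgln-ext}. The paper simply asserts the vanishing, whereas you supply the justification via the locally algebraic constituents; your explanation is correct and a welcome elaboration.
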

\begin{proof}Denote by $w_{\overline{P}_i}^{\infty}(\ul{\lambda}):=\sum_{J\supsetneq \{i\}} i_{\overline{P}_J}^G(\ul{\lambda}) \hookrightarrow i_{\overline{P}_i}^G(\ul{\lambda})$, and consider the exact sequence
\begin{equation*}
   0 \ra w_{\overline{P}_i}^{\infty}(\ul{\lambda}) \ra i_{\overline{P}_i}^G(\ul{\lambda}) \ra v_{\overline{P}_i}^{\infty}(\ul{\lambda}) \ra 0,
\end{equation*}
which induces a long exact sequence
\begin{multline*}
  0 \ra \Hom_G(v_{\overline{P}_i}^{\infty}(\ul{\lambda}),\St_n^{\an}(\ul{\lambda})) \ra \Hom_G(i_{\overline{P}_i}^G(\ul{\lambda}),\St_n^{\an}(\ul{\lambda})) \ra \Hom_G( w_{\overline{P}_i}^{\infty}(\ul{\lambda}), \St_n^{\an}(\ul{\lambda})) \\
  \ra \Ext^1_G(v_{\overline{P}_i}^{\infty}(\ul{\lambda}),\St_n^{\an}(\ul{\lambda})) \ra \Ext^1_G(i_{\overline{P}_i}^G(\ul{\lambda}),\St_n^{\an}(\ul{\lambda})) \ra \Ext_G^1( w_{\overline{P}_i}^{\infty}(\ul{\lambda}), \St_n^{\an}(\ul{\lambda})).
  \end{multline*}
  Since $\Hom_G( w_{\overline{P}_i}^{\infty}(\ul{\lambda}), \St_n^{\an}(\ul{\lambda}))=0$, we get an injection
  \begin{equation*}
    \Ext^1_G(v_{\overline{P}_i}^{\infty}(\ul{\lambda}),\St_n^{\an}(\ul{\lambda})) \hooklongrightarrow \Ext^1_G(i_{\overline{P}_i}^G(\ul{\lambda}),\St_n^{\an}(\ul{\lambda}))
  \end{equation*}
  which is actually an isomorphism since both of these two $E$-vectors spaces are $(d_L+1)$-dimensional by Proposition \ref{prop: lgln-ext2}, \ref{prop: lgln-ext}. The second part follows thus from Proposition \ref{prop: lgln-ext}.
\end{proof}

\begin{remark}\label{rem: lgln-ext3}
(i) The theorem in $\GL_2(\Q_p)$-case was discovered by Breuil (\cite{Br04}), and was used to define the so-called Breuil's $\cL$-invariants, which are roughly the counterpart of Fontaine-Mazur $\cL$-invariants (for $2$-dimensional special $\Gal_{\Q_p}$-representations) in the locally analytic representations of $\GL_2(\Q_p)$. The correspondence between Breuil's $\cL$-invariants and Fontaine-Mazur $\cL$-invariant was actually the first discovered non-trivial example in $p$-adic Langlands program. In the case of $\GL_2(L)$ and $\GL_3(\Q_p)$, the theorem was obtained by Schraen in \cite{Sch10} \cite{Sch11}.
\\

\noindent
(ii) Let $\Psi\in \Hom(T(L),E)$, $i\in \Delta$, and $V\in \Ext^1_G\big(i_{\overline{P}_{i}}^G(\ul{\lambda}),\St_n^{\an}(\ul{\lambda})\big)$ be the extension associated to $\Psi$ as in Remark \ref{rem: lgln-ext2}. By Theorem \ref{cor: lgln-key}, the pull-back of $V$ via the natural injection $w_{\overline{P}_i}^{\infty}(\ul{\lambda})\hookrightarrow i_{\overline{P}_i}^G(\ul{\lambda})$ is split \big(as an extension of $w_{\overline{P}_i}^{\infty}(\ul{\lambda})$ by $\St_n^{\an}(\ul{\lambda})$\big). Quotient by $w_{\overline{P}_i}^{\infty}(\ul{\lambda})$, one obtains thus an extension, denoted by $\tilde{\Sigma}_i(\ul{\lambda}, \Psi)$, of $v_{\overline{P}_i}^{\infty}(\ul{\lambda})$ by $\St_n^{\an}(\ul{\lambda})$, which is associated to $\Psi$ via (\ref{equ: lgln-extc}). In particular,  this extension only depends on the image of $\Psi$ in $\Hom(T(L),E)/\Hom(Z_{i}(L),E)$, and we have that $\tilde{\Sigma}_i(\ul{\lambda}; \Psi)$ comes by push-forward from a locally algebraic extension of $v_{\overline{P}_i}^{\infty}(\ul{\lambda})$ by $\St_n^{\infty}(\ul{\lambda})$ if and only if $\Psi$ is smooth modulo $\Hom(Z_i(L),E)$.
%
\\

\noindent
 (iii) For each $i\in \Delta$, we have an isomorphism
\begin{eqnarray}\label{equ: lgln-Linv0}
  \iota_i: \Hom(L^{\times},E) &\xlongrightarrow{\sim} & \Hom(T(L),E)/\Hom(Z_{i}(L),E), \\ \psi& \mapsto& [(a_1,\cdots, a_n)\mapsto \psi(a_i/a_{i+1})]\nonumber.
\end{eqnarray}
For $\psi\in \Hom(L^{\times}, E)$, we denote by $\tilde{\Sigma}_i(\ul{\lambda}, \psi):=\tilde{\Sigma}_i(\ul{\lambda}, \iota_i(\psi))$.
\end{remark}

\noindent Let $\alpha\in E^{\times}$, we denote by $*(\alpha, \ul{\lambda}):=*(\ul{\lambda}) \otimes_E \unr(\alpha) \circ \dett$ for any representation $*(\ul{\lambda})$ of $\GL_n(L)$ as above (for example, $*(\ul{\lambda})=\St_n^{\an}(\ul{\lambda})$, $v_{\overline{P}_i}^{\infty}(\ul{\lambda})$ etc.). It is clear that Theorem \ref{cor: lgln-key} holds with $v_{\overline{P}_i}^{\infty}(\ul{\lambda})$, $\St_n^{\an}(\ul{\lambda})$, $i_{\overline{P}_i}^G(\ul{\lambda})$ replaced by $v_{\overline{P}_i}^{\infty}(\alpha, \ul{\lambda})$, $\St_n^{\an}(\alpha, \ul{\lambda})$, $i_{\overline{P}_i}^G(\alpha, \ul{\lambda})$ respectively. We put $\tilde{\Sigma}_i(\alpha, \ul{\lambda}, ?):=\tilde{\Sigma}_i(\ul{\lambda},?)\otimes_E \unr(\alpha)\circ \dett$ with $?$ equal to $\psi\in \Hom(L^{\times},E)$ or $\Psi\in \Hom(T(L),E)$.

\subsection{Extensions of locally analytic representations, II}
\noindent Let  $\ul{\lambda}\in X^+$. In this section, we study certain subrepresentations of $\St_n^{\an}(\ul{\lambda})$ which have simpler (and more clear) structure than $\St_n^{\an}(\ul{\lambda})$ (e.g. see (\ref{soc})). And we show that the extensions of $v_{\overline{P}_i}^{\infty}(\ul{\lambda})$ by $\St_n^{\an}(\ul{\lambda})$  actually come by push-forward from the extensions of $v_{\overline{P}_i}^{\infty}(\ul{\lambda})$ by such subrepresentations (cf. (\ref{equ: BrLi}) (\ref{equ: Lsigma})).
\subsubsection{Subrepresentations of $\St_n^{\an}(\ul{\lambda})$} \noindent Let $I\subseteq \Delta$, denote by $\St_I^{\infty}$ the standard smooth Steinberg representation of $L_I(L)$, i.e.
\begin{equation*}
  \St_I^{\infty}\cong (\Ind_{\overline{B}(L)\cap L_I(L)}^{L_I(L)} 1)^{\infty}/\sum_{\emptyset\neq J\subseteq I} (\Ind_{\overline{P}_J(L)\cap L_I(L)}^{L_I(L)}1)^{\infty},
\end{equation*}
and put $\St_I^{\infty}(\ul{\lambda}):=\St_I^{\infty}\otimes_E L(\ul{\lambda})_I$. Similarly, let
\begin{equation*}
  \St^{\an}_{I}(\ul{\lambda}):= \big(\Ind_{\overline{B}(L)\cap L_I(L)}^{L_I(L)} \chi_{\ul{\lambda}}\big)^{\Q_p-\an}/\sum_{\emptyset \neq J\subseteq I} (\Ind_{\overline{P}_J(L)\cap L_I(L)}^{L_I(L)} L(\ul{\lambda})_J)^{\Q_p-\an}.
\end{equation*}
By \cite[Thm]{OS}, we see  $\St_I^{\infty}(\ul{\lambda})$ is actually the locally algebraic subrepresentation of $\St^{\an}_I(\ul{\lambda})$.
\\

\noindent
By the tranversity of parabolic inductions, we have \begin{equation}\label{equ: lgln-Istein}
    (\Ind_{\overline{P}_{I}(L)}^G \St_{I}^{\infty})^{\infty}  \cong \Big(\Ind_{\overline{P}_{I}(L)}^G \Big((\Ind_{\overline{B}(L)\cap L_{I}(L)}^{L_{I}(L)} 1)^{\infty}/\sum_{\emptyset \neq J\subseteq I} (\Ind_{\overline{P}_J(L)\cap L_{I}(L)}^{L_{I}(L)} 1)^{\infty}\Big)\Big)^{\infty}
    \cong i_{\overline{B}}^G /\sum_{\emptyset \neq J\subseteq I} i_{\overline{P}_J}^G.
  \end{equation}
Similarly, we have an isomorphism
\begin{equation}\label{equ: Istan}
  (\Ind_{\overline{P}_{I}(L)}^G \St_{I}^{\an}(\ul{\lambda}))^{\Q_p-\an} \cong \bI_{\overline{B}}^G(\ul{\lambda})/\sum_{\emptyset \neq J\subseteq I} \bI_{\overline{P}_J}^G(\ul{\lambda}),
\end{equation}
and hence a natural projection
\begin{equation*}
  (\Ind_{\overline{P}_{I}(L)}^G \St_{I}^{\an}(\ul{\lambda}))^{\Q_p-\an} \twoheadlongrightarrow \St_n^{\an}(\ul{\lambda}).
\end{equation*}
Consider the following composition
\begin{equation}\label{equ: SigmaI}
  (\Ind_{\overline{P}_{I}(L)}^G \St_{I}^{\infty}(\ul{\lambda}))^{\Q_p-\an} \hooklongrightarrow (\Ind_{\overline{P}_{I}(L)}^G \St_{I}^{\an}(\ul{\lambda}))^{\Q_p-\an} \twoheadlongrightarrow \St_n^{\an}(\ul{\lambda}),
\end{equation}
and denote by $\tilde{\Sigma}_{\Delta\setminus I}(\ul{\lambda})$ its image. When $I=\Delta \setminus \{i\}$, we denote by $\tilde{\Sigma}_i(\ul{\lambda}):=\tilde{\Sigma}_{\{i\}}(\ul{\lambda})$.
\begin{lemma}\label{lem: lgln-Sigma_i} Let $i\in \Delta$, then we have a natural exact sequence
\begin{equation*}
  0 \lra v_{\overline{P}_{i}}^{\infty}(\ul{\lambda}) \lra (\Ind_{\overline{P}_{\Delta\setminus \{i\}}(L)}^G \St_{\Delta\setminus \{i\}}^{\infty}(\ul{\lambda}))^{\Q_p-\an} \lra \tilde{\Sigma}_i(\ul{\lambda}) \lra 0.
\end{equation*}
\end{lemma}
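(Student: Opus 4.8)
The cokernel of the composite in (\ref{equ: SigmaI}) is $\tilde{\Sigma}_i(\ul{\lambda})$ by construction, so the whole content of the lemma is the identification of its kernel $K$ with $v_{\overline{P}_i}^{\infty}(\ul{\lambda})$, and the plan is to carry this out inside $\bI_{\overline{B}}^G(\ul{\lambda})$ via transitivity of parabolic induction. Set $I:=\Delta\setminus\{i\}$, $W:=\sum_{\emptyset\neq J\subseteq I}\bI_{\overline{P}_J}^G(\ul{\lambda})$ and $\widetilde{W}:=\sum_{\emptyset\neq J\subseteq\Delta}\bI_{\overline{P}_J}^G(\ul{\lambda})$. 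By (\ref{equ: Istan}) the ambient space of (\ref{equ: SigmaI}) is $(\Ind_{\overline{P}_I(L)}^G\St_I^{\an}(\ul{\lambda}))^{\Q_p-\an}\cong\bI_{\overline{B}}^G(\ul{\lambda})/W$, while $\St_n^{\an}(\ul{\lambda})\cong\bI_{\overline{B}}^G(\ul{\lambda})/\widetilde{W}$ and the surjection in (\ref{equ: SigmaI}) is the obvious projection, with kernel $\widetilde{W}/W$. Since $\widetilde{W}=W+\bI_{\overline{P}_i}^G(\ul{\lambda})$, the modular law gives $\widetilde{W}/W\cong\bI_{\overline{P}_i}^G(\ul{\lambda})/(\bI_{\overline{P}_i}^G(\ul{\lambda})\cap W)$; and using that the $\bI_{\overline{P}_J}^G(\ul{\lambda})$ generate a distributive lattice of subrepresentations of $\bI_{\overline{B}}^G(\ul{\lambda})$ with $\bI_{\overline{P}_{J_1}}^G(\ul{\lambda})\cap\bI_{\overline{P}_{J_2}}^G(\ul{\lambda})=\bI_{\overline{P}_{J_1\cup J_2}}^G(\ul{\lambda})$ (as in Orlik's work on generalized Steinberg representations and in \cite{OSch}), one gets $\bI_{\overline{P}_i}^G(\ul{\lambda})\cap W=\sum_{K\supsetneq\{i\}}\bI_{\overline{P}_K}^G(\ul{\lambda})$, hence $\widetilde{W}/W\cong v_{\overline{P}_i}^{\an}(\ul{\lambda})$. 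Through the inclusion $(\Ind_{\overline{P}_I(L)}^G\St_I^{\infty}(\ul{\lambda}))^{\Q_p-\an}\hookrightarrow(\Ind_{\overline{P}_I(L)}^G\St_I^{\an}(\ul{\lambda}))^{\Q_p-\an}$, this realizes $K$ as the intersection $(\Ind_{\overline{P}_I(L)}^G\St_I^{\infty}(\ul{\lambda}))^{\Q_p-\an}\cap v_{\overline{P}_i}^{\an}(\ul{\lambda})$ inside $(\Ind_{\overline{P}_I(L)}^G\St_I^{\an}(\ul{\lambda}))^{\Q_p-\an}$.

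I would then establish the two inclusions separately. For $v_{\overline{P}_i}^{\infty}(\ul{\lambda})\subseteq K$: any locally algebraic vector of an analytic parabolic induction $(\Ind_{\overline{P}_I(L)}^G V)^{\Q_p-\an}$ takes values in the locally algebraic part $V^{\lalg}$ (evaluate at points of $G$ and use that the evaluation maps intertwine the $\overline{\fp}_I$-actions), so $(\Ind_{\overline{P}_I(L)}^G\St_I^{\infty}(\ul{\lambda}))^{\Q_p-\an}$ contains the entire locally algebraic subrepresentation of its ambient space; since $v_{\overline{P}_i}^{\infty}(\ul{\lambda})$ is locally algebraic and, as follows from \cite{OS}, is the locally algebraic subrepresentation of $v_{\overline{P}_i}^{\an}(\ul{\lambda})$, it lies in $K$.

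For the reverse inclusion $K\subseteq v_{\overline{P}_i}^{\infty}(\ul{\lambda})$ I would argue on Jordan--H\"older constituents. By Lemma \ref{lem: lgln-Sla} (applied with $S=\Sigma_L$), both $(\Ind_{\overline{P}_I(L)}^G\St_I^{\infty}(\ul{\lambda}))^{\Q_p-\an}$ and $\bI_{\overline{P}_i}^G(\ul{\lambda})$ are of Orlik--Strauch type $\cF_{\overline{P}_I}^G(M,\St_I^{\infty})$, resp.\ $\cF_{\overline{P}_i}^G(M',1)$, with $M\in\co^{\overline{\fp}_{I,\Sigma_L}}_{\alg}$ and $M'\in\co^{\overline{\fp}_{i,\Sigma_L}}_{\alg}$. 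Hence every irreducible constituent of $(\Ind_{\overline{P}_I(L)}^G\St_I^{\infty}(\ul{\lambda}))^{\Q_p-\an}$ has the form $\cF_{\overline{P}_J}^G(\overline{L}(\mu),\tau)$ with $J\supseteq I$, and every constituent of $v_{\overline{P}_i}^{\an}(\ul{\lambda})$ (a subquotient of $\bI_{\overline{P}_i}^G(\ul{\lambda})$) has $J\supseteq\{i\}$; a constituent common to both therefore has $J=I\cup\{i\}=\Delta$, so $\overline{L}(\mu)$ is finite-dimensional and the constituent is locally algebraic, necessarily $\cong v_{\overline{P}_i}^{\infty}(\ul{\lambda})$ (the unique irreducible locally algebraic subquotient of $v_{\overline{P}_i}^{\an}(\ul{\lambda})$). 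Thus every constituent of $K\subseteq v_{\overline{P}_i}^{\an}(\ul{\lambda})$ is $\cong v_{\overline{P}_i}^{\infty}(\ul{\lambda})$; since this constituent occurs in $v_{\overline{P}_i}^{\an}(\ul{\lambda})$ with multiplicity one (under the contravariant Orlik--Strauch functor it corresponds to the finite-dimensional simple quotient of the relevant generalized Verma module, which has multiplicity one there), $K$ has length at most one, and as it already contains $v_{\overline{P}_i}^{\infty}(\ul{\lambda})$ we conclude $K=v_{\overline{P}_i}^{\infty}(\ul{\lambda})$, giving the asserted exact sequence.

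I expect the main obstacle to be this last step: the Jordan--H\"older bookkeeping needs the precise Orlik--Strauch description of the constituents of $(\Ind_{\overline{P}_I(L)}^G\St_I^{\infty}(\ul{\lambda}))^{\Q_p-\an}$ and of $v_{\overline{P}_i}^{\an}(\ul{\lambda})$ --- in particular the fact that the parabolic attached to a constituent of $\cF_{\overline{P}_I}^G(M,\pi)$ can only grow relative to $I$ --- together with the multiplicity-one statement for $v_{\overline{P}_i}^{\infty}(\ul{\lambda})$. The lattice identities in the first paragraph are essentially formal but still require the distributivity of the $\bI_{\overline{P}_J}^G(\ul{\lambda})$'s and the description of $\ker\big((\Ind_{\overline{P}_I(L)}^G\St_I^{\an}(\ul{\lambda}))^{\Q_p-\an}\twoheadrightarrow\St_n^{\an}(\ul{\lambda})\big)$, for which I would invoke the locally analytic analogues of Orlik's results and the exact sequence (\ref{equ: lgln-lanSt}).
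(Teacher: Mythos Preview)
Your argument is correct and follows essentially the same Orlik--Strauch Jordan--H\"older analysis as the paper, with a slightly different packaging. The paper observes that the locally algebraic subrepresentation of $(\Ind_{\overline{P}_{\Delta\setminus\{i\}}(L)}^G\St_{\Delta\setminus\{i\}}^{\infty}(\ul{\lambda}))^{\Q_p-\an}$ is an extension of $\St_n^{\infty}(\ul{\lambda})$ by $v_{\overline{P}_i}^{\infty}(\ul{\lambda})$, and then rules out any further non locally algebraic constituent $V$ of the kernel by noting that $V\cong\cF_{\overline{P}_{\Delta\setminus\{i\}}}^G(\overline{L}(-s\cdot\ul{\lambda}),\St_{\Delta\setminus\{i\}}^{\infty})$ would have to occur in some $\bI_{\overline{P}_I}^G(\ul{\lambda})$ with $I\neq\emptyset$, which \cite[Cor.~3.6]{OSch} forbids. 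You instead first compute, via the distributive lattice of the $\bI_{\overline{P}_J}^G(\ul{\lambda})$'s, that the kernel of $(\Ind_{\overline{P}_I(L)}^G\St_I^{\an}(\ul{\lambda}))^{\Q_p-\an}\twoheadrightarrow\St_n^{\an}(\ul{\lambda})$ is exactly $v_{\overline{P}_i}^{\an}(\ul{\lambda})$, and then use the ``parabolic can only grow'' principle (any simple subquotient of an object of $\co^{\overline{\fp}_{I,\Sigma_L}}_{\alg}$ remains in that category) to force $J=\Delta$ for common constituents. Your route makes the structural reason more transparent and avoids citing \cite[Cor.~3.6]{OSch} as a black box; the paper's route is shorter. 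Both arguments hinge on the same underlying facts from \cite{OS} and \cite{OSch}.
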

\begin{proof}
In terms of Orlik-Strauch (\cite{OS}), we have
  \begin{equation*}
     (\Ind_{\overline{P}_{\Delta\setminus \{i\}}(L)}^G \St_{\Delta\setminus \{i\}}^{\infty}(\ul{\lambda}))^{\Q_p-\an} \cong \cF_{\overline{P}_{\Delta\setminus \{i\}}}^G\big(\text{U}(\ug_{\Sigma_L}) \otimes_{\text{U}(\overline{\fp}_{\Delta\setminus \{i\}, \Sigma_L})}\overline{L}(-\ul{\lambda})_{\Delta \setminus \{i\}}, \St_{\Delta\setminus \{i\}}^{\infty}\big).
  \end{equation*}
By \cite[Thm]{OS}, it is easy to see the  locally algebraic subrepresentation of $(\Ind_{\overline{P}_{\Delta\setminus \{i\}}(L)}^G \St_{\Delta\setminus \{i\}}^{\infty}(\ul{\lambda}))^{\Q_p-\an}$ is given by
  \begin{equation*}
    \cF_{G}^G\big(\overline{L}(-\ul{\lambda}), (\Ind_{\overline{P}_{\Delta\setminus \{i\}}(L)}^G \St_{\Delta\setminus \{i\}}^{\infty})^{\infty}\big) \cong
      (\Ind_{\overline{P}_{\Delta\setminus \{i\}}(L)}^G \St^{\infty}_{\Delta\setminus \{i\}})^{\infty} \otimes_E L(\ul{\lambda}),
  \end{equation*}
which, by (\ref{equ: lgln-Istein}), is an extension of $\St_n^{\infty}(\ul{\lambda})$ by $v_{\overline{P}_i}^{\infty}(\ul{\lambda})$. It is easy to see the composition in (\ref{equ: SigmaI}) factors through
  \begin{equation}\label{equ: SigmaIb}
(\Ind_{\overline{P}_{\Delta\setminus \{i\}}(L)}^G \St_{\Delta\setminus \{i\}}^{\infty}(\ul{\lambda}))^{\Q_p-\an}/v_{\overline{P}_{i}}^{\infty}(\ul{\lambda}) \lra \St_n^{\an}(\ul{\lambda}),
  \end{equation}
and it suffices to show (\ref{equ: SigmaIb}) is injective. We already know its restriction on locally algebraic vectors is injective. Suppose (\ref{equ: SigmaIb}) is not injective, and let $V$ be an irreducible constituent of the kernel of (\ref{equ: SigmaIb}), which is thus not locally algebraic. By \cite[Thm]{OS}, $V$ has the form
\begin{equation}\label{equ: lgln-OSst_i}
  \cF_{\overline{P}_{\Delta\setminus \{i\}}}^G(\overline{L}(-s\cdot \ul{\lambda}), \St_{\Delta\setminus \{i\}}^{\infty})
\end{equation}
with  $s\in S_n^{|\Sigma_L|}$ satisfying  $s\cdot \ul{\lambda}\in X_{\Delta\setminus \{i\}}^+$. By (\ref{equ: Istan})(\ref{equ: SigmaI}), the kernel of (\ref{equ: SigmaIb}) is a subquotient of $\sum_{\emptyset \neq I \subset \Delta} \bI_{\overline{P}_I}^G(\ul{\lambda})$, and hence  $V$ is  an irreducible constituent of \begin{equation*}\bI_{\overline{P}_I}^G(\ul{\lambda})\cong \cF_{\overline{P}_I}^G(\text{U}(\ug_{\Sigma_L})\otimes_{\text{U}(\overline{\fp}_{I, \Sigma_L})} \overline{L}(-\ul{\lambda})_I,1),
\end{equation*}
for some $\emptyset \neq I \subset \Delta$ (more precisely, for $I=\{i\}$). However, using the results in \cite{OS} and \cite[Cor. 3.6]{OSch}, any representation of the form (\ref{equ: lgln-OSst_i}) can not appear as irreducible constituent in $\bI_{\overline{P}_I}^G(\ul{\lambda})$ for any  $\emptyset\neq I\subseteq \Delta$, a contradiction. The lemma follows.
\end{proof}
\begin{lemma}
  The injections $\tilde{\Sigma}_i(\ul{\lambda})\hookrightarrow \St_n^{\an}(\ul{\lambda})$ for all $i\in \Delta$ induce an injection
  \begin{equation*}
    \bigoplus_{\St_n^{\infty}(\ul{\lambda})}^{i\in \Delta}\tilde{\Sigma}_i(\ul{\lambda}) \hooklongrightarrow \St_n^{\an}(\ul{\lambda}).
  \end{equation*}
\end{lemma}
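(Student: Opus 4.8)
The plan is to identify the amalgamated sum $\Sigma:=\bigoplus_{\St_n^{\infty}(\ul{\lambda})}^{i\in\Delta}\tilde{\Sigma}_i(\ul{\lambda})$ with the subrepresentation $\sum_{i\in\Delta}\tilde{\Sigma}_i(\ul{\lambda})$ of $\St_n^{\an}(\ul{\lambda})$ via a five-lemma argument, the only real input being a disjointness statement about Jordan--Hölder constituents. First, by the universal property of the amalgamated sum, the inclusions $\tilde{\Sigma}_i(\ul{\lambda})\hookrightarrow \St_n^{\an}(\ul{\lambda})$ -- which, by Lemma \ref{lem: lgln-Sigma_i} and its proof, all restrict on the common subobject to the canonical inclusion $\St_n^{\infty}(\ul{\lambda})\hookrightarrow \St_n^{\an}(\ul{\lambda})$ (recall $\St_n^{\infty}(\ul{\lambda})$ is the image in $\tilde{\Sigma}_i(\ul{\lambda})$ of the locally algebraic subrepresentation of $(\Ind_{\overline{P}_{\Delta\setminus\{i\}}(L)}^G\St_{\Delta\setminus\{i\}}^{\infty}(\ul{\lambda}))^{\Q_p-\an}$, which is an extension of $\St_n^{\infty}(\ul{\lambda})$ by $v_{\overline{P}_i}^{\infty}(\ul{\lambda})$) -- induce a morphism $f\colon\Sigma\to \St_n^{\an}(\ul{\lambda})$ with image $\sum_{i\in\Delta}\tilde{\Sigma}_i(\ul{\lambda})$. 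Since $\St_n^{\infty}(\ul{\lambda})$ is a genuine subobject of each $\tilde{\Sigma}_i(\ul{\lambda})$, a direct computation with the amalgamated sum shows $\St_n^{\infty}(\ul{\lambda})\hookrightarrow \Sigma$ and $\Sigma/\St_n^{\infty}(\ul{\lambda})\cong \bigoplus_{i\in\Delta}\big(\tilde{\Sigma}_i(\ul{\lambda})/\St_n^{\infty}(\ul{\lambda})\big)$; that is, $\Sigma$ is an extension of this direct sum by $\St_n^{\infty}(\ul{\lambda})$.

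The crux, and the step I expect to be the main obstacle, is the claim: for $i\neq j$ in $\Delta$, the representations $\tilde{\Sigma}_i(\ul{\lambda})/\St_n^{\infty}(\ul{\lambda})$ and $\tilde{\Sigma}_j(\ul{\lambda})/\St_n^{\infty}(\ul{\lambda})$ share no Jordan--Hölder constituent. I would prove this by analysing constituents with the Orlik--Strauch classification (\cite{OS}, together with \cite[Cor.~3.6]{OSch}), exactly as in the proof of Lemma \ref{lem: lgln-Sigma_i}: writing $(\Ind_{\overline{P}_{\Delta\setminus\{i\}}(L)}^G\St_{\Delta\setminus\{i\}}^{\infty}(\ul{\lambda}))^{\Q_p-\an}\cong \cF_{\overline{P}_{\Delta\setminus\{i\}}}^G(\cdot,\cdot)$, its irreducible constituents are of the form $\cF_{\overline{P}_K}^G(L,\tau)$ with $K\supseteq \Delta\setminus\{i\}$, hence $K\in\{\Delta,\Delta\setminus\{i\}\}$; the ones with $K=\Delta$ are the locally algebraic ones, which by (\ref{equ: lgln-Istein}) are exactly $\St_n^{\infty}(\ul{\lambda})$ and $v_{\overline{P}_i}^{\infty}(\ul{\lambda})$, while the remaining constituents are $\cF_{\overline{P}_{\Delta\setminus\{i\}}}^G(L,\tau)$ with $L$ an infinite-dimensional simple object of $\co^{\overline{\fp}_{\Delta\setminus\{i\},\Sigma_L}}_{\alg}$. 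By Lemma \ref{lem: lgln-Sigma_i} the passage from $(\Ind_{\overline{P}_{\Delta\setminus\{i\}}(L)}^G\St_{\Delta\setminus\{i\}}^{\infty}(\ul{\lambda}))^{\Q_p-\an}$ to $\tilde{\Sigma}_i(\ul{\lambda})/\St_n^{\infty}(\ul{\lambda})$ removes precisely the two locally algebraic constituents $v_{\overline{P}_i}^{\infty}(\ul{\lambda})$ and $\St_n^{\infty}(\ul{\lambda})$, so only constituents of the last type survive. Finally, since $\overline{P}_{\Delta\setminus\{i\}}$ is a maximal proper parabolic, it is the canonical parabolic attached to any such infinite-dimensional $L$, which the Orlik--Strauch functor recovers; hence a constituent of $\tilde{\Sigma}_i(\ul{\lambda})/\St_n^{\infty}(\ul{\lambda})$ is never isomorphic to one of $\tilde{\Sigma}_j(\ul{\lambda})/\St_n^{\infty}(\ul{\lambda})$ when $i\neq j$.

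Granting the claim, the proof concludes formally. For each $i$ we have $\St_n^{\infty}(\ul{\lambda})\subseteq \tilde{\Sigma}_i(\ul{\lambda})\subseteq \St_n^{\an}(\ul{\lambda})$, so $\tilde{\Sigma}_i(\ul{\lambda})/\St_n^{\infty}(\ul{\lambda})$ embeds into $\St_n^{\an}(\ul{\lambda})/\St_n^{\infty}(\ul{\lambda})$; by the claim these subobjects have pairwise disjoint constituent sets, so their pairwise intersections vanish and their internal sum is direct, giving $\sum_{i\in\Delta}\tilde{\Sigma}_i(\ul{\lambda})/\St_n^{\infty}(\ul{\lambda})=\bigoplus_{i\in\Delta}\tilde{\Sigma}_i(\ul{\lambda})/\St_n^{\infty}(\ul{\lambda})$ inside $\St_n^{\an}(\ul{\lambda})/\St_n^{\infty}(\ul{\lambda})$. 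Consequently $\Ima(f)=\sum_{i\in\Delta}\tilde{\Sigma}_i(\ul{\lambda})$ is itself an extension of $\bigoplus_{i\in\Delta}\big(\tilde{\Sigma}_i(\ul{\lambda})/\St_n^{\infty}(\ul{\lambda})\big)$ by $\St_n^{\infty}(\ul{\lambda})$, and $f\colon\Sigma\to\Ima(f)$ is a morphism of such extensions inducing the identity on the common subobject $\St_n^{\infty}(\ul{\lambda})$ and on the common quotient $\bigoplus_{i\in\Delta}\tilde{\Sigma}_i(\ul{\lambda})/\St_n^{\infty}(\ul{\lambda})$. By the five lemma $f$ is an isomorphism onto $\sum_{i\in\Delta}\tilde{\Sigma}_i(\ul{\lambda})$, in particular injective, which is the assertion.
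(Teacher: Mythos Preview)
Your proof is correct and follows essentially the same approach as the paper's. The paper's proof is terse: it simply observes, via \cite[Cor.~3.6]{OSch}, that any non-locally-algebraic irreducible constituent of $\tilde{\Sigma}_i(\ul{\lambda})$ (necessarily of the form (\ref{equ: lgln-OSst_i}) with parabolic $\overline{P}_{\Delta\setminus\{i\}}$) cannot occur in $\tilde{\Sigma}_j(\ul{\lambda})$ for $j\neq i$, and then says ``The lemma follows.'' Your argument unpacks this last phrase carefully---making the amalgamated-sum structure explicit, identifying the quotient by $\St_n^{\infty}(\ul{\lambda})$, and closing with the five lemma---but the substantive input (disjointness of Jordan--H\"older constituents via the Orlik--Strauch classification and the fact that the maximal parabolic is recovered from the irreducible) is identical.
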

\begin{proof}
By \cite[Cor. 3.6]{OSch}, any non locally algebraic irreducible constituent of $\tilde{\Sigma}_i(\ul{\lambda})$ (thus is of  the form  as in  (\ref{equ: lgln-OSst_i})) can not appear as irreducible constituent of $\tilde{\Sigma}_j(\ul{\lambda})$ for $j\neq i$. The lemma follows.
\end{proof}
\begin{proposition}\label{soc}
$\soc_G(\tilde{\Sigma}_i(\ul{\lambda}))\cong \St_n^{\infty}(\ul{\lambda})$.
\end{proposition}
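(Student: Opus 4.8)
The plan is to read the socle off the presentation of $\tilde{\Sigma}_i(\ul{\lambda})$ supplied by Lemma~\ref{lem: lgln-Sigma_i}, i.e. the exact sequence
\begin{equation*}
0 \lra v_{\overline{P}_i}^{\infty}(\ul{\lambda}) \lra \Pi \lra \tilde{\Sigma}_i(\ul{\lambda}) \lra 0, \qquad \Pi:=\big(\Ind_{\overline{P}_{\Delta\setminus\{i\}}(L)}^G \St_{\Delta\setminus\{i\}}^{\infty}(\ul{\lambda})\big)^{\Q_p-\an} \cong \cF_{\overline{P}_{\Delta\setminus\{i\}}}^G\big(M,\St_{\Delta\setminus\{i\}}^{\infty}\big),
\end{equation*}
with $M:=\text{U}(\ug_{\Sigma_L})\otimes_{\text{U}(\overline{\fp}_{\Delta\setminus\{i\},\Sigma_L})}\overline{L}(-\ul{\lambda})_{\Delta\setminus\{i\}}$. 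First I would note that $\St_n^{\infty}(\ul{\lambda})$ is an irreducible subrepresentation of $\tilde{\Sigma}_i(\ul{\lambda})$, and in fact equals its locally algebraic part: by the proof of Lemma~\ref{lem: lgln-Sigma_i} the locally algebraic part $\Pi_{\alg}=\big(\Ind_{\overline{P}_{\Delta\setminus\{i\}}(L)}^G \St_{\Delta\setminus\{i\}}^{\infty}\big)^{\infty}\otimes_E L(\ul{\lambda})$ of $\Pi$ is an extension of $\St_n^{\infty}(\ul{\lambda})$ by $v_{\overline{P}_i}^{\infty}(\ul{\lambda})$, so its image in $\tilde{\Sigma}_i(\ul{\lambda})$ is $\St_n^{\infty}(\ul{\lambda})$, while the injection $\tilde{\Sigma}_i(\ul{\lambda})\hookrightarrow \St_n^{\an}(\ul{\lambda})$ forces $\tilde{\Sigma}_i(\ul{\lambda})_{\alg}\subseteq \St_n^{\an}(\ul{\lambda})_{\alg}=\St_n^{\infty}(\ul{\lambda})$. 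As $\St_n^{\infty}(\ul{\lambda})=\St_n^{\infty}\otimes_E L(\ul{\lambda})$ is irreducible, $\St_n^{\infty}(\ul{\lambda})\subseteq \soc_G\tilde{\Sigma}_i(\ul{\lambda})$; it remains to show the reverse inclusion, and it suffices to treat a hypothetical irreducible subrepresentation $W\subseteq\tilde{\Sigma}_i(\ul{\lambda})$ with $W\neq \St_n^{\infty}(\ul{\lambda})$, which by the above is automatically non-locally-algebraic.

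Next I would establish $\soc_G\Pi = v_{\overline{P}_i}^{\infty}(\ul{\lambda})$. The generalized Verma module $M$ has the finite-dimensional $\overline{L}(-\ul{\lambda})$ as its \emph{unique} simple quotient; by the Orlik--Strauch description of the socle of $\cF_{\overline{P}_{\Delta\setminus\{i\}}}^G(-,\St_{\Delta\setminus\{i\}}^{\infty})$ in terms of the simple quotients of the argument, every irreducible subrepresentation of $\Pi$ has the form $\pi'\otimes_E L(\ul{\lambda})$ with $\pi'$ an irreducible subrepresentation of $\big(\Ind_{\overline{P}_{\Delta\setminus\{i\}}(L)}^G \St_{\Delta\setminus\{i\}}^{\infty}\big)^{\infty}$; hence $\soc_G\Pi \subseteq \Pi_{\alg}$ and $\soc_G\Pi=\big(\soc_G\big(\Ind_{\overline{P}_{\Delta\setminus\{i\}}(L)}^G \St_{\Delta\setminus\{i\}}^{\infty}\big)^{\infty}\big)\otimes_E L(\ul{\lambda})$. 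By (\ref{equ: lgln-Istein}) the smooth representation $\big(\Ind_{\overline{P}_{\Delta\setminus\{i\}}(L)}^G \St_{\Delta\setminus\{i\}}^{\infty}\big)^{\infty}$ has exactly the two Jordan--Hölder factors $v_{\overline{P}_i}^{\infty}$ and $\St_n^{\infty}$, the former being a subrepresentation, while $\Hom_G\big(\St_n^{\infty},(\Ind_{\overline{P}_{\Delta\setminus\{i\}}(L)}^G \St_{\Delta\setminus\{i\}}^{\infty})^{\infty}\big)=0$: by Frobenius reciprocity this equals $\Hom_{L_{\Delta\setminus\{i\}}(L)}$ of the Jacquet module of $\St_n^{\infty}$ along $\overline{N}_{\Delta\setminus\{i\}}$ --- a \emph{nontrivial} unramified twist of $\St_{\Delta\setminus\{i\}}^{\infty}$ --- into $\St_{\Delta\setminus\{i\}}^{\infty}$. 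So $\soc_G\big(\Ind_{\overline{P}_{\Delta\setminus\{i\}}(L)}^G \St_{\Delta\setminus\{i\}}^{\infty}\big)^{\infty}=v_{\overline{P}_i}^{\infty}$, hence $\soc_G\Pi=v_{\overline{P}_i}^{\infty}(\ul{\lambda})$, and $\soc_G\tilde{\Sigma}_i(\ul{\lambda})=\soc_G(\Pi/\soc_G\Pi)$ is the second layer $\soc_2\Pi/\soc_1\Pi$ of the socle filtration of $\Pi$. Since $\Pi_{\alg}$ is a non-split extension of $\St_n^{\infty}(\ul{\lambda})$ by $v_{\overline{P}_i}^{\infty}(\ul{\lambda})=\soc_1\Pi$, one already has $\Pi_{\alg}\subseteq\soc_2\Pi$, reproving $\St_n^{\infty}(\ul{\lambda})\subseteq\soc_G\tilde{\Sigma}_i(\ul{\lambda})$.

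It remains to prove $\soc_2\Pi=\Pi_{\alg}$ --- equivalently, that no non-locally-algebraic irreducible representation extends $\soc_G\Pi=v_{\overline{P}_i}^{\infty}(\ul{\lambda})$ inside $\Pi$ --- and this is where the real work lies. For a hypothetical $W$ as in the first paragraph, its preimage $\widetilde{W}\subseteq\Pi$ would be a length-two subrepresentation with socle $v_{\overline{P}_i}^{\infty}(\ul{\lambda})$ and head $W$, with $\widetilde{W}\cap\Pi_{\alg}=v_{\overline{P}_i}^{\infty}(\ul{\lambda})$, so $\widetilde{W}+\Pi_{\alg}$ would be a length-three subrepresentation of $\Pi$ with simple socle $v_{\overline{P}_i}^{\infty}(\ul{\lambda})$ and \emph{semisimple} head $W\oplus\St_n^{\infty}(\ul{\lambda})$. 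But the non-locally-algebraic constituents of $\Pi$ are the $\cF_{\overline{P}_{\Delta\setminus\{i\}}}^G(\overline{L}(\mu),\St_{\Delta\setminus\{i\}}^{\infty})$ for $\overline{L}(\mu)$ an (infinite-dimensional) simple subquotient of $\operatorname{rad}M$, and since $\cF_{\overline{P}_{\Delta\setminus\{i\}}}^G(-,\St_{\Delta\setminus\{i\}}^{\infty})$ is exact, contravariant and fully faithful and $M$ is rigid in category $\co$, each such constituent sits in the Loewy filtration of $\Pi$ \emph{strictly above} $\Pi_{\alg}=\cF_{\overline{P}_{\Delta\setminus\{i\}}}^G(\overline{L}(-\ul{\lambda}),\St_{\Delta\setminus\{i\}}^{\infty})$ --- concretely it extends the cosocle $\St_n^{\infty}(\ul{\lambda})$ of $\Pi_{\alg}$, not its socle $v_{\overline{P}_i}^{\infty}(\ul{\lambda})$ (the $\mathcal{O}$-module $\widetilde{W}+\Pi_{\alg}$ would correspond to is a non-split length-two quotient $0\to\overline{L}(\mu)\to Q\to\overline{L}(-\ul{\lambda})\to 0$ of $M$, which is indecomposable because $M$ is generated by its $\overline{L}(-\ul{\lambda})$-part, forcing $\cF_{\overline{P}_{\Delta\setminus\{i\}}}^G(Q,\St_{\Delta\setminus\{i\}}^{\infty})$ to be \emph{uniserial} of Loewy length three). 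This contradicts the semisimplicity of the head of $\widetilde{W}+\Pi_{\alg}$, so $\soc_2\Pi=\Pi_{\alg}$ and $\soc_G\tilde{\Sigma}_i(\ul{\lambda})=\soc_2\Pi/\soc_1\Pi=\St_n^{\infty}(\ul{\lambda})$. The main obstacle is precisely this last step --- transferring the Loewy structure of $M$ through the Orlik--Strauch functor to see that the non-locally-algebraic constituents extend $\St_n^{\infty}(\ul{\lambda})$ rather than $v_{\overline{P}_i}^{\infty}(\ul{\lambda})$; the other ingredients (the identification of $\Pi_{\alg}$, the irreducibility of $\St_n^{\infty}(\ul{\lambda})$, and the smooth Jacquet-module computation) are formal. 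Alternatively, the reverse inclusion also follows at once from $\tilde{\Sigma}_i(\ul{\lambda})\hookrightarrow\St_n^{\an}(\ul{\lambda})$ as soon as one knows $\soc_G\St_n^{\an}(\ul{\lambda})=\St_n^{\infty}(\ul{\lambda})$, but arguing directly with $\Pi$, whose structure is simpler, seems preferable.
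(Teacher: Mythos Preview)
Your reduction is correct and coincides with the paper's: you identify $\Pi_{\alg}$, establish $\soc_G\Pi=v_{\overline{P}_i}^{\infty}(\ul{\lambda})$ (the paper cites \cite[Cor.~3.3]{Br13I} and \cite[Prop.~17]{Orl} rather than your direct Jacquet-module computation, but the conclusion is the same), and reduce to ruling out a non-split subextension $\widetilde{W}\subseteq\Pi$ of a non-locally-algebraic constituent $W$ by $v_{\overline{P}_i}^{\infty}(\ul{\lambda})$.

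The genuine gap is your last step, the claim that $\cF_{\overline{P}_{\Delta\setminus\{i\}}}^G(Q,\St_{\Delta\setminus\{i\}}^{\infty})$ is uniserial. Exactness and even full faithfulness of the Orlik--Strauch functor, together with indecomposability of $Q$, do \emph{not} give this: the functor does not induce isomorphisms on $\Ext^1$, hence does not transport Loewy filtrations from $\co$ to the locally analytic category. Concretely, $\cF_{\overline{P}_{\Delta\setminus\{i\}}}^G(\overline{L}(-\ul{\lambda}),\St_{\Delta\setminus\{i\}}^{\infty})=\Pi_{\alg}$ already has length two, so simples do not go to simples; and the quotient $\cF_{\overline{P}_{\Delta\setminus\{i\}}}^G(Q,\St_{\Delta\setminus\{i\}}^{\infty})/v_{\overline{P}_i}^{\infty}(\ul{\lambda})$ lies outside the essential image of $\cF_{\overline{P}_{\Delta\setminus\{i\}}}^G(-,\St_{\Delta\setminus\{i\}}^{\infty})$, so nothing in the OS formalism decides whether it is semisimple. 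Your assertion that each non-locally-algebraic constituent ``extends $\St_n^{\infty}(\ul{\lambda})$, not $v_{\overline{P}_i}^{\infty}(\ul{\lambda})$'' is precisely the statement to be proved, and rigidity of $M$ in $\co$ says nothing about extensions in the much larger locally analytic category.

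The paper handles this step by a separate lemma: any \emph{very strongly admissible} extension of $\cF_{\overline{P}_{\Delta\setminus\{i\}}}^G(\overline{L}(-\mu),\St_{\Delta\setminus\{i\}}^{\infty})$ by $v_{\overline{P}_i}^{\infty}(\ul{\lambda})$, for $\mu\in X_{\Delta\setminus\{i\}}^{+}\setminus X^{+}$, is split. The argument takes $N_0$-invariants of the hypothetical non-split sequence, computes $\hH^{r}_{\an}\big(N_0,v_{\overline{P}_i}^{\infty}(\ul{\lambda})\big)$ via \cite{Koh2011} and \cite[Thm.~4.10]{Sch11}, shows the relevant $\Hom$ into $\hH^{1}_{\an}$ vanishes using $\Hom_G\big((\Ind_{\overline{P}_{\Delta\setminus\{i\}}(L)}^G\St_{\Delta\setminus\{i\}}^{\infty})^{\infty},v_{\overline{P}_i}^{\infty}\big)=0$, and then invokes the Jacquet--Emerton adjunction \cite[Thm.~4.3]{Br13II} to produce a section. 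This is a genuinely analytic argument that cannot be replaced by category-$\co$ combinatorics. As for your proposed shortcut, the paper explicitly remarks (just after the proposition) that $\soc_G\St_n^{\an}(\ul{\lambda})\cong\St_n^{\infty}(\ul{\lambda})$ is expected but not known in general.
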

\begin{proof}
First, we have
  \begin{equation} \label{equ: lgln-socInd_i}
    \soc_G \big(\Ind_{\overline{P}_{\Delta\setminus \{i\}}(L)}^G \St^{\infty}_{\Delta\setminus \{i\}}(\ul{\lambda})\big)^{\Q_p-\an} \cong   \soc_G \cF_{\overline{P}_{\Delta\setminus \{i\}}}^G\big(L(-\ul{\lambda}), \St_{\Delta\setminus \{i\}}^{\infty}\big) \cong v_{\overline{P}_i}^{\infty}(\ul{\lambda}).
\end{equation}where the first isomorphism follows from \cite[Cor. 3.3]{Br13I}, and the second follows from \cite[Prop. 17]{Orl}. Suppose there exists an irreducible constituent  $W$ of $\tilde{\Sigma}_i(\ul{\lambda})/\St_n^{\infty}(\ul{\lambda})$ such that $W\hookrightarrow \soc_G(\tilde{\Sigma}_i(\ul{\lambda}))$. The pull-back $V$ of $(\Ind_{\overline{P}_{\Delta\setminus \{i\}}(L)}^G \St_{\Delta\setminus \{i\}}^{\infty}(\ul{\lambda}))^{\Q_p-\an}$ via this injection gives an extension of $W$ by $v_{\overline{P}_i}^{\infty}(\ul{\lambda})$, which is moreover \emph{non-split} by (\ref{equ: lgln-socInd_i}). As in the proof of Lemma \ref{lem: lgln-Sigma_i}, we see that there exists $s\in S_n^{|\Sigma_L|}$, $s\cdot \ul{\lambda}\in X_{\Delta\setminus \{i\}}^+$, such that $W \cong \cF_{\overline{P}_{\Delta\setminus \{i\}}}^G(\overline{L}(-s\cdot \ul{\lambda}), \St_{\Delta\setminus \{i\}}^{\infty})$. Since $V\hookrightarrow (\Ind_{\overline{P}_{\Delta\setminus \{i\}}(L)}^G \St_{\Delta\setminus \{i\}}^{\infty}(\ul{\lambda}))^{\Q_p-\an}$, we know moreover that $V$ is very strongly admissible in the sense of  \cite[Def. 0.12]{Em2}. Indeed, by \cite[Prop. 2.1.2]{Em2}, it is sufficient to show $\St_{\Delta\setminus \{i\}}^{\infty}(\ul{\lambda})$ is very strongly admissible. We then reduce to show that $\St_{\Delta \setminus \{i\}}^{\infty}$ is very strongly admissible (using the fact that an admissible Banach representation tensoring with a finite dimensional algebraic representation of $L_{\Delta\setminus \{i\}}(L)$ is still an admissible Banach representation of $L_{\Delta\setminus \{i\}}(L)$). However, this follows from the natural injection
\begin{equation*}
\St_{\Delta\setminus \{i\}}^{\infty} \hooklongrightarrow (\Ind_{\overline{B}(L)\cap L_{\Delta\setminus \{i\}}(L)}^{L_{\Delta\setminus \{i\}}(L)} 1)^{\cC^0}/\sum_{\emptyset\neq J\subseteq (\Delta \setminus \{i\})} (\Ind_{\overline{P}_J(L)\cap L_{\Delta \setminus \{i\}}(L)}^{L_{\Delta\setminus \{i\}}(L)}1)^{\cC^0}.
\end{equation*}
where we use $(\Ind -)^{\cC_0}$ to denote \emph{continuous} parabolic inductions \big(noting that the smooth subrepresentation of $(\Ind_{\overline{P}_J(L)\cap L_{\Delta \setminus \{i\}}(L)}^{L_{\Delta\setminus \{i\}}(L)}1)^{\cC^0}$ is $(\Ind_{\overline{P}_J(L)\cap L_{\Delta \setminus \{i\}}(L)}^{L_{\Delta\setminus \{i\}}(L)}1)^{\infty}$\big). The proposition follows then from the following lemma.
\end{proof}
\begin{lemma}
Let $\mu\in X_{\Delta\setminus \{i\}}^+\setminus X^+$, and $V$ be an extension of $\cF_{\overline{P}_{\Delta\setminus \{i\}}}^G\big(\overline{L}(-\mu), \St_{\Delta\setminus \{i\}}^{\infty}\big)$ by  $v_{\overline{P}_i}^{\infty}(\ul{\lambda})$. Suppose $V$ is very strongly admissible, then $V$ is split.
\end{lemma}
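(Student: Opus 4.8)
\emph{Proposal.} The plan is to argue by contradiction, assuming $V$ is not split. Since $W$ is irreducible and, because $\mu\in X_{\Delta\setminus\{i\}}^+\setminus X^+$, the object $\overline{L}(-\mu)$ of category $\co$ is infinite dimensional, the representation $W=\cF_{\overline{P}_{\Delta\setminus\{i\}}}^G(\overline{L}(-\mu),\St_{\Delta\setminus\{i\}}^{\infty})$ has no nonzero locally algebraic subrepresentation, whereas $v_{\overline{P}_i}^{\infty}(\ul{\lambda})$ is locally algebraic. Hence the locally algebraic subrepresentation of $V$ is exactly $v_{\overline{P}_i}^{\infty}(\ul{\lambda})$, and non-splitness forces $\soc_G V\cong v_{\overline{P}_i}^{\infty}(\ul{\lambda})$, which is simple. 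So it suffices to produce a $G$-equivariant injection $V\hookrightarrow\Pi$ into a suitable admissible locally analytic representation $\Pi$ and then read off a contradiction from the structure of $\Pi$.

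Here I would take $\Pi:=(\Ind_{\overline{P}_{\Delta\setminus\{i\}}(L)}^G\St_{\Delta\setminus\{i\}}^{\infty}(\ul{\lambda}))^{\Q_p-\an}$, which is very strongly admissible and, by (\ref{equ: lgln-socInd_i}) and Lemma~\ref{lem: lgln-Sigma_i}, satisfies $\soc_G\Pi\cong v_{\overline{P}_i}^{\infty}(\ul{\lambda})$ and $\Pi/v_{\overline{P}_i}^{\infty}(\ul{\lambda})\cong\tilde{\Sigma}_i(\ul{\lambda})\subseteq\St_n^{\an}(\ul{\lambda})$. Fixing the inclusion $\iota_0\colon v_{\overline{P}_i}^{\infty}(\ul{\lambda})\hookrightarrow\Pi$, the point is to lift $\iota_0$ along $v_{\overline{P}_i}^{\infty}(\ul{\lambda})\hookrightarrow V$ to a map $V\to\Pi$ (which is then automatically injective, as $\soc_G V$ is simple); equivalently, that the connecting map $\Hom_G(v_{\overline{P}_i}^{\infty}(\ul{\lambda}),\Pi)\to\Ext^1_G(W,\Pi)$ sends $\iota_0$ to $0$. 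This is the step that uses very strong admissibility: applying Emerton's adjunction formula \cite[Thm.~0.13]{Em2} to the very strongly admissible representations $V$ and $v_{\overline{P}_i}^{\infty}(\ul{\lambda})$, one computes $\Hom_G(-,\Pi)$ in terms of $\Hom_{L_{\Delta\setminus\{i\}}(L)}$ out of the Jacquet modules $J_{P_{\Delta\setminus\{i\}}}(-)$, and using left-exactness of the Jacquet functor applied to $0\to v_{\overline{P}_i}^{\infty}(\ul{\lambda})\to V\to W\to0$ the lifting is reduced to the assertion that $J_{P_{\Delta\setminus\{i\}}}(W)$ carries no subquotient isomorphic to the relevant (modulus-twisted) copy of $\St_{\Delta\setminus\{i\}}^{\infty}(\ul{\lambda})$. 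At this point the hypothesis $\mu\notin X^+$ is decisive: by Orlik--Strauch theory $J_{P_{\Delta\setminus\{i\}}}(W)$ is assembled from the $\overline{\fn}_{\Delta\setminus\{i\},\Sigma_L}$-homology of $\overline{L}(-\mu)$, whose $T$-weights are all strictly smaller than the weight attached to $\ul{\lambda}$ because $-\mu=-s\cdot\ul{\lambda}$ with $s\neq1$, so no such subquotient can occur.

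Granting the injection $V\hookrightarrow\Pi$, it remains to derive the contradiction. Inside $\Pi$ the subrepresentation $V$ is a non-split extension of the non-locally-algebraic irreducible $W$ by the socle $v_{\overline{P}_i}^{\infty}(\ul{\lambda})$; thus $W$ would be an irreducible subrepresentation of $\Pi/v_{\overline{P}_i}^{\infty}(\ul{\lambda})=\tilde{\Sigma}_i(\ul{\lambda})$, i.e.\ $W\hookrightarrow\soc_G\tilde{\Sigma}_i(\ul{\lambda})$. I would rule this out by the Orlik--Strauch classification: combining the description of the irreducible constituents of the $\bI_{\overline{P}_J}^G(\ul{\lambda})$ \cite{OS} with \cite[Cor.~3.6]{OSch}, together with the computation of socles of locally analytic parabolic inductions \cite[Cor.~3.3]{Br13I}, \cite[Prop.~17]{Orl}, one pins down the possible irreducible subobjects of $\tilde{\Sigma}_i(\ul{\lambda})$ and checks---by exactly the kind of analysis already carried out in the proof of Lemma~\ref{lem: lgln-Sigma_i}, so without invoking Proposition~\ref{soc}---that no representation of the form $\cF_{\overline{P}_{\Delta\setminus\{i\}}}^G(\overline{L}(-\mu),\St_{\Delta\setminus\{i\}}^{\infty})$ with $\mu$ not $\Delta$-dominant is among them. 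This contradiction shows that $V$ is split.

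The main obstacle is the second paragraph: identifying $J_{P_{\Delta\setminus\{i\}}}(V)$ precisely enough---that is, carrying through the Jacquet-module and adjunction bookkeeping, including the twist by the modulus character of $\overline{P}_{\Delta\setminus\{i\}}$---to conclude that the socle inclusion $\iota_0$ lifts to $V$. The remaining ingredients are either formal or an (intricate but essentially mechanical) application of the Orlik--Strauch theory already used in this section.
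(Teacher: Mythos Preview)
Your plan has a genuine circularity in the final step. You propose to derive a contradiction from $W\hookrightarrow\tilde{\Sigma}_i(\ul{\lambda})$ by ``pinning down the possible irreducible subobjects of $\tilde{\Sigma}_i(\ul{\lambda})$'' via Orlik--Strauch theory and \cite[Cor.~3.3]{Br13I}, \cite[Prop.~17]{Orl}. But those results compute the socle of the \emph{full} induction $\Pi=(\Ind_{\overline{P}_{\Delta\setminus\{i\}}(L)}^G\St_{\Delta\setminus\{i\}}^{\infty}(\ul{\lambda}))^{\Q_p-\an}$ (this is exactly (\ref{equ: lgln-socInd_i})), not of the quotient $\tilde{\Sigma}_i(\ul{\lambda})=\Pi/v_{\overline{P}_i}^{\infty}(\ul{\lambda})$. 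Determining the socle of that quotient is equivalent to computing $\Ext^1_G(W,v_{\overline{P}_i}^{\infty}(\ul{\lambda}))$ for each constituent $W$, which is precisely the content of the lemma you are proving. Lemma~\ref{lem: lgln-Sigma_i} does not help here either: its proof shows that such a $W$ is not in the \emph{kernel} of (\ref{equ: SigmaIb}), not that it is absent from the \emph{socle} of the image $\tilde{\Sigma}_i(\ul{\lambda})$. In short, the assertion ``$W\not\hookrightarrow\tilde{\Sigma}_i(\ul{\lambda})$'' is Proposition~\ref{soc}, and Proposition~\ref{soc} is proved by invoking this lemma; you cannot close the loop this way.

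There is also a softer issue with the lifting step: Emerton's adjunction \cite[Thm.~0.13]{Em2} runs in the direction $\Hom_G(I_{\overline{B}}^G(-),V)\cong\Hom_T(-,J_B(V))^{\bal}$ with $V$ very strongly admissible, so it is not the right tool for computing $\Hom_G(V,\Pi)$ with $\Pi$ the induction. The paper avoids both problems by going the other way: rather than embedding $V$ into $\Pi$, it produces a $G$-equivariant \emph{section} of $V\twoheadrightarrow W$. Concretely, it passes to the $(\ug,P_{\Delta\setminus\{i\}}(L))$-submodule $\cF_{\overline{P}_{\Delta\setminus\{i\}}}^G(\overline{L}(-\mu),\St_{\Delta\setminus\{i\}}^{\infty})^{\Q_p-\alg}$, takes $N_0$-invariants, computes $\hH^r_{\an}(N_0,v_{\overline{P}_i}^{\infty}(\ul{\lambda}))$ via Kostant's theorem with its $L_{\Delta\setminus\{i\}}^+$-Hecke action, and uses the key vanishing $\Hom_{L_{\Delta\setminus\{i\}}^+}\!\big(\St_{\Delta\setminus\{i\}}^{\infty}\otimes\delta_{\Delta\setminus\{i\}}\otimes L(\mu)_{\Delta\setminus\{i\}},\hH^1_{\an}(N_0,v_{\overline{P}_i}^{\infty}(\ul{\lambda}))\big)=0$ to obtain an $L_{\Delta\setminus\{i\}}^+$-equivariant copy of $\St_{\Delta\setminus\{i\}}^{\infty}\otimes\delta_{\Delta\setminus\{i\}}\otimes L(\mu)_{\Delta\setminus\{i\}}$ inside $V^{N_0}$; the hypothesis $\mu\neq\ul{\lambda}$ enters exactly here. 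Then \cite[Thm.~4.3]{Br13II} (using very strong admissibility of $V$) upgrades this to a $G$-equivariant section $W\to V$, giving the contradiction. This is the missing idea in your outline.
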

\begin{proof}
Suppose we have a non-split exact sequence
\begin{equation}\label{equ: hynonsp}
  0 \lra v_{\overline{P}_i}^{\infty}(\ul{\lambda})  \lra V \lra \cF_{\overline{P}_{\Delta\setminus \{i\}}}^G(\overline{L}(-\mu), \St_{\Delta\setminus \{i\}}^{\infty}) \lra 0.
\end{equation}
The rough idea of the proof is to construct a section of (\ref{equ: hynonsp}) (hence a contradiction) using the adjunction property of the Jacquet-Emerton functor.
\\

\noindent (a) By \cite[Prop. 3.6]{Br13II}, $\cF_{\overline{P}_{\Delta\setminus \{i\}}}^G(\overline{L}(-\mu), \St_{\Delta\setminus \{i\}}^{\infty})$ admits a natural $(\ug_{\Sigma_L}, P_{\Delta\setminus \{i\}}(L))$-submodule
\begin{equation*}\cF_{\overline{P}_{\Delta\setminus \{i\}}}^G(\overline{L}(-\mu), \St_{\Delta\setminus \{i\}}^{\infty})^{\Q_p-\alg}\cong \cC^{\infty}_c(N_{\Delta\setminus \{i\}}(L), \St_{\Delta\setminus \{i\}}^{\infty}) \otimes_E L(\mu),\end{equation*}
which is equipped with the finest locally convex topology, and is an $E$-vector space of compact type. Here we use the natural isomorphism of $\text{U}(\ug_{\Sigma_L})$-modules: $\Hom_E(\overline{L}(-\mu),E)^{\fn_{\Delta\setminus \{i\}}^{\infty}}\cong L(\mu)$ where $(\cdot)^{\fn_{\Delta\setminus \{i\}}^{\infty}}$ denotes the subspace generated by the vectors annihilated by a certain power of $\fn_{\Delta\setminus \{i\}, \Sigma_L}$. Note also that the $P_{\Delta\setminus \{i\}}(L)$-action on $\cF_{\overline{P}_{\Delta\setminus \{i\}}}^G(\overline{L}(-\mu), \St_{\Delta\setminus \{i\}}^{\infty})^{\Q_p-\alg}$ is locally $\Q_p$-analytic (see the discussion above \cite[Lem. 3.2]{Br13II} for details). Let $V_0$ be the pull-back of $V$ via the $P_{\Delta\setminus \{i\}}(L)$-equivariant continuous injection
\begin{equation*}
 \cF_{\overline{P}_{\Delta\setminus \{i\}}}^G(\overline{L}(-\mu), \St_{\Delta\setminus \{i\}}^{\infty})^{\Q_p-\alg} \hooklongrightarrow \cF_{\overline{P}_{\Delta\setminus \{i\}}}^G(\overline{L}(-\mu), \St_{\Delta\setminus \{i\}}^{\infty}).
\end{equation*}
We have thus an exact sequence of locally $\Q_p$-analytic representations of $P_{\Delta\setminus \{i\}}(L)$
\begin{equation}\label{equ: extalg}
  0 \lra v_{\overline{P}_i}^{\infty}(\ul{\lambda})  \lra V_0 \lra \cF_{\overline{P}_{\Delta\setminus \{i\}}}^G(\overline{L}(-\mu), \St_{\Delta\setminus \{i\}}^{\infty})^{\Q_p-\alg} \lra 0.
\end{equation}
Since $\cF_{\overline{P}_{\Delta\setminus \{i\}}}^G(\overline{L}(-\mu), \St_{\Delta\setminus \{i\}}^{\infty})^{\Q_p-\alg}$ is equipped with the finest locally convex topology, we see the extension (\ref{equ: extalg}) admits a continuous section (may not be $P_{\Delta\setminus \{i\}}(L)$-equivariant).
\\

\noindent
(b) We recall a little on locally analytic cohomology theory. Let $U$ be a locally $\Q_p$-analytic representation of $P_{\Delta\setminus \{i\}}(L)$ over an $E$-vector space of compact type. For $n\in \Z_{\geq 0}$ and a compact open subgroup $X$ of $P_{\Delta\setminus \{i\}}(L)$, denote by $\cC^{\Q_p-\an}(X^n, U)$ the set of locally $\Q_p$-analytic $U$-valued maps on $X^n$. We have natural edge maps $\partial_n: \cC^{\Q_p-\an}(X^n, U) \ra \cC^{\an}(X^{n+1},U)$ (which are defined in the same way as the usual group cohomology, and we refer to \cite[\S~3.1]{Sch10} for details), and we let $\hH^n_{\an}(X, U):= \Ker \partial_n/\Ima \partial_{n-1}$.

\noindent Fix a compact open subgroup $N_0$ of $N_{\Delta \setminus \{i\}}(L)$.
For $z\in P_{\Delta\setminus \{i\}}(L)$, we have natural maps
  \begin{equation*}
    \cC^{\Q_p-\an}(N_0^n, U) \lra \cC^{\Q_p-\an}((zN_0z^{-1})^n, U),\ f\mapsto [(g_0, \cdots, g_{n-1})\mapsto zf(z^{-1} g_0 z, \cdots, z^{-1} g_{n-1} z)].
  \end{equation*}
It is straightforward to check these maps are compatible with $\{\partial_n\}$, and hence induce maps $r_z:\hH^n(N_0, U) \ra \hH^n(zN_0z^{-1},U)$. Similar as in the setting of usual group cohomology, if $z\in N_0$, then $r_z=1$. Denote by  $L_{\Delta\setminus \{i\}}^+:=\{z\in L_{\Delta \setminus \{i\}}(L)\ |\ zN_0z^{-1}\subset N_0\}$. For $z\in L_{\Delta\setminus \{i\}}^+$, we have a natural transfer map
\begin{equation}\label{equ: tran}
 \tr:  \hH^n_{\an}(zN_0 z^{-1}, U) \lra \hH^n_{\an}(N_0, U).
\end{equation}
Actually, let $R$ be a set of right representatives of $zN_0z^{-1}$ in $N_0$, and consider
\begin{equation*}
  \tr_R: \cC^{\Q_p-\an}((zN_0z^{-1})^n, U) \lra \cC^{\Q_p-\an}((N_0)^n,U)
\end{equation*}
with $\tr_R(f)(a_0)(g_0, \cdots, g_{n-1})=\sum_{h\in R} h_0^{-1} f(h_0 g_0 h_1^{-1}, \cdots, h_{n-1}g_{n-1} h_n^{-1})$,  where $h_0=h$ and  $h_i$ is the unique element in $R$ satisfying $h_i\in (zN_0z^{-1}) h_{i-1} g_{i-1}$. Then the transfer map (\ref{equ: tran})  is induced by $\tr_R$ and is independent of the choice of $R$. For $z\in L_{\Delta\setminus \{i\}}^+$, $n\in \Z_{\geq 0}$, we have thus a Hecke operator $\pi_z$ on $\hH^n_{\an}(N_0, U)$ given by
\begin{equation*}
  \pi_z: \hH^n_{\an}(N_0, U) \xlongrightarrow{\frac{1}{|N_0/zN_0z^{-1}|} \cdot r_z} \hH^n_{\an}(zN_0 z^{-1}, U) \xlongrightarrow{\tr} \hH^n_{\an}(N_0,U).
\end{equation*}
\\

\noindent
(c) Since (\ref{equ: extalg}) admits a continuous section (of topological $E$-vector spaces), by \cite[Prop. 3.3]{Sch10}, (\ref{equ: extalg}) induces  an exact sequence
\begin{equation}\label{equ: lgln-N_0coho}
  0 \ra \hH^0_{\an}\big(N_0, v_{\overline{P}_i}^{\infty}(\ul{\lambda})\big) \ra V_0^{N_0}
   \ra \hH^0_{\an}\big(N_0,  \cC^{\infty}_c(N_{\Delta\setminus \{i\}}(L), \St_{\Delta\setminus \{i\}}^{\infty}) \otimes_E L(\mu)\big) \ra \hH^1_{\an}\big(N_0, v_{\overline{P}_i}^{\infty}(\ul{\lambda})\big),
\end{equation}
which is moreover $L_{\Delta\setminus \{i\}}^+$-equivariant (for the Hecke actinon of $L_{\Delta\setminus \{i\}}^+$ defined in (b)). By \cite[(3.5.4)]{Em11} and the fact $L(\mu)^{N_0} \cong L(\mu)_{\Delta\setminus \{i\}}$, we have a natural $L_{\Delta\setminus \{i\}}^+$-equivariant injection
\begin{equation}\label{injst00}\St_{\Delta\setminus \{i\}}^{\infty}\otimes_E \delta_{\Delta\setminus \{i\}}\otimes_E L(\mu)_{\Delta\setminus \{i\}} \hooklongrightarrow \hH^0_{\an}\big(N_0,  \cC^{\infty}_c(N_{\Delta\setminus \{i\}}(L), \St_{\Delta\setminus \{i\}}^{\infty}) \otimes_E L(\mu)\big),
\end{equation}
where $\delta_{\Delta \setminus \{i\}}$ denotes the modulus character of $P_{\Delta \setminus \{i\}}(L)$ (which is hence smooth).
\\

\noindent (d) We calculate $\hH^r_{\an}\big(N_0, v_{\overline{P}_i}^{\infty}(\ul{\lambda})\big)$.  By \cite[Thm. 4.10]{Koh2011}  (which applies to our case by \cite[Thm. 6.5]{Koh2011}, note also that by \cite[Rem. 2.17, Prop. 2.18]{Koh2011}, $\hH^r_{\an}(N_0,  v_{\overline{P}_i}^{\infty}(\ul{\lambda}))$ defined as in (b) coincides with that  in \cite[Def. 2.5]{Koh2011}), we deduce $\fl_{\Delta\setminus \{i\},\Sigma_L}$-equivariant isomorphisms
\begin{equation}\label{equ: N0an}
  \hH^r_{\an}\big(N_0, v_{\overline{P}_i}^{\infty}(\ul{\lambda})\big) \cong \hH^r\big(\fn_{\Delta\setminus \{i\},\Sigma_L},  v_{\overline{P}_i}^{\infty}(\ul{\lambda})\big)^{N_0} \cong
  \big(v_{\overline{P}_i}^{\infty} \otimes_E \hH^r\big(\fn_{\Delta \setminus \{i\}, \Sigma_L}, L(\ul{\lambda})\big)\big)^{N_0}.
   \end{equation}
By \cite[Thm. 4.10]{Sch11}, we have
\begin{equation}\label{equ: N0lie}
\hH^r\big(\fn_{\Delta \setminus \{i\}, \Sigma_L}, L(\ul{\lambda})\big)^{N_0}  \cong \hH^r\big(\fn_{\Delta \setminus \{i\}, \Sigma_L}, L(\ul{\lambda})\big)\cong \big(\oplus_{\substack{\lg(s)=r\\ s\cdot \ul{\lambda}\in X_{\Delta\setminus \{i\}}^+}}L(\mu_s)_{\Delta\setminus \{i\}}\big),
\end{equation}
where $\mu_s \in X_{\Delta\setminus \{i\}}^+$ is the weight satisfying  $L(\mu_s)_{\Delta \setminus \{i\}}\cong (L(s \cdot \ul{\lambda}')_{\Delta \setminus \{i\}})^{\vee}$ with $\ul{\lambda}'=(-\lambda_{n,\sigma}, \cdots, -\lambda_{1,\sigma})_{\sigma\in \Sigma_L}\in X^+$. We obtain thus an $\fl_{\Delta\setminus \{i\},\Sigma_L}$-equivariant isomorphism
\begin{equation}
\label{equ: h1iso}
  \hH^r_{\an}\big(N_0, v_{\overline{P}_i}^{\infty}(\ul{\lambda})\big) \cong \hH^0_{\an}\big(N_0, v_{\overline{P}_i}^{\infty}\big) \otimes_E \big(\oplus_{\substack{\lg(s)=r\\ s\cdot \ul{\lambda}\in X_{\Delta\setminus \{i\}}^+}}L(\mu_s)_{\Delta\setminus \{i\}}\big).
\end{equation}We show that (\ref{equ: h1iso}) is moreover $L_{\Delta\setminus \{i\}}^+$-equivariant, where $L_{\Delta\setminus \{i\}}^+$ acts on the object on the right hand side via the diagonal action,  the $L_{\Delta\setminus \{i\}}^+$-action on $\hH^0_{\an}\big(N_0, v_{\overline{P}_i}^{\infty})$ is given as in (b), and where the $L_{\Delta\setminus \{i\}}^+$-action on $L(\mu_s)_{\Delta\setminus \{i\}}$ is induced by the natural algebraic action of $L_{\Delta\setminus \{i\}}(L)$ via the injection $L_{\Delta\setminus \{i\}}^+\hookrightarrow L_{\Delta\setminus \{i\}}(L)$. Let $z\in L_{\Delta\setminus \{i\}}^+$, we have a commutative diagram
\begin{equation}\label{equ: diaga}
\begin{CD}
 \hH^0_{\an}\big(N_0, v_{\overline{P}_i}^{\infty}\big) \otimes_E \hH^r_{\an}(N_0, L(\ul{\lambda})) @>>> \hH^r_{\an}\big(N_0, v_{\overline{P}_i}^{\infty}(\ul{\lambda})\big) \\
 @V r_z\otimes r_z VV @V r_z VV \\
  \hH^0_{\an}\big(zN_0z^{-1}, v_{\overline{P}_i}^{\infty}\big) \otimes_E \hH^r_{\an}(zN_0z^{-1}, L(\ul{\lambda})) @>>> \hH^r_{\an}\big(zN_0z^{-1}, v_{\overline{P}_i}^{\infty}(\ul{\lambda})\big)
  \end{CD}
\end{equation}
where the horizontal maps are induced by $v\otimes f\mapsto [g\mapsto v\otimes f(g)]$. By \cite[Thm. 4.10]{Koh2011}, we have a natural commutative diagram
\begin{equation}\label{equ: congs}
\begin{CD}
  \hH^r_{\an}(N_0, L(\ul{\lambda})) @> \sim >> \hH^r(\fn_{\Delta\setminus \{i\}, \Sigma_L}, L(\ul{\lambda}))^{N_0} \\
  @V \iota VV @VVV \\
    \hH^r_{\an}(zN_0z^{-1}, L(\ul{\lambda})) @> \sim >> \hH^r(\fn_{\Delta\setminus \{i\}, \Sigma_L}, L(\ul{\lambda}))^{zN_0z^{-1}}\\
\end{CD}
\end{equation}
where $\iota$ is the  natural restriction map. By (\ref{equ: N0lie}), the right vertical map of (\ref{equ: congs})  is an isomorphism, hence so is $\iota$. Moreover, using the top horizontal isomorphism in (\ref{equ: congs}) and (\ref{equ: N0lie}), the isomorphism (\ref{equ: h1iso}) coincides with the top horizontal map in  (\ref{equ: diaga}). Similarly,  the bottom horizontal map in  (\ref{equ: diaga}) is also an isomorphism. The composition
\begin{equation}\label{equ: rzLie}
\big(\oplus_{\substack{\lg(s)=r\\ s\cdot \ul{\lambda}\in X_{\Delta\setminus \{i\}}^+}}L(\mu_s)_{\Delta\setminus \{i\}}\big) \cong\hH^r_{\an}(N_0, L(\ul{\lambda})) \xlongrightarrow{r_z} \hH^r_{\an}(zN_0z^{-1}, L(\ul{\lambda})) \cong \big(\oplus_{\substack{\lg(s)=r\\ s\cdot \ul{\lambda}\in X_{\Delta\setminus \{i\}}^+}}L(\mu_s)_{\Delta\setminus \{i\}}\big)
\end{equation}
 coincides with the natural $z$-action on $\big(\oplus_{\substack{\lg(s)=1\\ s\cdot \ul{\lambda}\in X_{\Delta\setminus \{i\}}^+}}L(\mu_s)_{\Delta\setminus \{i\}}\big)$ (noting (\ref{equ: N0lie}) also holds with $N_0$ replaced by $zN_0z^{-1}$).
On the other hand, for $v\in \hH^0_{\an}\big(zN_0z^{-1}, v_{\overline{P}_i}^{\infty}\big)$ and $f\in \hH^r_{\an}(N_0, L(\ul{\lambda}))$, one can check $\tr(v\otimes \iota(f))=\tr(v)\otimes f$ (see (\ref{equ: tran})). For
\begin{equation*}v_1\otimes v_2\in \hH^0_{\an}\big(N_0, v_{\overline{P}_i}^{\infty}\big) \otimes_E \big(\oplus_{\substack{\lg(s)=r\\ s\cdot \ul{\lambda}\in X_{\Delta\setminus \{i\}}^+}}L(\mu_s)_{\Delta\setminus \{i\}}\big) \cong
 \hH^r_{\an}\big(N_0, v_{\overline{P}_i}^{\infty}(\ul{\lambda})\big),
\end{equation*}
using the isomorphisms in (\ref{equ: diaga}) (\ref{equ: N0lie}), by (\ref{equ: diaga}) and  the above discussion, we have
\begin{equation*}
  \pi_z(v_1\otimes v_2)=\frac{1}{|N_0/zN_0z^{-1}|} \tr(r_z(v_1)\otimes z(v_2))=\pi_z(v_2) \otimes z(v_2).
\end{equation*}
Hence (\ref{equ: h1iso}) is $L_{\Delta\setminus \{i\}}^+$-equivariant.
\\

\noindent
(e) We show \begin{equation}\label{equ: lgln-dJE}
   \Hom_{L_{\Delta\setminus \{i\}}^+}\big(\St_{\Delta\setminus \{i\}}^{\infty} \otimes_E \delta_{\Delta\setminus \{i\}}\otimes_E L(\mu)_{\Delta\setminus \{i\}},\hH^1_{\an}\big(N_0, v_{\overline{P}_i}^{\infty}(\ul{\lambda})\big)\big)= 0.
\end{equation}
 Indeed, if the vector space in (\ref{equ: lgln-dJE}) is non-zero, we deduce from (\ref{equ: h1iso}) that there exists $s$ such that $\mu=\mu_s$ (e.g. by considering the action of $\fl_{\Delta\setminus \{i\},\Sigma_L}$) and that
\begin{equation*}
  \Hom_{L_{\Delta\setminus \{i\}}^+}\big(\St_{\Delta\setminus \{i\}}^{\infty}\otimes_E \delta_{\Delta\setminus \{i\}}, \hH^0_{\an}\big(N_0, v_{\overline{P}_i}^{\infty}\big)\big)\neq 0.
\end{equation*}
By the adjunction property (e.g. see \cite[(0.2)]{Em2}), we deduce then
\begin{equation*}
  \Hom_{G}\big((\Ind_{\overline{P}_{\Delta\setminus \{i\}}(L)}^G \St_{\Delta\setminus \{i\}}^{\infty})^{\infty}, v_{\overline{P}_i}^{\infty}\big) \neq 0,
\end{equation*}
which implies $\Hom_G\big(i_{\overline{B}}^G, v_{\overline{P}_i}^{\infty}\big)\neq 0$. However by \cite[Prop. 15]{Orl}, $\Ext^i_{G/Z}\big(i_{\overline{B}}^G, i_{\overline{P}}^G\big)=0$ for $i\in \Z_{\geq 0}$ and $\overline{P}\supsetneq \overline{B}$, from which one easily deduces $\Hom_{G}\big(i_{\overline{B}}^G, v_{\overline{P}_i}^{\infty}\big)=0$, a contradiction.
\\

\noindent (f) We finish the proof. Let $U$ be the pull-back of $V_0^{N_0}$ via the injection (\ref{injst00}) (which is well-defined by (e)). We have an $L_{\Delta \setminus \{i\}}^+$-equivariant exact sequence
\begin{equation*}
  0 \ra \hH^0_{\an}(N_0, v_{\overline{P}_i}^{\infty}) \otimes_E L(\ul{\lambda})_{\Delta\setminus \{i\}} \ra U \ra \St_{\Delta\setminus \{i\}}^{\infty}\otimes_E \delta_{\Delta\setminus \{i\}}\otimes_E L(\mu)_{\Delta\setminus \{i\}} \ra 0.
\end{equation*}
Since $\mu \neq \lambda$, applying $(-\otimes_E L(\mu)_{\Delta\setminus \{i\}}^{\vee})^{\fl_{\Delta\setminus \{i\},\Sigma_L}}$ and using $\hH^r(\fl_{\Delta\setminus \{i\},\Sigma_L}, L(\ul{\lambda})_{\Delta\setminus \{i\}} \otimes_E L(\mu)_{\Delta\setminus \{i\}}^{\vee})=0$ for $r=0, 1$, and $\hH^0(\fl_{\Delta\setminus \{i\},\Sigma_L}, L(\mu)_{\Delta\setminus \{i\}} \otimes_E L(\mu)_{\Delta\setminus \{i\}}^{\vee})=E$,  we obtain $L_{\Delta \setminus \{i\}}^+$-equivariant isomorphisms
\begin{equation*}
  (U \otimes_E L(\mu)_{\Delta\setminus \{i\}}^{\vee})^{\fl_{\Delta\setminus \{i\},\Sigma_L}} \xlongrightarrow{\sim} (\St_{\Delta\setminus \{i\}}^{\infty}\otimes_E \delta_{\Delta\setminus \{i\}}\otimes_E L(\mu)_{\Delta\setminus \{i\}}\otimes_E L(\mu)_{\Delta\setminus \{i\}}^{\vee})^{\fl_{\Delta\setminus \{i\},\Sigma_L}} \cong \St_{\Delta\setminus \{i\}}^{\infty}\otimes_E \delta_{\Delta\setminus \{i\}}.
\end{equation*}
where $L_{\Delta \setminus \{i\}}^+$ acts via the diagonal action on the tensor products, and acts via $L_{\Delta\setminus \{i\}}^+ \hookrightarrow L_{\Delta \setminus \{i\}}(L)$ on the algebraic representations of $L_{\Delta \setminus \{i\}}(L)$. We see that the natural $L_{\Delta\setminus \{i\}}^+$-equivariant map (induced by $L(\mu)_{\Delta\setminus \{i\}} \otimes_E L(\mu)_{\Delta\setminus \{i\}}^{\vee} \ra E$)
\begin{equation*}
   (U \otimes_E L(\mu)_{\Delta\setminus \{i\}}^{\vee})^{\fl_{\Delta\setminus \{i\},\Sigma_L}}  \otimes_E L(\mu)_{\Delta\setminus \{i\}} \lra U
\end{equation*}
is injective, because its composition with $U \ra \St_{\Delta\setminus \{i\}}^{\infty}\otimes_E \delta_{\Delta\setminus \{i\}}\otimes_E L(\mu)_{\Delta\setminus \{i\}}$ is bijective. Thus we have $L_{\Delta \setminus \{i\}}^+$-equivariant injections
\begin{equation*}
  \St_{\Delta\setminus \{i\}}^{\infty}\otimes_E \delta_{\Delta\setminus \{i\}} \otimes_E L(\mu)_{\Delta\setminus \{i\}} \hooklongrightarrow U \hooklongrightarrow V_0^{N_0} \hooklongrightarrow V^{N_0}.
\end{equation*}
However, since $V$ is very strongly admissible, by \cite[Thm. 4.3]{Br13II}, it is not difficult to deduce from the above composition a section of (\ref{equ: hynonsp}), a contradiction.
\end{proof}
\begin{remark}
It might be true that $\St_n^{\infty}(\ul{\lambda})\xrightarrow{\sim}\soc_G \St_n^{\an}(\ul{\lambda})$, but the author does not how to prove this.
\end{remark}
\noindent Let  $i\in \Delta$, $\sigma\in \Sigma_L$, $\ul{\lambda}_{\sigma}:=(\lambda_{1,\sigma}, \cdots, \lambda_{n,\sigma})$, and $\ul{\lambda}^{\sigma}:=(\lambda_{1,\sigma'}, \cdots, \lambda_{n,\sigma'})_{\sigma'\in \Sigma_L\setminus \{\sigma\}}$. Consider the locally $\sigma$-analytic parabolic induction
\begin{equation*}
   \big(\Ind_{\overline{P}_{I}(L)}^G \St_{\Delta \setminus \{i\}}^{\infty}(\ul{\lambda}_{\sigma}) \big)^{\sigma-\an}
\end{equation*}
where $\St_{\Delta \setminus \{i\}}^{\infty}(\ul{\lambda}_{\sigma})\cong \St_{\Delta\setminus \{i\}}^{\infty}\otimes_E L(\ul{\lambda}_{\sigma})_{\Delta \setminus \{i\}}$ is a locally $\sigma$-analytic representation of $L_{\Delta\setminus \{i\}}(L)$. By \cite[Thm]{OS} and Lemma \ref{lem: SfiniOS}, we have
\begin{multline*}
\big(\Ind_{\overline{P}_{\Delta\setminus \{i\}}(L)}^G \St_{\Delta \setminus \{i\}}^{\infty}\big)^{\infty}\otimes_E L(\ul{\lambda})  \hooklongrightarrow    \big(\Ind_{\overline{P}_{\Delta \setminus \{i\}}(L)}^G \St_{\Delta \setminus \{i\}}^{\infty}(\ul{\lambda}_{\sigma}) \big)^{\sigma-\an} \otimes_E L(\ul{\lambda}^{\sigma}) \\ \hooklongrightarrow   \big(\Ind_{\overline{P}_{\Delta \setminus \{i\}}(L)}^G \St_{\Delta \setminus \{i\}}^{\infty}(\ul{\lambda}) \big)^{\Q_p-\an}
\lra \St_n^{\an}(\ul{\lambda}).
\end{multline*}
Denote by $\tilde{\Sigma}_{i,\sigma}(\ul{\lambda}):=\big(\big(\Ind_{\overline{P}_{\Delta \setminus \{i\}}(L)}^G \St_{\Delta \setminus \{i\}}^{\infty}(\ul{\lambda}_{\sigma}) \big)^{\sigma-\an}\otimes_E L(\ul{\lambda}^{\sigma})\big)/v_{\overline{P}_i}^{\infty}(\ul{\lambda})\hookrightarrow \tilde{\Sigma}_i(\ul{\lambda}) \hookrightarrow \St_n^{\an}(\ul{\lambda})$. By Proposition \ref{soc}, we have $\soc_{\GL_n(L)}\tilde{\Sigma}_{i,\sigma}(\ul{\lambda})\cong \St_n^{\infty}(\ul{\lambda})$. Let
\begin{equation*}C_{i,\sigma}:=\cF_{\overline{P}_{\Delta \setminus \{i\}}}^G\big(\overline{L}(-\ul{\lambda}^{\sigma})\otimes_E \overline{L}(-s_{i,\sigma}\cdot \ul{\lambda}_{\sigma}), \St_{\Delta \setminus \{i\}}^{\infty}\big),
\end{equation*}
which is an irreducible subrepresentation of $\tilde{\Sigma}_{i,\sigma}(\ul{\lambda})/\St_n^{\infty}(\ul{\lambda})$. By \cite[Thm. 4.6]{OSch}, $C_{i,\sigma}$ has multiplicity one as irreducible constituent in $\St_n^{\an}(\ul{\lambda})$. Denote by $\Sigma_{i,\sigma}(\ul{\lambda})$ the extension of $C_{i,\sigma}$ by $\St_n^{\infty}(\ul{\lambda})$ appearing as a sub of $\tilde{\Sigma}_{i,\sigma}(\ul{\lambda})$, and put
\begin{equation*}\Sigma_i(\ul{\lambda}):=\bigoplus^{\sigma\in \Sigma_L}_{\St_n^{\infty}(\ul{\lambda})} \Sigma_{i,\sigma}(\ul{\lambda})\hookrightarrow \tilde{\Sigma}_i(\ul{\lambda}),
\
\Sigma(\ul{\lambda}):=\bigoplus^{i\in \Delta}_{\St_n^{\infty}(\ul{\lambda})} \Sigma_i(\ul{\lambda}).
\end{equation*}
By Proposition \ref{soc}, we have $\soc_{\GL_n(L)} \Sigma_i(\ul{\lambda})\cong \St_n^{\infty}(\ul{\lambda})$. By \cite[Cor. 3.6]{OSch}, $\{C_{i,\sigma}\}_{i\in \Delta, \sigma\in \Sigma_L}$ are all distinct, hence we also have $\soc_{\GL_n(L)} \Sigma(\ul{\lambda})\cong \St_n^{\infty}(\ul{\lambda})$.

\subsubsection{Extensions of locally analytic representations}
\noindent
The following lemma is an easy consequence of the results in \cite[\S 4.4]{Sch11}.
\begin{lemma}\label{lem: vanish}Let $\mu\in X^+$, $\pi$ be a smooth admissible representation of $G$ over $E$, $W:=L(\mu)\otimes_E \pi$. Let $s\in S_{n}^{|d_L|}\cong \sW_{\Sigma_L}$, $I\subseteq \Delta$ such that $ s\cdot \mu\in X_I^+$, and $\pi_I$ be a finite length smooth representation of $L_I(L)$ over $E$. Suppose $\lg s>1$, then we have

(1) $\Ext^1_G\big(W, (\Ind_{\overline{P}_I(L)}^G L(s\cdot \mu)_I\otimes_E \pi_I)^{\Q_p-\an}\big)=0$;

(2) $\Ext^1_G\big(W, \cF_{\overline{P}_I}^G(\overline{L}(-s\cdot \mu), \pi_I)\big)=0$.
\end{lemma}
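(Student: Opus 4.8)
The plan is to derive both vanishings from Schraen's spectral sequence \cite[Cor. 4.9]{Sch11} (in the version without fixed central character, as in the parenthetical remark following its use for (\ref{equ: lgln-ext0})), combined with the parabolic Kostant theorem \cite[Thm. 4.10]{Sch11}; statement (2) will be reduced to statement (1).

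\emph{Proof of (1).} Write $\pi_I':=L(s\cdot\mu)_I\otimes_E\pi_I$, so the second argument is $(\Ind_{\overline{P}_I(L)}^G\pi_I')^{\Q_p-\an}$. Since $W=L(\mu)\otimes_E\pi$ is a locally algebraic admissible representation of $G$, the separatedness hypothesis of \emph{loc. cit.} holds, and I would invoke the spectral sequence
\begin{equation*}
E_2^{r,s}=\Ext^r_{L_I(L)}\big(\hH_s(\overline{N}_I,W),\pi_I'\big)\Longrightarrow\Ext^{r+s}_G\big(W,(\Ind_{\overline{P}_I(L)}^G\pi_I')^{\Q_p-\an}\big).
\end{equation*}
By (the analogue for $W$ of) \cite[(4.41)]{Sch11}, $\hH_s(\overline{N}_I,W)\cong\hH_s(\overline{\fn}_{I,\Sigma_L},L(\mu))\otimes_E J_{\overline{P}_I}(\pi)$, and by \cite[Thm. 4.10]{Sch11}, $\hH_s(\overline{\fn}_{I,\Sigma_L},L(\mu))\cong\bigoplus_{\lg(w)=s,\,w\cdot\mu\in X_I^+}L(w\cdot\mu)_I$. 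To conclude $\Ext^1_G=0$ it is enough that $E_2^{1,0}=E_2^{0,1}=0$. The point is that, as $\mu\in X^+$, the dot-action of $\sW_{\Sigma_L}$ on $\mu$ is free ($w\cdot\mu=\mu$ forces $w$ to fix the regular weight $\mu+\rho$, so $w=1$), whence $w\mapsto w\cdot\mu$ is injective. Hence $E_2^{1,0}=\Ext^1_{L_I(L)}(L(\mu)_I\otimes_E J_{\overline{P}_I}(\pi),L(s\cdot\mu)_I\otimes_E\pi_I)=0$ because $s\neq1$ makes the algebraic parts $L(\mu)_I,L(s\cdot\mu)_I$ non-isomorphic, so \cite[Prop. 4.7 (1)]{Sch11} applies; and $E_2^{0,1}=\Hom_{L_I(L)}\big(\bigoplus_{\lg(w)=1}L(w\cdot\mu)_I\otimes_E J_{\overline{P}_I}(\pi),L(s\cdot\mu)_I\otimes_E\pi_I\big)=0$ since $\lg(s)>1$ and injectivity preclude $w\cdot\mu=s\cdot\mu$ with $\lg(w)=1$.

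\emph{Proof of (2).} Let $N:=\text{U}(\ug_{\Sigma_L})\otimes_{\text{U}(\overline{\fp}_{I,\Sigma_L})}L(s\cdot\mu)_I^{\vee}$, the generalized Verma module whose simple quotient is $\overline{L}(-s\cdot\mu)$, and let $N'\subset N$ be its radical. Lemma \ref{lem: lgln-Sla} gives $\cF_{\overline{P}_I}^G(N,\pi_I)\cong(\Ind_{\overline{P}_I(L)}^G L(s\cdot\mu)_I\otimes_E\pi_I)^{\Q_p-\an}$, so applying the exact functor $\cF_{\overline{P}_I}^G(-,\pi_I)$ (contravariant in the $\co$-argument) to $0\to N'\to N\to\overline{L}(-s\cdot\mu)\to0$ yields
\begin{equation*}
0\to\cF_{\overline{P}_I}^G(\overline{L}(-s\cdot\mu),\pi_I)\to(\Ind_{\overline{P}_I(L)}^G L(s\cdot\mu)_I\otimes_E\pi_I)^{\Q_p-\an}\to\cF_{\overline{P}_I}^G(N',\pi_I)\to0.
\end{equation*}
Every composition factor of $N'$ has the form $\overline{L}(-w\cdot\mu)$ with $w\cdot\mu\in X_I^+$ and $\lg(w)>\lg(s)$, in particular $w\neq1$; by \cite{OS} and \cite[Cor. 3.6]{OSch} no irreducible constituent of $\cF_{\overline{P}_I}^G(N',\pi_I)$ is then locally algebraic (such a constituent would force the corresponding $\overline{L}(-w\cdot\mu)$ to be finite-dimensional, i.e. $w=1$). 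Since $W$ is locally algebraic, this forces $\Hom_G(W,\cF_{\overline{P}_I}^G(N',\pi_I))=0$; feeding this and the vanishing from (1) into the long exact sequence for $\Ext^{\bullet}_G(W,-)$ gives $\Ext^1_G(W,\cF_{\overline{P}_I}^G(\overline{L}(-s\cdot\mu),\pi_I))=0$.

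The spectral-sequence formalism and the Jacquet-module/Kostant description of $\hH_s(\overline{N}_I,W)$ are routine; the two steps that really need care are the application of \cite[Prop. 4.7 (1)]{Sch11} (vanishing of $\Ext^1_{L_I(L)}$ between locally algebraic representations of $L_I(L)$ with non-isomorphic algebraic parts, where one uses that $\pi$ is admissible and $\pi_I$ has finite length) and, in (2), the assertion that $\cF_{\overline{P}_I}^G(N',\pi_I)$ has no locally algebraic irreducible constituent, which rests on the Orlik--Strauch description of the irreducible constituents of $\cF_{\overline{P}_I}^G(M,\pi)$.
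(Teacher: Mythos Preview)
Your proof is correct and follows essentially the same approach as the paper's: for (1), the paper likewise invokes Schraen's spectral sequence \cite[(4.39)]{Sch11} together with the Kostant description \cite[(4.43), Thm.~4.10]{Sch11} of $\hH_r(\overline{N}_I,W)$ and then \cite[Prop.~4.7]{Sch11} to kill the relevant $E_2$-terms; for (2), the paper also deduces it from (1) via the short exact sequence with $\cF_{\overline{P}_I}^G(\overline{L}(-s\cdot\mu),\pi_I)$ as sub of the full analytic induction, using that the quotient has no locally algebraic constituents. The only cosmetic difference is that you make explicit the injectivity of $w\mapsto w\cdot\mu$ (via regularity of $\mu+\rho$) and phrase the quotient in (2) as $\cF_{\overline{P}_I}^G(N',\pi_I)$ for $N'$ the radical of the generalized Verma module, whereas the paper simply cites \cite{OS} to assert the quotient has no nonzero locally algebraic vectors.
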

\begin{proof}
By \cite[Thm.]{OS}, $\cF_{\overline{P}_I}^G(\overline{L}(-s\cdot \mu),\pi_I)$ is a subrepresentation of $(\Ind_{\overline{P}_I(L)}^G L(s\cdot \mu)_I\otimes_E \pi_I)^{\Q_p-\an}$, and the quotient $(\Ind_{\overline{P}_I(L)}^G L(s\cdot \mu)_I\otimes_E \pi_I)^{\Q_p-\an}/\cF_{\overline{P}_I}^G(\overline{L}(-s\cdot \mu),\pi_I)$ does not have any non-zero locally algebraic vectors (hence $\Hom_G(W,V)=0$). Thus we can easily deduce (2) from (1).
\\

\noindent
We prove (1). By \cite[(4.43), Thm. 4.10]{Sch11}, we have
\begin{equation*}
    \hH_r(\overline{N}_I(L), W)\cong J_{\overline{P}_I}(\pi) \otimes_E \big(\oplus_{\substack{\lg(w)=r,\\ \text{$w\cdot \mu$ is $I$-dominant}}} L(w\cdot \mu)_I\big).
 \end{equation*}
Since $\lg s>1$, by \cite[Prop. 4.7]{Sch11}, for $i=0, 1$, and $r\in \Z_{\geq 0}$, we have
\begin{equation*}
\Ext^r_{L_I(L)}\big( \hH_i(\overline{N}_I(L), W), L(s\cdot \mu)_I\otimes_E \pi_I\big)=0.
\end{equation*}
  Part (1) then follows from the spectral sequence \cite[(4.39)]{Sch11} (note the separateness assumption is satisfied since $W$ is locally algebraic).
\end{proof}
\begin{proposition}\label{prop: iso-simp}
  For any $i, j\in \Delta$, $\sigma\in \Sigma_L$, the following natural morphisms are isomorphisms
  \begin{eqnarray}
  \label{equ: isoSst} \Ext^1_G\big(v_{\overline{P}_i}^{\infty}(\ul{\lambda}), \Sigma(\ul{\lambda})\big) &\xlongrightarrow{\sim}& \Ext^1_G\big(v_{\overline{P}_i}^{\infty}(\ul{\lambda}), \St_n^{\an}(\ul{\lambda})\big),\\
    \Ext^1_G\big(v_{\overline{P}_i}^{\infty}(\ul{\lambda}), \Sigma_j(\ul{\lambda})\big) &\xlongrightarrow{\sim}& \Ext^1_G\big(v_{\overline{P}_i}^{\infty}(\ul{\lambda}), \tilde{\Sigma}_j(\ul{\lambda})\big), \nonumber \\
    \Ext^1_G\big(v_{\overline{P}_i}^{\infty}(\ul{\lambda}), \Sigma_{j,\sigma}(\ul{\lambda})\big) &\xlongrightarrow{\sim}& \Ext^1_G\big(v_{\overline{P}_i}^{\infty}(\ul{\lambda}), \tilde{\Sigma}_{j,\sigma}(\ul{\lambda})\big). \nonumber
  \end{eqnarray}
\end{proposition}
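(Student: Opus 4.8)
The plan is to prove all three isomorphisms by one and the same argument, so let $(\fX,\fY)$ denote one of the pairs $(\St_n^{\an}(\ul{\lambda}),\Sigma(\ul{\lambda}))$, $(\tilde{\Sigma}_j(\ul{\lambda}),\Sigma_j(\ul{\lambda}))$, $(\tilde{\Sigma}_{j,\sigma}(\ul{\lambda}),\Sigma_{j,\sigma}(\ul{\lambda}))$. In each case $\fY\hooklongrightarrow\fX$, and putting $Q:=\fX/\fY$ we have a short exact sequence $0\to\fY\to\fX\to Q\to 0$; applying $\Hom_G(v_{\overline{P}_i}^{\infty}(\ul{\lambda}),-)$ produces the exact sequence $\Hom_G(v_{\overline{P}_i}^{\infty}(\ul{\lambda}),Q)\to\Ext^1_G(v_{\overline{P}_i}^{\infty}(\ul{\lambda}),\fY)\to\Ext^1_G(v_{\overline{P}_i}^{\infty}(\ul{\lambda}),\fX)\to\Ext^1_G(v_{\overline{P}_i}^{\infty}(\ul{\lambda}),Q)$. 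Thus the whole statement reduces to the two vanishing assertions $\Hom_G(v_{\overline{P}_i}^{\infty}(\ul{\lambda}),Q)=0$ and $\Ext^1_G(v_{\overline{P}_i}^{\infty}(\ul{\lambda}),Q)=0$, in all three cases.

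The crucial first step is to identify the Jordan--H\"older factors of $Q$. I would combine the Orlik--Strauch classification of the irreducible constituents of the relevant locally analytic (generalized) Steinberg and parabolically induced representations --- each such constituent being of the form $\cF_{\overline{P}_I}^G(\overline{L}(-w\cdot\ul{\lambda}),\pi_I)$ for some $I\subseteq\Delta$, $w\in\sW_{\Sigma_L}$ with $w\cdot\ul{\lambda}\in X_I^+$, and $\pi_I$ an irreducible smooth representation of $L_I(L)$ (using \cite{OS}, \cite{OSch}) --- with the observation that the constituents with $\lg(w)\le 1$ are precisely $\St_n^{\infty}(\ul{\lambda})$ (the locally algebraic part, $w=1$) and the $C_{i,\sigma}$ ($w=s_{i,\sigma}$). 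In the first case $\St_n^{\infty}(\ul{\lambda})\subseteq\Sigma(\ul{\lambda})$ and, by \cite[Thm. 4.6, Cor. 3.6]{OSch}, each $C_{i,\sigma}$ already occurs (with multiplicity one) as a constituent of $\Sigma(\ul{\lambda})$; in the second and third cases the locally algebraic part of $\tilde{\Sigma}_j(\ul{\lambda})$ (resp. $\tilde{\Sigma}_{j,\sigma}(\ul{\lambda})$) is $\St_n^{\infty}(\ul{\lambda})$ and its $\lg(w)=1$ constituents are the $C_{j,\sigma}$ for all $\sigma$ (resp. the single $C_{j,\sigma}$), all of which lie in $\Sigma_j(\ul{\lambda})$ (resp. $\Sigma_{j,\sigma}(\ul{\lambda})$) by construction and Proposition \ref{soc}. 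Hence every Jordan--H\"older factor of $Q$ is of the form $\cF_{\overline{P}_I}^G(\overline{L}(-w\cdot\ul{\lambda}),\pi_I)$ with $\lg(w)\ge 2$; in particular $Q$ has no locally algebraic constituent.

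Granting that, the two vanishings are formal. For $\Hom$: $v_{\overline{P}_i}^{\infty}(\ul{\lambda})=v_{\overline{P}_i}^{\infty}\otimes_E L(\ul{\lambda})$ is irreducible and locally algebraic, so a nonzero map $v_{\overline{P}_i}^{\infty}(\ul{\lambda})\to Q$ would be injective and exhibit a locally algebraic constituent of $Q$, a contradiction. For $\Ext^1$: $\fX$ has finite length (being a subquotient of $\bI_{\overline{B}}^G(\ul{\lambda})$ by Orlik--Strauch), hence so does $Q$, so by d\'evissage along a Jordan--H\"older filtration of $Q$ together with the long exact $\Ext$-sequence it is enough to show $\Ext^1_G(v_{\overline{P}_i}^{\infty}(\ul{\lambda}),S)=0$ for each constituent $S\cong\cF_{\overline{P}_I}^G(\overline{L}(-w\cdot\ul{\lambda}),\pi_I)$ of $Q$; since $\lg(w)\ge 2$, this is exactly Lemma \ref{lem: vanish}(2), applied with $W=v_{\overline{P}_i}^{\infty}(\ul{\lambda})=L(\ul{\lambda})\otimes_E v_{\overline{P}_i}^{\infty}$, $\mu=\ul{\lambda}$, $s=w$ and $\pi_I$. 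Feeding both vanishings back into the four-term sequence yields the three isomorphisms.

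The main obstacle I anticipate is the first step: making precise that $\Sigma(\ul{\lambda})$ (resp. $\Sigma_j(\ul{\lambda})$, $\Sigma_{j,\sigma}(\ul{\lambda})$) captures \emph{every} Jordan--H\"older factor of $\fX$ with $\lg(w)\le 1$, with the correct multiplicities, so that nothing of length $\le 1$ survives in $Q$. This rests entirely on the structure theory for the constituents of locally analytic parabolic inductions of smooth Steinberg representations (the results of \cite{OS}, \cite{OSch}) together with the socle computation of Proposition \ref{soc}; once this bookkeeping is in place, the $\Hom$-vanishing, the d\'evissage, and the appeal to Lemma \ref{lem: vanish} are routine.
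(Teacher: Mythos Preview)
Your proposal is correct and follows essentially the same approach as the paper: identify, via \cite[Thm.~4.6]{OSch}, that every irreducible constituent of the relevant quotient $Q$ has the form $\cF_{\overline{P}_J}^G(\overline{L}(-s\cdot\ul{\lambda}),\pi)$ with $\lg(s)>1$, then apply Lemma~\ref{lem: vanish} by d\'evissage. The paper's proof is terser (it does not spell out the four-term sequence or the $\Hom$-vanishing), but the substance is identical.
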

\begin{proof}
  By \cite[Thm. 4.6]{OSch}, any irreducible constituent of $\St_n^{\an}(\ul{\lambda})/\Sigma(\ul{\lambda})$ has the form $\cF_{\overline{P}_J}^G(\overline{L}(-s\cdot \ul{\lambda}), \pi)$ with $\lg(s)>1$. The first isomorphism then follows from Lemma \ref{lem: vanish} by an easy d\'evissage argument. The other isomorphisms follow by the same arguments.
\end{proof}
\begin{lemma}\label{lem: stij}
  Let $i, j \in \Delta$, then for any $\sigma\in \Sigma_L$, we have

  (1) $\dim_E\Ext^1_G\big(v_{\overline{P}_i}^{\infty}(\ul{\lambda}), (\Ind_{\overline{P}_{\Delta\setminus \{j\}}}^{G} L(s_{j, \sigma}\cdot \ul{\lambda})_{\Delta\setminus \{j\}}\otimes_E \St_{\Delta\setminus \{j\}}^{\infty})^{\Q_p-\an}\big)=\begin{cases}
1 & i = j \\
    0 & i\neq j
  \end{cases}$;

  (2) $\dim_E\Ext^1_G\big(v_{\overline{P}_i}^{\infty}(\ul{\lambda}), C_{j,\sigma}\big)=\begin{cases}
   1 & i =j \\
0 & i \neq j
  \end{cases}$.
\end{lemma}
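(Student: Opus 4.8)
The plan is to prove (1) by a single application of Schraen's spectral sequence, and to deduce (2) from (1) by d\'evissage. Throughout write $Y_{j,\sigma}$ for the representation $\big(\Ind_{\overline{P}_{\Delta\setminus\{j\}}(L)}^G L(s_{j,\sigma}\cdot\ul{\lambda})_{\Delta\setminus\{j\}}\otimes_E\St_{\Delta\setminus\{j\}}^{\infty}\big)^{\Q_p-\an}$ of part~(1). Since $v_{\overline{P}_i}^{\infty}(\ul{\lambda})=v_{\overline{P}_i}^{\infty}\otimes_E L(\ul{\lambda})$ is locally algebraic, the separatedness hypothesis of \cite[(4.39)]{Sch11} is met, and we get a spectral sequence $E_2^{r,s}=\Ext^r_{L_{\Delta\setminus\{j\}}(L)}\big(\hH_s(\overline{N}_{\Delta\setminus\{j\}},v_{\overline{P}_i}^{\infty}(\ul{\lambda})),\,L(s_{j,\sigma}\cdot\ul{\lambda})_{\Delta\setminus\{j\}}\otimes_E\St_{\Delta\setminus\{j\}}^{\infty}\big)\Rightarrow\Ext^{r+s}_G\big(v_{\overline{P}_i}^{\infty}(\ul{\lambda}),Y_{j,\sigma}\big)$. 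By \cite[(4.43)]{Sch11}, $\hH_s(\overline{N}_{\Delta\setminus\{j\}},v_{\overline{P}_i}^{\infty}(\ul{\lambda}))\cong\hH_s(\overline{\fn}_{\Delta\setminus\{j\},\Sigma_L},L(\ul{\lambda}))\otimes_E J_{\overline{P}_{\Delta\setminus\{j\}}}(v_{\overline{P}_i}^{\infty})$, and by \cite[Thm. 4.10]{Sch11} the first factor equals $\bigoplus_{\lg(w)=s,\ w\cdot\ul{\lambda}\in X_{\Delta\setminus\{j\}}^+}L(w\cdot\ul{\lambda})_{\Delta\setminus\{j\}}$.

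The key point is then that, by \cite[Prop. 4.7(1)]{Sch11}, the $\Ext^r_{L_{\Delta\setminus\{j\}}(L)}$ against $L(s_{j,\sigma}\cdot\ul{\lambda})_{\Delta\setminus\{j\}}\otimes_E\St_{\Delta\setminus\{j\}}^{\infty}$ vanishes in all degrees unless the algebraic parts coincide, i.e. $w\cdot\ul{\lambda}=s_{j,\sigma}\cdot\ul{\lambda}$; as $\ul{\lambda}+\rho$ is regular (automatic for $\ul{\lambda}\in X^+$), this forces $w=s_{j,\sigma}$ and $s=\lg(w)=1$. Hence $E_2$ is concentrated in the row $s=1$, $\Ext^0_G(v_{\overline{P}_i}^{\infty}(\ul{\lambda}),Y_{j,\sigma})=0$, and cancelling the common algebraic representation $L(s_{j,\sigma}\cdot\ul{\lambda})_{\Delta\setminus\{j\}}$ exactly as in step~(b) of the proof of the Proposition containing (\ref{equ: lgln-ext2}) (via \cite[(4.23)]{Sch11} and semisimplicity of $L\otimes_E L^\vee$) gives $\Ext^1_G\big(v_{\overline{P}_i}^{\infty}(\ul{\lambda}),Y_{j,\sigma}\big)\cong\Hom_{\Rep^{\infty}(L_{\Delta\setminus\{j\}}(L))}\big(J_{\overline{P}_{\Delta\setminus\{j\}}}(v_{\overline{P}_i}^{\infty}),\St_{\Delta\setminus\{j\}}^{\infty}\big)$. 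By Frobenius reciprocity (tracking the modulus character) this is $\Hom_G\big(v_{\overline{P}_i}^{\infty},(\Ind_{\overline{P}_{\Delta\setminus\{j\}}(L)}^G\St_{\Delta\setminus\{j\}}^{\infty})^{\infty}\big)$ up to an unramified twist irrelevant to the dimension count; by (\ref{equ: lgln-Istein}), $(\Ind_{\overline{P}_{\Delta\setminus\{j\}}(L)}^G\St_{\Delta\setminus\{j\}}^{\infty})^{\infty}\cong i_{\overline{B}}^G/\sum_{\emptyset\neq J\subseteq\Delta\setminus\{j\}}i_{\overline{P}_J}^G$, which has exactly two Jordan--H\"older factors, namely $v_{\overline{P}_j}^{\infty}$ as a subobject and $\St_n^{\infty}$ as a quotient. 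Since $\Hom_G(v_{\overline{P}_i}^{\infty},\St_n^{\infty})=0$ and $\Hom_G(v_{\overline{P}_i}^{\infty},v_{\overline{P}_j}^{\infty})$ is one-dimensional for $i=j$ and zero otherwise, this proves (1).

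For (2), note that the underlying $\ug_{\Sigma_L}$-module of $C_{j,\sigma}$ is $\overline{L}(-\ul{\lambda}^{\sigma})\otimes_E\overline{L}(-s_{j,\sigma}\cdot\ul{\lambda}_{\sigma})\cong\overline{L}(-s_{j,\sigma}\cdot\ul{\lambda})$, the unique simple quotient of the generalized Verma module $\text{U}(\ug_{\Sigma_L})\otimes_{\text{U}(\overline{\fp}_{\Delta\setminus\{j\},\Sigma_L})}\overline{L}(-s_{j,\sigma}\cdot\ul{\lambda})_{\Delta\setminus\{j\}}$ whose Orlik--Strauch image (paired with $\St_{\Delta\setminus\{j\}}^{\infty}$) is precisely $Y_{j,\sigma}$ by \cite[Thm]{OS}. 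As $\cF_{\overline{P}_{\Delta\setminus\{j\}}}^G(-,\St_{\Delta\setminus\{j\}}^{\infty})$ is exact, $C_{j,\sigma}$ is a subrepresentation of $Y_{j,\sigma}$, and every Jordan--H\"older factor of $Y_{j,\sigma}/C_{j,\sigma}$ has the form $\cF_{\overline{P}_{J'}}^G(\overline{L}(-s\cdot\ul{\lambda}),\pi')$ with $\lg(s)\geq 2$ (the factors of the kernel of the generalized Verma $\twoheadrightarrow\overline{L}(-s_{j,\sigma}\cdot\ul{\lambda})$ correspond to Weyl elements strictly longer than $s_{j,\sigma}$). By Lemma \ref{lem: vanish}, $\Hom_G$ and $\Ext^1_G$ from $v_{\overline{P}_i}^{\infty}(\ul{\lambda})$ into each such factor, hence into $Y_{j,\sigma}/C_{j,\sigma}$, vanish; the long exact sequence attached to $0\to C_{j,\sigma}\to Y_{j,\sigma}\to Y_{j,\sigma}/C_{j,\sigma}\to 0$ then yields $\Ext^1_G(v_{\overline{P}_i}^{\infty}(\ul{\lambda}),C_{j,\sigma})\cong\Ext^1_G(v_{\overline{P}_i}^{\infty}(\ul{\lambda}),Y_{j,\sigma})$, and (2) follows from (1).

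The step I expect to require the most care is the final smooth-representation-theoretic identification in the proof of (1): pinning down the normalization in Frobenius reciprocity so that the relevant induction is $(\Ind_{\overline{P}_{\Delta\setminus\{j\}}(L)}^G\St_{\Delta\setminus\{j\}}^{\infty})^{\infty}$ up to an unramified twist that does not affect the $\Hom$, and verifying through (\ref{equ: lgln-Istein}) that its only two constituents are $v_{\overline{P}_j}^{\infty}$ and $\St_n^{\infty}$ in the stated positions. The remaining manipulations are a direct transcription of the spectral-sequence bookkeeping already performed for Corollaries \ref{cor: lgln-ext2} and \ref{prop: lgln-ext2}.
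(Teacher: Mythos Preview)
Your proposal is correct and follows essentially the same approach as the paper: both apply Schraen's spectral sequence together with Kostant's theorem to collapse to the row $s=1$, cancel the algebraic coefficient via \cite[Prop.~4.7]{Sch11}, use Frobenius reciprocity to reduce to $\Hom_G\big(v_{\overline{P}_i}^{\infty},(\Ind_{\overline{P}_{\Delta\setminus\{j\}}(L)}^G\St_{\Delta\setminus\{j\}}^{\infty})^{\infty}\big)$, and then read off the answer from the two-step filtration $v_{\overline{P}_j}^{\infty}\subset\cdot\twoheadrightarrow\St_n^{\infty}$; part~(2) is deduced from (1) in both by d\'evissage via Lemma~\ref{lem: vanish}. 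The only cosmetic difference is that the paper cites the adjunction directly (no unramified twist appears with the normalizations of \cite[(0.2)]{Em2}), so your hedge about a twist is unnecessary but harmless.
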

\begin{proof}By \cite[Thm]{OS}, $C_{j,\sigma}\hookrightarrow (\Ind_{\overline{P}_{\Delta\setminus \{j\}}}^G L(s_{j, \sigma}\cdot \ul{\lambda})_{\Delta\setminus \{j\}}\otimes_E \St_{\Delta\setminus \{j\}}^{\infty})^{\Q_p-\an}$, and any irreducible constituent of the quotient $(\Ind_{\overline{P}_{\Delta\setminus \{j\}}}^G L(s_{j, \sigma}\cdot \ul{\lambda})_{\Delta\setminus \{j\}}\otimes_E \St_{\Delta\setminus \{j\}}^{\infty})^{\Q_p-\an}/ C_{j,\sigma}$
has the form $\cF_{\overline{P}}^G(\overline{L}(-s\cdot \ul{\lambda}), \pi)$ with $\lg s>1$. By Lemma \ref{lem: vanish} and an easy d\'evissage argument,  the part (2) follows from (1).
\\

\noindent
  We prove (1). We have
  \begin{multline*}
    \Ext^1_G\big(v_{\overline{P}_i}^{\infty}(\ul{\lambda}), (\Ind_{\overline{P}_{\Delta\setminus \{j\}}}^G L(s_{j, \sigma}\cdot \ul{\lambda})_{\Delta\setminus \{i\}}\otimes_E \St_{\Delta\setminus \{j\}}^{\infty})^{\Q_p-\an}\big)\\ \cong \Hom_{L_{\Delta\setminus \{j\}}(L)}\big(L(s_{j, \sigma}\cdot \ul{\lambda})_{\Delta\setminus \{j\}}\otimes_E J_{\overline{P}_{\Delta\setminus \{j\}}}(v_{\overline{P}_i}^{\infty}), L(s_{j, \sigma}\cdot \ul{\lambda})_{\Delta\setminus \{j\}} \otimes_E \St_{\Delta\setminus \{j\}}^{\infty}\big)
    \\ \cong \Hom_{L_{\Delta\setminus \{j\}}(L)}\big(J_{\overline{P}_{\Delta\setminus \{j\}}}(v_{\overline{P}_i}^{\infty}), \St_{\Delta\setminus \{j\}}^{\infty}\big) \cong \Hom_{G}\big(v_{\overline{P}_i}^{\infty}, (\Ind_{\overline{P}_{\Delta\setminus \{j\}}(L)}^G \St_{\Delta\setminus \{j\}}^{\infty})^{\infty}\big),
  \end{multline*}
  where the first isomorphism follows easily from the spectral sequence \cite[(4.37)]{Sch11} together with \cite[Thm. 4.10, Prop. 4.7]{Sch11}, the second isomorphism follows from \cite[Prop. 4.7]{Sch11} and where the third one follows from the adjunction formula for the classical Jacquet functor $J_{\overline{P}_{\Delta\setminus \{j\}}}(\cdot)$ (e.g. see \cite[(0.2)]{Em2}). However,  $(\Ind_{\overline{P}_{\Delta\setminus \{j\}}(L)}^G \St_{\Delta\setminus \{j\}}^{\infty})^{\infty}$ is a \emph{non-split} extension of $\St_n^{\infty}$ by $v_{\overline{P}_j}^{\infty}$, thus
\begin{equation*}
  \dim_E \Hom_{G}\big(v_{\overline{P}_i}^{\infty}, (\Ind_{\overline{P}_{\Delta\setminus \{j\}}(L)}^G \St_{\Delta\setminus \{j\}}^{\infty})^{\infty}\big)=\begin{cases}
    1 & i=j \\
    0 & i \neq j
  \end{cases}.
\end{equation*}
(1) follows.
\end{proof}
\begin{lemma}\label{lem: ext1}Let $i, j \in \Delta$.

(1) If $i\neq j$, then the following natural morphism is an isomorphism:
\begin{equation*}
  \Ext^1_G\big(v_{\overline{P}_i}^{\infty}(\ul{\lambda}), \St_n^{\infty}(\ul{\lambda})\big)\xlongrightarrow{\sim} \Ext^1_G\big(v_{\overline{P}_i}^{\infty}(\ul{\lambda}),\Sigma_{j}(\ul{\lambda})\big) .
\end{equation*}

(2) The following natural morphism is an isomorphism:
\begin{equation*}
  \Ext^1_G\big(v_{\overline{P}_i}^{\infty}(\ul{\lambda}), \Sigma_i(\ul{\lambda})\big)\xlongrightarrow{\sim} \Ext^1_G\big(v_{\overline{P}_i}^{\infty}(\ul{\lambda}),\Sigma(\ul{\lambda})\big).
\end{equation*}

(3) For $\sigma\in \Sigma_L$, $\dim_E \Ext^1_G\big(v_{\overline{P}_i}^{\infty}(\ul{\lambda}),  \Sigma_{i,\sigma}(\ul{\lambda}) \big)= 2$.
\end{lemma}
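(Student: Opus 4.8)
The plan is to prove all three parts by dévissage along the short exact sequences that define $\Sigma_j(\ul\lambda)$, $\Sigma(\ul\lambda)$ and $\Sigma_{i,\sigma}(\ul\lambda)$, feeding in Lemma \ref{lem: stij}(2) and the elementary vanishing
\begin{equation*}
  \Hom_G\big(v_{\overline{P}_i}^{\infty}(\ul\lambda),C_{j,\sigma}\big)=0\qquad(i,j\in\Delta,\ \sigma\in\Sigma_L),
\end{equation*}
which holds since $v_{\overline{P}_i}^{\infty}(\ul\lambda)$ is an irreducible locally algebraic representation while $C_{j,\sigma}$ is an irreducible representation with no non-zero locally algebraic vector, so the two are non-isomorphic and support no non-zero morphism. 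Write $v:=v_{\overline{P}_i}^{\infty}(\ul\lambda)$.

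For (1): apply $\Hom_G(v,-)$ to $0\to\St_n^{\infty}(\ul\lambda)\to\Sigma_j(\ul\lambda)\to\bigoplus_{\sigma\in\Sigma_L}C_{j,\sigma}\to 0$. As $i\neq j$, Lemma \ref{lem: stij}(2) gives $\Ext^1_G(v,C_{j,\sigma})=0$; together with the displayed vanishing, the resulting long exact sequence forces the push-forward $\Ext^1_G(v,\St_n^{\infty}(\ul\lambda))\to\Ext^1_G(v,\Sigma_j(\ul\lambda))$ to be an isomorphism. For (2): the amalgamation factors of $\Sigma(\ul\lambda)$ are glued only along the common subobject $\St_n^{\infty}(\ul\lambda)\subseteq\Sigma_i(\ul\lambda)$, so $\Sigma(\ul\lambda)/\Sigma_i(\ul\lambda)\cong\bigoplus_{j\neq i}\bigoplus_{\sigma\in\Sigma_L}C_{j,\sigma}$; applying $\Hom_G(v,-)$ to $0\to\Sigma_i(\ul\lambda)\to\Sigma(\ul\lambda)\to\Sigma(\ul\lambda)/\Sigma_i(\ul\lambda)\to 0$ and invoking the same two vanishings (now for the indices $j\neq i$) gives the claimed isomorphism.

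For (3), first combine (2) with the first isomorphism of Proposition \ref{prop: iso-simp} and Theorem \ref{cor: lgln-key} to get
\begin{equation*}
  \dim_E\Ext^1_G\big(v,\Sigma_i(\ul\lambda)\big)=\dim_E\Ext^1_G\big(v,\St_n^{\an}(\ul\lambda)\big)=d_L+1;
\end{equation*}
in particular (3) is immediate when $d_L=1$, since then $\Sigma_i(\ul\lambda)=\Sigma_{i,\sigma}(\ul\lambda)$. In general I would play off three short exact sequences: $0\to\St_n^{\infty}(\ul\lambda)\to\Sigma_{i,\sigma}(\ul\lambda)\to C_{i,\sigma}\to 0$; its analogue $0\to\St_n^{\infty}(\ul\lambda)\to\Sigma_i(\ul\lambda)\to\bigoplus_{\sigma\in\Sigma_L}C_{i,\sigma}\to 0$; and the amalgamated-sum presentation
\begin{equation*}
  0\to\St_n^{\infty}(\ul\lambda)^{\oplus(d_L-1)}\to\bigoplus_{\sigma\in\Sigma_L}\Sigma_{i,\sigma}(\ul\lambda)\to\Sigma_i(\ul\lambda)\to 0
\end{equation*}
(the sub being the kernel of the summation map on the common subobject $\St_n^{\infty}(\ul\lambda)$). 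Applying $\Hom_G(v,-)$, using $\dim_E\Ext^1_G(v,C_{i,\sigma})=1$ (Lemma \ref{lem: stij}(2)) and the vanishing of the relevant $\Hom$'s, a direct dimension count reduces (3) to the two facts: $\dim_E\Ext^1_G(v,\St_n^{\infty}(\ul\lambda))=1$, and the vanishing of each connecting map $\Ext^1_G(v,C_{i,\sigma})\to\Ext^2_G(v,\St_n^{\infty}(\ul\lambda))$ (equivalently, surjectivity of $\Ext^1_G(v,\Sigma_{i,\sigma}(\ul\lambda))\twoheadrightarrow\Ext^1_G(v,C_{i,\sigma})$); granting these, the first short exact sequence above gives $\dim_E\Ext^1_G(v,\Sigma_{i,\sigma}(\ul\lambda))=1+1=2$.

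Thus the heart of (3) is the identity $\dim_E\Ext^1_G\big(v_{\overline{P}_i}^{\infty}(\ul\lambda),\St_n^{\infty}(\ul\lambda)\big)=1$, which I expect to be the main obstacle. I would reduce it to the smooth category: for locally algebraic representations $L(\ul\lambda)\otimes_E\pi_1$, $L(\ul\lambda)\otimes_E\pi_2$ with the same algebraic part, Schraen's spectral sequence (cf. \cite[\S~4.4]{Sch11}) together with the vanishing $\hH^1(\usl_{n,\Sigma_L},E)=\hH^2(\usl_{n,\Sigma_L},E)=0$ (used already in Remark \ref{rem: lgln-ext}(i)) yields an exact sequence
\begin{equation*}
  0\to\Ext^1_{\Rep^{\infty}(G)}(\pi_1,\pi_2)\to\Ext^1_G\big(L(\ul\lambda)\otimes_E\pi_1,L(\ul\lambda)\otimes_E\pi_2\big)\to\Hom_{\Rep^{\infty}(G)}(\pi_1,\pi_2)\otimes_E\hH^1(\gl_{n,\Sigma_L},E).
\end{equation*}
Since $v_{\overline{P}_i}^{\infty}$ and $\St_n^{\infty}$ are non-isomorphic irreducible smooth representations, $\Hom_{\Rep^{\infty}(G)}(v_{\overline{P}_i}^{\infty},\St_n^{\infty})=0$, hence $\Ext^1_G(v_{\overline{P}_i}^{\infty}(\ul\lambda),\St_n^{\infty}(\ul\lambda))\cong\Ext^1_{\Rep^{\infty}(G)}(v_{\overline{P}_i}^{\infty},\St_n^{\infty})$; the latter is one-dimensional by Orlik's computation of extension groups between smooth generalized Steinberg representations (\cite{Orl}), a fact one can also extract from (\ref{equ: lgln-extSm}) by applying $\Hom_{\Rep^{\infty}(G)}(-,\St_n^{\infty})$ to the resolution (\ref{equ: lgln-lalgSt}) of $v_{\overline{P}_i}^{\infty}$. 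Finally, the vanishing of the connecting maps $\Ext^1_G(v,C_{i,\sigma})\to\Ext^2_G(v,\St_n^{\infty}(\ul\lambda))$ I would obtain by exhibiting, for each $\sigma$, an extension of $v$ by $\Sigma_{i,\sigma}(\ul\lambda)$ with non-zero image in $\Ext^1_G(v,C_{i,\sigma})$ --- most economically via Proposition \ref{prop: iso-simp} and the exact sequence $0\to v\to\big(\Ind_{\overline{P}_{\Delta\setminus\{i\}}(L)}^G\St_{\Delta\setminus\{i\}}^{\infty}(\ul\lambda_\sigma)\big)^{\sigma-\an}\otimes_E L(\ul\lambda^\sigma)\to\tilde\Sigma_{i,\sigma}(\ul\lambda)\to 0$, whose middle term carries enough $\Ext^1$ against $v$ by (a locally $\sigma$-analytic variant of) the computation in Lemma \ref{lem: stij}(1).
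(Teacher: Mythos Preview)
Your arguments for (1) and (2) are correct and coincide with the paper's: both use the long exact sequence on $\Ext^*_G(v_{\overline{P}_i}^{\infty}(\ul\lambda),-)$ together with $\Hom_G(v_{\overline{P}_i}^{\infty}(\ul\lambda),C_{j,\sigma})=0$ and Lemma~\ref{lem: stij}(2).

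For (3) the paper takes a shorter route. Instead of staying in the full category and trying to kill the connecting map $\Ext^1_G(v,C_{i,\sigma})\to\Ext^2_G(v,\St_n^{\infty}(\ul\lambda))$ by hand, the paper passes to the category with fixed central character $Z=\chi_{\ul\lambda}$: since $\Hom_G(v_{\overline{P}_i}^{\infty}(\ul\lambda),V)=0$ for $V\in\{\St_n^{\infty}(\ul\lambda),\Sigma_{i,\sigma}(\ul\lambda),C_{i,\sigma}\}$, one has $\Ext^1_{G,Z=\chi_{\ul\lambda}}=\Ext^1_G$ on all three terms, and in the fixed-central-character category the combination of \cite[Thm.~1]{Orl} with \cite[Prop.~4.7]{Sch11} gives directly
\[
\Ext^2_{G,Z=\chi_{\ul\lambda}}\big(v_{\overline{P}_i}^{\infty}(\ul\lambda),\St_n^{\infty}(\ul\lambda)\big)=0
\quad\text{and}\quad
\dim_E\Ext^1_{G,Z=\chi_{\ul\lambda}}\big(v_{\overline{P}_i}^{\infty}(\ul\lambda),\St_n^{\infty}(\ul\lambda)\big)=1.
\]
The four-term sequence then yields $\dim_E\Ext^1_G(v,\Sigma_{i,\sigma}(\ul\lambda))=1+1=2$ immediately. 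Your approach avoids fixing the central character, which forces you to confront $\Ext^2_G(v,\St_n^{\infty}(\ul\lambda))$; this group is in general nonzero (the spectral sequence contributes a term $\Ext^1_{\Rep^{\infty}(G)}(v_{\overline{P}_i}^{\infty},\St_n^{\infty})\otimes_E\hH^1(\gl_{n,\Sigma_L},E)$ of dimension $d_L$), so you genuinely need the surjectivity of $\Ext^1_G(v,\Sigma_{i,\sigma}(\ul\lambda))\to\Ext^1_G(v,C_{i,\sigma})$. Your proposed construction of a preimage via the $\sigma$-analytic induction is plausible but only sketched; the paper's trick of fixing the center sidesteps this entirely and is the cleaner argument.
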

\begin{proof}We have a  natural exact sequence
\begin{equation*}
    0 \lra\Ext^1_{G}\big(v_{\overline{P}_i}^{\infty}(\ul{\lambda}), \St_n^{\infty}(\ul{\lambda})\big) \lra \Ext^1_{G}\big(v_{\overline{P}_i}^{\infty}(\ul{\lambda}),  \Sigma_j(\ul{\lambda}) \big) \lra \Ext^1_{G}\big(v_{\overline{P}_i}^{\infty}(\ul{\lambda}), \oplus_{\sigma\in \Sigma_L}C_{j,\sigma}\big).
\end{equation*}
Thus  (1) follows from Lemma \ref{lem: stij} (2). The isomorphism in part (2) also follows from Lemma \ref{lem: stij} (2) by a similar argument.  Now let $\sigma\in \Sigma_L$, and consider the following natural exact sequence
\begin{multline*}
  0 \ra\Ext^1_{G,Z=\chi_{\ul{\lambda}}}\big(v_{\overline{P}_i}^{\infty}(\ul{\lambda}), \St_n^{\infty}(\ul{\lambda})\big) \ra \Ext^1_{G,Z=\chi_{\ul{\lambda}}}\big(v_{\overline{P}_i}^{\infty}(\ul{\lambda}),  \Sigma_{i,\sigma}(\ul{\lambda}) \big)\\ \ra \Ext^1_{G,Z=\chi_{\ul{\lambda}}}\big(v_{\overline{P}_i}^{\infty}(\ul{\lambda}), C_{i,\sigma}\big) \ra \Ext^2_{G,Z=\chi_{\ul{\lambda}}}\big(v_{\overline{P}_i}^{\infty}(\ul{\lambda}), \St_n^{\infty}(\ul{\lambda})\big).
\end{multline*}
For $V= \St_n^{\infty}(\ul{\lambda})$,  $\Sigma_{i,\sigma}(\ul{\lambda})$, $C_{i,\sigma}$, we deduce from $\Hom_G\big(v_{\overline{P}_i}^{\infty}(\ul{\lambda}), V\big)=0$ that $\Ext^1_{G,Z=\chi_{\ul{\lambda}}}\big(v_{\overline{P}_i}^{\infty}(\ul{\lambda}), V\big)\cong \Ext^1_{G}\big(v_{\overline{P}_i}^{\infty}(\ul{\lambda}), V\big)$. By \cite[Thm. 1]{Orl} and \cite[Prop. 4.7]{Sch11}, we deduce
$\Ext^2_{G,Z=\chi_{\ul{\lambda}}}\big(v_{\overline{P}_i}^{\infty}(\ul{\lambda}), \St_n^{\infty}(\ul{\lambda})\big)=0$ and $\dim_E \Ext^1_{G,Z=\chi_{\ul{\lambda}}}\big(v_{\overline{P}_i}^{\infty}(\ul{\lambda}), \St_n^{\infty}(\ul{\lambda})\big)=1$. Together with Lemma \ref{lem: stij} (2), the part (3) follows.
\end{proof}
\noindent
For $i\in \Delta$, by (\ref{equ: lgln-extc}) (\ref{equ: isoSst}) and Lemma \ref{lem: ext1} (2), one has a natural isomorphism
\begin{equation}\label{equ: BrLi}
 \Hom(T(L),E)/\Hom(Z_i(L),E)\xlongrightarrow{\sim}  \Ext^1_G\big(v_{\overline{P}_i}^{\infty}(\ul{\lambda}), \Sigma_{i}(\ul{\lambda})\big).
\end{equation}
\begin{proposition}\label{prop: sigmaan}
  Let $i\in \Delta$ and $\sigma\in \Sigma_L$, the isomorphism (\ref{equ: BrLi}) induces an isomorphism
  \begin{equation}\label{equ: Lsigma}
     \Hom_{\sigma}(T(L)/Z_i(L), E) \xlongrightarrow{\sim} \Ext^1_G\big(v_{\overline{P}_i}^{\infty}(\ul{\lambda}), \Sigma_{i,\sigma}(\ul{\lambda})\big).
  \end{equation}
\end{proposition}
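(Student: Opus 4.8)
The plan is to deduce (\ref{equ: Lsigma}) from (\ref{equ: BrLi}) by a dimension count, the one nontrivial input being that the extension class attached to a locally $\sigma$-analytic character already lies in the subspace cut out by $\Sigma_{i,\sigma}(\ul{\lambda})$. First I would observe that both sides of (\ref{equ: Lsigma}) are $2$-dimensional: the isomorphism $\iota_i$ of (\ref{equ: lgln-Linv0}) visibly restricts to $\Hom_{\sigma}(L^{\times},E)\xrightarrow{\sim}\Hom_{\sigma}(T(L)/Z_i(L),E)$ and $\dim_E\Hom_{\sigma}(L^{\times},E)=2$ (an unramified smooth character together with one genuinely locally $\sigma$-analytic character, e.g. $\sigma\circ\log$), while $\dim_E\Ext^1_G\big(v_{\overline{P}_i}^{\infty}(\ul{\lambda}),\Sigma_{i,\sigma}(\ul{\lambda})\big)=2$ by Lemma \ref{lem: ext1} (3). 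Next, the natural map $\Ext^1_G\big(v_{\overline{P}_i}^{\infty}(\ul{\lambda}),\Sigma_{i,\sigma}(\ul{\lambda})\big)\to\Ext^1_G\big(v_{\overline{P}_i}^{\infty}(\ul{\lambda}),\Sigma_i(\ul{\lambda})\big)$ is injective: since $\soc_G\Sigma_i(\ul{\lambda})\cong\St_n^{\infty}(\ul{\lambda})$ and $\Sigma_{i,\sigma'}(\ul{\lambda})/\St_n^{\infty}(\ul{\lambda})\cong C_{i,\sigma'}$ we get $\Sigma_i(\ul{\lambda})/\Sigma_{i,\sigma}(\ul{\lambda})\cong\bigoplus_{\sigma'\neq\sigma}C_{i,\sigma'}$, and $\Hom_G\big(v_{\overline{P}_i}^{\infty}(\ul{\lambda}),C_{i,\sigma'}\big)=0$ because $v_{\overline{P}_i}^{\infty}(\ul{\lambda})$ is irreducible and locally algebraic whereas $C_{i,\sigma'}$ is not; the long exact sequence then gives the claim. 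Hence it suffices to show that (\ref{equ: BrLi}) carries $\Hom_{\sigma}(T(L)/Z_i(L),E)$ into the image of $\Ext^1_G\big(v_{\overline{P}_i}^{\infty}(\ul{\lambda}),\Sigma_{i,\sigma}(\ul{\lambda})\big)$: as (\ref{equ: BrLi}) is injective and both spaces are $2$-dimensional, such a containment is an equality, which is exactly (\ref{equ: Lsigma}).

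For the containment I would first dispose of the smooth line: by Remark \ref{rem: lgln-ext3} (ii), if $\psi$ is smooth then $\tilde{\Sigma}_i(\ul{\lambda},\psi)$ comes by push-forward from a locally algebraic extension of $v_{\overline{P}_i}^{\infty}(\ul{\lambda})$ by $\St_n^{\infty}(\ul{\lambda})$, so — after identifying $\Ext^1_G\big(v_{\overline{P}_i}^{\infty}(\ul{\lambda}),\St_n^{\an}(\ul{\lambda})\big)$ with $\Ext^1_G\big(v_{\overline{P}_i}^{\infty}(\ul{\lambda}),\Sigma_i(\ul{\lambda})\big)$ via Proposition \ref{prop: iso-simp} and Lemma \ref{lem: ext1} (2) — its class lies in the image of $\Ext^1_G\big(v_{\overline{P}_i}^{\infty}(\ul{\lambda}),\St_n^{\infty}(\ul{\lambda})\big)$, which sits inside that of $\Ext^1_G\big(v_{\overline{P}_i}^{\infty}(\ul{\lambda}),\Sigma_{i,\sigma}(\ul{\lambda})\big)$ since $\St_n^{\infty}(\ul{\lambda})=\soc_G\Sigma_{i,\sigma}(\ul{\lambda})$. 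Writing $\Hom_{\sigma}(L^{\times},E)$ as the sum of this line and one non-smooth locally $\sigma$-analytic character, I am reduced to treating a single locally $\sigma$-analytic $\psi$, i.e. (by Proposition \ref{prop: iso-simp}) to realizing $\tilde{\Sigma}_i(\ul{\lambda},\psi)$ as the push-out along $\tilde{\Sigma}_{i,\sigma}(\ul{\lambda})\hookrightarrow\St_n^{\an}(\ul{\lambda})$ of an extension of $v_{\overline{P}_i}^{\infty}(\ul{\lambda})$ by $\tilde{\Sigma}_{i,\sigma}(\ul{\lambda})$. To do this I would re-run the construction of Remark \ref{rem: lgln-ext2} inside the locally $\sigma$-analytic world: since $1_{\psi}$ is a $2$-dimensional locally $\sigma$-analytic representation of $T(L)$, writing $\chi_{\ul{\lambda}}=\chi_{\ul{\lambda}_{\sigma}}\cdot\chi_{\ul{\lambda}^{\sigma}}$, Lemma \ref{lem: SfiniOS} (with $S=\{\sigma\}$) exhibits $\widehat{\bI}_{\sigma}:=\big(\Ind_{\overline{B}(L)}^G\chi_{\ul{\lambda}_{\sigma}}\otimes_E 1_{\psi}\big)^{\sigma-\an}\otimes_E L(\ul{\lambda}^{\sigma})$ as a subrepresentation of $\big(\Ind_{\overline{B}(L)}^G\chi_{\ul{\lambda}}\otimes_E 1_{\psi}\big)^{\Q_p-\an}$, compatibly with $\pr$ and with the inclusion into $\bI_{\overline{B}}^G(\ul{\lambda})$ of $\widehat{\bI}_{\sigma,0}:=\big(\Ind_{\overline{B}(L)}^G\chi_{\ul{\lambda}_{\sigma}}\big)^{\sigma-\an}\otimes_E L(\ul{\lambda}^{\sigma})$; as $i_{\overline{P}_i}^G(\ul{\lambda})$ is locally algebraic it lies in $\widehat{\bI}_{\sigma,0}$, so $\sE_{\{i\}}^{\emptyset}(\ul{\lambda},\psi)$ contains the sub-extension $\widehat{\sE}_{\sigma}:=(\pr|_{\widehat{\bI}_{\sigma}})^{-1}\big(i_{\overline{P}_i}^G(\ul{\lambda})\big)$ of $i_{\overline{P}_i}^G(\ul{\lambda})$ by $\widehat{\bI}_{\sigma,0}$. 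By the locally $\sigma$-analytic analogues of (\ref{equ: lgln-Istein}), (\ref{equ: Istan}) and Lemma \ref{lem: lgln-Sigma_i} — obtained verbatim by replacing $\Q_p$-analytic inductions by locally $\sigma$-analytic ones and applying Lemma \ref{lem: SfiniOS} and Lemma \ref{lem: SlaOS} to peel off the $L(\ul{\lambda}^{\sigma})$-twist — the image of $\widehat{\bI}_{\sigma,0}$ in $\St_n^{\an}(\ul{\lambda})$ is $\tilde{\Sigma}_{i,\sigma}(\ul{\lambda})$ (the constituent $v_{\overline{P}_i}^{\infty}(\ul{\lambda})$ being absorbed once one quotients by $w_{\overline{P}_i}^{\infty}(\ul{\lambda})$); pushing $\widehat{\sE}_{\sigma}$ forward along $\widehat{\bI}_{\sigma,0}\twoheadrightarrow\tilde{\Sigma}_{i,\sigma}(\ul{\lambda})$ and quotienting by $w_{\overline{P}_i}^{\infty}(\ul{\lambda})$ then yields the desired extension of $v_{\overline{P}_i}^{\infty}(\ul{\lambda})$ by $\tilde{\Sigma}_{i,\sigma}(\ul{\lambda})$, whose push-out along $\tilde{\Sigma}_{i,\sigma}(\ul{\lambda})\hookrightarrow\St_n^{\an}(\ul{\lambda})$ is, by construction, $\tilde{\Sigma}_i(\ul{\lambda},\psi)$. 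Together with the first paragraph this finishes the proof.

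I expect the main obstacle to be precisely the bookkeeping in that last step: checking that the image of $\widehat{\bI}_{\sigma,0}$ in $\St_n^{\an}(\ul{\lambda})$ is really $\tilde{\Sigma}_{i,\sigma}(\ul{\lambda})$ — i.e. that the subrepresentations $\sum_{\emptyset\neq J\subseteq\Delta}\widehat{\bI}_{\sigma,0,J}$ (the locally $\sigma$-analytic analogues of the $\bI_{\overline{P}_J}^G(\ul{\lambda})$) being killed match, modulo $v_{\overline{P}_i}^{\infty}(\ul{\lambda})$, those defining $\tilde{\Sigma}_{i,\sigma}(\ul{\lambda})$ out of $\big(\Ind_{\overline{P}_{\Delta\setminus\{i\}}(L)}^G\St_{\Delta\setminus\{i\}}^{\infty}(\ul{\lambda}_{\sigma})\big)^{\sigma-\an}$ — and that the various push-outs and sub-extensions commute as claimed. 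This rests on the transversity of parabolic inductions together with Lemmas \ref{lem: lgln-Sla}, \ref{lem: SlaOS}, \ref{lem: SfiniOS} and the Orlik--Strauch classification of constituents (exactly as used in Lemma \ref{lem: lgln-Sigma_i} and Proposition \ref{soc}). A slightly cleaner organisation would be to develop a ``locally $\sigma$-analytic $\Ext$'' formalism and prove a $\sigma$-analytic analogue of Theorem \ref{cor: lgln-key} directly, but this amounts to the same computations.
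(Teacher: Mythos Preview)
Your overall strategy---constructing the extension in the locally $\sigma$-analytic Borel induction $\big(\Ind_{\overline{B}(L)}^G\chi_{\ul{\lambda}_\sigma}\otimes 1_\psi\big)^{\sigma-\an}\otimes_E L(\ul{\lambda}^\sigma)$, pushing forward to the Steinberg, quotienting by $w_{\overline{P}_i}^{\infty}(\ul{\lambda})$, and concluding by a dimension count against Lemma~\ref{lem: ext1}(3)---is exactly the paper's. The separate treatment of the smooth line is harmless but unnecessary: the paper handles all locally $\sigma$-analytic $\psi$ at once.

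There is, however, a genuine error at precisely the step you flagged. The image of $\widehat{\bI}_{\sigma,0}=\big(\Ind_{\overline{B}(L)}^G\chi_{\ul{\lambda}_\sigma}\big)^{\sigma-\an}\otimes_E L(\ul{\lambda}^\sigma)$ in $\St_n^{\an}(\ul{\lambda})$ is \emph{not} $\tilde{\Sigma}_{i,\sigma}(\ul{\lambda})$. The Borel induction knows nothing about the chosen index $i$: by Orlik--Strauch its image (call it $\tilde{\Sigma}_\sigma(\ul{\lambda})$, as the paper does) contains \emph{every} $C_{j,\sigma}$, $j\in\Delta$, as a length-one constituent, whereas $\tilde{\Sigma}_{i,\sigma}(\ul{\lambda})$, being built from the parabolic $\overline{P}_{\Delta\setminus\{i\}}$, contains only $C_{i,\sigma}$. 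So your push-out produces an extension of $v_{\overline{P}_i}^{\infty}(\ul{\lambda})$ by $\tilde{\Sigma}_\sigma(\ul{\lambda})$, not by $\tilde{\Sigma}_{i,\sigma}(\ul{\lambda})$, and the attempted identification via ``locally $\sigma$-analytic analogues of (\ref{equ: lgln-Istein}), (\ref{equ: Istan})'' does not salvage this: those identities control quotients by $\sum_{J\subseteq\Delta\setminus\{i\}}(\cdots)$, not by $\sum_{J\neq\emptyset}(\cdots)$.

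The repair is not to try to identify the image. One simply observes that the natural map
\[
\Ext^1_G\big(v_{\overline{P}_i}^{\infty}(\ul{\lambda}),\Sigma_{i,\sigma}(\ul{\lambda})\big)\longrightarrow\Ext^1_G\big(v_{\overline{P}_i}^{\infty}(\ul{\lambda}),\tilde{\Sigma}_\sigma(\ul{\lambda})\big)
\]
is an isomorphism: every irreducible constituent of $\tilde{\Sigma}_\sigma(\ul{\lambda})/\Sigma_{i,\sigma}(\ul{\lambda})$ is either of the form $\cF_{\overline{P}_J}^G(\overline{L}(-s\cdot\ul{\lambda}),\pi)$ with $\lg(s)>1$ (handled by Lemma~\ref{lem: vanish} exactly as in Proposition~\ref{prop: iso-simp}), or is a $C_{j,\sigma}$ with $j\neq i$ (handled by Lemma~\ref{lem: stij}(2), which gives $\Hom=\Ext^1=0$). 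With this in place your argument goes through unchanged, and this is precisely what the paper does.
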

\begin{proof}
  Let $\psi\in \Hom_{\sigma}(T(L)/Z_{i}(L), E)\cong \Hom_{\sigma}(T(L),E)/\Hom_{\sigma}(Z_i(L),E)$ (cf. Remark \ref{rem: lgln-ext} (ii) and note that the same statements hold with $\Hom$ replaced by $\Hom_{\sigma}$) and let  $\Psi \in \Hom_{\sigma}(T(L),E)$ be a lifting of $\psi$ with $\Psi|_{Z_i(L)}=1$. The induced extension $1_{\Psi}$ of trivial characters of $T(L)$:
\begin{equation*}
  1_{\Psi}(a)=\begin{pmatrix}
    1 & \Psi(a) \\ 0 & 1
  \end{pmatrix}
\end{equation*}
is thus locally $\sigma$-analytic. We have a commutative diagram
\begin{equation*}\footnotesize
  \begin{CD}
 0 @>>> \bI_{\overline{B}}^G(\ul{\lambda}_{\sigma})^{\sigma-\an}\otimes_E L(\ul{\lambda}^{\sigma}) @>>> \big(\Ind_{\overline{B}(L)}^G \chi_{\ul{\lambda}_{\sigma}}\otimes_E 1_{\Psi}\big)^{\sigma-\an}\otimes_E L(\ul{\lambda}^{\sigma}) @> \pr_1 >> \bI_{\overline{B}}^G(\ul{\lambda}_{\sigma})^{\sigma-\an} \otimes_E L(\ul{\lambda}^{\sigma}) @>>> 0 \\
 @. @VVV @VVV @VVV @. \\
    0 @>>> \bI_{\overline{B}}^G(\ul{\lambda}) @>>> \big(\Ind_{\overline{B}(L)}^G\chi_{\ul{\lambda}}\otimes_E 1_{\Psi}\big)^{\Q_p-\an} @> \pr_2 >> \bI_{\overline{B}}^G(\ul{\lambda})@>>> 0
  \end{CD},
\end{equation*}
where all the vertical maps are natural injections, e.g. the middle map sends $f \otimes v$ to $[g\mapsto f(g) \otimes gv]$. This diagram induces a commutative diagram
\begin{equation}\label{equ: extsigma}
  \begin{CD}
 0 @>>> \tilde{\Sigma}_{\sigma}(\ul{\lambda})@>>> \pr_1^{-1}\big(i_{\overline{P}_i}^G(\ul{\lambda})\big) @> \pr_1 >> i_{\overline{P}_i}^G(\ul{\lambda}) @>>> 0 \\
 @. @VVV @VVV @V \id VV @. \\
    0 @>>> \St_n^{\an}(\ul{\lambda}) @>>> \pr_2^{-1}\big(i_{\overline{P}_i}^G(\ul{\lambda})\big) @> \pr_2 >> i_{\overline{P}_i}^G(\ul{\lambda}) @>>> 0
  \end{CD}
\end{equation}
where $\tilde{\Sigma}_{\sigma}(\ul{\lambda})$ denotes the image of the composition
\begin{equation*}
  \bI_{\overline{B}}^G(\ul{\lambda}_{\sigma})^{\sigma-\an}\otimes_E L(\ul{\lambda}^{\sigma}) \hooklongrightarrow \bI_{\overline{B}}^G(\ul{\lambda}) \twoheadlongrightarrow \St_n^{\an}(\ul{\lambda}).
\end{equation*}
By the proof of Theorem \ref{cor: lgln-key} (and let $w_{\overline{P}_i}^{\infty}(\ul{\lambda})$ be the representation as in \emph{loc. cit.}), the pull-back of $\pr_2^{-1}\big(i_{\overline{P}_i}^G(\ul{\lambda})\big)$ via the natural injection $\iota: w_{\overline{P}_i}^{\infty}(\ul{\lambda}) \hookrightarrow i_{\overline{P}_i}^G(\ul{\lambda})$ is split, from which we see the pull-back of $\pr_1^{-1}\big(i_{\overline{P}_i}^G(\ul{\lambda})\big)$ via $\iota$ is also split (noting $\Ext^1(w_{\overline{P}_i}^{\infty}(\ul{\lambda}), \tilde{\Sigma}_{\sigma}(\ul{\lambda}))\hookrightarrow \Ext^1(w_{\overline{P}_i}^{\infty}(\ul{\lambda}), \St_n^{\an}(\ul{\lambda}))$ since \begin{equation*}\Hom_G(w_{\overline{P}_i}^{\infty}(\ul{\lambda}),\St_n^{\an}(\ul{\lambda})/\tilde{\Sigma}_{\sigma}(\ul{\lambda}))=0).\end{equation*}
 Denote by $\tilde{\Sigma}_{\sigma}(\ul{\lambda},\psi)$ the induced extension of $v_{\overline{P}_i}^{\infty}(\ul{\lambda})$ by $\tilde{\Sigma}_{\sigma}(\ul{\lambda})$. The diagram (\ref{equ: extsigma}) induces then a commutative diagram
\begin{equation*}
    \begin{CD}
 0 @>>> \tilde{\Sigma}_{\sigma}(\ul{\lambda})@>>> \tilde{\Sigma}_{\sigma}(\ul{\lambda},\psi) @> \pr_1 >> v_{\overline{P}_i}^{\infty}(\ul{\lambda}) @>>> 0 \\
 @. @VVV @VVV @V \id VV @. \\
    0 @>>> \St_n^{\an}(\ul{\lambda}) @>>> \tilde{\Sigma}(\ul{\lambda};\psi) @> \pr_2 >> v_{\overline{P}_i}^{\infty}(\ul{\lambda}) @>>> 0
  \end{CD}
\end{equation*}
with all vertical maps injective. Hence $\tilde{\Sigma}(\ul{\lambda},\psi)$ is isomorphic to the push-forward of $\tilde{\Sigma}_{\sigma}(\ul{\lambda},\psi)$. Consequently, the following  morphism induced by (\ref{equ: lgln-extc}):
\begin{equation*}
  \Hom_{\sigma}(T(L),E)/\Hom_{\sigma}(Z_i(L),E) \hooklongrightarrow \Ext^1_G\big(v_{\overline{P}_i}^{\infty}(\ul{\lambda}), \St_n^{\an}(\ul{\lambda})\big),
\end{equation*}
factors through
\begin{equation*}
  \Hom_{\sigma}(T(L),E)/\Hom_{\sigma}(Z_{i}(L),E) \hooklongrightarrow \Ext^1_G\big(v_{\overline{P}_i}^{\infty}(\ul{\lambda}), \tilde{\Sigma}_{\sigma}(\ul{\lambda})\big).
\end{equation*}
By the same argument as in the proof of Proposition \ref{prop: iso-simp}, one can show that the natural morphism
\begin{equation*}
 \Ext^1_G\big(v_{\overline{P}_i}^{\infty}(\ul{\lambda}), \Sigma_{i,\sigma}(\ul{\lambda})\big) \lra \Ext^1_G\big(v_{\overline{P}_i}^{\infty}(\ul{\lambda}), \tilde{\Sigma}_{\sigma}(\ul{\lambda})\big)
\end{equation*}
is bijective. We have thus a natural injection
\begin{equation*}
  \Hom_{\sigma}(T(L),E)/\Hom_{\sigma}(Z_{i}(L),E) \hooklongrightarrow \Ext^1_G\big(v_{\overline{P}_i}^{\infty}(\ul{\lambda}), \Sigma_{i,\sigma}(\ul{\lambda})\big),
\end{equation*}
which has to be bijective since  both of the source and target are two dimensional $E$-vector spaces (cf. Lemma \ref{lem: ext1} (3)). The proposition follows.
\end{proof}

\begin{remark}\label{rem: sigmaan}(i) By the proof, we see that any  $G$-representation $V\in \Ext^1_G\big(v_{\overline{P}_i}^{\infty}(\ul{\lambda}), \Sigma_{i,\sigma}(\ul{\lambda})\big)$ is $\text{U}(\ug_{\Sigma_L\setminus \{\sigma\}})$-finite.
\\

\noindent
(ii) Let $\Pi_{i,\sigma}^1(\ul{\lambda})\in \Ext^1_{G}\big(v_{\overline{P}_i}^{\infty}(\ul{\lambda}), C_{i,\sigma}\big)$
be the unique non-split element (cf. Lemma \ref{lem: stij} (2), note also $\Pi_{i,\sigma}^1(\ul{\lambda})$ is actually locally  $\text{U}(\ug_{\Sigma_L\setminus \{\sigma\}})$-finite by (i)). One can expect
\begin{equation*}
  \dim_E \Ext^1_G\big(\Pi_{i,\sigma}^1(\ul{\lambda}), \St_n^{\infty}(\ul{\lambda})\big){\buildrel {?}\over =}2,
\end{equation*}
and that $\Pi_{i,\sigma}^1(\ul{\lambda})$ is a subrepresentation (up to twist by locally algebraic representations) of the conjectural representations in \cite[(EXT)]{Br16}.
\end{remark}
\noindent
Let $i \in \Delta$, and $\psi \in \Hom(L^{\times}, E)\cong \Hom(T(L),E)/\Hom(Z_i,E)$ (see (\ref{equ: lgln-Linv0})). We denote by $\Sigma_i(\ul{\lambda}, \psi)$ the extension of $v_{\overline{P}_i}^{\infty}(\ul{\lambda})$ by $\Sigma_i(\ul{\lambda})$ attached to $\psi$ via  (\ref{equ: BrLi}), which is thus a subrepresentation of $\tilde{\Sigma}_i(\ul{\lambda}, \psi)$. Moreover, if $\psi\in \Hom_{\sigma}(L^{\times}, E)$, $\Sigma_i(\ul{\lambda}, \psi)$ is isomorphic to the push-forward of an extension, denoted by $\Sigma_{i,\sigma}(\ul{\lambda}, \psi)$, of $v_{\overline{P}_i}^{\infty}(\ul{\lambda})$ by $\Sigma_{i,\sigma}(\ul{\lambda})$. Actually $\Sigma_{i,\sigma}(\ul{\lambda}, \psi)$ is isomorphic to the extension attached to $\psi$ via (\ref{equ: Lsigma}). For $\alpha\in E^{\times}$, we put $\Sigma_*(\alpha, \ul{\lambda}):=\Sigma(\ul{\lambda}) \otimes_E \unr(\alpha)\circ \dett$ with $*\in \{\emptyset, i,\{i,\sigma\}\}$, and $\Sigma_i(\alpha, \ul{\lambda}, \psi):=\Sigma_i(\ul{\lambda}, \psi)\otimes_E \unr(\alpha) \circ \dett$; if $\psi$ is locally $\sigma$-analytic, we put $\Sigma_{i,\sigma}(\alpha, \ul{\lambda}, \psi)=\Sigma_{i,\sigma}(\ul{\lambda}, \psi)\otimes_E \unr(\alpha)\circ \dett$. Note that if $\psi\neq 0$, then
\begin{equation}\label{soc2}
  \soc_G \Sigma_i(\alpha, \ul{\lambda}, \psi) \cong \soc_G\Sigma(\alpha, \ul{\lambda})\cong \St_n^{\infty}(\alpha, \ul{\lambda}).
\end{equation}
\section{Fontaine-Mazur simple $\cL$-invariants}\label{sec: lgln-LFM}
\subsection{Fontaine-Mazur simple $\cL$-invariants}\label{sec: lgln-LFM1}
\noindent Let $k_{\sigma}\in \Z_{\geq 1}$ for all $\sigma\in \Sigma_L$, and  $\delta:=\unr(q_L^{-1})\prod_{\sigma\in \Sigma_L} \sigma^{k_{\sigma}}=\varepsilon\prod_{\sigma\in \Sigma_L} \sigma^{k_{\sigma}-1}$. We call such characters \emph{special}.
Let $\cR_L:=B_{\rig,L}^{\dagger}$, and $\cR_E:=\cR_L\otimes_{\Q_p} E$.  The cup-product induces a perfect pairing (e.g. see \cite[Lem. 1.13]{Ding4})
\begin{equation}\label{equ: lgln-ccup}
\langle \cdot, \cdot \rangle:  \hH^1_{(\varphi,\Gamma)}(\cR_E(\delta)) \times \hH^1_{(\varphi,\Gamma)}(\cR_E) \xlongrightarrow{\cup} \hH^2_{(\varphi,\Gamma)}(\cR_E(\delta))\cong E.
\end{equation}
The local class field theory gives a natural isomorphism $\hH^1_{(\varphi,\Gamma)}(\cR_E)\cong \Hom(L^{\times}, E)$. For $[D]\in \hH^1_{(\varphi,\Gamma)}(\cR_E(\delta))$, $[D]$ non-split, then $\cL(D):=[D]^{\perp}:=\{\psi \in \Hom(L^{\times}, E), \langle [D], \psi \rangle=0\}$ is an $E$-vector subspace of $\Hom(L^{\times}, E)$ of codimension one, which determines $[D]$ (since (\ref{equ: lgln-ccup}) is perfect). Recall also (\ref{equ: lgln-ccup}) induces an isomorphism $\hH^1_{(\varphi,\Gamma),e}(\cR_E(\delta))^{\perp} \cong \Hom_{\infty}(L^{\times}, E)$ (e.g. see \cite[Prop. 1.9, Lem. 1.15]{Ding4}), thus $\Hom_{\infty}(L^{\times}, E)\subseteq \cL(D)$ if and only if $D$ is crystalline.
\begin{remark}\label{rem: lgln-Lfmex}
For $\sigma\in \Sigma_L$, denote by $\cL(D)_{\sigma}:=\cL(D)\cap \Hom_{\sigma}(L^{\times}, E)$. We know $\Hom_{\sigma}(L^{\times}, E)$ is $2$-dimensional and admits a basis $\log_{p,\sigma}:=\sigma\circ \log_p$ and $\val_L$, where $\log_p: L^{\times} \ra L$ is equal to the $p$-adic logarithme when restricted to $\co_L^{\times}$, and $\log_p(p)=0$. Suppose $D$ is non-crystalline, then $\dim_E \cL(D)_{\sigma}=1$ for all $\sigma\in \Sigma_L$, and $\cL(D)\cong \oplus_{\sigma\in \Sigma_L} \cL(D)_{\sigma}$. Moreover, there exists $\cL_{\sigma}\in E$ such that $\cL(D)_{\sigma}$ is generated by $\log_{p,\sigma} -\cL_{\sigma} \val_L$. When $D$ is associated (up to twist) to a non-critical Galois representation $\rho_L$ of $\Gal_L$, one can actually prove $\{\cL_{\sigma}\}_{\sigma\in \Sigma_L}$ coincides with the usual Fontaine-Mazur $\cL$-invariants of $\rho_L$ defined in terms of the Hodge filtration.
\end{remark}
\noindent Let $D$ be a rank $n$ triangular $(\varphi,\Gamma)$-module, with a fixed triangulation $\cF$ of parameter given by the ordered set $\{\delta_i\}_{i=1,\cdots, n}$ of characters $\delta_i: L^{\times} \ra E^{\times}$, i.e. $\cF$ signifies an increasing filtration \begin{equation}\label{Fil0}
\cF: \ 0 =\Fil^0 D \subsetneq \Fil^1 D \subsetneq \cdots \subsetneq \Fil^{n} D=D
\end{equation}
where $\Fil^i D$ is a $(\varphi,\Gamma)$-submodule of $D$ such that $\Fil^i D/\Fil^{i-1} D\cong \cR_E(\delta_i)$ for $i=1, \cdots, n$. We denote by $D_i^j:=\Fil^j D/\Fil^i D$ for $i\leq j$ which is thus of parameter $(\delta_i, \cdots, \delta_j)$. We call $(D, \{\delta_i\})$ \emph{non-critical special} if $D_i^{i+1}$ is non-split and $\delta_i\delta_{i+1}^{-1}$ is special for all $i\in \Delta=\{1, \cdots, n-1\}$. For a non-critical special $(D,\{\delta_i\})$, and $i\in \Delta$, we can attach $\cL(D)_i:=\cL(D_i^{i+1} \otimes_{\cR_E} \cR_E(\delta_{i+1}^{-1}))\subseteq \Hom(L^{\times}, E)$ with $\dim_E \cL(D)_i=d_L$ (since $D_i^{i+1}$ is assumed to be non-split).
We call $\cL(D):=\prod_{i\in \Delta} \cL(D)_i$ the Fontaine-Mazur \emph{simple} $\cL$-invariants of $(D,\{\delta_i\})$, or for simplicity, of $D$.  Suppose moreover $D$ is semi-stable (note $D$ is always semi-stable up to twist), there exists thus $\alpha\in E^{\times}$ such that $\delta_1$ is the product of $\unr(\alpha)$ with an algebraic character. Let $D_{\st}(D)$ be the attached filtered $(\varphi, N)$-module.
\begin{lemma}\label{monofull}
  We have $N^{n-1}\neq 0$ on $D_{\st}(D)$ if and only if $D_i^{i+1}$ is semi-stable non-crystalline for all $i\in \Delta$.
\end{lemma}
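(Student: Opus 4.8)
The plan is to reformulate ``$N^{n-1}\neq 0$'' as a statement about the flag that the triangulation $\cF$ induces on the filtered $(\varphi,N)$-module $D_{\st}(D)$, and to compare it with the monodromy of the rank-two subquotients $D_i^{i+1}=\Fil^{i+1}D/\Fil^{i-1}D$. First I would note that every $\delta_j$ is the product of an unramified and an algebraic character: this holds for $\delta_1$ by hypothesis, and $\delta_j\delta_{j+1}^{-1}=\unr(q_L^{-1})\prod_{\sigma}\sigma^{k_\sigma}$ is visibly of this shape for all $j$, so by induction $\delta_j=\unr(\alpha q_L^{j-1})\cdot(\text{algebraic})$; in particular each $\cR_E(\delta_j)$ is crystalline, so $N=0$ on the rank-one $(L_0\otimes_{\Q_p}E)$-module $D_{\st}(\cR_E(\delta_j))$ (here $L_0$ is the maximal unramified subextension of $L$, and $L_0\otimes_{\Q_p}E\cong E^{f_L}$ since $E$ contains all embeddings of $L$). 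Since $D$ is semi-stable, each $\Fil^j D$ and each $D_i^{i+1}$, being subquotients of $D$, is again semi-stable, and $D_{\st}$ is exact on the relevant short exact sequences of semi-stable $(\varphi,\Gamma)$-modules. Setting $F_j:=D_{\st}(\Fil^j D)$, this yields a full flag $0=F_0\subsetneq F_1\subsetneq\cdots\subsetneq F_n=D_{\st}(D)$ of $\varphi$- and $N$-stable $(L_0\otimes_{\Q_p}E)$-submodules with $F_j/F_{j-1}\cong D_{\st}(\cR_E(\delta_j))$, and $D_{\st}(D_i^{i+1})\cong F_{i+1}/F_{i-1}$. As $N$ kills each graded piece, $N(F_j)\subseteq F_{j-1}$ for all $j$, so $N$ induces $(L_0\otimes_{\Q_p}E)$-linear maps $\overline N_i\colon F_{i+1}/F_i\to F_i/F_{i-1}$ for $i\in\Delta$.

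The proof then reduces to two claims. Claim (A): for each $i$, the rank-two semi-stable $D_i^{i+1}$ is non-crystalline if and only if $\overline N_i\neq 0$. Indeed, on $D_{\st}(D_i^{i+1})=F_{i+1}/F_{i-1}$ one has $N(F_{i+1}/F_{i-1})\subseteq F_i/F_{i-1}$ and $N(F_i/F_{i-1})=0$, so $N$ there is nonzero if and only if the induced map $F_{i+1}/F_i\to F_i/F_{i-1}$, which is exactly $\overline N_i$, is nonzero; and a rank-two semi-stable module is non-crystalline precisely when $N\neq 0$. Claim (B): $N^{n-1}\neq 0$ on $D_{\st}(D)$ if and only if $\overline N_i\neq 0$ for all $i\in\Delta$. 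Here I would first use $N\varphi=p\varphi N$ together with the fact that $\varphi$ cyclically permutes the $f_L$ factors of $L_0\otimes_{\Q_p}E$ to conclude that the ``support'' of each $\overline N_i$ is $\varphi$-stable, hence that $\overline N_i$ is either $0$ or an isomorphism of free rank-one modules. If all $\overline N_i$ are isomorphisms, choose $v\in F_n$ generating $F_n/F_{n-1}$; then $N^k v$ generates $F_{n-k}/F_{n-1-k}$ for each $k\leq n-1$ (at the step from $N^k$ to $N^{k+1}$ one applies the isomorphism $\overline N_{n-1-k}$), so $N^{n-1}v$ generates $F_1\neq 0$. Conversely, if $\overline N_{i_0}=0$ for some $i_0$, then $N(F_{i_0+1})\subseteq F_{i_0-1}$; combined with $N(F_j)\subseteq F_{j-1}$ this forces $N^{n-1}(D_{\st}(D))\subseteq F_0=0$ by a short index chase (the flag drops by at least one under each $N$ and by two at step $i_0+1$). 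Claims (A) and (B), together with the already-noted semi-stability of every $D_i^{i+1}$, give the lemma.

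The main obstacle is the technical input used to build the flag $F_\bullet$: that $\Fil^j D$ and $D_i^{i+1}$ are semi-stable and that $D_{\st}$ is exact on this subcategory, so that the $F_j$ really are $\varphi$- and $N$-stable submodules of $D_{\st}(D)$ refining the triangulation. This rests on standard facts (de Rham $(\varphi,\Gamma)$-modules are stable under subquotients, the associated Weil--Deligne representation is exact and functorial so that triviality of the Galois action passes to subquotients, and a dimension count then forces exactness of $D_{\st}$), which I would simply cite. The remaining points --- checking that $\overline N_i$ is $0$ or an isomorphism, the degenerate cases $i_0=1$ and $i_0=n-1$ of the index chase, and that $\alpha\in E^{\times}$ is genuinely used (so that $\delta_j$ is unramified times algebraic with nonvanishing unramified part) --- are routine.
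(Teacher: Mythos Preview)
Your proof is correct and shares its core with the paper's argument: both exploit the full flag $F_\bullet=D_{\st}(\Fil^\bullet D)$ and the fact that $N(F_j)\subseteq F_{j-1}$. The ``only if'' direction is essentially identical (the paper phrases the same index chase as a three-fold d\'evissage using $N^{n-i-1}=0$ on $D_{\st}(D_{i+1}^n)$, $N=0$ on $D_{\st}(D_i^{i+1})$, and $N^{i-1}=0$ on $D_{\st}(D_1^{i-1})$). The ``if'' direction is organized differently: the paper argues by induction on $n$, choosing a $\varphi^{f_L}$-eigenvector $e_n\in D_{\st}(D)$ lifting a generator of $D_{\st}(\cR_E(\delta_n))$ and showing $N^{n-1}e_n\neq 0$ by reducing to the rank-$(n-1)$ case; you instead isolate the graded maps $\overline N_i\colon F_{i+1}/F_i\to F_i/F_{i-1}$, use $N\varphi=p\varphi N$ and the cyclic $\varphi$-action on the $f_L$ components of $L_0\otimes_{\Q_p}E$ to see that each $\overline N_i$ is zero or an isomorphism, and then run a direct descent along the flag. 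Your packaging is a bit cleaner in that it avoids the inductive step and makes explicit the dichotomy for each $\overline N_i$, while the paper's version keeps the argument closer to the eigenvalue structure of $\varphi^{f_L}$; in substance the two are equivalent, since the paper's use of the eigenvector and its $\varphi$-translates is exactly what underlies your ``$\overline N_i$ is $0$ or an isomorphism'' step.
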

\begin{proof}
Suppose there exists $i\in \Delta$ such that $D_i^{i+1}$ is crystalline. We have thus $N^{n-i-1}=0$ on $D_{\st}(D_{i+1}^n)$, $N^{i-1}=0$ on $D_{\st}(D_1^{i-1})$ and $N=0$ on $D_i^{i+1}$. By an easy d\'evissage argument,  it is straightforward  to deduce $N^{n-1}=N^{(n-i-1)+(i-1)+1}=0$ on $D_{\st}(D)$. The ``only if" part follows.  Conversely, suppose $D_i^{i+1}$ is non-crystalline for all $i\in \Delta$. we use induction on $n$ to show $N^{n-1}\neq 0$.  If $\rk D=2$, it is  trivial. Suppose that $N^{n-2}\neq 0$ on $D_1^{n-1}$. Consider the natural exact sequence of filtered $(\varphi, N)$-modules
\begin{equation*}
  0 \ra D_{\st}(D_1^{n-1}) \ra D_{\st}(D) \ra D_{\st}(\cR_E(\delta_n)) \ra 0.
\end{equation*}
Let $e_n\in D_{\st}(D)$ be a $\varphi^{f_L}$-eigenvector of eigenvalue $q_L^{n-1}\alpha$ (recall $f_L$ is the unramified degree of $L$ over $\Q_p$), which is thus a lifting of a non-zero element in $D_{\st}(\cR_E(\delta_n))$ (note by assumption, $\delta_n$ is a product of $\unr(q_L^{n-1}\alpha)$ with an algebraic character). By assumption, $N(e_n)\in D_{\st}(D_1^{n-1})$ is non-zero since $D_{n-1}^n$ is non-crystalline, and $\varphi^{f_L}(N(e_n))=q_L^{n-2}\alpha$. By induction hypothesis, we also know $N^{n-2}(N(e_n))\neq 0$ (since otherwise, this together with the fact $N^{n-2}=0$ on $D_{\st}(D_1^{n-2})$ imply $N^{n-2}=0$ on $D_{\st}(D_1^{n-1})$, a contradiction) and hence $N^{n-1}(e_n)\neq 0$.
\end{proof}
\noindent In summary, for a non-critical special semi-stable $D$, we can attach $\alpha\in E^{\times}$, a strictly dominant weight $\wt(\ul{\delta})=(\wt(\delta_1)_{\sigma}, \cdots, \wt(\delta_n)_{\sigma})_{\sigma\in \Sigma_L}\in X_{\Delta}^+$ and $\cL(D)=\prod_{i\in \Delta} \cL(D)_i$. Note that when $n>2$, in general, one can not recover $D$ from the data $\{\alpha, \wt(\ul{\delta}), \cL(D)\}$, and we refer to \cite{BD1} for a discussion on other (higher) $\cL$-invariants.
\subsection{A sub candidate in locally analytic $p$-adic local Langlands program}\label{sec: lgln-ll}
\noindent Let $D$ be a non-critical special semi-stable $(\varphi, \Gamma)$-module of rank $n$ over $\cR_E$, with the attached data $\{\alpha, \wt(\ul{\delta}), \cL(D)\}$ as in the precedent section (thus $\delta_i=\unr(q_L^{i-1}\alpha) \prod_{\sigma\in \Sigma_L} \sigma^{\wt(\delta_i)_{\sigma}}$). Put
\begin{equation*}\ul{\lambda}:=(\lambda_{1,\sigma}, \cdots, \lambda_{n,\sigma})_{\sigma\in \Sigma_L}\in X_{\Delta}^+\end{equation*} with $\lambda_{i,\sigma}:=\wt(\delta_i)_{\sigma}+i-1$. For $i\in \Delta$, let $\{\psi_{i,1}, \cdots, \psi_{i,d_L}\}$ be a basis of $\cL(D)_i$ and put
\begin{eqnarray*}
  \tilde{\Sigma}_i(\alpha,\ul{\lambda}, \cL(D)_i)&:=&\bigoplus_{\St_n^{\an}(\alpha, \ul{\lambda})}^{j=1,\cdots, d_L} \tilde{\Sigma}_i(\alpha, \ul{\lambda}, \psi_{i,j}), \\
\Sigma_i(\alpha,\ul{\lambda}, \cL(D)_i)&:=&\bigoplus_{\Sigma_i(\alpha, \ul{\lambda})}^{j=1,\cdots, d_L} \Sigma_i(\alpha, \ul{\lambda}, \psi_{i,j}).
\end{eqnarray*}
Thus $\Sigma_i(\alpha,\ul{\lambda}, \cL(D)_i)\subset \tilde{\Sigma}_i(\alpha, \ul{\lambda}, \cL(D)_i)$, and by (\ref{equ: lgln-extc}) (\ref{equ: BrLi})  both of the representations are independent of the choice of the basis of $\cL(D)_i$ and determine $\cL(D)_i$. Put
\begin{eqnarray*}
  \tilde{\Sigma}(\alpha,\ul{\lambda}, \cL(D))&:=&\bigoplus_{\St_n^{\an}(\alpha, \ul{\lambda})}^{i\in \Delta} \tilde{\Sigma}_i(\alpha, \ul{\lambda}, \cL(D)_i), \\
\Sigma(\alpha,\ul{\lambda}, \cL(D))&:=&\bigoplus_{\St_n^{\infty}(\alpha, \ul{\lambda})}^{i\in \Delta} \Sigma_i(\alpha, \ul{\lambda}, \cL(D)_i).
\end{eqnarray*}
We have $\Sigma(\alpha,\ul{\lambda}, \cL(D))\subset \tilde{\Sigma}(\alpha,\ul{\lambda}, \cL(D))$, and the both determine exactly the data $\{\alpha, \ul{\lambda}, \cL(D)\}$ and vice versa.
\begin{lemma}
  (1) $\soc_G\Sigma_i(\alpha,\ul{\lambda}, \cL(D)_i)\cong \soc_G \Sigma(\alpha, \ul{\lambda}, \cL(D))\cong \St_n^{\infty}(\alpha, \ul{\lambda})$.

\noindent
  (2) The locally algebraic subrepresentation of $\tilde{\Sigma}(\alpha,\ul{\lambda}, \cL(D))$ (resp. of $\tilde{\Sigma}_i(\alpha,\ul{\lambda}, \cL(D)_i)$ for $i\in \Delta$) is isomorphic to $\St_n^{\infty}(\alpha, \ul{\lambda})$ if and only if $N^{n-1}\neq 0$ on $D_{\st}(D)$ (resp. if and only if $D_i^{i+1}$ is non-crystalline).
\end{lemma}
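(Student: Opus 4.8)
The plan is, in both parts, to present the representation under study as an extension $0\to K\to\bullet\xrightarrow{q}Q\to 0$ in which $K$ has a known small socle (resp.\ known locally algebraic subrepresentation) and $Q$ is a direct sum of copies of the $v_{\overline{P}_i}^{\infty}(\alpha,\ul{\lambda})$, and then to rule out the unwanted simple subobjects by a Baer-sum computation with the extension class, which we have already identified with a subspace of $\Hom(L^{\times},E)$ through (\ref{equ: lgln-extc}), (\ref{equ: BrLi}) and Lemma~\ref{lem: ext1}. I will use freely that $\St_n^{\infty}(\ul{\lambda})$ is the (irreducible) locally algebraic subrepresentation of $\St_n^{\an}(\ul{\lambda})$, that each $v_{\overline{P}_i}^{\infty}(\ul{\lambda})$ is irreducible with $\End_G\!\big(v_{\overline{P}_i}^{\infty}(\ul{\lambda})\big)=E$, that $v_{\overline{P}_i}^{\infty}(\ul{\lambda})\not\cong v_{\overline{P}_j}^{\infty}(\ul{\lambda})$ for $i\neq j$, and the socle computations $\soc_G\Sigma_i(\ul{\lambda})\cong\soc_G\Sigma(\ul{\lambda})\cong\St_n^{\infty}(\ul{\lambda})$ established in \S\ref{sec: lgln-LB}.

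For (1), write $\Sigma_i(\alpha,\ul{\lambda},\cL(D)_i)$ as an extension $0\to\Sigma_i(\alpha,\ul{\lambda})\to\Sigma_i(\alpha,\ul{\lambda},\cL(D)_i)\xrightarrow{q}v_{\overline{P}_i}^{\infty}(\alpha,\ul{\lambda})^{\oplus d_L}\to 0$ whose class corresponds to the tuple $(\psi_{i,1},\dots,\psi_{i,d_L})$ under (\ref{equ: BrLi}). If $W$ is an irreducible subrepresentation with $q(W)=0$ then $W\hookrightarrow\Sigma_i(\alpha,\ul{\lambda})$, so $W\cong\St_n^{\infty}(\alpha,\ul{\lambda})$; if $q(W)\neq 0$ then $q|_W$ is injective and $q(W)\subseteq v_{\overline{P}_i}^{\infty}(\alpha,\ul{\lambda})^{\oplus d_L}$ is simple, hence of the form $\{(c_1w,\dots,c_{d_L}w)\}$ for some $0\neq(c_j)\in E^{d_L}$, and pulling the extension back along $q(W)\hookrightarrow v_{\overline{P}_i}^{\infty}(\alpha,\ul{\lambda})^{\oplus d_L}$ produces an extension of $v_{\overline{P}_i}^{\infty}(\alpha,\ul{\lambda})$ by $\Sigma_i(\alpha,\ul{\lambda})$ which is split (a splitting being provided by $W$) and of class $\sum_j c_j\psi_{i,j}$; this forces $\sum_j c_j\psi_{i,j}=0$, contradicting the linear independence of the basis $\{\psi_{i,j}\}$ of $\cL(D)_i$. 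Running the same argument for $0\to\Sigma(\alpha,\ul{\lambda})\to\Sigma(\alpha,\ul{\lambda},\cL(D))\to\bigoplus_{i\in\Delta}v_{\overline{P}_i}^{\infty}(\alpha,\ul{\lambda})^{\oplus d_L}\to 0$ --- noting that a simple subobject of the quotient lies in a single block, and that $\Ext^1_G\!\big(v_{\overline{P}_i}^{\infty}(\ul{\lambda}),\Sigma_i(\ul{\lambda})\big)\to\Ext^1_G\!\big(v_{\overline{P}_i}^{\infty}(\ul{\lambda}),\Sigma(\ul{\lambda})\big)$ is an isomorphism by Lemma~\ref{lem: ext1}(2) --- gives the remaining isomorphism of (1).

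For (2), the key inputs are Remark~\ref{rem: lgln-ext3}(ii), that $\tilde{\Sigma}_i(\ul{\lambda},\psi)$ comes by push-forward from a locally algebraic extension of $v_{\overline{P}_i}^{\infty}(\ul{\lambda})$ by $\St_n^{\infty}(\ul{\lambda})$ precisely when $\psi$ is smooth, together with the elementary fact that $\Hom_{\infty}(L^{\times},E)=E\cdot\val_L$ is one-dimensional. From $0\to\St_n^{\an}(\alpha,\ul{\lambda})\to\tilde{\Sigma}_i(\alpha,\ul{\lambda},\cL(D)_i)\xrightarrow{q}v_{\overline{P}_i}^{\infty}(\alpha,\ul{\lambda})^{\oplus d_L}\to 0$ with class $(\psi_{i,j})_j$, I would show that the maximal locally algebraic subrepresentation $V^{\mathrm{lalg}}$ satisfies $V^{\mathrm{lalg}}\cap\St_n^{\an}(\alpha,\ul{\lambda})=\St_n^{\infty}(\alpha,\ul{\lambda})$ and $q\big(V^{\mathrm{lalg}}\big)=v_{\overline{P}_i}^{\infty}(\alpha,\ul{\lambda})\otimes_E W_i$, where $W_i:=\{(w_j)\in E^{d_L}\mid\sum_j w_j\psi_{i,j}\in\Hom_{\infty}(L^{\times},E)\}$: the inclusion $\supseteq$ comes from pushing forward the locally algebraic extension attached to $W_i$ via Remark~\ref{rem: lgln-ext3}(ii), and $\subseteq$ from the fact that a single locally algebraic vector generates, by irreducibility of $v_{\overline{P}_i}^{\infty}(\alpha,\ul{\lambda})$ and Jacobson density applied to its $G$-translates, a subrepresentation of the form $v_{\overline{P}_i}^{\infty}(\alpha,\ul{\lambda})\otimes_E R$, and such a subextension of $\St_n^{\an}(\alpha,\ul{\lambda})$ carrying a locally algebraic section onto its cosocle necessarily comes by push-forward from $\St_n^{\infty}(\alpha,\ul{\lambda})$, whence $R\subseteq W_i$. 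Since $\dim_E W_i=\dim_E\big(\cL(D)_i\cap\Hom_{\infty}(L^{\times},E)\big)$ and $\Hom_{\infty}(L^{\times},E)$ is one-dimensional, $V^{\mathrm{lalg}}\cong\St_n^{\infty}(\alpha,\ul{\lambda})$ iff $W_i=0$ iff $\Hom_{\infty}(L^{\times},E)\not\subseteq\cL(D)_i$, i.e.\ by Remark~\ref{rem: lgln-Lfmex} iff $D_i^{i+1}$ is non-crystalline; the same analysis, carried out block-by-block in $i\in\Delta$, shows the locally algebraic subrepresentation of $\tilde{\Sigma}(\alpha,\ul{\lambda},\cL(D))$ is $\St_n^{\infty}(\alpha,\ul{\lambda})$ iff $\Hom_{\infty}(L^{\times},E)\not\subseteq\cL(D)_i$ for every $i$, which by Lemma~\ref{monofull} is equivalent to $N^{n-1}\neq 0$ on $D_{\st}(D)$.

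The step I expect to require the most care is the identification $q\big(V^{\mathrm{lalg}}\big)=v_{\overline{P}_i}^{\infty}(\alpha,\ul{\lambda})\otimes_E W_i$ in part (2): one must check that the subrepresentation generated by an individual locally algebraic vector really is of the product shape $v_{\overline{P}_i}^{\infty}(\alpha,\ul{\lambda})\otimes_E R$, and that the resulting subextension of $\St_n^{\an}(\alpha,\ul{\lambda})$ splits off its locally algebraic part over $\St_n^{\infty}(\alpha,\ul{\lambda})$; here the irreducibility of the generalized Steinberg constituents and the explicit descriptions of $\Ext^1_G\!\big(v_{\overline{P}_i}^{\infty}(\ul{\lambda}),\St_n^{\an}(\ul{\lambda})\big)$ and of its locally algebraic subgroup in Remarks~\ref{rem: lgln-ext2}--\ref{rem: lgln-ext3} carry the real weight, and everything else reduces to formal manipulation with Baer sums and the $\Ext^1$-identifications of \S\ref{sec: lgln-LB}.
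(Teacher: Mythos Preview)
Your proof is correct and follows essentially the same approach as the paper: part (1) is exactly the Baer-sum/linear-independence argument underlying the paper's one-line citation of (\ref{soc2}), and part (2) unpacks in detail the same mechanism the paper invokes via Remark~\ref{rem: lgln-ext3}(ii), the equivalence $\Hom_{\infty}(L^{\times},E)\subseteq\cL(D)_i\Leftrightarrow D_i^{i+1}$ crystalline from \S\ref{sec: lgln-LFM1}, and Lemma~\ref{monofull}. The only difference is granularity---your computation of $q(V^{\mathrm{lalg}})$ as $v_{\overline{P}_i}^{\infty}(\alpha,\ul{\lambda})\otimes_E W_i$ is more than is strictly needed (one only has to detect whether $W_i=0$), but it is correct and makes the argument self-contained.
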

\begin{proof}
  (1) follows from (\ref{soc2}).

  \noindent
  By Remark \ref{rem: lgln-ext3} (ii), we see that the locally algebraic subrepresentation of $\tilde{\Sigma}_i(\alpha,\ul{\lambda}, \cL(D)_i)$ is strictly bigger than $\St_n^{\infty}(\ul{\lambda})$ if and only if $\Hom_{\infty}(L^{\times}, E)\subseteq \cL(D)_i$ which is equivalent to $D_i^{i+1}$ being crystalline (see the discussion in \S \ref{sec: lgln-LFM1}). The statement for $\tilde{\Sigma}_i(\alpha,\ul{\lambda}, \cL(D)_i)$ follows. By definition, it is easy to see $\tilde{\Sigma}(\alpha, \ul{\lambda}, \cL(D))$ has locally algebraic vectors other than $\St_n^{\infty}(\alpha, \ul{\lambda})$ if and only if so does $\tilde{\Sigma}_i(\alpha, \ul{\lambda}, \cL(D)_i)$ for certain $i\in \Delta$. Together with Lemma \ref{monofull}, the statement for $\tilde{\Sigma}(\alpha, \ul{\lambda}, \cL(D))$ follows.
\end{proof}
\noindent Suppose $N^{n-1}\neq 0$ on $D_{\st}(D)$. For $i\in \Delta$, $\sigma\in \Sigma_L$, there exists $\cL_{i,\sigma}\in E$ such that $\cL(D)_i\cap \Hom_{\sigma}(L^{\times}, E)$ is generated by $\psi_{i,\sigma}:=\log_{p, \sigma}-\cL_{i,\sigma} \val_L$. The characters $\{\psi_{i,\sigma}\}_{\sigma\in \Sigma_L}$ form a basis of $\cL(D)_i$. Since $\psi_{i,\sigma}$ is locally $\sigma$-analytic, by (\ref{equ: BrLi}) Proposition \ref{prop: sigmaan}, we have (recall $\Sigma_{i,\sigma}(\alpha, \ul{\lambda}, \psi_{i,\sigma})$ is $\text{U}(\ug_{\Sigma_L\setminus\{\sigma\}})$-finite)
\begin{equation*}\Sigma_i(\alpha, \ul{\lambda}, \psi_{i,\sigma})\cong \Sigma_{i,\sigma}(\alpha, \ul{\lambda}, \psi_{i,\sigma}) \oplus_{\St_n^{\infty}(\alpha, \ul{\lambda})} \big(\bigoplus_{\St_n^{\infty}(\alpha, \ul{\lambda})}^{\tau\in \Sigma_L, \tau\neq \sigma} \Sigma_{i,\tau}(\alpha, \ul{\lambda})\big).\end{equation*}
In this case, using the basis $\{\psi_{i,\sigma}\}_{i\in \Delta, \sigma\in \Sigma_L}$ of $\cL(D)$, we see that $\Sigma(\alpha, \ul{\lambda}, \cL(D))$ has the following form (where a line denotes a \emph{non-split} extension with the left object the sub, and the right object the quotient, and we enumerate the embeddings in $\Sigma_L$ for convenience):
\begin{equation}\label{stru}
  \begindc{\commdiag}[200]
    \obj(0,3)[a]{$\St_n^{\infty}(\alpha, \ul{\lambda})$}
    \obj(4,6)[b]{$C_{1,\sigma_1}$}
    \obj(8,6)[c]{$v_{\overline{P}_1}^{\infty}(\alpha,\ul{\lambda})$}
    \obj(4,5)[d]{$\vdots$}
    \obj(8,5)[e]{$\vdots$}
    \obj(4,4)[f]{$C_{1, \sigma_{d_L}}$}
    \obj(8,4)[g]{$v_{\overline{P}_1}^{\infty}(\alpha, \ul{\lambda})$}
    \obj(4,3)[h]{$\vdots$}
    \obj(8,3)[i]{$\vdots$}
    \obj(4,2)[j]{$C_{n-1,\sigma_1}$}
    \obj(8,2)[k]{$v_{\overline{P}_{n-1}}^{\infty}(\alpha, \ul{\lambda})$}
    \obj(4,1)[l]{$\vdots$}
    \obj(8,1)[m]{$\vdots$}
    \obj(4,0)[n]{$C_{n-1,\sigma_{d_L}}$}
    \obj(8,0)[o]{$v_{\overline{P}_{n-1}}^{\infty}(\alpha, \ul{\lambda})$}
    \mor{a}{b}{}[+1,\solidline]
    \mor{b}{c}{}[+1,\solidline]
    \mor{a}{f}{}[+2,\solidline]
    \mor{f}{g}{}[+1,\solidline]
       \mor{a}{j}{}[+1,\solidline]
    \mor{j}{k}{}[+1,\solidline]
       \mor{a}{n}{}[+1,\solidline]
    \mor{n}{o}{}[+1,\solidline]
  \enddc,
\end{equation}
where the $(i,\sigma)$-th branch carries the exact information on $\cL_{i,\sigma}$, and is $\text{U}(\ug_{\Sigma_L\setminus \{\sigma\}})$-finite.
\\

\noindent
Let $\rho_L$ be an $n$-dimensional semi-stable representation of $\Gal_L$ over $E$ with $N^{n-1}\neq 0$ on $D_{\st}(\rho_L)$, and let $D:=D_{\rig}(\rho_L)$. Suppose $D$ is non-critical (actually the condition $N^{n-1}\neq 0$ implies that  $D$ admits a unique triangulation, so in this case, we just say $D$ is non-critical if its triangulation is non-critical). We put $\cL(\rho_L):=\cL(D)$. The information on $\cL(\rho_L)$ is lost when passing from $\rho_L$ to its associated Weil-Deligne representation and its Hodge-Tate weights, and hence is invisible in classical local Langlands correspondance. Actually, by classical local Langlands correspondence (combined with the theory on weights), we can only see the locally algebraic representation $\St_n^{\infty}(\alpha, \ul{\lambda})$ (where $\alpha$, $\ul{\lambda}$ are attached to $D$ as above). We expect that $\Sigma(\alpha, \ul{\lambda}, \cL(\rho_L))$ is a sub of the right representation corresponding to $\rho_L$ in the  $p$-adic Langlands program.

\subsection{Colmez-Greenberg-Stevens formula}\noindent
Let $A$ be an affinoid $E$-algebra, and $D_A$ be a rank $n$ triangulable $(\varphi,\Gamma)$-module over $\cR_A:=\cR_L \widehat{\otimes}_{\Q_p} A$ of trianguline parameter $\{\delta_{A,i}\}_{i=1,\cdots, n}$, i.e. $D_A$ is a successive extension of $\cR_A(\delta_{A,i})$, which is the rank $1$ $(\varphi,\Gamma)$-module over $\cR_A$ associated to $\delta_{A,i}: L^{\times} \ra A^{\times}$ (cf. \cite[Const. 6.2.4]{KPX}). Let  $z\in \Spm A$ be an $E$-point, $D:=D_A|_z$, $\delta_i:=\delta_{A,i}|_z$, and suppose that  $(D, \delta_i)$ is non-critical special.  Recall (e.g. see \cite[Thm. 0.1]{Ding4})
\begin{theorem}\label{thm: CGS}Let $i\in \Delta=\{1,\cdots, n-1\}$,  let $t:\Spec E[\epsilon]/\epsilon^2 \ra \Spm A$ be an element in the tangent space of $A$ at $z$, $\widetilde{\delta}_i:=t^* \delta_{A,i}: L^{\times} \ra (E[\epsilon]/\epsilon^2)^{\times}$, and  $\psi\in \Hom(L^{\times},E)$ be the additive character such that $\widetilde{\delta}_i\widetilde{\delta}_{i+1}^{-1}=\delta_i\delta_{i+1}^{-1}(1+\epsilon\psi)$, then
    $\psi\in \cL(D)_i$.
\end{theorem}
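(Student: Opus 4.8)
The plan is to reduce the statement to a rank-two computation over $\cR_{E[\epsilon]/\epsilon^2}$, and then to identify a connecting homomorphism in $(\varphi,\Gamma)$-cohomology with the cup-product pairing (\ref{equ: lgln-ccup}) that defines $\cL(D)_i$. First I would reduce to rank two. Being triangulable of parameter $\{\delta_{A,j}\}$, $D_A$ carries a filtration $0=\Fil^0 D_A\subsetneq\cdots\subsetneq\Fil^n D_A=D_A$ by $(\varphi,\Gamma)$-submodules with $\Fil^j D_A/\Fil^{j-1}D_A\cong\cR_A(\delta_{A,j})$ (cf.\ \cite{KPX}). I would pass to the rank-two subquotient $D_{A,i}^{i+1}:=\Fil^{i+1}D_A/\Fil^{i-1}D_A$, a triangulable family of parameter $(\delta_{A,i},\delta_{A,i+1})$; twisting it by $\cR_A(\delta_{A,i+1}^{-1})$ gives an extension $0\to\cR_A(\eta_A)\to D_A'\to\cR_A\to 0$ with $\eta_A:=\delta_{A,i}\delta_{A,i+1}^{-1}$, whose fibre at $z$ is $D_i^{i+1}\otimes_{\cR_E}\cR_E(\delta_{i+1}^{-1})$, a non-split extension of $\cR_E$ by $\cR_E(\eta)$, where $\eta:=\eta_A|_z$ is special. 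Since base change of a filtration whose graded pieces are free over $\cR_A$ stays exact, applying $t$ produces a rank-two $(\varphi,\Gamma)$-module $\widetilde D$ over $\cR_{E[\epsilon]/\epsilon^2}$ fitting in $0\to\cR_{E[\epsilon]/\epsilon^2}(\widetilde\eta)\to\widetilde D\to\cR_{E[\epsilon]/\epsilon^2}\to 0$ with $\widetilde\eta=t^*\eta_A$; writing $t^*\delta_{A,j}=\delta_j(1+\epsilon\psi_j)$ with $\psi_j\in\Hom(L^\times,E)$, one gets $\widetilde\eta=\eta(1+\epsilon\psi)$ with $\psi=\psi_i-\psi_{i+1}$ the character of the statement. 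So it suffices to prove: if a first-order deformation $\widetilde D$ of a non-split class $[D]\in\hH^1_{(\varphi,\Gamma)}(\cR_E(\eta))$ has sub-object of parameter $\eta(1+\epsilon\psi)$, then $\psi\in\cL(D)=[D]^\perp$.

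Next I would compute the relevant connecting map. Consider the short exact sequence of $(\varphi,\Gamma)$-modules over $\cR_E$
\[0\lra\cR_E(\eta)\lra\cR_{E[\epsilon]/\epsilon^2}(\widetilde\eta)\lra\cR_E(\eta)\lra 0,\]
where the first map is multiplication by $\epsilon$ and the second is reduction modulo $\epsilon$. Using the identification $\cR_{E[\epsilon]/\epsilon^2}(\widetilde\eta)\cong\cR_E(\eta)\otimes_{\cR_E}\cR_{E[\epsilon]/\epsilon^2}(1+\epsilon\psi)$, this sequence is obtained by applying $\cR_E(\eta)\otimes_{\cR_E}(-)$ to $0\to\cR_E\to\cR_{E[\epsilon]/\epsilon^2}(1+\epsilon\psi)\to\cR_E\to 0$, whose class in $\Ext^1_{(\varphi,\Gamma)}(\cR_E,\cR_E)=\hH^1_{(\varphi,\Gamma)}(\cR_E)\cong\Hom(L^\times,E)$ is $\psi$ by the normalization of that isomorphism. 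Hence, by compatibility of boundary maps with cup products, the connecting map $\partial\colon\hH^1_{(\varphi,\Gamma)}(\cR_E(\eta))\to\hH^2_{(\varphi,\Gamma)}(\cR_E(\eta))\cong E$ of the above sequence is, up to a nonzero scalar, $x\mapsto\langle x,\psi\rangle$. Since $[\widetilde D]\in\hH^1_{(\varphi,\Gamma)}(\cR_{E[\epsilon]/\epsilon^2}(\widetilde\eta))$ reduces modulo $\epsilon$ to $[D]$ (here one uses that $\cR_{E[\epsilon]/\epsilon^2}(\widetilde\eta)$ is free over $E[\epsilon]/\epsilon^2$, so that $-\otimes_{E[\epsilon]/\epsilon^2}E$ stays exact and agrees with the middle-to-right map of the long exact sequence), the class $[D]$ lies in $\ker\partial$; therefore $\langle[D],\psi\rangle=0$, i.e.\ $\psi\in\cL(D)_i$.

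The main obstacle is the middle step: establishing the identification $\cR_{E[\epsilon]/\epsilon^2}(\widetilde\eta)\cong\cR_E(\eta)\otimes_{\cR_E}\cR_{E[\epsilon]/\epsilon^2}(1+\epsilon\psi)$ and, especially, the precise normalization under which the rank-one $(\varphi,\Gamma)$-module attached to $x\mapsto 1+\epsilon\psi(x)$ represents $\psi$ in $\hH^1_{(\varphi,\Gamma)}(\cR_E)\cong\Hom(L^\times,E)$ compatibly with the local class field theory normalization, and then verifying that the boundary map of the tensored sequence is cup product with this class — an explicit cocycle computation in the Robba ring, essentially as in \cite{Ding4}. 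The remaining ingredients — the reduction to rank two, exactness of the base changes, and the long-exact-sequence bookkeeping — are routine.
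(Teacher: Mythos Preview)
Your proposal is correct and follows the standard Colmez--Greenberg--Stevens argument: reduce to the rank-two subquotient, base change along the tangent vector, and use that the connecting map in the long exact sequence attached to $0\to\cR_E(\eta)\to\cR_{E[\epsilon]/\epsilon^2}(\widetilde\eta)\to\cR_E(\eta)\to 0$ is cup product with $\psi$, so that the liftability of $[D]$ forces $\langle[D],\psi\rangle=0$. The paper does not supply its own proof of this theorem but simply cites \cite[Thm.~0.1]{Ding4}; your sketch is precisely the argument of that reference (and the identification of the connecting map with the cup-product pairing is exactly \cite[(2.3)]{Ding4}, which the paper also invokes in the proof of the converse, Proposition~\ref{prop: lgln-trd}).
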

\noindent We also have a ``\emph{converse}'' version of the theorem.
\begin{proposition}\label{prop: lgln-trd}
Let $\psi_i\in \cL(D)_i$ for $i\in \Delta$, let $\widetilde{\delta}_1: L^{\times} \ra (E[\epsilon]/\epsilon^2)^{\times}$ with $\widetilde{\delta}_1\equiv \delta_1 \pmod{\epsilon}$, and let $\widetilde{\delta}_{i+1}: L^{\times} \ra (E[\epsilon]/\epsilon^2)^{\times}$ be such that $\widetilde{\delta}_i\widetilde{\delta}_{i+1}^{-1}=\delta_i\delta_{i+1}^{-1}(1+\epsilon\psi_i)$. Then there exists a trianguline $(\varphi,\Gamma)$-module $\tilde{D}$ over $\cR_{E[\epsilon]/\epsilon^2}$ of parameter $\big(\widetilde{\delta}_1, \cdots, \widetilde{\delta}_n\big)$ such that $\tilde{D}\equiv D \pmod{\epsilon}$.
\end{proposition}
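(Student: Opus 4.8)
The plan is to build $\tilde{D}$ together with a triangulation $\Fil^\bullet \tilde{D}$ over $\cR_{E[\epsilon]/\epsilon^2}$ one graded piece at a time, lifting the filtration (\ref{Fil0}). Put $\Fil^1\tilde{D}:=\cR_{E[\epsilon]/\epsilon^2}(\widetilde{\delta}_1)$, which reduces to $\Fil^1 D=\cR_E(\delta_1)$ modulo $\epsilon$. Assume inductively that we have a $(\varphi,\Gamma)$-module $\Fil^k\tilde{D}$ over $\cR_{E[\epsilon]/\epsilon^2}$, finite free of rank $k$, carrying a filtration with graded pieces $\cR_{E[\epsilon]/\epsilon^2}(\widetilde{\delta}_j)$ for $j=1,\dots,k$, together with a filtered isomorphism $\Fil^k\tilde{D}\otimes_{E[\epsilon]/\epsilon^2}E\xrightarrow{\sim}\Fil^k D$. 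To pass from $k$ to $k+1$ we must realize the class $[\Fil^{k+1}D]\in \Ext^1_{(\varphi,\Gamma)}\big(\cR_E(\delta_{k+1}),\Fil^k D\big)\cong \hH^1_{(\varphi,\Gamma)}\big(\Fil^k D(\delta_{k+1}^{-1})\big)$ as a reduction mod $\epsilon$ of a class over $E[\epsilon]/\epsilon^2$.

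Since $\Fil^k\tilde{D}$ is free over $\cR_{E[\epsilon]/\epsilon^2}$, multiplication by $\epsilon$ gives a short exact sequence $0\to\Fil^k D\xrightarrow{\epsilon}\Fil^k\tilde{D}\to\Fil^k D\to0$ of $(\varphi,\Gamma)$-modules; tensoring over $\cR_{E[\epsilon]/\epsilon^2}$ with the free rank-one module $\cR_{E[\epsilon]/\epsilon^2}(\widetilde{\delta}_{k+1}^{-1})$ yields
\begin{equation*}
0\longrightarrow \Fil^k D(\delta_{k+1}^{-1})\longrightarrow \Fil^k\tilde{D}(\widetilde{\delta}_{k+1}^{-1})\longrightarrow \Fil^k D(\delta_{k+1}^{-1})\longrightarrow 0 .
\end{equation*}
Its long exact cohomology sequence shows that $[\Fil^{k+1}D]$ lifts to $\hH^1_{(\varphi,\Gamma)}\big(\Fil^k\tilde{D}(\widetilde{\delta}_{k+1}^{-1})\big)$ — equivalently, to an extension $\Fil^{k+1}\tilde{D}$ of $\cR_{E[\epsilon]/\epsilon^2}(\widetilde{\delta}_{k+1})$ by $\Fil^k\tilde{D}$ — if and only if its image under the connecting map in $\hH^2_{(\varphi,\Gamma)}\big(\Fil^k D(\delta_{k+1}^{-1})\big)$ vanishes. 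The whole content is to show that this obstruction vanishes, using $\psi_k\in\cL(D)_k$.

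To that end I would first compute $\hH^2$. The graded pieces of $\Fil^k D(\delta_{k+1}^{-1})$ are the $\cR_E(\delta_j\delta_{k+1}^{-1})$ with $j\le k$. By Tate duality, $\hH^2_{(\varphi,\Gamma)}(\cR_E(\delta))\neq0$ exactly when $\delta=\varepsilon\prod_{\sigma}\sigma^{m_\sigma}$ with $m_\sigma\in\Z_{\geq0}$; comparing unramified parts (that of $\delta_j\delta_{k+1}^{-1}$ is $\unr(q_L^{\,j-k-1})$, that of $\varepsilon$ is $\unr(q_L^{-1})$) forces $j=k$, and for $j=k$ the hypothesis that $\delta_k\delta_{k+1}^{-1}$ is special gives $\hH^2_{(\varphi,\Gamma)}(\cR_E(\delta_k\delta_{k+1}^{-1}))\cong E$. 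A dévissage along the filtration, using $\hH^{>2}_{(\varphi,\Gamma)}=0$ and the vanishing for $j<k$, then shows that the quotient map $\Fil^k D(\delta_{k+1}^{-1})\twoheadrightarrow \cR_E(\delta_k\delta_{k+1}^{-1})$ induces an isomorphism $\hH^2_{(\varphi,\Gamma)}(\Fil^k D(\delta_{k+1}^{-1}))\xrightarrow{\sim}\hH^2_{(\varphi,\Gamma)}(\cR_E(\delta_k\delta_{k+1}^{-1}))$. Hence it suffices to prove the image of the obstruction in $\hH^2_{(\varphi,\Gamma)}(\cR_E(\delta_k\delta_{k+1}^{-1}))$ is zero.

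To identify that image, observe that the surjection $\Fil^k\tilde{D}\twoheadrightarrow \Fil^k\tilde{D}/\Fil^{k-1}\tilde{D}\cong \cR_{E[\epsilon]/\epsilon^2}(\widetilde{\delta}_k)$ maps the displayed short exact sequence onto the analogous one $0\to \cR_E(\delta_k\delta_{k+1}^{-1})\to \cR_{E[\epsilon]/\epsilon^2}(\widetilde{\delta}_k\widetilde{\delta}_{k+1}^{-1})\to \cR_E(\delta_k\delta_{k+1}^{-1})\to 0$; by functoriality of connecting maps, $[\Fil^{k+1}D]$ is carried to $[D_k^{k+1}]$ and the obstruction is carried to the connecting image of $[D_k^{k+1}]$. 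That connecting map is cup product with the class of the bottom sequence in $\hH^1_{(\varphi,\Gamma)}(\cR_E)\cong\Hom(L^\times,E)$, which is $\psi_k$ since $\widetilde{\delta}_k\widetilde{\delta}_{k+1}^{-1}=\delta_k\delta_{k+1}^{-1}(1+\epsilon\psi_k)$; thus the obstruction equals $\langle[D_k^{k+1}],\psi_k\rangle$ for the pairing (\ref{equ: lgln-perf0}), which vanishes because $\psi_k\in\cL(D)_k=[D_k^{k+1}]^{\perp}$. Therefore $\Fil^{k+1}\tilde{D}$ exists; it is free over $\cR_{E[\epsilon]/\epsilon^2}$ as an extension of free modules, its graded pieces are the required $\cR_{E[\epsilon]/\epsilon^2}(\widetilde{\delta}_j)$, and $\Fil^{k+1}\tilde{D}\otimes_{E[\epsilon]/\epsilon^2}E\cong \Fil^{k+1}D$ since reduction mod $\epsilon$ on $\hH^1$ is the surjection in the long exact sequence and $\cR_{E[\epsilon]/\epsilon^2}(\widetilde{\delta}_{k+1})$ is flat. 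Iterating up to $k=n$ produces $\tilde{D}$. I expect the only real obstacle to be the careful bookkeeping — exhibiting the commutative diagram of short exact sequences, invoking naturality of the boundary maps and their identification with cup products, and keeping all twists and mod-$\epsilon$ reductions consistent; the cohomology vanishing itself is mechanical once the "special" hypothesis is brought in.
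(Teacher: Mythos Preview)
Your proof is correct and follows essentially the same approach as the paper: an induction adding one graded piece at a time, reducing the $\hH^2$ obstruction via d\'evissage to the top graded piece $\cR_E(\delta_k\delta_{k+1}^{-1})$, and killing it there by the hypothesis $\psi_k\in\cL(D)_k$. The paper packages the same argument slightly differently (inducting on $n$, twisting so that $\delta_n=1$, and citing \cite[(2.3)]{Ding4} for what you spell out as ``connecting map $=$ cup product with $\psi_k$''), but the substance is identical.
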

\begin{proof}
  We use induction on the rank of $D$. The rank $1$ case is trivial. Suppose the statement holds if the rank of $D$ is less than $n-1$, and now assume $D$ is of rank $n$. By the induction hypothesis, there exists a trianguline $(\varphi,\Gamma)$-module $\tilde{D}_1^{n-1}$ over $\cR_{E[\epsilon]/\epsilon^2}$ of parameter $\big(\widetilde{\delta}_1,\cdots, \widetilde{\delta}_{n-1}\big)$ such that  $\tilde{D}_1^{n-1} \equiv D_1^{n-1} \pmod{\epsilon}$. Twisting $D$ (resp. $\tilde{D}_1^{n-1}$) by $\cR_E(\delta_n^{-1})$ (resp. by $\cR_{E[\epsilon]/\epsilon^2}(\widetilde{\delta}_n^{-1})$), we can and do assume $\delta_n=\widetilde{\delta}_n=1$. We have $D\in \hH^1_{(\varphi,\Gamma)}(D_1^{n-1})$, $D_{n-1}^n\in \hH^1_{(\varphi,\Gamma)}(\cR_E(\delta_{n-1}))$.
  We have the following commutative diagram of $(\varphi,\Gamma)$-modules
  \begin{equation*}
    \begin{CD}
      0 @>>> D_{1}^{n-1} @>>> \tilde{D}_1^{n-1} @>>> D_1^{n-1} @>>> 0 \\
      @. @VVV  @VVV @VVV @. \\
      0 @>>> \cR_E(\delta_{n-1}) @>>> \cR_{E[\epsilon]/\epsilon^2}(\widetilde{\delta}_{n-1}) @>>> \cR_E(\delta_{n-1}) @>>> 0
    \end{CD}
  \end{equation*}
where the vertical maps are natural projections. Taking cohomology, we get
\begin{equation*}
  \begin{CD}
    \hH^1_{(\varphi,\Gamma)}\big(\tilde{D}_1^{n-1}\big) @> p_0 >> \hH^1_{(\varphi,\Gamma)}(D_1^{n-1}) @>\delta >> \hH^2_{(\varphi,\Gamma)}(D_{1,n-1}) \\
    @V p_1 VV @V p_2 VV @V p_3 VV \\
    \hH^1_{(\varphi,\Gamma)}\big(\cR_{E[\epsilon]/\epsilon^2}(\widetilde{\delta}_{n-1})\big) @> p_0' >> \hH^1_{(\varphi,\Gamma)}(\cR_E(\delta_{n-1})) @>\delta' >>  \hH^2_{(\varphi,\Gamma)}(\cR_E(\delta_{n-1}))
  \end{CD}.
\end{equation*}
It is sufficient to prove $D\in \Ima(p_0)$. However, by \cite[Thm. 1.2]{Liu07} and an easy d\'evissage argument, we have that $p_3$ is an isomorphism, and $p_1, p_2$ are surjective \big(indeed, $\hH^2_{(\varphi,\Gamma)}(\cR_E(\delta_i))=0$ for $i\leq n-2$\big). By an easy diagram chasing, we are reduced to prove $D_{n-1}^n\in \Ima(p_0')$. However, this follows from our assumption on $\psi_{n-1}$ and \cite[(2.3)]{Ding4}. This concludes the proof.
\end{proof}

\subsection{Trianguline deformations}\label{sec: triDef}
\noindent We study trianguline deformations of non-critical special $(\varphi,\Gamma)$-modules. Let $(D, \delta_i)$ be a  non-critical special $(\varphi,\Gamma)$-module. Recall that the trianguline deformation functor
\begin{equation*}
\normalsize F_{D,\cF}:  \Big\{\substack{\text{Artinian local $E$-algebras}\\\text{ with residue field $E$}}\Big\} \lra \big\{\text{Sets}\big\}
\end{equation*}
sends $A$ to the set of isomorphism classes $F_{D,\cF}(A)=\{(D_A,\cF_A, \pi_A)\}/\sim$ where
\begin{enumerate}
  \item $D_A$ is a rank $n$ $(\varphi,\Gamma)$-module over $\cR_A$ with $\pi_A: D_A \otimes_{\cR_A} \cR_E\xrightarrow{\sim} D$,
  \item $\cF_A$ is an increasing filtration (see (\ref{Fil0}))
    \begin{equation*}
\cF_A: \       0 =\Fil^0_A D_A \subsetneq \Fil^1_A D_A \subsetneq \cdots \subsetneq \Fil^n_A D_A=D_A,
    \end{equation*}
where $\Fil^i A$ is a $(\varphi,\Gamma)$-submodule of $D_A$ over $\cR_A$ being  a direct summand as $\cR_A$-submodule.
\end{enumerate}
Two objects $(D_A, \cF_A, \pi_A)$ and $(D_A', \cF_A', \pi_A')$ are isomorphic if there exists an isomorphism of $(\varphi,\Gamma)$-modules over $\cR_A$: $f: D_A\ra D_A'$ which respects the filtrations and $\pi_A=\pi'_A \circ f$. For $(D_A, \cF_A, \pi_A)\in F_{D,\cF}(A)$ and $i=1,\cdots, n$, $\Fil^i_A D_A/\Fil^{i-1}_A D_A$ is thus a $(\varphi,\Gamma)$-module of rank $1$ over $\cR_A$, which, by \cite[Prop. 2.3.1]{Bch} \cite[Prop. 2.16]{Na2}, is isomorphic to $\cR_A(\delta_{A,i})$ for a unique continuous character $\delta_{A,i}: L^{\times} \ra A^{\times}$. We have the following $E$-linear maps
\begin{equation}\label{equ: kappa}
  \kappa: F_{D,\cF}(E[\epsilon]/\epsilon^2) \lra \Hom(T(L),E) \xlongrightarrow{\kappa_L} \prod_{i\in \Delta} \Hom(L^{\times},E)
\end{equation}
where the first map sends $(D_{E[\epsilon]/\epsilon^2}, \cF_{E[\epsilon]/\epsilon^2}, \pi_{E[\epsilon]/\epsilon^2})$ to $\big(\big(\delta_{E[\epsilon]/\epsilon^2,i}\delta_i^{-1}-1\big)/\epsilon\big)_{i=1,\cdots, n}$ with $\{\delta_{E[\epsilon]/\epsilon^2, i}\}_{i=1,\cdots, n}$ associated to $\cF_{E[\epsilon]/\epsilon^2}$ as above, and $\kappa_L$ sends $(\psi_1, \cdots, \psi_n)$ to $(\psi_i-\psi_{i+1})_{i\in \Delta}$.
\begin{proposition}\label{prop: lgln-dtd}
(1) The functor $F_{D,\cF}$ is pro-representable, and is formally smooth of dimension $1+\frac{n(n+1)}{2} d_L$.

\noindent (2) The map $\kappa$ factors through a surjective map
\begin{equation*}
  \kappa: F_{D,\cF}(E[\epsilon]/\epsilon^2) \twoheadlongrightarrow \prod_{i\in \Delta} \cL(D)_i=\cL(D).
\end{equation*}
\end{proposition}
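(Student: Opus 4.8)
The plan is to translate both statements into the cohomology of a single $(\varphi,\Gamma)$-module. Let $\End_{\cF}(D)\subseteq \End(D)=D^{\vee}\otimes_{\cR_E}D$ denote the sub-$(\varphi,\Gamma)$-module of endomorphisms preserving the filtration $\cF$ of (\ref{Fil0}); it is the ``block upper triangular'' part, a $(\varphi,\Gamma)$-module over $\cR_E$ of rank $\tfrac{n(n+1)}{2}$. A standard computation with the Herr complex (as in the trianguline deformation theory of \cite{Bch,Na2}) identifies the tangent space $F_{D,\cF}(E[\epsilon]/\epsilon^2)$ with $\hH^1_{(\varphi,\Gamma)}(\End_{\cF}(D))$ and exhibits the obstructions to lifting a point of $F_{D,\cF}$ along a small extension inside $\hH^2_{(\varphi,\Gamma)}(\End_{\cF}(D))$. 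I first record that $\hH^0_{(\varphi,\Gamma)}(\End_{\cF}(D))=E$: if a $(\varphi,\Gamma)$-equivariant $f$ preserving $\cF$ acts by $c_i\in E$ on $\gr^iD=\cR_E(\delta_i)$ and $c_i\neq c_{i+1}$ for some $i$, then $f-c_{i+1}$ would split the rank-$2$ consecutive subquotient of parameter $(\delta_i,\delta_{i+1})$, contradicting non-criticality; hence all $c_i$ equal a common $c$, and a downward induction along the nilpotent filtration of $\End_{\cF}(D)$, using $\hH^0_{(\varphi,\Gamma)}(\cR_E(\delta_i\delta_j^{-1}))=0$ for $i<j$, forces $f=c$. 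Granting in addition the vanishing $\hH^2_{(\varphi,\Gamma)}(\End_{\cF}(D))=0$ proved below, $F_{D,\cF}$ is formally smooth; moreover $\hH^0_{(\varphi,\Gamma)}(\End_\cF(D))=E$ implies that every automorphism of a deformation reducing to the identity is a scalar (hence acts trivially), so Schlessinger's condition (H4) holds, the conditions (H1)--(H3) being formal, and $F_{D,\cF}$ is pro-representable; and the Euler characteristic identity $\dim_E\hH^0_{(\varphi,\Gamma)}-\dim_E\hH^1_{(\varphi,\Gamma)}+\dim_E\hH^2_{(\varphi,\Gamma)}=-\tfrac{n(n+1)}{2}d_L$ for $\End_{\cF}(D)$ gives $\dim F_{D,\cF}(E[\epsilon]/\epsilon^2)=1+\tfrac{n(n+1)}{2}d_L$.

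It remains to prove $\hH^2_{(\varphi,\Gamma)}(\End_{\cF}(D))=0$, which is the crux. Filter $\End_{\cF}(D)$ by the sub-$(\varphi,\Gamma)$-modules $\fn_{\cF}^{(2)}(D)\subseteq \fn_{\cF}(D)\subseteq \End_{\cF}(D)$ of endomorphisms carrying $\Fil^iD$ into $\Fil^{i-2}D$, resp. $\Fil^{i-1}D$; the successive graded pieces are $\bigoplus_{i=1}^{n}\cR_E$, then $\bigoplus_{i=1}^{n-1}\cR_E(\delta_i\delta_{i+1}^{-1})$, then a module whose graded pieces are the $\cR_E(\delta_i\delta_j^{-1})$ with $j\geq i+2$. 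Since $\hH^2_{(\varphi,\Gamma)}(\cR_E(\delta))\neq 0$ precisely when $\delta=\varepsilon\prod_{\sigma}\sigma^{k_\sigma}$ with all $k_\sigma\geq 0$ (and then it is one-dimensional), and since $\delta_i\delta_j^{-1}=\varepsilon^{\,j-i}\prod_{\sigma}\sigma^{(\geq 0)}$ by non-criticality and special-ness, this $\hH^2$ vanishes for $\delta=1$ and for all $j\geq i+2$, whereas $\hH^2_{(\varphi,\Gamma)}(\cR_E(\delta_i\delta_{i+1}^{-1}))\cong E$ (the space occurring in (\ref{equ: lgln-perf0})). Dévissage then yields $\hH^2_{(\varphi,\Gamma)}(\fn_{\cF}(D))\xrightarrow{\;\sim\;}\bigoplus_{i=1}^{n-1}\hH^2_{(\varphi,\Gamma)}(\cR_E(\delta_i\delta_{i+1}^{-1}))$, and $\hH^2_{(\varphi,\Gamma)}(\End_{\cF}(D))$ is the cokernel of the connecting map $\partial\colon\hH^1_{(\varphi,\Gamma)}\big(\bigoplus_{i=1}^{n}\cR_E\big)\longrightarrow\bigoplus_{i=1}^{n-1}\hH^2_{(\varphi,\Gamma)}(\cR_E(\delta_i\delta_{i+1}^{-1}))$. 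Unwinding $\partial$, its $i$-th component carries $(a_1,\dots,a_n)$ to $\langle [e_i],\,a_i-a_{i+1}\rangle$, where $[e_i]\in\hH^1_{(\varphi,\Gamma)}(\cR_E(\delta_i\delta_{i+1}^{-1}))$ is the class of the suitably twisted consecutive subquotient $\Fil^{i+1}D/\Fil^{i-1}D$ and $\langle\cdot,\cdot\rangle$ is the perfect pairing (\ref{equ: lgln-ccup}). As $D$ is non-critical special, each $[e_i]$ is nonzero, so $\langle [e_i],\cdot\rangle$ surjects onto $E$; choosing $a_n=0$ and then solving for $a_{n-1},a_{n-2},\dots,a_1$ in turn shows $\partial$ is surjective, i.e. $\hH^2_{(\varphi,\Gamma)}(\End_{\cF}(D))=0$.

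For part (2), by construction $\kappa$ records, for a tangent vector with triangulation parameter $\{\widetilde{\delta}_i\}$, the additive characters $\psi_i\in\Hom(L^{\times},E)$ determined by $\widetilde{\delta}_i\widetilde{\delta}_{i+1}^{-1}=\delta_i\delta_{i+1}^{-1}(1+\epsilon\psi_i)$; that each such $\psi_i$ lies in $\cL(D)_i$ is exactly the Colmez--Greenberg--Stevens formula (Theorem \ref{thm: CGS}), so $\kappa$ takes values in $\prod_{i\in\Delta}\cL(D)_i$. Conversely, given $(\psi_i)_{i\in\Delta}\in\prod_i\cL(D)_i$, set $\widetilde{\delta}_1:=\delta_1$, define $\widetilde{\delta}_{i+1}$ by $\widetilde{\delta}_i\widetilde{\delta}_{i+1}^{-1}=\delta_i\delta_{i+1}^{-1}(1+\epsilon\psi_i)$, and invoke Proposition \ref{prop: lgln-trd} to produce a trianguline $(\varphi,\Gamma)$-module $\tilde{D}$ over $\cR_{E[\epsilon]/\epsilon^2}$ of parameter $(\widetilde{\delta}_1,\dots,\widetilde{\delta}_n)$ with $\tilde{D}\equiv D\pmod\epsilon$; endowed with its triangulation and the tautological reduction isomorphism, $\tilde{D}$ defines a point of $F_{D,\cF}(E[\epsilon]/\epsilon^2)$ whose image under $\kappa$ is $(\psi_i)_i$. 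Hence $\kappa$ factors through a surjection onto $\cL(D)$.

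I expect the main obstacle to be the precise identification of the connecting map $\partial$ with cup product against the classes $[e_i]$ under the pairing (\ref{equ: lgln-ccup}) — that is, tracking which consecutive extension class sits in which off-diagonal block of $\End_{\cF}(D)$, together with the signs — after which the surjectivity of $\partial$, and hence the whole of (1), is immediate; the remaining steps are either formal or are quotations of \cite{Bch,Na2}, Theorem \ref{thm: CGS}, and Proposition \ref{prop: lgln-trd}.
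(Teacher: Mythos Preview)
Your proposal is correct, and part (2) is handled identically to the paper (both directions quoting Theorem \ref{thm: CGS} and Proposition \ref{prop: lgln-trd}). For part (1), you and the paper agree on the overall architecture: identify the tangent space and obstructions with $\hH^1_{(\varphi,\Gamma)}(\End_\cF(D))$ and $\hH^2_{(\varphi,\Gamma)}(\End_\cF(D))$, show $\hH^0=E$ and $\hH^2=0$, and conclude by Euler characteristic together with \cite[Prop.~3.6]{Che11} (which packages the Schlessinger argument you sketch).

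The genuine difference is in the proof that $\hH^2_{(\varphi,\Gamma)}(\End_\cF(D))=0$. The paper filters $\End_\cF(D)$ by \emph{columns}, i.e.\ by the sub-$(\varphi,\Gamma)$-modules $\Hom_{\cR_E}(\cR_E(\delta_i),D_1^i)\cong D_1^i\otimes_{\cR_E}\cR_E(\delta_i^{-1})$, and kills each $\hH^2$ by Tate duality: $\hH^2\big(D_1^i\otimes\cR_E(\delta_i^{-1})\big)\cong\hH^0\big((D_1^i)^{\vee}\otimes\cR_E(\delta_i\varepsilon)\big)$, and the latter reduces by d\'evissage to the vanishing of $\hH^0$ of the non-split rank-$2$ module $(D_{i-1}^i)^{\vee}\otimes\cR_E(\delta_i\varepsilon)$, which is checked by a Sen-weight argument. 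Your approach filters by \emph{super-diagonals} and reduces everything to the surjectivity of the connecting map $\partial$, which you identify with the pairing $\langle[e_i],a_i-a_{i+1}\rangle$. Your route is more conceptual---it makes the link to the $\cL$-invariant pairing (\ref{equ: lgln-ccup}) explicit, and the surjectivity of $\partial$ is essentially the infinitesimal content of Proposition \ref{prop: lgln-trd}---but, as you correctly flag, it requires verifying that the extension class of $\End_\cF(D)/\fn_\cF^{(2)}(D)$ over the diagonal is given in the $(i,j)$-entry by $\pm[e_i]$ for $j\in\{i,i+1\}$ and $0$ otherwise. This is a standard ``adjoint of a flag'' computation (restrict to the rank-$2$ block $D_{i-1}^{i+1}$, where $\End_\cF$ has the evident three-step structure with submodules $\Hom(\cR_E(\delta_{i+1}),D_{i-1}^{i+1})$ and $\Hom(D_{i-1}^{i+1},\cR_E(\delta_i))$), but it does need to be written out; the paper's column filtration sidesteps this bookkeeping entirely at the cost of the slightly less transparent Sen-weight step.
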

\begin{proof}The case for $n=1$ is trivial. We assume $n\geq 2$. (2) follows from Proposition \ref{prop: lgln-trd}. We prove (1).
We first show $\End_{(\varphi,\Gamma)}(D)\cong E$. Denote by $\End(D)$ the $\cR_E$-module of $\cR_E$-endomorphisms on $D$, which has a natural $(\varphi,\Gamma)$-module structure given by $\gamma(u)(x)=\gamma(u(\gamma^{-1}(x)))$ and $\varphi(u)(\varphi(x))=\varphi(u(x))$, for $u\in \End(D)$. We have $\End_{(\varphi,\Gamma)}(D)\cong \hH^0_{(\varphi,\Gamma)}(\End(D))$. We have moreover an exact sequence of $(\varphi,\Gamma)$-modules over $\cR_E$:
\begin{equation*}
  0 \ra \Hom_{\cR_E}(\cR_E(\delta_n), D) \ra \End(D) \ra \Hom_{\cR_E}(D_1^{n-1}, D) \ra 0.
\end{equation*}
We claim $\hH^0_{(\varphi,\Gamma)}(\Hom_{\cR_E}(\cR_E(\delta_n), D))=0$, and $\hH^0_{(\varphi,\Gamma)}(\End(D_1^{n-1})) \xrightarrow{\sim} \hH^0_{(\varphi,\Gamma)}(\Hom_{\cR_E}(D_1^{n-1}, D))$. Assuming the claim, then by an easy induction argument, we can deduce $E\cong \End_{(\varphi,\Gamma)}(D)$ (noting we have \emph{a priori} $E\hookrightarrow \End_{(\varphi,\Gamma)}(D)$). We prove the claim. Since $D_{n-1}^n$ is non-split (by the non-critical special assumption), we deduce
\begin{equation*}\hH^0_{(\varphi,\Gamma)}(\Hom_{\cR_E}(\cR_E(\delta_n), D_{n-1}^n))\cong \Hom_{(\varphi,\Gamma)}(\cR_E(\delta_n), D_{n-1}^n)=0\end{equation*}
 and hence, by an easy d\'evissage argument, $\hH^0_{(\varphi,\Gamma)}(\Hom_{\cR_E}(\cR_E(\delta_n), D))=0$. It is also clear that \begin{equation*}\hH^0_{(\varphi,\Gamma)}(\Hom_{\cR_E}(D_1^{n-1}, \cR_E(\delta_n)))\cong \Hom_{(\varphi,\Gamma)}(D_1^{n-1}, \cR_E(\delta_n))=0.\end{equation*}
From which, we deduce the second part of the claim.
\\

\noindent Put
\begin{equation*}
  \End_{\cF}(D):=\{f\in \End(D)\ |\ f(\Fil^i D)\subseteq \Fil^i D\},
\end{equation*}
which is a saturated $(\varphi,\Gamma)$-submodule of $\End(D)$. We calculate $\hH^i_{(\varphi,\Gamma)}(\End_{\cF}(D))$. Since $\End_{(\varphi,\Gamma)}(D)\cong E$, we easily see $\hH^0_{(\varphi,\Gamma)}(\End_{\cF}(D))=E$. We have a natural exact sequence of $(\varphi,\Gamma)$-modules over $\cR_E$
\begin{equation*}
  0 \ra \Hom_{\cR_E}(\cR_E(\delta_n), D) \ra \End_{\cF}(D) \ra \End_{\cF}(D_1^{n-1}) \ra 0
\end{equation*}
where we also use $\cF$ to denote the induced filtration on $D_1^{n-1}\subset D$. Continuing with such argument, we easily deduce the following facts:
\begin{itemize}
  \item $\End_{\cF}(D)$ is a $(\varphi,\Gamma)$-module over $\cR_E$ of rank $n+(n-1)+ \cdots +1=\frac{n(n+1)}{2}$;
  \item $\End_{\cF}(D)$ is isomorphic to an extension of $(\varphi,\Gamma)$-modules $D_1^i \otimes_{\cR_E} \cR_E(\delta_i^{-1})$ with $i=1, \cdots, n$.
\end{itemize}
By \cite[Thm. 1.2 (2)]{Liu07} , we have  an isomorphism for $i=1,\cdots, n$:
\begin{equation*}\hH^2_{(\varphi,\Gamma)}(D_1^i \otimes_{\cR_E} \cR_E(\delta_i^{-1}))\cong \hH^0_{(\varphi,\Gamma)}((D_1^i)^{\vee}\otimes_{\cR_E} \cR_E(\delta_i\varepsilon)).\end{equation*}
Let $M_i:=(D_{i-1}^i)^{\vee} \otimes_{\cR_E} \cR_E(\delta_i \varepsilon)$ if $i\geq 2$, and $M_1:=(D_1^1)^{\vee} \otimes_{\cR_E} \cR_E(\delta_1 \varepsilon)\cong \cR_E(\varepsilon)$.
By an easy d\'evissage argument (and the explicit structure of special characters), we have for $i\geq 1$
\begin{equation}\label{devi00}
  \hH^0_{(\varphi,\Gamma)}(M_i)\xlongrightarrow{\sim}\hH^0_{(\varphi,\Gamma)}((D_1^i)^{\vee}\otimes_{\cR_E} \cR_E(\delta_i\varepsilon)).
\end{equation}
It is clear that $ \hH^0_{(\varphi,\Gamma)}(M_1)=0$. For $i\geq 2$, since $D_i^{i+1}$ is non-split,  $M_i$ is isomorphic to a non-split extension of $\cR_E(\delta_{i-1}^{-1}\delta_i \varepsilon)$ by $\cR_E(\varepsilon)$. If $\hH^0_{(\varphi,\Gamma)}(M_i)\neq 0$, we have an injection of $(\varphi,\Gamma)$-modules $j: \cR_E \hookrightarrow M_i$. By considering the Sen weights, it is not difficult to see the $\Ima(j)[1/t]\cap M_i\cong \cR_E(\delta_{i-1}^{-1}\delta_i \varepsilon)$ and hence $M_i$ is split, a contradiction. Together with (\ref{devi00}), we see then for all $i=1,\cdots, n$
\begin{equation*}
  \hH^2_{(\varphi,\Gamma)}(D_1^i \otimes_{\cR_E} \cR_E(\delta_i^{-1}))=0
\end{equation*}
and hence by d\'evissage, $\hH^2_{(\varphi,\Gamma)}(\End_{\cF}(D))=0$. By \cite[Thm. 1.2 (1)]{Liu07}, we see
\begin{equation*}\dim_E \hH^1_{(\varphi,\Gamma)}(\End_{\cF}(D))=1+\frac{n(n+1)}{2}d_L.
\end{equation*}
By \cite[Prop. 3.6]{Che11} (and an easy generalization), (1) follows.
\end{proof}

\section{Local-global compatibility}\label{sec: lgln-LG}
\noindent In this section, we prove some local-global compatibility result in semi-stable non-crystalline case. In particular, we show that the correspondence  between Fontaine-Mazur and Breuil's simple $\cL$-invariants can be realised in the space of the (patched) $p$-adic automorphc representations.
\subsection{Patched eigenvariety}
\subsubsection{Patched Banach representation of $\GL_n(L)$}\label{sec: lgln-pbr}
\noindent We recall the (patched) $p$-adic automorphic representations considered in \cite{CEGGPS}. We follow the notation of \emph{loc. cit.} except for the CM field $F$, the $p$-adic field $L$, and the $p$-adic place $\wp$ (denoted by $\widetilde{F}$, $F$, $\fp$ respectively in \emph{loc. cit.}). Suppose $p\nmid 2n$, and let $\overline{r}: \Gal_L \ra \GL_n(k_E)$ be a continuous representation such that $\overline{r}$ admits a potentially crystalline lift $r_{\mathrm{pot.diag}}: \Gal_L \ra \GL_n(k_E)$  of regular weight which is potentially diagonalisable. Let $\xi$ (resp. $\tau$) be the weight (resp. the inertial type) of $r_{\mathrm{pot.diag}}$.  In this case, by \cite[\S~2.1]{CEGGPS}, we can find a triple $(F,F^+, \overline{\rho})$ \big(where $(F,F^+)$ is denoted by $(\widetilde{F}, \widetilde{F}^+)$ in \emph{loc. cit.}\big) where $F$ is an imaginary CM field with maximal totally real subfield $F^+$ satisfying that
 \begin{itemize}\item the extension $F/F^+$ is unramified at all finite places,
  \item for any $v|p$ of $F^+$, $v$ splits in $F$, and $F^+_v\cong L$,
  \item $\overline{\rho}$ is a \emph{suitable globalisation} (cf. \emph{loc. cit.}) of $\overline{r}$.
\end{itemize}

\noindent We use the setting of \cite[\S 2.3]{CEGGPS} (and we refer to \emph{loc. cit.} for details). In particular, we have the following objects
\begin{equation*}
  \{\widetilde{G}, v_1, S_p, \wp, \{U_m\}_{m\in \Z}\},
\end{equation*}
where
\begin{itemize} \item $\widetilde{G}$ is a certain definite unitary group over $F^+$ with a model over $\co_{F^+}$ such that
 for each place $v$ of $F^+$ which splits as $w w^c$ in $F$, we have (and fix) an isomorphism $i_w: \widetilde{G}(\co_{F^+})\xrightarrow{\sim} \GL_n(\co_{F_w})$;
 \item $v_1$ is a certain finite place of $F^+$ prime to $p$, and we refer to \emph{loc. cit.} for properties of $v_1$, noting $v_1$ splits in $F$;
 \item $S_p$ is the set of places of $F^+$ above $p$, and for any $v\in S_p\cup\{v_1\}$, let $\widetilde{v}|v$ be a place of $F$ such that if $v\in S_p$, then $\overline{\rho}|_{\Gal_{F_{\widetilde{v}}}}\cong \overline{r}$ (the existence of $\widetilde{v}$ follows from that $\overline{\rho}$ is a suitable globalisation of $\overline{r}$);
 \item $\wp\in S_p$;
 \item $\{U_m=\prod_{v} U_{m,v}\}_{m\in \Z_{\geq 0}}$ is a tower of compact open subgroups of $\widetilde{G}(\bA_{F^+}^{\infty})$ where
\begin{itemize}\item  $U_{m,v}=\widetilde{G}(\co_{F^+_v})$ if $v$ splits in $F$ and $v\neq \wp, v_1$,
  \item $U_{m,v}$ hyperspecial if $v$ is inert in $F$,
  \item $U_{m,v_1}$ is the preimage of upper triangular matrices of the composition ($k_{F_{\widetilde{v_1}}}$ denoting the corresponding residue field)
  \begin{equation*}
    \widetilde{G}(\co_{F^+_{v_1}}) \xrightarrow[\sim]{i_{\widetilde{v_1}}} \GL_n(\co_{F_{\widetilde{v_1}}}) \twoheadrightarrow \GL_n(k_{F_{\widetilde{v_1}}}),
  \end{equation*}
  \item $U_{m,\wp}$ is the kernel of $\widetilde{G}(\co_{F^+_{\wp}}) \rightarrow \widetilde{G}(\co_{F^+_{\wp}}/\varpi_{F^+_{\wp}}^m)$, where $\varpi_{F^+_{\wp}}$ denotes a uniformizer of $\co_{F^+_{\wp}}$.
  \end{itemize}
\end{itemize}

\noindent
For $(\xi,\tau)$, as in \emph{loc. cit.}, we can attach a finite free $\co_E$-module $L_{\xi,\tau}$ equipped with a locally algebraic representation of $\GL_n(\co_L)$. Put $W_{\xi,\tau}:=\otimes_{v\in S_p\setminus \{\wp\}} L_{\xi,\tau}$, which is equipped with an action of $\prod_{v\in S_p\setminus \{\wp\}} U_{m,v}$ with  $U_{m,v}$ acting on the factor corresponding to $v$ via $U_{m,v}\cong \GL_n(\co_L)$. Let
\begin{multline*}
  S_{\xi,\tau}(U_m, \co_E/\varpi_E^k):=\big\{f: \widetilde{G}(F^+)\backslash \widetilde{G}(\bA_{F^+}^{\infty}) \ra W_{\xi,\tau}\otimes_{\co_E} \co_E/\varpi_E^k\ |\ \\ f(gu)=u^{-1} f(g) \text{ for $g\in \widetilde{G}(\bA_{F^+}^{\infty})$, $u\in U_m$}\big\}
\end{multline*}
where $U_m$ acts on $W_{\xi,\tau}\otimes_{\co_E}  \co_E/\varpi_E^k$ via the projection $U_m\twoheadrightarrow \prod_{v\in S_p\setminus \{\wp\}} U_{m,v}$.  The $\co_E/\varpi_E^k$-module $S_{\xi,\tau}(U_m, \co_E/\varpi_E^k)$ is equipped with a natural action of the spherical Hecke operators
\begin{equation*}T_w^{(j)}=\bigg[U_{m,v} i_w^{-1}\bigg(\begin{pmatrix} \varpi_{F_w} 1_j & 0 \\ 0 & 1_{n-1}\end{pmatrix}\bigg)U_{m,v}\bigg]\end{equation*} where $w$ is a place of $F$ lying over a place $v\notin S_p \cup \{v_1\}$ of $F^+$ which splits in $F$, $\varpi_{F_w}$ is a uniformizer of $F_w$ and $j\in \{1, \cdots, n\}$. We denote by $\bT^{S_p,\univ}$ the $\co_E$-polynomial algebra generated by such $T_w^{(j)}$ and the formal variables $T_{\widetilde{v_1}}^{(j)}$. One can associate a maximal ideal $\fm$ of $\bT^{S_p,\univ}$ to the global Galois representation $\overline{\rho}$ as in the end of \cite[\S~2.3]{CEGGPS}.
We put $U^{\wp}:=\prod_{v\neq \wp} U_{m,v}$ (which is independent of $m$), and put
\begin{eqnarray}
  \widehat{S}_{\xi,\tau}(U^{\wp}, \co_E)_*&:=&\varprojlim_k \varinjlim_m S_{\xi,\tau}(U_m,\co_E/\varpi_E^k)_*, \nonumber\\
  \label{equ: lgln-afp}\widehat{S}_{\xi,\tau}(U^{\wp},E)_*&:=&\widehat{S}_{\xi,\tau}(U^{\wp}, \co_E)_*\otimes_{\co_E} E, \nonumber
\end{eqnarray}
with $*\in \{\fm, \emptyset\}$ (and $S_{\xi,\tau}(U_m,\co_E/\varpi_E^k)_{\fm}$ denotes the localization of $S_{\xi,\tau}(U_m,\co_E/\varpi_E^k)$ at $\fm$). Thus  $\widehat{S}_{\xi,\tau}(U^{\wp},E)_{*}$ is an admissible unitary Banach representation of $\GL_n(L)$. Actually, $\widehat{S}_{\xi,\tau}(U^{\wp},E)_{\fm}$ is a direct summand of $\widehat{S}_{\xi,\tau}(U^{\wp},E)$.  The action of $\bT^{S_p,\univ}$ on the localisation $\widehat{S}_{\xi,\tau}(U^{\wp}, \co_E)_{\fm}$ factors through the Hecke algebra
\begin{equation*}
 \bT_{\xi,\tau}^{S_p}(U^{\wp}, \co_E)_{\fm}:=\varprojlim_m \varprojlim_k \bT_{\xi,\tau}(U_m, \co_E/\varpi_E^k)_{\fm},
\end{equation*}
where $\bT_{\xi,\tau}(U_m,\co_E/\varpi_E^k)_{\fm}$ denotes the $\co_E/\varpi_E^k$-subalgebra of $\End_{\co_E/\varpi_E^k}\big(S_{\xi,\tau}(U_m, \co_E/\varpi_E^k)_{\fm}\big)$ generated by the operators in $\bT^{S_p,\univ}$.
\\

\noindent We denote by $R_{\widetilde{v}}^{\square}$ the maximal reduced and $p$-torsion free quotient of the universal $\co_E$-lifting ring of $\overline{\rho}|_{\Gal_{F_{\widetilde{v}}}}$ ($\cong \overline{r}$). For $v\in S_p\setminus \{\wp\}$, we denote by $R_{\widetilde{v}}^{\square, \xi,\tau}$ for the reduced and $p$-torsion free quotient of $R_{\widetilde{v}}^{\square}$ corresponding to potentially crystalline lifts of weight $\xi$ and inertial type $\tau$. Let $\cS$ denote the global deformation problem as in \cite[\S ~2.4]{CEGGPS}, $R_{\cS}^{\univ}$ the universal deformation ring, and $\rho_{\cS}^{\univ}$ the universal deformation over $R_{\cS}^{\univ}$. Note that one has a natural morphism $R_{\cS}^{\univ} \ra \bT_{\xi,\tau}^{S_p}(U^{\wp}, \co_E)_{\fm}$, in particular, $\widehat{S}_{\xi,\tau}(U^{\wp}, \co_E)_{\fm}$ is naturally equipped with an action of $R_{\cS}^{\univ}$. Following \cite[\S~2.8]{CEGGPS} we put
\begin{eqnarray*}
  R^{\loc}&:=&R_{\widetilde{\wp}}^{\square} \widehat{\otimes} \Big(\widehat{\otimes}_{S_p\setminus \{\wp\}}R_{\widetilde{v}}^{\square, \xi,\tau}\Big)\widehat{\otimes} R_{\widetilde{v_1}}^{\square},\\
  R_{\infty}&:=&R^{\loc}\llbracket x_1,\cdots, x_g\rrbracket,\\
  S_{\infty}&:=&\co_E\llbracket z_1,\cdots, z_{n^2(|S_p|+1)}, y_1,\cdots, y_q\rrbracket,
\end{eqnarray*}
where $q\geq [F^+:\Q]\frac{n(n-1)}{2}$ is a certain integer as in \emph{loc. cit.}, $g=q-[F^+:\Q]\frac{n(n-1)}{2}$, and $x_i$, $y_i$, $z_i$ are formal variables. By \cite[\S~2.8]{CEGGPS}, one can obtain
\begin{enumerate}
  \item a continuous $R_{\infty}$-admissible unitary representation $\Pi_{\infty}$ of $G=\GL_n(L)$ over $E$ together with a $G$-stable and $R_{\infty}$-stable unit ball $\Pi_{\infty}^o\subset \Pi_{\infty}$;
  \item a morphism of local $\co_E$-algebras $S_{\infty}\ra R_{\infty}$ such that $M_{\infty}:= \Hom_{\co_L}(\Pi_{\infty}^o, \co_E)$ is finite projective as $S_{\infty}\llbracket \GL_n(\co_L)\rrbracket$-module;
  \item a closed ideal $\fa$ of $R_{\infty}$, a surjection $R_{\infty}/\fa R_{\infty}\twoheadrightarrow R_{\cS}^{\univ}$ and a  $G   \times R_{\infty}/\fa R_{\infty}$-invariant isomorphism $\Pi_{\infty}[\fa]\cong \widehat{S}_{\xi,\tau}(U^{\wp},E)_{\fm}$, where $R_{\infty}$ acts on $\widehat{S}_{\xi,\tau}(U^{\wp},E)_{\fm}$ via $R_{\infty}/\fa R_{\infty}\twoheadrightarrow  R_{\cS}^{\univ}$.
\end{enumerate}
\subsubsection{Patched eigenvariety}
\noindent We briefly recall the patched eigenvariety of Breuil-Hellmann-Schraen \cite{BHS1}. Indeed, although our input as in \S~\ref{sec: lgln-pbr} is slightly different from that in \cite{BHS1}, one sees easily that all the arguments in \emph{loc. cit. } apply in our case.
\\

\noindent Let $\Pi_{\infty}^{R_{\infty}-\an}$ denote the subspace of the locally $R_{\infty}$-analytic vectors of $\Pi_{\infty}$ (cf. \cite[\S~3.1]{BHS1}), which are locally $\Q_p$-analytic vectors for the action $G\times \Z_p^s$ with respect to one (or any) presentation $\co_E\llbracket \Z_p^s \rrbracket\twoheadrightarrow R_{\infty}$. Applying the Jacquet-Emerton functor (with respect to the upper triangular Borel subgroup $B$), we get a locally $\Q_p$-analytic representation $J_B(\Pi_{\infty}^{R_{\infty}-\an})$ of $T(L)$ equipped with a continuous action of $R_{\infty}$, which is moreover an essentially admissible locally $\Q_p$-analytic representation of $T(L)\times \Z_p^s$ (with respect to  a chosen presentation $\co_E\llbracket \Z_p^s \rrbracket\twoheadrightarrow R_{\infty}$).
Let $\fX_{\infty}:=(\Spf R_{\infty})^{\rig}$, $R_{\infty}^{\rig}:=\co\big(\fX_{\infty}\big)$, and let $\widehat{T}$ denote the character space of $T(L)$ \big(i.e. the rigid space parametrizing continuous characters of $T(L)$\big).  The strong dual $J_{B}\big(\Pi_{\infty}^{R_{\infty}-\an}\big)^{\vee}$ is a coadmissible $R_{\infty}^{\rig}\widehat{\otimes}_E \co(\widehat{T})$-module, which corresponds to a coherent sheaf $\cM_{\infty}$ over $\fX_{\infty}\times \widehat{T}$ such that
\begin{equation*}
   \Gamma\big(\fX_{\infty}\times \widehat{T}, \cM_{\infty}\big) \cong  J_{B}\big(\Pi_{\infty}^{R_{\infty}-\an}\big)^{\vee}.
\end{equation*}
Let $X_{\wp}(\overline{\rho})$ be the support of $\cM_{\infty}$ on $\fX_{\infty}\times \widehat{T}$,  called the \emph{patched eigenvariety}. For $x=(y,\chi)\in \fX_{\infty}\times \widehat{T}$, $x\in X_{\wp}(\overline{\rho})$ if and only if the corresponding eigenspace
\begin{equation*}
J _{B}\big(\Pi_{\infty}^{R_{\infty}-\an}\big)[\fm_y,T(L)=\chi]\neq 0,
\end{equation*}
where $\fm_y$ denotes the maximal ideal of $R_{\infty}[\frac{1}{p}]$ corresponding to $y$. By the same arguments as in  \cite[Cor. 3.12, Thm. 3.19, Cor. 3.20]{BHS1} \cite[Lem. 3.8]{BHS2}, we have
\begin{theorem}\label{thm: patev}
  (1) The rigid space $X_{\wp}(\overline{\rho})$ is reduced and equidimensional of dimension
  \begin{equation*}
 q+n^2(|S_p|+1)+nd_L=g+\frac{n(n-1)}{2}[F^+:\Q]+n^2(|S_p|+1)+nd_L.
  \end{equation*}

 \noindent  (2) The coherent sheaf $\cM_{\infty}$ over $X_{\wp}(\overline{\rho})$ is Cohen-Macauley.

 \noindent (3) The set of very classical points (cf. \cite[Def. 3.17]{BHS1}) is Zariski-dense in $X_{\wp}(\overline{\rho})$ and is an accumulation set.
\end{theorem}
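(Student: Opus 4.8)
The strategy is to transport the arguments of \cite{BHS1} (and \cite{BHS2}) to the present setting essentially verbatim; the only genuine point is to check that the patching datum of \S~\ref{sec: lgln-pbr}, which comes from \cite{CEGGPS} rather than from the setup of \emph{loc. cit.}, has all the properties used there. The relevant facts are: $\Pi_{\infty}$ is a continuous $R_{\infty}$-admissible unitary Banach representation of $G$; the dual patching module $M_{\infty}=\Hom_{\co_E}(\Pi_{\infty}^o,\co_E)$ is finite projective over $S_{\infty}\llbracket \GL_n(\co_L)\rrbracket$; and there is a surjection $\co_E\llbracket \Z_p^s\rrbracket\twoheadrightarrow R_{\infty}$ of $S_{\infty}$-algebras for some $s$. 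All of these are recorded in \S~\ref{sec: lgln-pbr}. Granting them, one forms $\Pi_{\infty}^{R_{\infty}-\an}$, and the arguments of \cite[\S~3.1]{BHS1} show that $J_B(\Pi_{\infty}^{R_{\infty}-\an})$ is an essentially admissible locally $\Q_p$-analytic representation of $T(L)\times\Z_p^s$; taking strong duals yields the coadmissible $R_{\infty}^{\rig}\widehat{\otimes}_E\co(\widehat{T})$-module whose associated coherent sheaf on $\fX_{\infty}\times\widehat{T}$ is $\cM_{\infty}$, with support $X_{\wp}(\overline{\rho})$.

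For parts (1) and (2), the key mechanism is that the finite projectivity of $M_{\infty}$ over $S_{\infty}\llbracket \GL_n(\co_L)\rrbracket$ propagates, through Emerton's computation of Jacquet modules of essentially admissible representations together with the Koszul/Lazard-type resolution computing locally $R_{\infty}$-analytic vectors, to the statement that $\cM_{\infty}$ is a maximal Cohen--Macaulay sheaf on $X_{\wp}(\overline{\rho})$; this is exactly \cite[Cor. 3.12]{BHS1} in our situation, and it gives (2) as well as the equidimensionality in (1). The precise value of the dimension is then a numerical bookkeeping with the known Krull dimensions of $S_{\infty}$ and of the local framed deformation rings $R_{\widetilde{\wp}}^{\square}$, $R_{\widetilde{v}}^{\square,\xi,\tau}$ (for $v\in S_p\setminus\{\wp\}$) and $R_{\widetilde{v_1}}^{\square}$, together with the fact that the character variety of $L^{\times}$ has dimension $1+d_L$, so $\dim\widehat{T}=n(1+d_L)$, and with the relation $q=g+[F^+:\Q]\tfrac{n(n-1)}{2}$; carried out exactly as in \cite[\S~3]{BHS1} and \cite[Lem. 3.8]{BHS2}, this produces the asserted dimension $q+n^2(|S_p|+1)+nd_L$. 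Reducedness follows from Cohen--Macaulayness, which gives property $(S_1)$ (no embedded components), combined with generic reducedness, which one obtains from the density of the classical points and the reducedness of $R_{\infty}$.

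For part (3), the very classical points are, by construction, the points $x=(y,\chi)$ at which $J_B(\Pi_{\infty}^{R_{\infty}-\an})[\fm_y,T(L)=\chi]$ contains a vector which, via Emerton's adjunction, comes from a classical automorphic form on $\widetilde{G}(\bA_{F^+})$ of the appropriate weight and level (so, after specialising along $\fa$, from a vector of $\widehat{S}_{\xi,\tau}(U^{\wp},E)_{\fm}$). Their Zariski-density and the accumulation property are proved as in \cite[Thm. 3.19, Cor. 3.20]{BHS1}, by combining (a) a local classicality criterion, which near a generic crystalline point converts a locally analytic vector with regular dominant Hodge--Tate--Sen weights and non-critical $U_p$-slope into a locally algebraic one, with (b) the density of such crystalline points in $(\Spf R_{\widetilde{\wp}}^{\square})^{\rig}$ and the fact, built into the patching construction, that the corresponding locally algebraic vectors are actually realised inside $\Pi_{\infty}$. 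This step (3) is the place where some care is needed: one must exhibit enough very classical points with Zariski-dense image, which rests on the interplay between the local classicality/companion-point analysis and the supply of classical automorphic forms inside the patched module; by contrast, (1) and (2) are formal once the inputs of \S~\ref{sec: lgln-pbr} are in place.
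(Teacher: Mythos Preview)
Your proposal is correct and follows the same approach as the paper, which simply asserts the theorem by invoking \cite[Cor.~3.12, Thm.~3.19, Cor.~3.20]{BHS1} and \cite[Lem.~3.8]{BHS2} without further argument. You actually supply more detail than the paper does, correctly identifying that the only thing to verify is that the patching inputs from \cite{CEGGPS} (in particular the finite projectivity of $M_\infty$ over $S_\infty\llbracket\GL_n(\co_L)\rrbracket$) match those used in \cite{BHS1}.
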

\noindent Let $X_{\tri}^{\square}(\overline{r})$ be the trianguline variety associated to $\overline{r}$ (cf. \cite[D\'ef. 2.4]{BHS1}), which is in particular a closed reduced rigid subspace of $\fX^{\square}_{\overline{r}} \times \widehat{T}$, equidimensional of dimension $n^2+\frac{n(n+1)}{2} d_L$, where $\fX_{\overline{r}}^{\square}:=\big(\Spf R_{\overline{r}}^{\square}\big)^{\rig}$. Let $R^{\wp}:=\Big(\widehat{\otimes}_{v\in S_p\setminus \{\wp\}}R_{\widetilde{v}}^{\square, \xi,\tau}\Big)\widehat{\otimes} R_{\widetilde{v_1}}^{\square}$, $R_{\infty}^{\wp}:=R^{\wp}\llbracket x_1,\cdots, x_g\rrbracket$,  $\fX_{\overline{\rho}^{\wp}}^{\square}:=(\Spf R^{\wp})^{\rig}$, and $\bU$ be the open unit ball in $\bA^1$, we have thus $(\Spf R_{\infty})^{\rig}\cong \fX_{\overline{\rho}^{\wp}}^{\square} \times\bU^g \times \fX_{\overline{r}}^{\square} \cong (\Spf R_{\infty}^{\wp})^{\rig}\times \fX_{\overline{r}}^{\square}$. Put
\begin{equation}\label{equ: zeta}
  \zeta:=\unr(q_L^{1-n}) \otimes \cdots \otimes \big(\unr(q_L^{i-n})\prod_{\sigma\in \Sigma_L} \sigma^{i-1}\big) \otimes \cdots \otimes \big(\prod_{\sigma\in \Sigma_L} \sigma^{n-1}\big)
\end{equation}(which is a character of $T(L)$) and denote by $\iota_L$ the following isomorphism
\begin{equation*}
  \iota_L: \widehat{T} \xlongrightarrow{\sim} \widehat{T}, \ \delta \mapsto \delta\zeta.
\end{equation*} Let $\iota_L^{-1}\big(X_{\tri}^{\square}(\overline{r})\big):=X_{\tri}^{\square}(\overline{r}) \times_{\widehat{T}, \iota_L} \widehat{T}$ (which is also a closed reduced rigid subspace of $\fX_{\overline{r}}^{\square}\times \widehat{T}$). By the same argument as in \cite[Thm.3.21]{BHS1}, we have
\begin{theorem}\label{thm: cclg-pwR}
  The natural embedding
  \begin{equation}X_{\wp}(\overline{\rho}) \hooklongrightarrow (\Spf R_{\infty})^{\rig}\times \widehat{T} \cong \fX_{\overline{\rho}^{\wp}}^{\square}\times \bU^g \times \fX_{\overline{r}}^{\square}\times \widehat{T}\end{equation} factors though
  \begin{equation}\label{equ: cclg-pow}X_{\wp}(\overline{\rho})\hooklongrightarrow \fX_{\overline{\rho}^{\wp}}^{\square}\times \bU^g \times \iota_L^{-1}\big(X_{\tri}^{\square}(\overline{r})\big),\end{equation} and induces an isomorphism between $X_{\wp}(\overline{\rho})$ and a union of irreducible components (equipped with the reduced closed rigid subspace structure) of $\fX_{\overline{\rho}^{\wp}}^{\square}\times \bU^g \times \iota_L^{-1}\big(X_{\tri}^{\square}(\overline{r})\big)$.
\end{theorem}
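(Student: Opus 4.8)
The plan is to follow verbatim the strategy of \cite[Thm.~3.21]{BHS1}: as remarked just before the statement, the only thing to check is that each ingredient of \emph{loc.\ cit.}\ survives the mild change of patching data. I would organise the argument in three steps — the factorisation \eqref{equ: cclg-pow}, a dimension count, and a formal descent to irreducible components. \textbf{Step 1 (factorisation).} By Theorem~\ref{thm: patev}(3) the set of very classical points is Zariski-dense in $X_{\wp}(\overline{\rho})$. For such a point $x=(y,\chi)$, the specialisation $\rho_x$ of the patched Galois representation at $y$ is, up to the standard twist, a Galois representation cut out by a classical automorphic form; by classical local-global compatibility together with the eigenvariety construction, $\rho_x|_{\Gal_{F_{\widetilde{\wp}}}}$ is trianguline with parameter prescribed by $\chi\zeta$, the twist $\zeta$ of \eqref{equ: zeta} being exactly the discrepancy between Emerton's Jacquet-module normalisation used to build $X_{\wp}(\overline{\rho})$ and the normalisation defining $X_{\tri}^{\square}(\overline{r})$. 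Hence every very classical point of $X_{\wp}(\overline{\rho})$ lies on $\fX_{\overline{\rho}^{\wp}}^{\square}\times \bU^g\times \iota_L^{-1}\big(X_{\tri}^{\square}(\overline{r})\big)$; since the latter is a Zariski-closed subspace of $\fX_{\overline{r}}^{\square}\times\widehat{T}$ and $X_{\wp}(\overline{\rho})$ is reduced (Theorem~\ref{thm: patev}(1)), Zariski-density forces the embedding to factor through \eqref{equ: cclg-pow}.

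\textbf{Step 2 (dimensions).} The target $Y:=\fX_{\overline{\rho}^{\wp}}^{\square}\times\bU^g\times\iota_L^{-1}\big(X_{\tri}^{\square}(\overline{r})\big)$ is equidimensional: $\bU^g$ is smooth of dimension $g$; $\iota_L$ is an automorphism of $\widehat{T}$, so $\iota_L^{-1}(X_{\tri}^{\square}(\overline{r}))$ has the same pure dimension $n^2+\tfrac{n(n+1)}{2}d_L$ as $X_{\tri}^{\square}(\overline{r})$; and $\fX_{\overline{\rho}^{\wp}}^{\square}$ is a product of the $|S_p|-1$ potentially crystalline framed deformation spaces $(\Spf R_{\widetilde{v}}^{\square,\xi,\tau})^{\rig}$ (equidimensional of dimension $n^2+\tfrac{n(n-1)}{2}d_L$ by Kisin, as $\xi$ is regular) and of $(\Spf R_{\widetilde{v_1}}^{\square})^{\rig}$ (equidimensional of dimension $n^2$, by the choice of $v_1$ in \cite[\S2.1]{CEGGPS}). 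Using $[F^+:\Q]=|S_p|d_L$ one then computes
\begin{equation*}
\dim Y=(|S_p|-1)\Big(n^2+\tfrac{n(n-1)}{2}d_L\Big)+n^2+g+n^2+\tfrac{n(n+1)}{2}d_L=q+n^2(|S_p|+1)+nd_L,
\end{equation*}
which is precisely the dimension of $X_{\wp}(\overline{\rho})$ recorded in Theorem~\ref{thm: patev}(1).

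\textbf{Step 3 (components).} There remains the purely formal observation that if $Z\hookrightarrow Y$ is a reduced closed analytic subspace of a rigid space $Y$ which is equidimensional of dimension $d$, and $Z$ is itself equidimensional of the same dimension $d$, then $Z$ is a union of irreducible components of $Y$: each irreducible component of $Z$ is irreducible, closed in $Y$ and of dimension $d=\dim Y$, hence is an irreducible component of $Y$. Applying this with $Z=X_{\wp}(\overline{\rho})$ — reduced and equidimensional by Theorem~\ref{thm: patev}(1), embedded in $Y$ by Step~1, of dimension $\dim Y$ by Step~2 — gives the theorem.

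\textbf{Expected main obstacle.} The only genuine content is Step~1: one must verify that, with the patching input imported from \cite{CEGGPS} in place of the one used in \cite{BHS1}, the density of very classical points still holds (this is Theorem~\ref{thm: patev}(3), which itself requires rerunning the arguments of \cite[\S3]{BHS1}) and that at these points the $T(L)$-eigencharacter determines the trianguline parameter of $\rho_x|_{\Gal_{F_{\widetilde{\wp}}}}$ up to the explicit twist $\zeta$ of \eqref{equ: zeta} — i.e.\ the compatibility of the eigenvariety construction with $p$-adic Hodge theory and with the normalisation of local class field theory fixed in \S\ref{sec: lgln-LB-not}. Granting, as asserted in the text, that the arguments of \cite[\S3]{BHS1} transport, Steps~2 and~3 reduce to a bookkeeping computation and a standard lemma on rigid analytic spaces.
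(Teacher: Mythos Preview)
Your proposal is correct and is precisely the argument of \cite[Thm.~3.21]{BHS1} that the paper invokes verbatim; the three steps you outline (Zariski-density of very classical points to force the factorisation, the dimension count matching Theorem~\ref{thm: patev}(1), and the formal lemma on equidimensional reduced closed subspaces) are exactly the content of \emph{loc.\ cit.} There is nothing to add.
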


\subsubsection{Non-critical special points}
\noindent
Let $\rho_L: \Gal_L \ra \GL_n(E)$ be a semi-stable representation, and $D:=D_{\rig}(\rho_L)$. Let $\delta_1, \cdots, \delta_n$ be a a trianguline parameter of $D$, suppose that $(D,\delta_i)$ is non-critical special (cf. \S~\ref{sec: lgln-LFM1}). We assume $x_L=(\rho_L, \delta)\in X_{\tri}^{\square}(\overline{r})$, with $\delta=\delta_1\otimes \cdots \otimes \delta_n$. Let $X$ be a union of irreducible components of an open subset of $X_{\tri}^{\square}(\overline{r})$, and suppose $X$ satisfies the accumulation property at $x_L$ (cf. \cite[Def. 2.11]{BHS2}). Consider the composition (see (\ref{equ: kappa}) for $\kappa_L$)
\begin{equation}\label{equ: lgln-trvt}
  T_{X,x_L} \lra \Hom(T(L),E)\xlongrightarrow{\kappa_L} \prod_{i\in \Delta} \Hom(L^{\times},E),
\end{equation}
where $T_{X,x_L}$ denotes the tangent space of $X$ at $x_L$, and the first map is the tangent map of the natural morphism $X\ra \widehat{T}$ at the point $x_L$.
\begin{proposition}\label{prop: lgln-tv}$X$ is smooth at the point $x_L$, and (\ref{equ: lgln-trvt}) factors though a surjective map
\begin{equation*}
T_{X,x_L}\twoheadlongrightarrow \cL(D).
\end{equation*}
\end{proposition}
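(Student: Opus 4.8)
The plan is to deduce Proposition~\ref{prop: lgln-tv} from the purely local trianguline deformation result (Proposition~\ref{prop: lgln-dtd}) together with the accumulation hypothesis, more or less in the spirit of \cite[\S~4]{BHS2} but in the simpler semi-stable non-crystalline setting. First I would reinterpret the tangent space $T_{X,x_L}$ as a subspace of the tangent space of the ambient rigid space $\fX_{\overline{r}}^{\square}\times \widehat{T}$ at $x_L$, and recall that the composite $X_{\tri}^{\square}(\overline{r})\hookrightarrow \fX_{\overline{r}}^{\square}\times\widehat{T}$ is, near a sufficiently generic (in particular non-critical) point, controlled by the trianguline deformation functor: a tangent vector at $x_L$ that lies on $X$ gives rise to a deformation $D_{E[\epsilon]/\epsilon^2}$ of $D$ over $\cR_{E[\epsilon]/\epsilon^2}$ together with a triangulation lifting $\cF$ and with lifted parameters $\{\delta_{E[\epsilon]/\epsilon^2,i}\}$. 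This is exactly the data classified by $F_{D,\cF}(E[\epsilon]/\epsilon^2)$, so I get a commutative square relating $T_{X,x_L}$, $F_{D,\cF}(E[\epsilon]/\epsilon^2)$, and $\Hom(T(L),E)$, under which the map (\ref{equ: lgln-trvt}) is identified with the map $\kappa$ of (\ref{equ: kappa}) restricted to the image of $T_{X,x_L}$.

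Next I would establish the smoothness of $X$ at $x_L$. The point is that $X_{\tri}^{\square}(\overline{r})$ has dimension $n^2+\frac{n(n+1)}{2}d_L$, while the framed trianguline deformation space, which near $x_L$ is (formally) the product of the framing variables with $F_{D,\cF}$, has dimension $n^2+1+\frac{n(n+1)}{2}d_L - 1 = n^2+\frac{n(n+1)}{2}d_L$ once one accounts for the $\GL_1$ of the automorphisms of $D$ (note $\End_{(\varphi,\Gamma)}(D)\cong E$ was proved in the course of Proposition~\ref{prop: lgln-dtd}). Since $F_{D,\cF}$ is formally smooth by Proposition~\ref{prop: lgln-dtd}(1), the corresponding map is smooth at $x_L$; combined with the accumulation property of $X$ at $x_L$ (which forces $X$ to locally contain a component through $x_L$ of the expected dimension, by density of the crystalline/classical points computed as in \cite[\S~2]{BHS2}), one concludes $X$ is smooth at $x_L$ of the expected dimension and that $T_{X,x_L}$ surjects onto the tangent space of $F_{D,\cF}$ times the framing directions. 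Here one uses that the classical points are very Zariski-dense and that the local-model computation of the tangent space at a generic point matches.

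Granting this, the factorization through $\cL(D)$ is then immediate from Proposition~\ref{prop: lgln-dtd}(2): the map $\kappa$ on $F_{D,\cF}(E[\epsilon]/\epsilon^2)$ already factors through the surjection onto $\prod_{i\in\Delta}\cL(D)_i=\cL(D)$, this being in turn a consequence of the Colmez--Greenberg--Stevens formula (Theorem~\ref{thm: CGS}) for the "only if" direction and of Proposition~\ref{prop: lgln-trd} for surjectivity. Since $T_{X,x_L}\twoheadrightarrow T(F_{D,\cF})(E[\epsilon]/\epsilon^2)$ (up to the framing part, which is killed by $\kappa_L$ anyway), the composite (\ref{equ: lgln-trvt}) has image exactly $\cL(D)$, which is what is claimed.

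The main obstacle I expect is the smoothness assertion, specifically controlling $X$ (as opposed to the full trianguline variety $X_{\tri}^{\square}(\overline{r})$) near $x_L$: one must know that $X$, being a union of components of an open subset with the accumulation property, actually meets $x_L$ in a single smooth branch of the expected dimension, and that its tangent space is not strictly smaller than the deformation-theoretic one. This requires the local-global-free input that the (framed) trianguline variety is itself smooth at $x_L$ — i.e. that the obstruction space $\hH^2_{(\varphi,\Gamma)}(\End_{\cF}(D))$ vanishes, which is Proposition~\ref{prop: lgln-dtd} again — plus the accumulation/density argument to pin down which components pass through $x_L$. Once the ambient trianguline variety is known to be smooth of the right dimension at $x_L$ and $X$ accumulates at $x_L$, $X$ must coincide with (a neighborhood in) that smooth space, and everything else is bookkeeping with the identification of tangent maps. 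I would write this last part carefully but expect no essential difficulty beyond transcribing the relevant parts of \cite[\S~2,4]{BHS2} in the present, simpler, situation.
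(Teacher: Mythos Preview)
Your proposal is correct and follows essentially the same approach as the paper: use accumulation plus global triangulation theory to see that $T_{X,x_L}$ maps into $F_{D,\cF}(E[\epsilon]/\epsilon^2)$ (modulo the framing kernel $K(\rho_L)$ of dimension $n^2-1$), then invoke Proposition~\ref{prop: lgln-dtd}(1) for the dimension count giving smoothness and surjectivity onto $F_{D,\cF}(E[\epsilon]/\epsilon^2)$, and finally apply Proposition~\ref{prop: lgln-dtd}(2) for the factorization through $\cL(D)$. The paper's version is slightly more direct in that it bounds $\dim_E T_{X,x_L}\le (n^2-1)+\big(1+\tfrac{n(n+1)}{2}d_L\big)=\dim X$ straight away rather than first arguing smoothness of the full trianguline variety and then descending to $X$, but the content is the same.
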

\begin{proof}The natural  morphism $X\ra \fX_{\overline{\rho}_{\widetilde{\wp}}}^{\square} \cong \fX_{\overline{r}}^{\square}$ induces an exact sequence (see \cite[(4.21)]{BHS2}):
\begin{equation}\label{equ: cclg-xXL}
  0 \ra K(\rho_L) \cap T_{X,x_L} \ra T_{X,x_L} \xrightarrow{f} \Ext_{\Gal_L}^1(\rho_L,\rho_L)\cong \Ext_{(\varphi,\Gamma)}^1(D,D)
\end{equation}
where $K(\rho_L)$ is an $E$-vector space of dimension $n^2-\dim_{E} \End_{\Gal_L}(\rho_L)=n^2-1$ (see \cite[Lem. 4.13]{BHS2}). On the other hand, since $X$ satisfies the accumulation property, by global triangulation theory (\cite[Prop. 4.3.5]{Liu}) and the fact that $(D, \delta_i)$ is non-critical, we know the image of $f$ is contained in the $E$-vector space $F_{D,\cF}(E[\epsilon]/\epsilon^2)$ (note by  \cite[Cor. 2.38]{Na2}, $F_{D,\cF}(E[\epsilon]/\epsilon^2)\subseteq \Ext_{(\varphi,\Gamma)}^1(D,D)$). Together with Proposition \ref{prop: lgln-dtd} (1), we deduce
\begin{equation*}\dim_E T_{X,x_L}\leq n^2 +\frac{n(n+1)}{2} d_L=\dim X.\end{equation*} Thus $X$ is smooth at the point $x$, and $\Ima(f)=F_{D,\cF}(E[\epsilon]/\epsilon^2)$. Moreover, in this case, the first map in (\ref{equ: lgln-trvt}) coincides with the first map in (\ref{equ: kappa}) (by global triangulation theory). The second part of the proposition follows then from Proposition \ref{prop: lgln-dtd} (2).
\end{proof}
\noindent Now  assume moreover there exists $x^{\wp}\in (\Spf R^{\wp}_{\infty})^{\rig}$ such that
\begin{equation*}x=(x^{\wp}, \rho_L, \delta\zeta^{-1}) \in X_{\wp}(\overline{r})\hooklongrightarrow (\Spf R^{\wp}_{\infty})^{\rig}\times \fX_{\overline{r}}^{\square}\times \widehat{T}.\end{equation*}
Consider the following composition
\begin{equation}\label{equ: lgln-paev}
 T_{X_{\wp}(\overline{\rho}),x} \lra \Hom(T(L),E)  \xlongrightarrow{\kappa_L} \prod_{i\in \Delta} \Hom(L^{\times},E),
\end{equation}
where $T_{X_{\wp}(\overline{\rho}),x}$ denotes the tangent space of $X_{\wp}(\overline{\rho})$ at the point $x$ and the first map is the tangent map at $x$ of the natural morphism $X_{\wp}(\overline{\rho}) \lra \widehat{T}$.
\begin{corollary}\label{cor: lgln-patch}
$X_{\wp}(\overline{\rho})$ is smooth at the point $x$, and (\ref{equ: lgln-paev}) factors through a surjective map
\begin{equation}\label{equ: tangL}
T_{X_{\wp}(\overline{\rho}),x}\twoheadlongrightarrow \cL(D).
\end{equation}
\end{corollary}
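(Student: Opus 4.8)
The plan is to deduce the statement from Proposition~\ref{prop: lgln-tv} applied to a union of irreducible components of $X_{\tri}^{\square}(\overline{r})$, by peeling off the smooth auxiliary factors occurring in the presentation of $X_{\wp}(\overline{\rho})$ furnished by Theorem~\ref{thm: cclg-pwR}. First I would record the local geometry. By Theorem~\ref{thm: cclg-pwR}, a suitable open neighbourhood $\cU$ of $x$ in $X_{\wp}(\overline{\rho})$ is a union of irreducible components of an open subset of $Y:=\fX_{\overline{\rho}^{\wp}}^{\square}\times \bU^g \times \iota_L^{-1}\big(X_{\tri}^{\square}(\overline{r})\big)$. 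The factor $\bU^g$ is smooth of dimension $g$, and $\fX_{\overline{\rho}^{\wp}}^{\square}$ is smooth at $x^{\wp}$: it is the rigid product of the framed potentially crystalline deformation spaces $\big(\Spf R_{\widetilde{v}}^{\square,\xi,\tau}\big)^{\rig}$ for $v\in S_p\setminus\{\wp\}$, which are regular (hence smooth) of dimension $n^2+\tfrac{n(n-1)}{2}d_L$ since $\xi$ is a regular weight (Kisin), together with $\big(\Spf R_{\widetilde{v_1}}^{\square}\big)^{\rig}$, which is formally smooth of dimension $n^2$ by the choice of $v_1$. Moreover the morphism $X_{\wp}(\overline{\rho})\ra\widehat{T}$ underlying (\ref{equ: lgln-paev}) factors through the projection of $Y$ onto its third factor followed by $\iota_L^{-1}\big(X_{\tri}^{\square}(\overline{r})\big)\ra\widehat{T}$, and $\iota_L$, being translation by the fixed character $\zeta$, induces the identity on the tangent space of $\widehat{T}$; so only the third factor of $Y$ is relevant for (\ref{equ: lgln-paev}), and one may work with $X_{\tri}^{\square}(\overline{r})$ and the point $x_L=(\rho_L,\delta)$ directly.

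Next I would descend to the trianguline variety. Let $X$ be the union of those irreducible components of $X_{\tri}^{\square}(\overline{r})$ through $x_L$ that arise as the $X_{\tri}^{\square}(\overline{r})$-factor of an irreducible component of $Y$ meeting $\cU$; since $E$ is large enough the irreducible components of the product $Y$ are products of irreducible components of the factors, so $X$ is a union of irreducible components of an open subset of $X_{\tri}^{\square}(\overline{r})$ containing $x_L$. By Theorem~\ref{thm: patev}(3) the very classical points of $X_{\wp}(\overline{\rho})$ accumulate at $x$; their images accumulate at $x_L$ and lie on $X$, so $X$ satisfies the accumulation property at $x_L$ in the sense of \cite[Def.~2.11]{BHS2}. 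Proposition~\ref{prop: lgln-tv} then yields that $X$ is smooth at $x_L$, of dimension $n^2+\tfrac{n(n+1)}{2}d_L$, and that the composite $T_{X,x_L}\ra\Hom(T(L),E)\xrightarrow{\kappa_L}\prod_{i\in\Delta}\Hom(L^{\times},E)$ factors through a surjection $T_{X,x_L}\twoheadrightarrow\cL(D)$.

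It then remains to reassemble and to conclude. Since its three factors are smooth at $x^{\wp}$, $0$ and $x_L$ respectively, the product $Z:=\fX_{\overline{\rho}^{\wp}}^{\square}\times\bU^g\times X$ is smooth at the corresponding point, of dimension the sum of the three; a short computation using $[F^+:\Q]=|S_p|\,d_L$ together with the dimensions recorded above shows this sum equals $q+n^2(|S_p|+1)+n d_L$, which by Theorem~\ref{thm: patev}(1) is exactly $\dim X_{\wp}(\overline{\rho})$. As $X_{\wp}(\overline{\rho})$ is, near $x$, a union of irreducible components of $Y$ of this common dimension, each such component being one of the smooth components of $Z$, the point $x$ lies on a single such component; combined with the reducedness of $X_{\wp}(\overline{\rho})$ (Theorem~\ref{thm: patev}) this gives that $X_{\wp}(\overline{\rho})$ is smooth at $x$ and that $T_{X_{\wp}(\overline{\rho}),x}\cong T_{\fX_{\overline{\rho}^{\wp}}^{\square},x^{\wp}}\oplus T_{\bU^g,0}\oplus T_{X,x_L}$. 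Under (\ref{equ: lgln-paev}) the first two summands are killed and the third restricts to the map of the previous paragraph, whence (\ref{equ: lgln-paev}) factors through a surjection onto $\cL(D)$, as desired. The main obstacle I anticipate is exactly this geometric bookkeeping — matching the components of $X_{\wp}(\overline{\rho})$ at $x$ with products of components of the factors of $Y$, and controlling which component $X$ of $X_{\tri}^{\square}(\overline{r})$ results — which is the analogue, in a considerably easier situation, of the analysis of \cite[\S~4]{BHS2}; the substantive input, Proposition~\ref{prop: lgln-tv}, carries the real content, and all the rest is a statement about smooth rigid spaces.
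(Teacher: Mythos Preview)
Your proposal is correct and follows essentially the same route as the paper's own proof: peel off the smooth auxiliary factors $\fX_{\overline{\rho}^{\wp}}^{\square}\times\bU^g$ (using Kisin's regularity of potentially crystalline deformation rings and the smoothness at $v_1$), reduce to a union $X$ of irreducible components of $X_{\tri}^{\square}(\overline{r})$ through $x_L$, verify the accumulation property, and invoke Proposition~\ref{prop: lgln-tv}. The only cosmetic differences are that the paper cites \cite[Thm.~3.3.8]{Kis08} and \cite[Lem.~2.5, Cor.~A.2]{CEGGPS} explicitly for the smoothness of the $\wp$-complement, obtains the accumulation property directly from (the proof of) \cite[Thm.~3.19]{BHS1} rather than via Theorem~\ref{thm: patev}(3), and concludes smoothness by showing the relevant component sets $\{X_i^{\wp}\}$ and $\{X_{i,\wp}\}$ are singletons rather than by your explicit dimension count; your dimension bookkeeping is correct and amounts to the same thing.
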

\begin{proof}
  Let $X$ be the union of  irreducible component of $X_{\wp}(\overline{\rho})$ containing $x$, which, by Theorem \ref{thm: cclg-pwR}, has the form $\cup_i \big(X_i^{\wp}\times \bU^g \times \iota_L^{-1}(X_{i,\wp})\big)$ where $X_{i,\wp}$ is an irreducible component of $X_{\tri}^{\square}(\overline{r})$ containing $x_{\wp}:=(\rho_L,\delta)$, and $X_i^{\wp}$ is an irreducible component of $\fX_{\overline{\rho}^{\wp}}$. By \cite[Thm. 3.3.8]{Kis08} and \cite[Lem. 2.5]{CEGGPS} (see also \cite[Cor. A.2]{CEGGPS}),   $\fX_{\overline{\rho}^{\wp}} \times \bU^g$ is  smooth at $x^{\wp}$, and hence $\{X_i^{\wp}\}_i$ is a singleton. By (the proof of) \cite[Thm. 3.19]{BHS1}, $\cup_i X_{i,\wp}$ satisfies the accumulation property at $x_{\wp}$, and hence by Proposition \ref{prop: lgln-tv}, $\cup_i X_{i,\wp}$ is smooth at $x_{\wp}$ and $\{X_{i,\wp}\}$ is a singleton. So $X_{\wp}(\overline{\rho})$ is smooth at the point $x$. By Proposition \ref{prop: lgln-tv}, (\ref{equ: tangL}) also follows (noting the first map of (\ref{equ: tangL}) factors through the first map in (\ref{equ: lgln-trvt}) with $X=X_{i,\wp}$, $x_L=x_{\wp}$).
\end{proof}
\begin{remark}\label{rem: eig}
The projection (\ref{equ: tangL}) should also hold in genuine eigenvariety case. For example, let $\cE$ be the eigenvariety in \cite{Che11} (for $L=\Q_p$), $x$ be a classical point such that $D_{\rig}(\rho_{x,\Q_p})$ is special non-critical with $N^{n-1}\neq 0$ on $D_{\st}(\rho_{x,\Q_p})$ (where $\rho_{x,\Q_p}$ denotes the $\Gal_{\Q_p}$ representation attached to $x$). By global triangulation theory together with Theorem \ref{thm: CGS}, similarly as in (\ref{equ: tangL}), the natural morphism $\cE \ra \widehat{T}$ induces
\begin{equation}\label{equ: kappaL2}T_{\cE,x}\lra \cL(D).\end{equation} On the other hand, using the same arguments as in the proof of \cite[Thm. 4.8, 4.10]{Che11}, one might show (under multiplicity one hypothesis) that $\cE$ is \'etale over the weight space (i.e. the rigid space parameterizing continuous characters of $(\Z_p^{\times})^n$) at the point $x$. In particular, the map $T_{\cE,x} \ra \Hom(\Z_p^{\times}, E)^n$ (induced by $\cE\ra \widehat{T}$) should be surjective. We have a commutative diagram
\begin{equation*}
  \begin{CD}
    T_{\cE,x} @> (\ref{equ: kappaL2}) >> \cL(D) \\
    @VVV @VVV \\
    \Hom(\Z_p^{\times}, E)^{n} @>>> \Hom(\Z_p^{\times}, E)^{n-1},
  \end{CD}
\end{equation*}
where the right vertical map is the restriction map (thus is bijective, since $\cL(D)_i\cap  \Hom_{\infty}(\Q_p^{\times}, E)=0$ for all $i\in \Delta$), and the map below sends $(\psi_1, \cdots, \psi_n)$ to $(\psi_i-\psi_{i+1})_{i\in \Delta}$ (thus surjective). We see the surjectivity of the left vertical map will imply the surjectivity of  (\ref{equ: kappaL2}).
\end{remark}
\subsection{Local-global compatibility}\label{sec:lg}\noindent Let $\rho_L: \Gal_L \ra \GL_n(E)$ be a continuous  representation and suppose $\rho_L$ appears in the patched eigenvariety $X_{\wp}(\overline{\rho})$, i.e. there exist $x^{\wp}\in (\Spf R_{\infty}^{\wp})^{\rig}$, $\chi\in \widehat{T}$ such that
$(x^{\wp}, \rho_L, \chi)\in X_{\wp}(\overline{r})$.
Let $\fm_y$ be the maximal ideal of $R_{\infty}[1/p]$ corresponding to the point $y:=(x^{\wp}, \rho_L)$ of $(\Spf R_{\infty})^{\rig}$ (e.g. if $\rho_L$ is attached to an automorphic representation of $\widetilde{G}$ with non-zero $U^{\wp}$-fixed vectors). We see that (where the object on the right hand side denotes the subspace of vectors annihilated by $\fm_y$)
\begin{equation*}
  \widehat{\Pi}(\rho_L):=\Pi_{\infty}[\fm_y]
\end{equation*}
is an admissible unitary Banach representation of $\GL_n(L)$, which one might expect to be the right representation (up to multiplicities) corresponding to $\rho_L$ in the $p$-adic local Langlands program. Suppose
\begin{enumerate}
\item $D:=D_{\rig}(\rho_L)$ is trianguline of parameter $(\delta_1, \cdots, \delta_n)$ such that $(D, \delta_i)$ is special non-critical;
\item $\rho_L$ is semi-stable, thus there exists $\alpha\in E^{\times}$ such that $\delta_i \unr(\alpha^{-1}q_L^{1-i})$ is an algebraic character;
\item the monodromy operator $N$ on $D_{\st}(\rho_L)$ satisfies $N^{n-1}\neq 0$.
\end{enumerate}
By \cite[Prop. 4.3]{Colm2}, the assumption 3 implies that the trianguline parameter of $D$ is unique. By Lemma \ref{monofull}, we also have that  $D_i^{i+1}$ is non-crystalline for all $i=1, \cdots, n-1$.
\\

\noindent As in  \S~\ref{sec: lgln-ll}, we put
$\ul{\lambda}:=(\lambda_{1,\sigma}, \cdots, \lambda_{n,\sigma})_{\sigma\in \Sigma_L}\in X_{\Delta}^+$ with $\lambda_{i,\sigma}:=\wt(\delta_i)_{\sigma}+i-1$. Put $\delta:=\delta_1\otimes \cdots \otimes \delta_n$, we have (see (\ref{equ: zeta}) for $\zeta$)
\begin{equation}\label{equ: lgln-chi}
 \delta\zeta =\delta_B \chi_{\ul{\lambda}} \unr(\alpha)\circ \dett
\end{equation}
where $\delta_B=\unr(q_L^{1-n}) \otimes \cdots \otimes \unr(q_L^{2i-n-1}) \otimes \cdots \otimes \unr(q_L^{n-1})$ denotes the modulus character of $B(L)$. By assumption (in the beginning of this section), there exist $\chi: T(L) \ra E^{\times}$ and $x^{\wp}\in (\Spf R_{\infty}^{\wp})^{\rig}$ such that $(x^{\wp}, \rho_L,\chi)\in X_{\wp}(\overline{\rho})$, or equivalently, such that the eigenspace $J_B(\Pi_{\infty}^{R_{\infty}-\an}[\fm_y])[T(L)=\chi]$ is non-zero, with $y=(x^{\wp}, \rho_L)$.
Since $\rho_L$ is non-critical with the unique triangulation, the following lemma follows directly from the global triangulation theory \cite{KPX}\cite{Liu}.
\begin{lemma}\label{lem: lgln-nonc}
For a continuous character $\chi: T(L)\ra E^{\times}$, the eigenspace
\begin{equation*}
  J_B(\Pi_{\infty}^{R_{\infty}-\an}[\fm_y])[T(L)=\chi]\neq 0
\end{equation*}
if and only if $\chi=\delta\zeta$. Equivalently, $(y,\chi)\in X_{\wp}(\overline{\rho})$ if and only if $\chi=\delta\zeta$.
\end{lemma}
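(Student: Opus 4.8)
The plan is to prove Lemma \ref{lem: lgln-nonc} via the global triangulation theory applied to the coherent sheaf $\cM_\infty$ over the patched eigenvariety, exactly in the spirit of \cite[\S 3]{BHS1}. First I would unwind what it means for $(y,\chi)$ to lie in $X_\wp(\overline\rho)$: by the very definition of $X_\wp(\overline\rho)$ as the support of $\cM_\infty$, this is equivalent to $J_B(\Pi_\infty^{R_\infty-\an})[\fm_y, T(L)=\chi]\neq 0$, and since $\fm_y$ has residue field $E$ and $\Pi_\infty^{R_\infty-\an}[\fm_y]$ is a closed subrepresentation of $\Pi_\infty^{R_\infty-\an}$ on which $R_\infty$ acts through $R_\infty[1/p]/\fm_y=E$, one checks $J_B(\Pi_\infty^{R_\infty-\an}[\fm_y])[T(L)=\chi]\neq 0$ is the same condition. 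So the two formulations in the statement are literally the same, and it suffices to prove the ``only if'' direction (the ``if'' direction is the hypothesis that $\rho_L$ appears in $X_\wp(\overline\rho)$, which forces $\chi=\delta\zeta$ once we know uniqueness).

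The key input is Theorem \ref{thm: cclg-pwR}: any point $(y,\chi)=(x^\wp,\rho_L,\chi)\in X_\wp(\overline\rho)$ lies in $\fX_{\overline\rho^\wp}^\square\times \bU^g\times \iota_L^{-1}(X_{\tri}^\square(\overline r))$, i.e. $(\rho_L,\chi\zeta^{-1})\in X_{\tri}^\square(\overline r)$. By the definition of the trianguline variety (and global triangulation theory, \cite{KPX}\cite{Liu}, via the density of the nice points and a specialization argument as in \cite[\S 2]{BHS1}), this means that $D=D_{\rig}(\rho_L)$ admits a triangulation whose parameter is a twist of $\chi\zeta^{-1}$; more precisely, writing $\chi\zeta^{-1}=\delta_1'\otimes\cdots\otimes\delta_n'$, the $(\varphi,\Gamma)$-module $D$ is trianguline with parameter $(\delta_1',\dots,\delta_n')$ up to the ambiguity encoded in $X_{\tri}^\square$ (the triangulation of the generic specialization propagates, but at special points one only gets a saturated triangulation with a possibly modified parameter). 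Now I invoke assumption (3): since $N^{n-1}\neq 0$ on $D_{\st}(\rho_L)$, by \cite[Prop. 4.3]{Colm2} the trianguline parameter of $D$ is \emph{unique} (and in particular the triangulation is unique). Combined with the fact, recalled just before the lemma, that $(D,\delta_i)$ is the given non-critical special parameter, this pins down $(\delta_1',\dots,\delta_n')=(\delta_1,\dots,\delta_n)$ up to the ordering — and uniqueness of the parameter also rules out reorderings. Hence $\chi\zeta^{-1}=\delta$, i.e. $\chi=\delta\zeta$.

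The main obstacle is the bookkeeping around ``parameter of a point of $X_{\tri}^\square$'' versus ``parameter of an actual triangulation of $D$'': a priori a point of the trianguline variety only records that $D$ lies in the Zariski closure of the locus of genuinely triangulable modules with the given parameter, so one must argue that at the point $x_L=(\rho_L,\delta)$ the module $D$ really is trianguline with that parameter. This is where non-criticality (in the strong form that $\rho_L$ admits a \emph{unique} triangulation, guaranteed here by $N^{n-1}\neq 0$) does the work: by \cite[Prop. 4.3.5]{Liu} (global triangulation) together with \cite[Cor. 2.38]{Na2}, the saturated triangulation at a non-critical point has the expected parameter with no twist, so the parameter of the point equals the parameter of the genuine triangulation. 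One small additional point to check is the shift by $\zeta$ and by $\delta_B$: the normalization in (\ref{equ: lgln-chi}) shows $\delta\zeta=\delta_B\chi_{\ul\lambda}\unr(\alpha)\circ\dett$, so that once $\chi\zeta^{-1}=\delta$ is established the character $\chi$ is exactly the one appearing in the classical point attached to $\rho_L$; this is purely a matter of matching conventions and carries no real content. Everything else is a direct citation of the cited results, so the lemma follows.
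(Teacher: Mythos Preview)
Your approach is correct and matches the paper's: the paper's entire justification is the one sentence ``Since $\rho_L$ is non-critical with the unique triangulation, the following lemma follows directly from the global triangulation theory \cite{KPX}\cite{Liu}'', and you have unpacked precisely this, invoking Theorem~\ref{thm: cclg-pwR} to pass to $X_{\tri}^\square(\overline r)$, then global triangulation plus uniqueness (via $N^{n-1}\neq 0$ and \cite[Prop.~4.3]{Colm2}) to pin down the parameter. One small caution: be careful with the direction of the $\zeta$-twist when passing between $X_\wp(\overline\rho)$ and $X_{\tri}^\square(\overline r)$ via $\iota_L$ --- the paper's own conventions here are slightly delicate (compare \S4.1.3 with \S4.2), so it is worth double-checking that your $\chi\zeta^{-1}$ versus $\chi\zeta$ is consistent with the fiber-product definition of $\iota_L^{-1}(X_{\tri}^\square(\overline r))$; but this is purely bookkeeping and does not affect the argument.
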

\noindent In the following, we let $\chi:=\delta\zeta$, $\chi_{\infty}:=\delta_B \unr(\alpha)\circ \dett$ (thus $\chi=\chi_{\infty}\chi_{\ul{\lambda}}$) and $x:=(y,\chi)$.
\begin{proposition}\label{lg-ind}
The natural map (induced by applying the Jacquet-Emerton functor, recall $\chi\delta_B^{-1}=\chi_{\ul{\lambda}}\unr(\alpha)\circ \dett$ and $\chi \hookrightarrow J_B(i_{\overline{B}}^G(\alpha, \ul{\lambda}))\hookrightarrow J_B(\bI_{\overline{B}}^G(\alpha, \ul{\lambda}))$)
\begin{equation}\label{equ: lgln-bij0}
     \Hom_{\GL_n(L)}\big(\bI_{\overline{B}}^G(\alpha, \ul{\lambda}), \Pi_{\infty}^{R_{\infty}-\an}[\fm_y]\big)\lra \Hom_{T(L)}\big(\chi, J_B\big(\Pi_{\infty}^{R_{\infty}-\an}[\fm_y]\big)\big)
\end{equation}
is bijective.
\end{proposition}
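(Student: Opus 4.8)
The plan is to deduce Proposition \ref{lg-ind} from Emerton's adjunction formula \cite[Thm.~0.13]{Em2}. First I would record that, by the relation $\chi\delta_B^{-1}=\chi_{\ul{\lambda}}\unr(\alpha)\circ\dett$ noted above, the source $\bI_{\overline B}^G(\alpha,\ul{\lambda})=(\Ind_{\overline B(L)}^G\chi\delta_B^{-1})^{\Q_p-\an}$ is precisely the locally $\Q_p$-analytic principal series attached to $\chi\delta_B^{-1}$, so that the map (\ref{equ: lgln-bij0}) — which by construction is ``apply $J_B$, then restrict along the canonical inclusion $\chi\hooklongrightarrow J_B(\bI_{\overline B}^G(\alpha,\ul{\lambda}))$'' — is exactly the canonical adjunction map of \emph{loc.\ cit.} Before invoking that theorem I would check that $V:=\Pi_{\infty}^{R_{\infty}-\an}[\fm_y]$ satisfies its hypotheses, i.e.\ is a very strongly admissible locally $\Q_p$-analytic representation of $\GL_n(L)$: on the subspace $\Pi_{\infty}[\fm_y]$ the ring $R_{\infty}$ acts through its residue field $E$, hence the locally $R_{\infty}$-analytic vectors of $\Pi_{\infty}$ killed by $\fm_y$ are exactly the locally $\Q_p$-analytic vectors of the Banach representation $\Pi_{\infty}[\fm_y]$; and $\Pi_{\infty}[\fm_y]$ is a closed $\GL_n(L)$-stable subspace of the admissible unitary Banach representation $\Pi_{\infty}$, hence is itself an admissible unitary Banach representation, so $V$ is very strongly admissible in the sense of \cite[Def.~0.12]{Em2}.

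Granting this, \cite[Thm.~0.13]{Em2} identifies (\ref{equ: lgln-bij0}) with the composition
\begin{equation*}
\Hom_{\GL_n(L)}\big(\bI_{\overline B}^G(\alpha,\ul{\lambda}),V\big)\xlongrightarrow{\ \sim\ }\Hom_{T(L)}\big(\chi,J_B(V)\big)_{\mathrm{bal}}\hooklongrightarrow\Hom_{T(L)}\big(\chi,J_B(V)\big),
\end{equation*}
the first map being the (bijective) Emerton adjunction and the second the inclusion of the subspace of \emph{balanced} homomorphisms in the sense of \cite[Def.~0.8]{Em2}. In particular (\ref{equ: lgln-bij0}) is automatically injective, and what remains is to show that every $T(L)$-equivariant homomorphism $\chi\to J_B(V)$ is balanced, which yields surjectivity.

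For this last point I would use the non-criticality of $\rho_L$. The algebraic part of $\chi\delta_B^{-1}$ is the dominant weight $\chi_{\ul{\lambda}}$ (with $\ul{\lambda}\in X_{\Delta}^+$), and $(D,\delta_i)$ is a non-critical special semi-stable triangulation; by the same mechanism that produces the classicality criterion on trianguline eigenvarieties — global triangulation theory \cite{KPX}, \cite{Liu} together with the analysis of $J_B$ as in \cite[\S3]{BHS1} — the $B$-positive submonoid of $T(L)$ acts on the $\chi$-eigenspace of $J_B(V)$ on the ``dominant side'' at every simple root, which is precisely the balancedness condition of \cite[Def.~0.8]{Em2}. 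Lemma \ref{lem: lgln-nonc}, which asserts that $\chi=\delta\zeta$ is the only character occurring in $J_B(\Pi_{\infty}^{R_{\infty}-\an}[\fm_y])$, should further streamline this verification. I expect checking balancedness to be the main obstacle: this is where the exact form of the non-criticality hypothesis — the strict dominance of $\wt(\ul{\delta})$ and the refinement $(\delta_i)$ being in general position with respect to the Hodge filtration — must be used to rule out non-balanced homomorphisms. Once balancedness is established, the equality $\Hom_{T(L)}(\chi,J_B(V))_{\mathrm{bal}}=\Hom_{T(L)}(\chi,J_B(V))$ gives the bijectivity claimed in the proposition.
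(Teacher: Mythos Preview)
There is a genuine gap. Emerton's adjunction \cite[Thm.~0.13]{Em2} is stated for the representation $I_{\overline B}^G(\chi\delta_B^{-1})$, which is by definition the closed $G$-subrepresentation of $(\Ind_{\overline B(L)}^G\chi\delta_B^{-1})^{\Q_p-\an}$ generated by the image of $\chi\delta_B^{-1}$ under the canonical lifting. When the inducing character is locally algebraic, as it is here, this subrepresentation is \emph{not} the full analytic principal series $\bI_{\overline B}^G(\alpha,\ul{\lambda})$: by \cite[Prop.~2.8.10]{Em2} (recorded in the paper as Lemma~\ref{lem: lgln-ibg}(1)) one has $I_{\overline B}^G(\chi\delta_B^{-1})\cong i_{\overline B}^G(\alpha,\ul{\lambda})$, the locally \emph{algebraic} induction. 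So your identification of (\ref{equ: lgln-bij0}) with Emerton's adjunction map is incorrect. Even after you verify balancedness, Emerton's theorem only yields
\[
\Hom_{G}\big(i_{\overline B}^G(\alpha,\ul{\lambda}),V\big)\ \xlongrightarrow{\sim}\ \Hom_{T(L)}\big(\chi,J_B(V)\big),
\]
and you still have to show that the restriction map $\Hom_{G}(\bI_{\overline B}^G(\alpha,\ul{\lambda}),V)\to\Hom_{G}(i_{\overline B}^G(\alpha,\ul{\lambda}),V)$ is bijective. Neither direction of this is automatic.

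This last step is exactly where the paper invests its effort. Following \cite{BCho} and \cite[\S~5.6]{BE}, the paper passes to $(\ug_{\Sigma_L},B(L))$-modules and reduces the question to showing that the natural maps between $\Hom$-spaces with source $M(\ul{\lambda})^\vee\otimes\cC_c^\infty(N(L),\chi_\infty\delta_B^{-1})$, $L(\ul{\lambda})\otimes\cC_c^\infty(\cdots)$, and $M(\ul{\lambda})\otimes\cC_c^\infty(\cdots)$ are all bijective. The key ingredient is a d\'evissage along the Jordan--H\"older series of the Verma module $M(\ul{\lambda})^\vee/L(\ul{\lambda})$: for each simple constituent $L(s\cdot\ul{\lambda})$ with $s\neq 1$, one constructs (using the Jacquet--Emerton functor, the $R_\infty$-action, and crucially Lemma~\ref{lem: lgln-nonc}) an explicit section, so that maps from the smaller object extend uniquely to the larger one. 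Your proposal does not address this extension problem at all, and the vague appeal to ``the same mechanism that produces the classicality criterion'' for balancedness, while in the right spirit, does not touch the real obstruction.
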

\begin{proof}
(a) By Lemma \ref{lem: lgln-nonc}, \cite[Thm. 4.3]{Br13II} and \cite[Cor. 3.4]{Br13I}, one can deduce any non-zero map $f$ in the left hand side set factors through $\St_n^{\an}(\alpha, \ul{\lambda})$, and induces a non-zero map $\St_n^{\infty}(\alpha, \lambda)\hookrightarrow \Pi_{\infty}^{R_{\infty}-\an}[\fm_y]$. Hence (\ref{equ: lgln-bij0}) is injective (noting $J_B(\St_n^{\infty}(\alpha, \lambda))\cong \chi$).
\\

\noindent
(b) By \cite[Cor. 3.9]{BHS1} and the fact that $M_{\infty}$ is finite projective over $S_{\infty}[[\GL_n(\co_L)]]$, we see $\Pi_{\infty}^{R_{\infty}-\an}$ is a direct summand of $\cC^{\Q_p-\an}\big(\Z_p^s \times \GL_n(\co_L),E\big)$ as $\GL_n(\co_L)$-representations where $s=n^2(|S_p|+1)+q$. Let $\fm \subset S_{\infty}$ be the preimage of $\fm_y$ via the morphism $S_{\infty} \ra R_{\infty}$. Then $V:=\Pi_{\infty}^{R_{\infty}-\an}[\fm]$ is an admissible Banach representation of $\GL_n(L)$ equipped with a continuous action of $R_{\infty}$, satisfying that $V|_{H}\cong \cC^{\Q_p-\an}(H,E)^{\oplus r}$ for certain $r\in \Z_{\geq 1}$, where $H$ is a pro-$p$ uniform compact open subgroup of $\GL_n(\co_L)$. By the same argument as in the proof of \cite[Prop. 6.3.3]{Br16}, we have
\begin{equation}\label{equ: lgln-vanb}
\hH^i\big(\overline{\ub}_{\Sigma_L}, V \otimes_E \eta\big)=0,
\end{equation}
for all $i\in \Z_{\geq 1}$, and any character $\eta: \overline{\ub}_{\Sigma_L} \twoheadrightarrow \ft_{\Sigma_L} \ra E$.
\\

\noindent
(c) We modify the proof of \cite[Thm. 4.8]{BCho} (see also \cite[\S~5.6]{BE}) to obtain the surjectivity in (\ref{equ: lgln-bij0}). Indeed,  our $V$ satisfies the first hypothesis in \cite[Thm. 4.8]{BCho}. Moreover, since $\rho_L$ is non-critical we have that, $(\chi_{\ul{\lambda}},\chi_{\infty})$ is \emph{non-critical with respect to} $V[\fm_y]$ by Lemma \ref{lem: lgln-nonc} in the terminology of \emph{loc. cit.}. We have a similar commutative diagram as in the proof of \cite[Thm. 4.8]{BCho} with $V$ of \emph{loc. cit.} being our $V[\fm_y]$. And one reduces to show the following natural maps are bijective
\begin{multline}\label{equ: lgln-ad0}
  \Hom_{(\ug_{\Sigma_L},B(L))}\Big(M(\ul{\lambda})^{\vee} \otimes_E \cC_c^{\infty}\big(N(L), \chi_{\infty}\delta_B^{-1}\big), V[\fm_y]\Big)\\ \xlongrightarrow{\eta_1} \Hom_{(\ug_{\Sigma_L},B(L))}\Big(L(\ul{\lambda}) \otimes_E \cC_c^{\infty}\big(N(L), \chi_{\infty}\delta_B^{-1}\big), V[\fm_y]\Big)\\ \xlongrightarrow{\eta_2}
  \Hom_{(\ug_{\Sigma_L},B(L))}\Big(M(\ul{\lambda}) \otimes_E \cC_c^{\infty}\big(N(L), \chi_{\infty}\delta_B^{-1}\big), V[\fm_y]\Big)
\end{multline}
where  $M(\ul{\lambda})^{\vee}$ is the dual of $M(\ul{\lambda})$ in the BGG category $\co^{\ub_{\Sigma_L}}$ (thus  $L(\ul{\lambda})$ the unique semi-simple sub of $M(\ul{\lambda})^{\vee}$), the sequence is induced by the natural composition $M(\ul{\lambda})\twoheadrightarrow L(\ul{\lambda}) \hookrightarrow M(\ul{\lambda})^{\vee}$ and where $\cC_c^{\infty}\big(N(L), \chi_{\infty}\delta_B^{-1}\big)$ denotes the space of locally constant $(\chi_{\infty}\delta_B^{-1})$-valued functions on $N(L)$ with compact support. We discuss a little on the topology. For $M\in \co^{\ub_{\Sigma_L}}$, we equip $M$ with the finest locally convex topology, which realizes $M$ as an $E$-vector space of compact type since $M$ is of countable dimension. And $\cC_c^{\infty}\big(N(L), \chi_{\infty}\delta_B^{-1}\big)$, with the topology defined in \cite[\S~3.5]{Em11}, is also an $E$-vector space of compact type. One can check in our case that this topology coincides with the finest locally convex topology on $\cC_c^{\infty}\big(N(L), \chi_{\infty}\delta_B^{-1}\big)$. Finally, we equip $M\otimes_E \cC_c^{\infty}\big(N(L), \chi_{\infty}\delta_B^{-1}\big)$ with the inductive (or equivalently, projective) tensor product topology, which is of compact type and coincides in fact with the finest locally convex topology. Note that all the maps in the sets in (\ref{equ: lgln-ad0}) are continuous. Since $(U,\chi_{\infty})$ is non-critical with respect to $V[\fm_y]$, one can show as in \cite[Prop. 4.9]{BCho} that $\eta_2$ is bijective and $\eta_1$ is injective.
\\

\noindent
(d) Recall that all the irreducible constituents of $M(\ul{\lambda})$ are of form $L(s\cdot \ul{\lambda})$ with $s$ an element in the Weyl group $(S_{n})^{\oplus d_L}$ of $\Res^L_{\Q_p} \GL_n$, and $L(\ul{\lambda})$ has multiplicity one in $M(\ul{\lambda})$. Thus to prove that $\eta_1$ is surjective (and hence to prove the proposition), it is sufficient to prove the following claim.

\noindent \textbf{Claim: } Let $M, M'\in \co^{\ub_{\Sigma_L}}$ such that $M\subset M'$ and $M'/M\cong L(s\cdot \ul{\lambda})$ with $s\neq 1$, then the restriction map
\begin{equation*}
  \Hom_{(\ug_{\Sigma_L},B(L))}\big(M' \otimes_E \cC_c^{\infty}\big(N(L), \chi_{\infty}\delta_B^{-1}\big), V[\fm_y]\big) \lra \Hom_{(\ug_{\Sigma_L},B(L))}\big(M \otimes_E \cC_c^{\infty}\big(N(L), \chi_{\infty}\delta_B^{-1}\big), V[\fm_y]\big)
\end{equation*}
is surjective.

\noindent We  prove the claim. We equip $M' \otimes_E \cC_c^{\infty}\big(N(L), \chi_{\infty}\delta_B^{-1}\big)$ with an $R_{\infty}[1/p]$-action via $R_{\infty}[1/p]\twoheadrightarrow R_{\infty}[1/p]/\fm_y\cong E$.  Given a $(\ug_{\Sigma_L}, B(L))$-equivariant morphism \big(which is also equivariant under the $R_{\infty}[1/p]$-action, where the left object is equipped with an $R_{\infty}[1/p]$-action via $R_{\infty}[1/p]\twoheadrightarrow R_{\infty}[1/p]/\fm_y\cong E$\big)
\begin{equation*}f: M \otimes_E \cC_c^{\infty}\big(N(L), \chi_{\infty}\delta_B^{-1}\big)\longrightarrow  V[\fm_y]\hooklongrightarrow V,\end{equation*} let  $\widetilde{V}$ denote the push-forward of $M' \otimes_E \cC_c^{\infty}\big(N(L), \chi_{\infty}\delta_B^{-1}\big)$ via $f$,
which sits in an exact sequence of $(\ug_{\Sigma_L}, B(L))$-modules
\begin{equation}\label{equ: lgln-adex}
  0 \ra V \ra \widetilde{V} \xrightarrow{\kappa} L(s\cdot \ul{\lambda})\otimes_E \cC_c^{\infty}\big(N(L), \chi_{\infty}\delta_B^{-1}\big) \ra 0,
\end{equation}
and is equipped with a natural locally convex topology and a natural continuous $R_{\infty}[1/p]$-action (satisfying that (\ref{equ: lgln-adex}) is equivariant under the $R_{\infty}[1/p]$-action).
Since $L(s\cdot \ul{\lambda})\otimes_E \cC_c^{\infty}\big(N(L), \chi_{\infty}\delta_B^{-1}\big)$ is equipped with the finest locally convex topology, the extension (\ref{equ: lgln-adex}) is split as topological $E$-vector spaces. In particular, $\widetilde{V}$ is also an $E$-vector space of compact type.

\noindent Using the same argument as in the proof of \cite[Prop. 4.1]{BH2}, we see that (\ref{equ: lgln-adex}) induces an $R_{\infty}[1/p]$-equivariant exact sequence of finite dimensional $E$-vector spaces \footnote{Actually, the only argument of \emph{loc. cit.} that does not directly apply to our case is for the finiteness of the dimension of $J_B(\widetilde{V})[\ft=s\cdot \lambda]\{T(\Q_p)=\chi_{s\cdot \lambda} \chi_{\infty}\}$. However, this also follows from the left  exactness of the Jacquet-Emerton functor, and the fact that both the left and right $E$-vector spaces have finite dimensions. }
\begin{multline*}
0 \lra J_B(V)[\ft=s\cdot \lambda]\{T(\Q_p)=\chi_{s\cdot \lambda} \chi_{\infty}\} \lra J_B(\widetilde{V}) [\ft=s\cdot \lambda]\{T(\Q_p)=\chi_{s\cdot \lambda} \chi_{\infty}\} \\ \lra J_B(\chi_{s\cdot \lambda}\otimes_E \cC_c^{\infty}(N(L), \chi_{\infty}\delta_{B}^{-1})[\ft=s\cdot \lambda]\{T(\Q_p)=\chi_{s\cdot \lambda} \chi_{\infty}\} \lra 0.
\end{multline*}
The sequence is still exact after taking the $\fm_y$-generalized eigenspaces for $R_{\infty}[1/p]$. By Lemma \ref{lem: lgln-nonc}, we easily deduce
\begin{equation*}
J_B(V)[\ft=s\cdot \lambda]\{T(\Q_p)=\chi_{s\cdot \lambda} \chi_{\infty}, \fm_y\}=0.
\end{equation*}
Hence we obtain an isomorphism (see \cite[Lem. 3.5.2]{Em11} for the last isomorphism)
\begin{multline}\label{equ: splitJac}
J_B(\widetilde{V}) [\ft=s\cdot \lambda]\{T(\Q_p)=\chi_{s\cdot \lambda} \chi_{\infty}, \fm_y\}\\ \xlongrightarrow{\sim} J_B(\chi_{s\cdot \lambda}\otimes_E \cC_c^{\infty}(N(L), \chi_{\infty}\delta_{B}^{-1})[\ft=s\cdot \lambda]\{T(\Q_p)=\chi_{s\cdot \lambda} \chi_{\infty}, \fm_y\} \cong \chi_{s\cdot \ul{\lambda}} \chi_{\infty},
\end{multline}
and we see the left hand side is actually annihilated by $\fm_y$.
By \cite[Thm. 3.5.6]{Em11}, the inverse of (\ref{equ: splitJac}) induces a $B(L)$-invariant map
\begin{equation*}
  \chi_{s\cdot \ul{\lambda}} \otimes_E \cC^{\infty}_c(N(L), \chi_{\infty}\delta_B^{-1}) \lra \widetilde{V}[\fm_y],
\end{equation*}
and hence a $(\ug_{\Sigma_L}, B(L))$-invariant map $ M(s\cdot \ul{\lambda})\otimes_E \cC^{\infty}_c(N(L), \chi_{\infty}\delta_B^{-1}) \ra \widetilde{V}[\fm_y]$.
However, this map has to factor through
\begin{equation}\label{equ: lgln-adsec}L(s\cdot \ul{\lambda})\otimes_E  \cC^{\infty}_c(N(L), \chi_{\infty}\delta_B^{-1}) \lra \widetilde{V}[\fm_y].\end{equation}
Indeed, otherwise, there exists $\lambda'\neq s\cdot \lambda$ and a non-zero $(\ug_{\Sigma_L}, B(L))$-equivariant map
\begin{equation}\label{equ: lambda'}
  L(\lambda') \otimes_E \cC^{\infty}_c(N(L), \chi_{\infty}\delta_B^{-1}) \lra \widetilde{V}[\fm_y],
\end{equation}
which has image in $V[\fm_y]$ by the fact (which follows easily from \cite[Lem. 3.5.2, Thm. 3.5.6]{Em11})
\begin{equation*}
  \Hom_{(\ug_{\Sigma_L}, B(L))}\big(L(\lambda')\otimes_E \cC^{\infty}_c(N(L),\chi_{\infty}\delta_B^{-1}), L(s\cdot \lambda) \otimes_E \cC^{\infty}_c(N(L), \chi_{\infty}\delta_B^{-1})\big)=0.
\end{equation*}
However, applying the Jacquet-Emerton functor to a non-zero map as in (\ref{equ: lambda'}) with image in $V[\fm_y]$ and using \cite[Lem. 3.5.2]{Em11}, we easily obtain a contradiction against Lemma \ref{lem: lgln-nonc}.

\noindent One can check (e.g. by \cite[Thm. 3.5.6]{Em11}) that (\ref{equ: lgln-adsec}) gives a section of $\kappa$ in (\ref{equ: lgln-adex}) and induces a projection $\widetilde{V}[\fm_y] \twoheadrightarrow V[\fm_y]$. The composition
\begin{equation*}
  M'\otimes_E \cC^{\infty}_c(N(L), \chi_{\infty}\delta_B^{-1}) \lra \widetilde{V}[\fm_y] \twoheadlongrightarrow V[\fm_y]
\end{equation*}
gives thus the desired lifting of $f$. This concludes the proof of the claim (hence of the proposition).
\end{proof}
\begin{remark}\label{rem: Ressubq}
By (a) of the proof, we see the set on the left hand side of (\ref{equ: lgln-bij0}) stays unchanged if  $\bI_{\overline{B}}^G(\alpha, \ul{\lambda})$ is replaced by $\St_n^{\an}(\alpha, \ul{\lambda})$ or any subrepresentation of $\St_n^{\an}(\alpha, \ul{\lambda})$ containing $\St_n^{\infty}(\alpha, \ul{\lambda})$.
\end{remark}
\noindent Let $\fI_y\subseteq \fm_y$ be a closed ideal of $R_{\infty}[1/p]$ such that $\dim_E (R_{\infty}[1/p]/\fI_y) < +\infty$ and that $\fm_y$ is the unique closed maximal ideal containing $\fI_y$ (e.g. $\fI_y=\fm_y^k$).
\begin{corollary}\label{cor: lgln-bij3}
  Let $\St_n^{\infty}(\alpha, \ul{\lambda})\subset W\subset \St_n^{\an}(\alpha, \ul{\lambda})$, and suppose we have a morphism $f: W\ra \Pi_{\infty}^{R_{\infty}-\an}[\fI_y]$. If the restriction of $f$ on  $\St_n^{\infty}(\alpha, \ul{\lambda})$ has image in $\Pi_{\infty}^{R_{\infty}-\an}[\fm_y]$, then $f$ has image in $\Pi_{\infty}^{R_{\infty}-\an}[\fm_y]$.
\end{corollary}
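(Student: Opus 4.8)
The approach is to reduce everything to Proposition~\ref{lg-ind}, extended from $\bI_{\overline{B}}^G(\alpha,\ul{\lambda})$ to $W$ by Remark~\ref{rem: Ressubq}, by means of an induction on the degree of nilpotence of $\fm_y$ modulo $\fI_y$. Since $R_{\infty}[1/p]/\fI_y$ is a finite-dimensional $E$-algebra having $\fm_y/\fI_y$ as its unique maximal ideal, it is Artinian local, so there is a smallest integer $N\geq 1$ with $\fm_y^N\subseteq \fI_y$, and I would argue by induction on $N$. The case $N=1$ is vacuous, since then $\fI_y=\fm_y$ and $\Pi_{\infty}^{R_{\infty}-\an}[\fI_y]=\Pi_{\infty}^{R_{\infty}-\an}[\fm_y]$. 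The base ingredient for the induction is the following rigidity observation, which I would extract from step (a) of the proof of Proposition~\ref{lg-ind} together with Remark~\ref{rem: Ressubq}: \emph{any $\GL_n(L)$-equivariant map $g\colon W\ra \Pi_{\infty}^{R_{\infty}-\an}[\fm_y]$ with $g|_{\St_n^{\infty}(\alpha,\ul{\lambda})}=0$ is zero}. Indeed, a nonzero such $g$ would, via Remark~\ref{rem: Ressubq}, extend to $\St_n^{\an}(\alpha,\ul{\lambda})$ and pull back to a nonzero element of $\Hom_{\GL_n(L)}(\bI_{\overline{B}}^G(\alpha,\ul{\lambda}),\Pi_{\infty}^{R_{\infty}-\an}[\fm_y])$, which by step (a) of the proof of Proposition~\ref{lg-ind} restricts to a nonzero (in fact injective) map on $\St_n^{\infty}(\alpha,\ul{\lambda})\subset W$, a contradiction.

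For the inductive step, I would assume $N\geq 2$ and the statement known with $N$ replaced by $N-1$, and let $f\colon W\ra \Pi_{\infty}^{R_{\infty}-\an}[\fI_y]$ be as in the statement. Fix $t\in \fm_y$ and consider $tf\colon W\ra \Pi_{\infty}^{R_{\infty}-\an}$, $w\mapsto t\cdot f(w)$; this is again $\GL_n(L)$-equivariant because the $R_{\infty}$-action on $\Pi_{\infty}$ commutes with that of $\GL_n(L)$. For $s\in\fm_y^{N-1}$ one has $s\cdot (tf)(w)=(st)\cdot f(w)=0$ since $st\in\fm_y^N\subseteq\fI_y$, so $tf$ takes values in $\Pi_{\infty}^{R_{\infty}-\an}[\fm_y^{N-1}]$, and $R_{\infty}[1/p]/\fm_y^{N-1}$ is again finite-dimensional local with maximal ideal $\fm_y/\fm_y^{N-1}$ and nilpotence degree $\leq N-1$. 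Moreover $(tf)|_{\St_n^{\infty}(\alpha,\ul{\lambda})}=t\cdot\big(f|_{\St_n^{\infty}(\alpha,\ul{\lambda})}\big)$ has image in $\fm_y\cdot\Pi_{\infty}^{R_{\infty}-\an}[\fm_y]=0$ by hypothesis, so in particular it lands in $\Pi_{\infty}^{R_{\infty}-\an}[\fm_y]$. Applying the inductive hypothesis to $tf\colon W\ra \Pi_{\infty}^{R_{\infty}-\an}[\fm_y^{N-1}]$ then gives that $tf$ takes values in $\Pi_{\infty}^{R_{\infty}-\an}[\fm_y]$, and the rigidity observation above forces $tf=0$. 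Since $t\in\fm_y$ is arbitrary, this yields $f(W)\subseteq\Pi_{\infty}^{R_{\infty}-\an}[\fm_y]$, completing the induction.

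I do not expect a genuine obstacle: the only delicate point is the bookkeeping with the $\fm_y$-adic filtration --- checking at each stage that $tf$ really lands in the smaller finite-colength eigenspace $\Pi_{\infty}^{R_{\infty}-\an}[\fm_y^{N-1}]$ and that its restriction to $\St_n^{\infty}(\alpha,\ul{\lambda})$ is annihilated --- after which the whole argument rests on the already-established ``rigidity'' of $\Pi_{\infty}^{R_{\infty}-\an}[\fm_y]$ as an abstract locally $\Q_p$-analytic representation, i.e. on Proposition~\ref{lg-ind} and Remark~\ref{rem: Ressubq}. (One should also recall that $W$ and $\Pi_{\infty}^{R_{\infty}-\an}[\fm_y]$ are admissible, so that morphisms between them are automatically continuous and the Hom-sets in question are the ones controlled by Proposition~\ref{lg-ind}.)
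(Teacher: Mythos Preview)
Your argument is correct, but it follows a somewhat different route from the paper's. The paper proceeds more directly: it first uses Proposition~\ref{lg-ind} together with Remark~\ref{rem: Ressubq} to produce a map $f'\colon W\to \Pi_{\infty}^{R_{\infty}-\an}[\fm_y]$ with $f'|_{\St_n^{\infty}(\alpha,\ul{\lambda})}=f|_{\St_n^{\infty}(\alpha,\ul{\lambda})}$, and then shows $f'-f=0$ by observing that Lemma~\ref{lem: lgln-nonc} holds verbatim with the $\fm_y$-eigenspace replaced by the $\fm_y$-generalized eigenspace, so that (together with \cite[Cor.~3.4]{Br13I}) the ``rigidity'' you formulated for $[\fm_y]$ actually holds already for $[\fI_y]$. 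In other words, the paper establishes rigidity once, at the level of the generalized eigenspace, and applies it in one stroke; you instead establish rigidity only at the level of $[\fm_y]$ and then propagate it to $[\fI_y]$ by an $\fm_y$-adic induction. Your route is slightly longer but has the virtue of being entirely self-contained: it uses nothing beyond Proposition~\ref{lg-ind} and Remark~\ref{rem: Ressubq} as stated, whereas the paper's shortcut requires the reader to check that the global triangulation argument behind Lemma~\ref{lem: lgln-nonc} goes through for the generalized eigenspace.
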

\begin{proof} Applying the Jacquet-Emerton functor to $f|_{\St_n^{\infty}(\alpha, \ul{\lambda})}$, we obtain a morphism $\chi \ra J_B(\Pi_{\infty}^{R_{\infty}-\an}[\fm_y])$ which, by (\ref{equ: lgln-bij0}) and Remark \ref{rem: Ressubq}, induces a morphism $f': W \ra \Pi_{\infty}^{R_{\infty}-\an}[\fm_y]$. Moreover $f'|_{\St_n^{\infty}(\alpha, \ul{\lambda})}=f|_{\St_n^{\infty}(\alpha, \ul{\lambda})}$. Consider $f'-f: W\ra \Pi_{\infty}^{R_{\infty}-\an}[\fI_y]$. By Lemma \ref{lem: lgln-nonc} (which also holds if the $\fm_y$-eigenspace is replaced by the $\fm_y$-generalized eigenspace) and \cite[Cor. 3.4]{Br13I}, we deduce $f'-f=0$. The corollary follows.
\end{proof}
\noindent The following theorem is the main result of the paper.
\begin{theorem}\label{thm: lgln-main}
(1) The following restriction map is bijective
\begin{equation}\label{equ: lgln-main}
    \Hom_{\GL_n(L)}\big(\Sigma(\alpha,\ul{\lambda},\cL(\rho_L)), \Pi_{\infty}^{R_{\infty}-\an}[\fm_y]\big) \lra \Hom_{\GL_n(L)}\big(\St_n^{\infty}(\alpha, \ul{\lambda}), \Pi_{\infty}^{R_{\infty}-\an}[\fm_y]\big).
\end{equation}

\noindent
(2) Let $0\neq \psi\in \Hom(L^{\times},E)$ and $i\in \Delta$, an injection
\begin{equation*}
f: \St_n^{\infty}(\alpha, \ul{\lambda})\hooklongrightarrow \Pi_{\infty}^{R_{\infty}-\an}[\fm_y]   \end{equation*}
can extend to an injection $\Sigma_{i}(\alpha, \ul{\lambda}, \psi)\hookrightarrow \Pi_{\infty}^{R_{\infty}-\an}[\fm_y]$ if and only if $\psi\in \cL(\rho_L)_i$.
\end{theorem}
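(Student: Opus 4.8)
The plan is to deduce part~(1) from the ``if'' direction of part~(2), to extract the ``only if'' direction of~(2) from the geometry of $X_{\wp}(\overline{\rho})$ at $x$ already recorded in Corollary~\ref{cor: lgln-patch}, and to prove the ``if'' direction by transporting a tangent vector of $X_{\wp}(\overline{\rho})$ through Emerton's adjunction formula onto $\tilde{\Sigma}_i(\alpha,\ul{\lambda},\psi)$.

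\emph{Reduction of~(1).} Since $d_L\geq 1$, every $\cL(\rho_L)_i$ is nonzero, so by~(\ref{soc2}) we have $\soc_G\Sigma(\alpha,\ul{\lambda},\cL(\rho_L))\cong\St_n^{\infty}(\alpha,\ul{\lambda})\cong\soc_G\Sigma_i(\alpha,\ul{\lambda},\psi)$ whenever $0\neq\psi$; hence (\ref{equ: lgln-main}) is injective and every nonzero map out of $\Sigma(\alpha,\ul{\lambda},\cL(\rho_L))$ is an injection. For surjectivity, start from an injection $f\colon\St_n^{\infty}(\alpha,\ul{\lambda})\hookrightarrow\Pi_{\infty}^{R_{\infty}-\an}[\fm_y]$. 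By Remark~\ref{rem: Ressubq}, $f$ extends uniquely across each subrepresentation of $\St_n^{\an}(\alpha,\ul{\lambda})$ containing $\St_n^{\infty}(\alpha,\ul{\lambda})$, in particular across $\Sigma_i(\alpha,\ul{\lambda})$. Now $\Sigma(\alpha,\ul{\lambda},\cL(\rho_L))$ is an amalgamated sum over $\St_n^{\infty}(\alpha,\ul{\lambda})$ of the $\Sigma_i(\alpha,\ul{\lambda},\cL(\rho_L)_i)$, each of which is an amalgamated sum over $\Sigma_i(\alpha,\ul{\lambda})$ of the representations $\Sigma_i(\alpha,\ul{\lambda},\psi_{i,j})$ for a basis $\{\psi_{i,j}\}$ of $\cL(\rho_L)_i$; by the universal property of pushouts, extending $f$ across the whole thing is equivalent to extending $f$ across each $\Sigma_i(\alpha,\ul{\lambda},\psi)$ with $\psi\in\cL(\rho_L)_i$, the requisite compatibilities being forced by the uniqueness statement above. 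So it suffices to prove the ``if'' part of~(2).

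\emph{The ``if'' part of~(2).} Fix $0\neq\psi\in\cL(D)_i$ and $f$ as above; by Proposition~\ref{lg-ind} it corresponds to a nonzero $\chi\hookrightarrow J_B(\Pi_{\infty}^{R_{\infty}-\an}[\fm_y])$. The first step is geometric. By Corollary~\ref{cor: lgln-patch}, $X_{\wp}(\overline{\rho})$ is smooth at $x=(y,\chi)$ and $\kappa_i\circ d\omega_x\colon\cT_{X_{\wp}(\overline{\rho}),x}\to\Hom(L^{\times},E)$ surjects onto $\cL(D)_i$; combined with the Cohen--Macaulayness of $\cM_{\infty}$ (Theorem~\ref{thm: patev}(2)) this makes $\cM_{\infty}$ locally free near $x$, so that near $x$ the module $J_B(\Pi_{\infty}^{R_{\infty}-\an})^{\vee}$ looks like $\co_{X_{\wp}(\overline{\rho})}^{\oplus r}$ with $T(L)$ acting through the map to $\widehat{T}$. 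Choosing $v\in\cT_{X_{\wp}(\overline{\rho}),x}$ with $(\kappa_i\circ d\omega_x)(v)\in E^{\times}\psi$ and setting $\Psi:=d\omega_x(v)$, local freeness propagates $v$ along the given line to an injection of $T(L)$-representations
\begin{equation*}
  \chi\otimes_E 1_{\Psi}\hooklongrightarrow J_B(\Pi_{\infty}^{R_{\infty}-\an})\{\fm_y\},
\end{equation*}
where $1_{\Psi}$ is the non-split self-extension of the trivial character attached to $\Psi$ and $\kappa_i(\Psi)\in E^{\times}\psi$, so that $\Psi$ is a lift of $\iota_i(\psi)$. Non-criticality of $\rho_L$, via Lemma~\ref{lem: lgln-nonc} and the slope position of $\chi$, makes this embedding \emph{balanced} in the sense of \cite[Def.~0.8]{Em2}, and Emerton's adjunction formula \cite[Thm.~0.13]{Em2} yields a nonzero $G$-map
\begin{equation*}
  I_{\overline{B}}^{G}\big(\chi\delta_B^{-1}\otimes_E 1_{\Psi}\big)\lra\Pi_{\infty}^{R_{\infty}-\an}\{\fm_y\};
\end{equation*}
by~(\ref{equ: lgln-chi}) its source is $(\Ind_{\overline{B}(L)}^{G}\chi_{\ul{\lambda}}\,\unr(\alpha)\!\circ\!\dett\otimes_E 1_{\Psi})^{\Q_p-\an}$, an extension of $\bI_{\overline{B}}^{G}(\alpha,\ul{\lambda})$ by itself. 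Pulling back to the twist by $\unr(\alpha)\!\circ\!\dett$ of $\sE_{\{i\}}^{\emptyset}(\ul{\lambda},\Psi)$, pushing out along $\bI_{\overline{B}}^{G}(\alpha,\ul{\lambda})\twoheadrightarrow\St_n^{\an}(\alpha,\ul{\lambda})$, and killing $w_{\overline{P}_i}^{\infty}(\alpha,\ul{\lambda})$ (on which the extension is split by Theorem~\ref{cor: lgln-key}) realizes $\tilde{\Sigma}_i(\alpha,\ul{\lambda},\psi)$; arguing as in Proposition~\ref{lg-ind}(a) (using \cite[Thm.~4.3]{Br13II}, \cite[Cor.~3.4]{Br13I} and Lemma~\ref{lem: lgln-nonc}) the $G$-map factors through $\tilde{\Sigma}_i(\alpha,\ul{\lambda},\psi)$ with restriction $f$ on $\St_n^{\infty}(\alpha,\ul{\lambda})$. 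Restricting to the subrepresentation $\Sigma_i(\alpha,\ul{\lambda},\psi)$ and invoking the form of Corollary~\ref{cor: lgln-bij3} adapted to this situation (its restriction to $\Sigma_i(\alpha,\ul{\lambda})\subset\St_n^{\an}(\alpha,\ul{\lambda})$ already landing in $\Pi_{\infty}^{R_{\infty}-\an}[\fm_y]$, and the generalized form of Lemma~\ref{lem: lgln-nonc} together with \cite[Cor.~3.4]{Br13I} ruling out any contribution of $v_{\overline{P}_i}^{\infty}(\alpha,\ul{\lambda})$ to the generalized eigenspace) upgrades the target to $\Pi_{\infty}^{R_{\infty}-\an}[\fm_y]$, and the socle computation makes the extension injective.

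\emph{The ``only if'' part and the main obstacle.} If $f$ extends to an injection $\Sigma_i(\alpha,\ul{\lambda},\psi)\hookrightarrow\Pi_{\infty}^{R_{\infty}-\an}[\fm_y]$ with $\psi\neq 0$, then applying $J_B$ and the Jacquet-module description of $\tilde{\Sigma}_i(\alpha,\ul{\lambda},\psi)$ produces a non-split $\chi\otimes_E 1_{\Psi}\hookrightarrow J_B(\Pi_{\infty}^{R_{\infty}-\an})\{\fm_y\}$ with $\kappa_i(\Psi)\in E^{\times}\psi$; since $J_B(\Pi_{\infty}^{R_{\infty}-\an})^{\vee}$ is a coherent sheaf on $X_{\wp}(\overline{\rho})$, such a generalized eigenvector forces a tangent vector $v$ of $X_{\wp}(\overline{\rho})$ at $x$ with $d\omega_x(v)\equiv\Psi$ modulo $\Hom(Z_i(L),E)$, and Corollary~\ref{cor: lgln-patch} then gives $\psi\in E^{\times}(\kappa_i\circ d\omega_x)(v)\subseteq\cL(D)_i=\cL(\rho_L)_i$. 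I expect the genuine difficulties to be concentrated in the ``if'' part: first, establishing that $\cM_{\infty}$ is locally free at $x$ so that \emph{every} line in the $\chi$-eigenspace — not merely one produced abstractly — can be pushed along the tangent direction provided by Corollary~\ref{cor: lgln-patch}; second, checking that the resulting embedding $\chi\otimes_E 1_{\Psi}\hookrightarrow J_B(\Pi_{\infty}^{R_{\infty}-\an})\{\fm_y\}$ is balanced so that the adjunction formula applies; and third, controlling the constituent structure of $\tilde{\Sigma}_i(\alpha,\ul{\lambda},\psi)$ and of $\St_n^{\an}(\alpha,\ul{\lambda})$ precisely enough to see both that the adjunction map descends to $\tilde{\Sigma}_i(\alpha,\ul{\lambda},\psi)$ and that its image lies in the honest eigenspace $\Pi_{\infty}^{R_{\infty}-\an}[\fm_y]$ rather than merely in $\Pi_{\infty}^{R_{\infty}-\an}\{\fm_y\}$.
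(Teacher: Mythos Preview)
Your global strategy matches the paper's: inject via the tangent space of $X_{\wp}(\overline{\rho})$ (Corollary~\ref{cor: lgln-patch}), use Cohen--Macaulayness plus smoothness to get local freeness of $\cM_{\infty}$, push the resulting first-order thickening $\widetilde{\chi}=\chi(1+\Psi\epsilon)$ through Emerton's adjunction, and then descend to $\Sigma_i(\alpha,\ul{\lambda},\psi)$.  But there is a genuine gap at the crucial step.

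You assert that the source of the adjunction map is $(\Ind_{\overline{B}(L)}^{G}\chi_{\ul{\lambda}}\,\unr(\alpha)\!\circ\!\dett\otimes_E 1_{\Psi})^{\Q_p-\an}$, an extension of $\bI_{\overline{B}}^{G}(\alpha,\ul{\lambda})$ by itself.  This is false: in \cite[Thm.~0.13]{Em2} the functor $I_{\overline{B}}^{G}(-)$ is the closed $G$-subrepresentation of the full analytic induction generated by the image of the canonical lifting, and for a locally algebraic character one gets only the \emph{locally algebraic} induction.  Concretely (this is Lemma~\ref{lem: lgln-ibg}) $I_{\overline{B}}^{G}(\chi\delta_B^{-1})\cong i_{\overline{B}}^{G}(\alpha,\ul{\lambda})$, not $\bI_{\overline{B}}^{G}(\alpha,\ul{\lambda})$; and $I_{\overline{B}}^{G}(\widetilde{\chi}\delta_B^{-1})$ sits between two copies of $i_{\overline{B}}^{G}(\alpha,\ul{\lambda})$, not between two copies of the analytic induction.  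Consequently your ``pull back to $\sE_{\{i\}}^{\emptyset}(\ul{\lambda},\Psi)$ and push out along $\bI_{\overline{B}}^{G}(\alpha,\ul{\lambda})\twoheadrightarrow\St_n^{\an}(\alpha,\ul{\lambda})$'' step does not go through: the analytic piece $\St_n^{\an}(\alpha,\ul{\lambda})$ (or even $\Sigma_i(\alpha,\ul{\lambda})$) is simply not present in $I_{\overline{B}}^{G}(\widetilde{\chi}\delta_B^{-1})$.  The paper repairs this by intersecting $I_{\overline{B}}^{G}(\widetilde{\chi}\delta_B^{-1})$ with $\bI_{\overline{B}}^{G}(\alpha,\ul{\lambda})$ and with $\sum_{\overline{P}\supsetneq\overline{B}}\bI_{\overline{P}}^{G}(\alpha,\ul{\lambda})$ inside the ambient full induction, obtaining only a \emph{fragment} $\Sigma_i(\alpha,\ul{\lambda})'\subset\Sigma_i(\alpha,\ul{\lambda})$, and then uses the bijection of Proposition~\ref{lg-ind} (via Remark~\ref{rem: Ressubq} and Corollary~\ref{cor: lgln-bij3}) to \emph{uniquely extend} the resulting map from $\Sigma_i(\alpha,\ul{\lambda})'$ to all of $\Sigma_i(\alpha,\ul{\lambda})$, before amalgamating with the quotient $v_{\overline{P}_i}^{\infty}(\alpha,\ul{\lambda})$.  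This ``glue back the analytic piece'' step is the technical heart you are missing.

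Your ``only if'' argument is also different from the paper's and is incomplete as written.  You apply $J_B$ to the hypothetical injection $\Sigma_i(\alpha,\ul{\lambda},\psi)\hookrightarrow\Pi_{\infty}^{R_{\infty}-\an}[\fm_y]$ and claim to obtain $\chi\otimes 1_{\Psi}$ in the Jacquet module, but $J_B(\Sigma_i(\alpha,\ul{\lambda},\psi))$ is not computed anywhere, and since the target is $[\fm_y]$ (not $\{\fm_y\}$) the resulting $E[\epsilon]/\epsilon^2$-point has trivial $\fX_{\infty}$-component, so it is far from clear it yields a tangent vector to $X_{\wp}(\overline{\rho})$.  The paper's route is much cleaner: once (1) is known, if some $\psi\notin\cL(\rho_L)_i$ also extended, then $\cL(\rho_L)_i+E\psi=\Hom(L^{\times},E)$ would contain a nonzero smooth character $\psi_{\infty}$, so $\Sigma_i(\alpha,\ul{\lambda},\psi_{\infty})$ (which has a locally algebraic sub containing $v_{\overline{P}_i}^{\infty}(\alpha,\ul{\lambda})$) would inject, and applying $J_B$ to that locally algebraic piece produces an extra eigencharacter contradicting Lemma~\ref{lem: lgln-nonc}.
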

\noindent The rest of the section is to prove Theorem \ref{thm: lgln-main}, and we use the strategy of \cite{Ding3}. First note that the injectivity of (\ref{equ: lgln-main}) follows from Lemma \ref{lem: lgln-nonc} and \cite[Cor. 3.4]{Br13I} (as in (a) of the proof of Proposition \ref{lg-ind}).
\\

\noindent Theorem \ref{thm: lgln-main} (2) follows from (1). Indeed, the ``if" part is a direct consequence of (1). Conversely, suppose there exists $\Sigma_{i}(\alpha, \ul{\lambda}, \psi)\hookrightarrow \Pi_{\infty}^{R_{\infty}-\an}[\fm_y]$ with $\psi\notin \cL(\rho_L)_i$. By (1), we know $\Sigma_i(\alpha, \ul{\lambda}, \cL(\rho_L)_i)\hookrightarrow \Pi_{\infty}^{R_{\infty}-\an}[\fm_y]$, hence we obtain an injection
\begin{equation*}
\Sigma_i(\alpha, \ul{\lambda}, \cL(\rho_L)_i)\oplus_{\Sigma_i(\alpha, \ul{\lambda})} \Sigma_{i}(\alpha, \ul{\lambda}, \psi) \hooklongrightarrow \Pi_{\infty}^{R_{\infty}-\an}[\fm_y].
\end{equation*}
However, $\psi \notin \cL(\rho_L)_i$ implies $\cL(\rho_L)_i+ E\psi =\Hom(L^{\times}, E)$. In particular, let $0\neq \psi_{\infty}\in \Hom_{\infty}(L^{\times},E)$, we see $\psi_{\infty}\in \cL(\rho_L)_i+E\psi$ and hence
\begin{equation*}
  \Sigma_i(\alpha, \ul{\lambda}, \psi_{\infty}) \hooklongrightarrow\Sigma_i(\alpha, \ul{\lambda}, \cL(\rho_L)_i)\oplus_{\Sigma_i(\alpha, \ul{\lambda})} \Sigma_{i}(\alpha, \ul{\lambda}, \psi) \hooklongrightarrow \Pi_{\infty}^{R_{\infty}-\an}[\fm_y].
\end{equation*}
However, by Remark \ref{rem: lgln-ext3} (ii), $\Sigma_i(\alpha, \ul{\lambda}, \psi_{\infty})$ contains $V\otimes_E L(\ul{\lambda})$ where $V$ is a smooth extension of $v_{\overline{P}_i}^{\infty}$ by $\St_n^{\infty}$. By applying the (left exact) Jacquet-Emerton functor to $V\otimes_E L(\ul{\lambda})\hookrightarrow \Pi_{\infty}^{R_{\infty}-\an}[\fm_y]$ and using \cite[Prop. 4.3.6]{Em11}, we get a contradiction with Lemma \ref{lem: lgln-nonc}. (2) follows.
 \\

\noindent It rests to  show (\ref{equ: lgln-main}) is surjective. By definition (cf. \S ~\ref{sec: lgln-ll}), it suffices to show that for any $i\in \Delta$, $\psi\in \cL(\rho_L)_i$, the following restriction map is surjective
\begin{equation}\label{equ: lgln-rest}
  \Hom_{\GL_n(L)}\big(\Sigma_i(\alpha, \ul{\lambda}, \psi), \Pi_{\infty}^{R_{\infty}-\an}[\fm_y]\big) \lra \Hom_{\GL_n(L)}\big(\St_n^{\infty}(\alpha, \ul{\lambda}), \Pi_{\infty}^{R_{\infty}-\an}[\fm_y]\big).
\end{equation}
Let $t: \Spec E[\epsilon]/\epsilon^2 \ra X_{\wp}(\overline{\rho})$ be an element in $T_{X_{\wp}(\overline{\rho}),x}$, such that the $i$-th factor of the image of $t$ under (\ref{equ: lgln-paev}) equals $\psi$, and the $j$-th factors for all $j\neq i$ are zero (where the existence of $t$ follows from Corollary \ref{cor: lgln-patch}). Let $\Psi\in \Hom(T(L),E)$ be the image of $t$ via the first map in (\ref{equ: lgln-paev}) \big(thus $\Psi|_{Z_{j}(L)}=0$ for $j\neq i$\big).
Since $X_{\wp}(\overline{\rho})$ is smooth at $x$, by Theorem \ref{thm: patev} (2) and \cite[Cor. 17.3.5 (i)]{EGAiv1}, we deduce $\cM_{\infty}$ is locally free in a certain neighborhood of $x$.
Let $r=\dim_E x^* \cM_{\infty}$. We have thus the following facts:
\begin{enumerate}
\item $(t^* \cM_{\infty})^{\vee}\cong \big(\chi(1+\Psi \epsilon)\big)^{\oplus r}=:\widetilde{\chi}^{\oplus r}$ \big(resp. $(x^* \cM_{\infty})^{\vee}\cong \chi^{\oplus r}$\big)as $T(L)$-representations (recall $\chi=\delta\zeta$, see (\ref{equ: lgln-chi})),
\item  $(t^* \cM_{\infty})^{\vee}\subseteq J_B\big(\Pi_{\infty}^{R_{\infty}-\an}[\fI_t]\big)$ where $\fI_t$ denotes the kernel of the morphism $R_{\infty}[1/p] \ra E[\epsilon]/\epsilon^2$ induced by $t$,
\item there are natural $T(L)$-equivariant injections
\begin{equation}\label{equ: lgln-injf}(x^* \cM_{\infty})^{\vee}\hooklongrightarrow (t^* \cM_{\infty})^{\vee} \hooklongrightarrow J_B(\Pi_{\infty}^{R_{\infty}-\an}[\fI_t]).\end{equation}
\end{enumerate}

\begin{lemma}
  The morphisms of $T(L)$-representations: $(x^*\cM_{\infty})^{\vee} \hookrightarrow J_B(\Pi_{\infty}^{R_{\infty}-\an}[\fm_y])$ and $(t^* \cM_{\infty})^{\vee} \hookrightarrow
  J_B(\Pi_{\infty}^{R_{\infty}-\an}[\fI_t])$ are balanced (see \cite[Def. 0.8]{Em2} \cite[Def. 5.17]{Em07}).
\end{lemma}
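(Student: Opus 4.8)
Recall (\cite[Def.~0.8]{Em2}, \cite[Def.~5.17]{Em07}) that balancedness of a $T(L)$-equivariant map $U\to J_B(W)$ is the condition characterizing precisely those morphisms which lie in the image of Emerton's adjunction $\Hom_G\big(I_{\overline B}^G(U\delta_B^{-1}),W\big)\to\Hom_{T(L)}\big(U,J_B(W)\big)$ obtained by applying $J_B$ and restricting along the canonical inclusion $U\hookrightarrow J_B\big(I_{\overline B}^G(U\delta_B^{-1})\big)$; concretely it amounts to a slope condition on $U$ relative to $W$ that, for the locally algebraic characters occurring here, is governed by the size of $\val_L(\alpha)$ against the dominant weight $\ul{\lambda}$. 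The plan is to dispose of the $\fm_y$-case by invoking Proposition \ref{lg-ind}, and then to bootstrap to the first-order thickening $\fI_t$ by reduction modulo $\epsilon$.

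For the first map, $(x^*\cM_\infty)^\vee\cong\chi^{\oplus r}$ with $\chi=\delta\zeta=\delta_B\chi_{\ul{\lambda}}\unr(\alpha)\circ\dett$ (cf.~(\ref{equ: lgln-chi})), and balancedness being stable under finite direct sums, it suffices to treat one copy of the canonical inclusion $\chi\hookrightarrow J_B(\Pi_\infty^{R_\infty-\an}[\fm_y])$. But $\bI_{\overline B}^G(\alpha,\ul{\lambda})=I_{\overline B}^G(\chi\delta_B^{-1})$, and the bijection of Proposition \ref{lg-ind} coincides with Emerton's adjunction map for $V=\Pi_\infty^{R_\infty-\an}[\fm_y]$; since that map has image contained in the balanced homomorphisms and Proposition \ref{lg-ind} says it is surjective onto all of $\Hom_{T(L)}(\chi,J_B(V))$, every such morphism -- in particular our inclusion -- is balanced. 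Here the non-criticality and semi-stability of $\rho_L$ enter only through the proof of Proposition \ref{lg-ind} itself, via Lemma \ref{lem: lgln-nonc} and the vanishing (\ref{equ: lgln-vanb}).

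For the second map, write $(t^*\cM_\infty)^\vee\cong\widetilde\chi^{\oplus r}$ with $\widetilde\chi=\chi(1+\Psi\epsilon)$; as an $E$-representation of $T(L)$ this is an iterated self-extension of $\chi$, and since $\epsilon^2=0$ the weight of $\widetilde\chi$ and the $p$-adic valuations of its values at uniformizers of $\co_L$ coincide with those of $\chi$. I would then argue that the balancedness criterion, depending only on these invariants, is insensitive to the nilpotent thickening: filtering $\widetilde\chi^{\oplus r}$ by $T(L)$-stable subspaces with graded pieces $\cong\chi$, and using the compatible inclusions (\ref{equ: lgln-injf}) together with the fact that $\fI_t\subseteq\fm_y$ has $\fm_y$ as its unique maximal ideal containing it -- so that Lemma \ref{lem: lgln-nonc} applies verbatim to the $\fm_y$-generalized eigenspace and Corollary \ref{cor: lgln-bij3} controls the relevant Hom-spaces -- one propagates the lifting property from the graded pieces to $\widetilde\chi^{\oplus r}$ itself. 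Alternatively one can unwind the definition of balanced into a closed condition cut out inside $J_B(\Pi_\infty^{R_\infty-\an}[\fI_t])$ and observe that it holds at $x$ and is stable under the infinitesimal deformation parametrized by $t$.

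The main obstacle will be making this last step rigorous: pinning down the exact shape of Emerton's balancedness condition in \cite[Def.~0.8]{Em2} for a first-order-thickened character, and verifying that it is genuinely preserved under the $E[\epsilon]/\epsilon^2$-deformation -- equivalently, that the graded-pieces argument really does glue. Everything else, including the matching of the Galois-theoretic non-criticality with the slope bound, is already packaged inside Proposition \ref{lg-ind}.
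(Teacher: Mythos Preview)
Your argument for the $\fm_y$-case is essentially sound, though note the misidentification: by Lemma~\ref{lem: lgln-ibg}(1) one has $I_{\overline B}^G(\chi\delta_B^{-1})\cong i_{\overline B}^G(\alpha,\ul\lambda)$, not $\bI_{\overline B}^G(\alpha,\ul\lambda)$. This does not break the logic, since the map of Proposition~\ref{lg-ind} factors through the restriction $\Hom_G(\bI_{\overline B}^G,-)\to\Hom_G(i_{\overline B}^G,-)$ followed by Emerton's adjunction, and surjectivity of the composite forces surjectivity of the latter. So for this half your route is a legitimate alternative to the paper's, trading the direct kernel computation for an appeal to the already-established Proposition~\ref{lg-ind}.

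The genuine gap is the $\fI_t$-case, and you are right to flag it. Your proposed d\'evissage on the filtration $0\subset\chi\subset\widetilde\chi$ does not go through: balancedness is formulated as a kernel-containment condition inside $\text{U}(\ug_{\Sigma_L})\otimes_{\text{U}(\ub_{\Sigma_L})}\cC_c^{\sm}(N(L),U)$, and there is no mechanism by which this condition, verified for the sub and the quotient separately, automatically propagates to the extension; nor is there an analogue of Proposition~\ref{lg-ind} available with $\fm_y$ replaced by $\fI_t$. The paper bypasses this entirely by unwinding \cite[Def.~5.17]{Em07} directly and uniformly for both ideals: one must show that the kernel $W$ of the natural map $\text{U}(\ug_{\Sigma_L})\otimes_{\text{U}(\ub_{\Sigma_L})}U\to\cC^{\Q_p-\pol}(N(L),U)$ (tensored with $\cC_c^\infty(N(L),E)$) is killed by the map to $\Pi_\infty^{R_\infty-\an}[\fI_y]$. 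Using $\cC^{\Q_p-\pol}(N(L),\chi)\cong M(\ul\lambda)^\vee\otimes_E\chi_\infty$ and the structure of Verma modules, one sees that $W$ admits a filtration whose graded pieces are of the form $L(s\cdot\ul\lambda)\otimes_E\cC_c^\infty(N(L),\chi_\infty\delta_B^{-1})$ with $s\neq 1$; then Lemma~\ref{lem: lgln-nonc} (applied to the generalized eigenspace, hence valid for $\fI_t$ as well as $\fm_y$) together with the argument of \cite[Prop.~4.9]{BCho} gives $\Hom_{(\ug_{\Sigma_L},B(L))}(L(s\cdot\ul\lambda)\otimes\cC_c^\infty,\Pi_\infty^{R_\infty-\an}[\fI_y])=0$ for each such $s$, whence $W$ maps to zero. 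This is the missing ingredient your sketch lacks.
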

\begin{proof}
By \cite[Def. 5.17]{Em07}, it is sufficient to prove the kernel of the $(\text{U}(\ug_{\Sigma_L}),B(L))$-equivariant map (induced by (\ref{equ: lgln-injf}))
\begin{equation}\label{equ: bal1a}
\text{U}(\ug_{\Sigma_L}) \otimes_{\text{U}(\ub_{\Sigma_L})}  \cC^{\sm}_c\big(N(L),(x^* \cM_{\infty})^{\vee}\big) \lra \Pi_{\infty}^{R_{\infty}-\an}[\fm_y]
\end{equation}
\begin{equation}\label{equ: bal2a}
  \text{\Big(resp. }\text{U}(\ug_{\Sigma_L}) \otimes_{\text{U}(\ub_{\Sigma_L})} \cC^{\sm}_c\big(N(L),(t^* \cM_{\infty})^{\vee}\big) \lra \Pi_{\infty}^{R_{\infty}-\an}[\fI_t]\text{\Big)},
\end{equation}
contains the kernel of the $(\text{U}(\ug_{\Sigma_L}),B(L))$-equivariant map
\begin{equation}\label{equ: bal1}
\text{U}(\ug_{\Sigma_L}) \otimes_{\text{U}(\ub_{\Sigma_L})}\cC^{\sm}_c\big(N(L),(x^*   \cM_{\infty})^{\vee}\big) \lra  \cC^{\lp}_c\big(N(L),(x^* \cM_{\infty})^{\vee}\big)
\end{equation}
\begin{equation}\label{equ: bal2}
\text{\Big(resp. } \text{U}(\ug_{\Sigma_L}) \otimes_{\text{U}(\ub_{\Sigma_L})} \cC^{\sm}_c\big(N(L),(t^* \cM_{\infty})^{\vee}\big) \lra  \cC^{\lp}_c\big(N(L),(t^* \cM_{\infty})^{\vee}\big)\text{\Big)},
\end{equation}
where we refer to \cite[Def. 2.5.21]{Em2} for $\cC^{\lp}_c(N(L),-)$. Moreover, as in the discussion above \cite[Lem. 2.8.8]{Em2}, the map (\ref{equ: bal1}) (resp. (\ref{equ: bal2})) can be induced from
\begin{equation*}
\text{U}(\ug_{\Sigma_L}) \otimes_{\text{U}(\ub_{\Sigma_L})}(x^*   \cM_{\infty})^{\vee} \lra  \cC^{\Q_p-\pol}\big(N(L),(x^* \cM_{\infty})^{\vee}\big)
\end{equation*}
\begin{equation*}
\text{\Big(resp. } \text{U}(\ug_{\Sigma_L}) \otimes_{\text{U}(\ub_{\Sigma_L})} (t^* \cM_{\infty})^{\vee} \lra  \cC^{\Q_p-\pol}\big(N(L),(t^* \cM_{\infty})^{\vee}\text{\Big)},
\end{equation*}
by tensoring with $\cC^{\infty}_c(N(L),E)$, where $\cC^{\Q_p-\pol}(N(L),U):=\cC^{\Q_p-\pol}(N(L),E)\otimes_E U$ with $\cC^{\Q_p-\pol}(N(L),E)$ the ring of the algebraic $E$-valued functions on the affine algebraic group $\Res^L_{\Q_p} N$, and $U$ a locally analytic representation of $T(L)$ (cf. \cite[\S~2.5]{Em2}).
The natural exact sequence $0 \ra \chi \ra \widetilde{\chi}\ra \chi\ra 0$ induces a $(\ug_{\Sigma_L}, B(L))$-equivariant commutative diagram
\begin{equation*}
  \begin{CD}
  0@>>>  \text{U}(\ug_{\Sigma_L}) \otimes_{\text{U}(\ub_{\Sigma_L})} \chi @>>>\text{U}(\ug_{\Sigma_L}) \otimes_{\text{U}(\ub_{\Sigma_L})} \widetilde{\chi} @>>>  \text{U}(\ug_{\Sigma_L}) \otimes_{\text{U}(\ub_{\Sigma_L})} \chi @>>> 0\\
  @. @V j_1 VV @V j_2 VV @VVV @. \\
  0 @>>> \cC^{\Q_p-\pol}(N(L),\chi)@>>>\cC^{\Q_p-\pol}\big(N(L),\widetilde{\chi}\big) @>>> \cC^{\Q_p-\pol}(N(L),\chi) @>>> 0
  \end{CD}.
\end{equation*}
By \cite[Lem. 2.5.8]{Em2} and the proof of \cite[Thm. 4.3]{Br13II}, we have $\cC^{\Q_p-\pol}(N(L),\chi)\cong M(\ul{\lambda})^{\vee}\otimes_E \chi_{\infty}$. By the structure of (the dual of) Verma module, we see that any irreducible constituent of $\Ker j_1$ and $\Ker j_2$ is isomorphic to $L(s\cdot \ul{\lambda})\otimes_E \chi_{\infty}$ for $s\neq 1$. Thus the kernel $W$ of (\ref{equ: bal1}) (resp.  of (\ref{equ: bal2})) admits a $(\ug_{\Sigma_L},B(L))$-equivariant filtration $0=\cF^0W \subsetneq \cF^1 W \subsetneq \cdots \subsetneq \cF^m W=W$ (for certain $m$) such that $\cF^i W/\cF^{i+1} W \cong \cC_c^{\infty}(N(L),E) \otimes_E L(s\cdot \ul{\lambda})\otimes_E \chi_{\infty}$ for certain $s\neq 1$.  Using the same argument as in the proof of \cite[Prop. 4.9]{BCho}, we have by Lemma \ref{lem: lgln-nonc}:
\begin{equation*}
  \Hom_{(\ug_{\Sigma_L},B(L))}\big(L(s'\cdot \ul{\lambda})\otimes_E \cC^{\infty}_c\big(N(L),\chi_{\infty}\delta_B^{-1}\big), \Pi_{\infty}^{R_{\infty}-\an}[\fI_y]\big)=0
\end{equation*}
for any $s'\neq 1$,  where $\fI_y=\fm_y$ or $\fI_y=\fI_t$. Together with the above filtration on $W$, we deduce (inductively) that  (\ref{equ: bal1a}) (resp. (\ref{equ: bal2a})) restricts to $0$ on $W$. The lemma follows.
\end{proof}
\noindent By \cite[Thm. 0.13]{Em2}, the injections in (\ref{equ: lgln-injf}) induce thus
\begin{equation}\label{equ: lgln-keymap}
 I_{\overline{B}}^G \big(\chi\delta_B^{-1}\big)^{\oplus r} \hooklongrightarrow I_{\overline{B}}^G \big(\widetilde{\chi}\delta_B^{-1}\big)^{\oplus r} \longrightarrow \Pi_{\infty}[\fI_t].
\end{equation}
where as in \emph{loc. cit.}, for a locally $\Q_p$-analytic representation $V$ of $T(L)$, $I_{\overline{B}}^G(V)$ denotes the closed $G$-subrepresentation of $(\Ind_{\overline{B}(L)}^G V)^{\Q_p-\an}$ generated by $V$ via the composition
\begin{equation*}
  V \lra J_B\big((\Ind_{\overline{B}(L)}^G V)^{\Q_p-\an}\big) \lra (\Ind_{\overline{B}(L)}^G V)^{\Q_p-\an},
\end{equation*}
where the second map denotes the canonical lifting with respect to a compact open subgroup $N_o$ of $N(L)$ (cf.  \cite[(0.10)]{Em11}) (note that by \cite[Lem. 2.8.3]{Em2}, $I_{\overline{B}}^G(V)$ is independent of the choice of $N_o$). We have the following easy lemma.
\begin{lemma}\label{lem: lgln-ibg}(1) $I_{\overline{B}}^G (\chi \delta_B^{-1})\cong i_{\overline{B}}^G(\alpha, \ul{\lambda})$.

\noindent (2) The natural exact sequence $0\ra \chi \ra \widetilde{\chi} \ra \chi \ra 0$ induces a sequence (which is not exact in general)
\begin{equation*}
  I_{\overline{B}}^{G}(\chi\delta_B^{-1}) \hooklongrightarrow I_{\overline{B}}^G(\widetilde{\chi}\delta_B^{-1}) \twoheadlongrightarrow I_{\overline{B}}^G(\chi \delta_B^{-1}).
\end{equation*}
\end{lemma}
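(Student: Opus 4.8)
The plan is to derive (1) from the explicit shape of Emerton's functor $I_{\overline B}^G$ on locally algebraic dominant characters, and (2) from the functoriality of the canonical lifting together with the exactness of locally $\Q_p$-analytic parabolic induction.

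\textbf{Part (1).} First I would use (\ref{equ: lgln-chi}) to rewrite the input of $I_{\overline B}^G$ as the locally algebraic character $\chi\delta_B^{-1}=\chi_{\ul{\lambda}}\,\unr(\alpha)\circ\dett$, whose algebraic part $\chi_{\ul{\lambda}}$ is dominant: indeed each $\delta_i\delta_{i+1}^{-1}$ being special forces $\wt(\delta_i)_\sigma-\wt(\delta_{i+1})_\sigma\ge 1$, hence $\lambda_{i,\sigma}\ge\lambda_{i+1,\sigma}$, i.e. $\ul{\lambda}\in X_{\Delta}^+$. By the projection formula $\big(\Ind_{\overline B(L)}^G\chi\delta_B^{-1}\big)^{\Q_p-\an}\cong\bI_{\overline B}^G(\alpha,\ul{\lambda})$. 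Now, since $\chi\delta_B^{-1}$ is locally algebraic, the canonical copy of $\chi\delta_B^{-1}$ (defined relative to a fixed compact open $N_o\subseteq N(L)$) lands in the locally algebraic vectors of $\bI_{\overline B}^G(\alpha,\ul{\lambda})$; as these vectors form the $G$-subrepresentation $i_{\overline B}^G(\alpha,\ul{\lambda})$ by \cite{OS}, we get $I_{\overline B}^G(\chi\delta_B^{-1})\subseteq i_{\overline B}^G(\alpha,\ul{\lambda})$. To promote this to an equality I would invoke Emerton's description of $I_{\overline B}^G$ on locally algebraic characters (equivalently, classical Frobenius reciprocity for the smooth induction $\big(\Ind_{\overline B(L)}^G\unr(\alpha)\circ\dett\big)^{\infty}$ tensored with $L(\ul{\lambda})$; cf. \cite[\S 2]{Br13I}, \cite[Prop. 4.3.6]{Em11}), which shows the canonical copy of $\chi\delta_B^{-1}$ already generates the whole locally algebraic principal series $i_{\overline B}^G(\alpha,\ul{\lambda})$.

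\textbf{Part (2).} Locally $\Q_p$-analytic parabolic induction is exact, so the short exact sequence $0\to\chi\delta_B^{-1}\to\widetilde\chi\delta_B^{-1}\to\chi\delta_B^{-1}\to 0$ of $T(L)$-representations yields a short exact sequence
\begin{equation*}
0\to\big(\Ind_{\overline B(L)}^G\chi\delta_B^{-1}\big)^{\Q_p-\an}\to\big(\Ind_{\overline B(L)}^G\widetilde\chi\delta_B^{-1}\big)^{\Q_p-\an}\to\big(\Ind_{\overline B(L)}^G\chi\delta_B^{-1}\big)^{\Q_p-\an}\to 0.
\end{equation*}
Both $J_B$ and the canonical lifting $V\mapsto\big(V\to J_B\big((\Ind_{\overline B(L)}^GV)^{\Q_p-\an}\big)\to(\Ind_{\overline B(L)}^GV)^{\Q_p-\an}\big)$ (for the fixed $N_o$) are functorial in $V$. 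Hence the inclusion $\chi\delta_B^{-1}\hookrightarrow\widetilde\chi\delta_B^{-1}$ carries the canonical copy of $\chi\delta_B^{-1}$ into that of $\widetilde\chi\delta_B^{-1}$, so it restricts to an injection $I_{\overline B}^G(\chi\delta_B^{-1})\hookrightarrow I_{\overline B}^G(\widetilde\chi\delta_B^{-1})$; and the projection $\widetilde\chi\delta_B^{-1}\twoheadrightarrow\chi\delta_B^{-1}$ carries the canonical copy of $\widetilde\chi\delta_B^{-1}$ onto that of $\chi\delta_B^{-1}$, so it maps $I_{\overline B}^G(\widetilde\chi\delta_B^{-1})$ onto the subrepresentation generated by the canonical copy of $\chi\delta_B^{-1}$, i.e. onto $I_{\overline B}^G(\chi\delta_B^{-1})$. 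Since $\chi\delta_B^{-1}\to\widetilde\chi\delta_B^{-1}\to\chi\delta_B^{-1}$ is the zero map, the composite of the two displayed arrows vanishes; exactness in the middle is not claimed (the image of the injection may be strictly smaller than the kernel of the surjection, as $I_{\overline B}^G$ is not exact), and is not needed.

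\textbf{Main obstacle.} The only step with genuine content is identifying $I_{\overline B}^G(\chi\delta_B^{-1})$ with the \emph{full} locally algebraic principal series $i_{\overline B}^G(\alpha,\ul{\lambda})$ rather than a proper subrepresentation — i.e. getting the $\delta_B$-normalization right and feeding it into the Jacquet-module computation for locally algebraic inductions. Part (2) is then formal, once the functoriality of the canonical lifting with respect to a fixed $N_o$ is invoked.
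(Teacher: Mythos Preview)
Your argument is correct and matches the paper's approach; the paper's proof is simply terser, citing \cite[Prop.~2.8.10]{Em2} for (1) (which packages exactly the ``canonical lift lands in locally algebraic vectors and generates the locally algebraic induction'' argument you spell out) and saying ``by definition'' for (2), i.e.\ the functoriality of the canonical lifting that you invoke. One small remark on your surjectivity argument in (2): the image $\pi\big(I_{\overline B}^G(\widetilde\chi\delta_B^{-1})\big)$ a priori only has dense image in $I_{\overline B}^G(\chi\delta_B^{-1})$, but since it is a quotient of an admissible locally analytic representation by a closed subrepresentation it is itself admissible, and an injection of admissible locally analytic representations has closed image (by duality to coadmissible modules), so the image is indeed all of $I_{\overline B}^G(\chi\delta_B^{-1})$.
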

\begin{proof}
(1) follows easily from \cite[Prop. 2.8.10]{Em2}. (2) follows by definition.
\end{proof}
\noindent In particular, we see $v_{\overline{P}_i}^{\infty}(\alpha, \ul{\lambda})$ has multiplicity $2$ as irreducible constituent in $I_{\overline{B}}^G(\widetilde{\chi}\delta_B^{-1})$. Put
\begin{eqnarray*}U&:=&I_{\overline{B}}^G(\widetilde{\chi}\delta_B^{-1}) \cap \big(\sum_{\overline{P}\supsetneq \overline{B}} \bI_{\overline{P}}^G(\alpha, \ul{\lambda})\big), \\
W&:=&I_{\overline{B}}^G(\widetilde{\chi}\delta_B^{-1})\cap \bI_{\overline{B}}^G(\alpha, \ul{\lambda}),
\end{eqnarray*}where the intersection is taken inside $(\Ind_{\overline{B}(L)}^{G} \widetilde{\chi}\delta_B^{-1})^{\Q_p-\an}$ (noting $\bI_{\overline{B}}^G(\alpha, \ul{\lambda}) \hookrightarrow (\Ind_{\overline{B}(L)}^{G} \widetilde{\chi}\delta_B^{-1})^{\Q_p-\an}$). Thus $\tilde{\Sigma}(\alpha, \ul{\lambda})' :=W/U$ is a subrepresentation of $\St_n^{\an}(\alpha, \ul{\lambda})$, and $I_{\overline{B}}^G(\widetilde{\chi}\delta_B^{-1})/U$ is an extension of $i_{\overline{B}}^G(\alpha, \ul{\lambda})$ by $\tilde{\Sigma}(\alpha, \ul{\lambda})'$. By Lemma \ref{lem: lgln-nonc} (as in (a) of the proof of Proposition \ref{lg-ind}), we can deduce that any irreducible constituent of $U$ can not appear in the socle of $\Pi_{\infty}^{R_{\infty}-\an}[\fI_t]$. So (\ref{equ: lgln-keymap}) induces
\begin{equation}\label{equ: lgln-keymap2}
  \St_n^{\infty}(\alpha, \ul{\lambda})^{\oplus r}\hooklongrightarrow \big(I_{\overline{B}}^G (\widetilde{\chi}\delta_B^{-1})/U\big)^{\oplus r} \lra \Pi_{\infty}^{R_{\infty}-\an}[\fI_t].
\end{equation}The composition is injective and has image in $\Pi_{\infty}^{R_{\infty}-\an}[\fm_y]$ (since $(x^* \cM_{\infty})^{\vee}$ has image in $J_B(\Pi_{\infty}^{R_{\infty}-\an}[\fm_y])$ via (\ref{equ: lgln-injf})). Moreover, any map in the right hand set of (\ref{equ: lgln-main}) is in fact induced by this composition (e.g. by the bijection in (\ref{equ: lgln-bij0}) with $\bI_{\overline{B}}^G(\alpha,\ul{\lambda})$ replaced by $\St_n^{\infty}(\alpha, \ul{\lambda})$).
\\

\noindent
Denote by $\Sigma_i(\alpha, \ul{\lambda})':=\tilde{\Sigma}(\alpha, \ul{\lambda})' \cap \Sigma_i(\alpha, \ul{\lambda})$. By Corollary \ref{cor: lgln-bij3}, the  composition
\begin{equation}\label{equ: compW}
  (\Sigma_i(\alpha, \ul{\lambda})')^{\oplus r} \hooklongrightarrow \big(I_{\overline{B}}^G (\widetilde{\chi}\delta_B^{-1})/U\big)^{\oplus r} \lra \Pi_{\infty}^{R_{\infty}-\an}[\fI_t]
\end{equation}
also has image in $\Pi_{\infty}^{R_{\infty}-\an}[\fm_y]$, and is also injective (since $\soc_{G} \Sigma_i(\alpha, \ul{\lambda})'\cong \St_n^{\infty}(\alpha, \ul{\lambda})$, by Proposition \ref{soc}). By Remark \ref{rem: Ressubq}, we deduce (\ref{equ: compW}) can uniquely extend to an injection
\begin{equation}\label{equ: compnoL}\Sigma_i(\alpha, \ul{\lambda})^{\oplus r} \lra \Pi_{\infty}^{R_{\infty}-\an}[\fm_y] \hooklongrightarrow \Pi_{\infty}^{R_{\infty}-\an}[\fI_t].\end{equation} Putting (\ref{equ: compW}) (\ref{equ: compnoL}) together, we obtain \big(where the composition has image in $\Pi_{\infty}^{R_{\infty}-\an}[\fm_y]$\big)
\begin{equation}\label{equ: lgL1}
  \Sigma_i(\alpha, \ul{\lambda})^{\oplus r}\hooklongrightarrow \Big( \big(I_{\overline{B}}^G (\widetilde{\chi}\delta_B^{-1})/U\big)\oplus_{\Sigma_i(\alpha, \ul{\lambda})'} \Sigma_i(\alpha, \ul{\lambda})\Big)^{\oplus r} \lra \Pi_{\infty}^{R_{\infty}-\an}[\fI_t].
\end{equation}
We prove that $\Sigma_i(\alpha, \ul{\lambda}, \psi)$ is a subrepresentation of $\big(I_{\overline{B}}^G (\widetilde{\chi}\delta_B^{-1})/U\big)\oplus_{\Sigma_i(\alpha, \ul{\lambda})'} \Sigma_i(\alpha, \ul{\lambda})$. Let $V$ be the pull-back of $I_{\overline{B}}^G (\widetilde{\chi}\delta_B^{-1})/U$ via the injection $i_{\overline{P}_i}^G(\alpha, \ul{\lambda}) \hookrightarrow i_{\overline{B}}^G(\alpha, \ul{\lambda})$ which is thus isomorphic to an extension of $i_{\overline{P}_i}^{G}(\alpha,\ul{\lambda})$ by $\tilde{\Sigma}(\alpha, \ul{\lambda})'$. We have a commutative diagram
\begin{equation}\label{commrevi}
  \begin{CD}
 0 @>>> \tilde{\Sigma}(\alpha, \ul{\lambda})' @>>> V @>>> i_{\overline{P}_i}^G(\alpha, \ul{\lambda}) @>>> 0 \\
 @. @VVV @VVV @| @. \\
    0 @>>> \St_n^{\an}(\alpha, \ul{\lambda}) @>>> \sE_{i}^{\emptyset}(\alpha, \ul{\lambda})/\tilde{U} @>>> i_{\overline{P}_i}^G(\alpha, \ul{\lambda}) @>>> 0
  \end{CD}
\end{equation}
where $\tilde{U}:=\sum_{\overline{P}\supsetneq \overline{B}} \bI_{\overline{P}}^G(\alpha, \ul{\lambda})$. By Theorem \ref{cor: lgln-key} (see also Remark \ref{rem: lgln-ext3} (ii)), the pull-back of the bottom exact sequence via
\begin{equation}\label{injabc}w_{\overline{P}_i}^{\infty}(\alpha,\ul{\lambda}):=w_{\overline{P}_i}^{\infty}(\ul{\lambda})\otimes_E \unr(\alpha)\circ \dett\hooklongrightarrow i_{\overline{P}_i}^G(\alpha, \ul{\lambda})\end{equation}
is split. Using
\begin{equation*}\Ext^1_G(w_{\overline{P}_i}^{\infty}(\alpha,\ul{\lambda}), \tilde{\Sigma}(\alpha, \ul{\lambda})')\hooklongrightarrow \Ext^1_G(w_{\overline{P}_i}^{\infty}(\alpha,\ul{\lambda}),\St_n^{\an}(\alpha,\ul{\lambda}))\end{equation*} \big(by $\Hom_G(w_{\overline{P}_i}^{\infty}(\alpha,\ul{\lambda}), \St_n^{\an}(\alpha,\ul{\lambda})/\tilde{\Sigma}(\alpha, \ul{\lambda})')=0$\big), we deduce that the pull-back of the top exact sequence of (\ref{commrevi}) via (\ref{injabc}) is also split. Thus $I_{\overline{B}}^G (\widetilde{\chi}\delta_B^{-1})/U$ has a subrepresentaiton $\tilde{\Sigma}_i(\alpha, \ul{\lambda}, \psi)'$, which is isomorphic to an extension of $v_{\overline{P}_i}^{\infty}(\alpha,\ul{\lambda})$ by $\tilde{\Sigma}(\alpha, \ul{\lambda})'$.
Moreover, by similar arguments as in Proposition \ref{prop: iso-simp}, Lemma \ref{lem: ext1} (2), we have an isomorphism
\begin{equation*}
  \Ext^1_G(v_{\overline{P}_i}^{\infty}(\alpha,\ul{\lambda}), \Sigma_i(\alpha, \ul{\lambda})') \xlongrightarrow{\sim}  \Ext^1_G(v_{\overline{P}_i}^{\infty}(\alpha,\ul{\lambda}), \tilde{\Sigma}(\alpha, \ul{\lambda})').
\end{equation*}
We deduce then $\tilde{\Sigma}_i(\alpha,\ul{\lambda}, \psi)'$ comes by push-forward from an extension $\Sigma_i(\alpha,\ul{\lambda}, \psi)'$ of $v_{\overline{P}_i}^{\infty}(\alpha, \ul{\lambda})$ by $\Sigma_i(\alpha,\ul{\lambda})'$. It is clear that push-forward of $\Sigma_i(\alpha, \ul{\lambda}, \psi)'$ via $\Sigma_i(\alpha,\ul{\lambda})'\hookrightarrow \Sigma_i(\alpha,\ul{\lambda})$ is isomorphic to $\Sigma_i(\alpha,\ul{\lambda}, \psi)$ (e.g. using the fact that the both are subrepresentations of $\tilde{\Sigma}_i(\alpha,\ul{\lambda}, \psi)$), and is a subrepresentation of $ \big(I_{\overline{B}}^G (\widetilde{\chi}\delta_B^{-1})/U\big)\oplus_{\Sigma_i(\alpha, \ul{\lambda})'} \Sigma_i(\alpha, \ul{\lambda})$.
\\

\noindent The composition in (\ref{equ: lgL1}) induces then
\begin{equation*}
   \Sigma_i(\alpha, \ul{\lambda})^{\oplus r}\hooklongrightarrow \Sigma_i(\alpha,\ul{\lambda}, \psi)^{\oplus r} \lra \Pi_{\infty}^{R_{\infty}-\an}[\fI_t].
\end{equation*}
We know the composition has image in $\Pi_{\infty}^{R_{\infty}-\an}[\fm_y]$. We show the image of the second morphism is also contained in $ \Pi_{\infty}^{R_{\infty}-\an}[\fm_y] $. For any $a\in \fm_y$, the composition (which is $\GL_n(L)$-equivariant)
\begin{equation}\label{equ: lgln-keymap4}
\Sigma_i(\alpha,\ul{\lambda}, \psi)^{\oplus r} \lra \Pi_{\infty}^{R_{\infty}-\an}[\fI_t] \xlongrightarrow{v\mapsto av} \Pi_{\infty}^{R_{\infty}-\an}[\fI_t],
\end{equation}
factors through $\Sigma_i(\alpha,\ul{\lambda}, \psi)^{\oplus r}/ \Sigma_i(\alpha, \ul{\lambda})^{\oplus r}\cong v_{\overline{P}_i}^{\infty}(\alpha, \ul{\lambda})^{\oplus r}$. So (\ref{equ: lgln-keymap4}) has to be zero, since by Lemma \ref{lem: lgln-nonc}, $v_{\overline{P}_{i}}^{\infty}(\alpha, \ul{\lambda})$ is not a subrepresentation of $\Pi_{\infty}^{R_{\infty}-\an}[\fm_y]$ (hence of $\Pi_{\infty}^{R_{\infty}-\an}[\fI_t]$).

\noindent In summary, we obtain morphisms
\begin{equation}\label{equ: lgln-keymap1}
  \St_n^{\infty}(\alpha, \ul{\lambda})^{\oplus r}\hooklongrightarrow \Sigma_{i}(\alpha, \ul{\lambda}, \psi)^{\oplus r} \lra  \Pi_{\infty}^{R_{\infty}-\an}[\fm_y] .
\end{equation}
From which the surjectivity of (\ref{equ: lgln-rest}) follows (note that all the maps in the right hand set of (\ref{equ: lgln-rest}) are induced from the composition (\ref{equ: lgln-keymap1})). This concludes the proof of Theorem \ref{thm: lgln-main}.
\begin{remark}\label{rem: lgln-lg}
  (i) Using the fact that $\tilde{\Sigma}(\alpha,\ul{\lambda}, \cL(\rho_L))\cong \Sigma(\alpha, \ul{\lambda},\cL(\rho_L))\oplus_{\Sigma(\alpha,\ul{\lambda})} \St_n^{\an}(\alpha, \ul{\lambda})$, and (\ref{equ: lgln-bij0}) (see also Remark \ref{rem: Ressubq}), one can actually deduce from (\ref{equ: lgln-main}) an similar bijection with $\Sigma(\alpha, \ul{\lambda},\cL(\rho_L))$ replaced by $\tilde{\Sigma}(\alpha,\ul{\lambda}, \cL(\rho_L))$. And the statement in Theorem (2) also holds if the part ``an injection $\Sigma_i(\alpha,\ul{\lambda}, \psi)\hookrightarrow  \Pi_{\infty}^{R_{\infty}-\an}[\fm_y]$" is replaced by ``a non-zero morphism  $\tilde{\Sigma}(\alpha,\ul{\lambda}, \cL(\rho_L))\ra  \Pi_{\infty}^{R_{\infty}-\an}[\fm_y]$". However, the author does not know if $\soc_G \St_n^{\an}(\alpha, \ul{\lambda})\cong \St_n^{\infty}(\alpha,\ul{\lambda})$, and hence does not know if any non-zero $\tilde{\Sigma}(\alpha,\ul{\lambda}, \cL(\rho_L))\ra \Pi_{\infty}^{R_{\infty}-\an}[\fm_y]$ is injective.
\\

\noindent (ii) The global context that we are working in is not important for the above arguments. Actually, a key step is the existence of the character $\widetilde{\chi}$, which is a consequence of the surjectivity of (\ref{equ: tangL}); we also use the smoothness of the patched eigenvariety at the point $x$. But as in the discussion in Remark \ref{rem: eig}, one can expect that these hold in more general setting. The rest of the arguments (assuming Lemma \ref{lem: lgln-nonc}) is irrelevant with the global context.
\end{remark}

\section*{References}

\end{document}